\newcommand{\BA}{{\mathbb {A}}}
\newcommand{\BC}{{\mathbb {C}}}
\newcommand{\BF}{{\mathbb {F}}}
\newcommand{\BG}{{\mathbb {G}}}
\newcommand{\BL}{{\mathbb {L}}}
\newcommand{\BN}{{\mathbb {N}}}
\newcommand{\BQ}{{\mathbb {Q}}}
\newcommand{\BZ}{{\mathbb {Z}}}
\newcommand{\CC}{{\mathcal {C}}}
\newcommand{\CF}{{\mathcal {F}}}
\newcommand{\CI}{{\mathcal {I}}}
\newcommand{\CJ}{{\mathcal {J}}}
\newcommand{\CM}{{\mathcal {M}}}
\newcommand{\CO}{{\mathcal {O}}}
\newcommand{\CP}{{\mathcal {P}}}
\newcommand{\CS}{{\mathcal {S}}}
\newcommand{\CT}{{\mathcal {T}}}
\newcommand{\CV}{{\mathcal {V}}}
\newcommand{\CZ}{{\mathcal {Z}}}
\newcommand{\FA}{{\mathfrak {A}}}
\newcommand{\FF}{{\mathfrak {F}}}
\newcommand{\FL}{{\mathfrak {L}}}
\newcommand{\Fa}{{\mathfrak {a}}}
\newcommand{\Fj}{{\mathfrak {j}}}
\newcommand{\Fl}{{\mathfrak {l}}}
\newcommand{\Fp}{{\mathfrak {p}}}
\newcommand{\Fr}{{\mathfrak {r}}}
\newcommand{\Fs}{{\mathfrak {s}}}
\newcommand{\Fu}{{\mathfrak {u}}}
\newcommand{\RD}{{\mathrm {D}}}
\newcommand{\RG}{{\mathrm {G}}}
\newcommand{\RI}{{\mathrm {I}}}
\newcommand{\RJ}{{\mathrm {J}}}
\newcommand{\RM}{{\mathrm {M}}}
\newcommand{\RR}{{\mathrm {R}}}
\newcommand{\RZ}{{\mathrm {Z}}}
\newcommand{\Ad}{{\mathrm{Ad}}}
\newcommand{\ab}{{\mathrm{ab}}}
\newcommand{\ac}{{\mathrm{ac}}}
\newcommand{\bk}{{\mathrm{BK}}}
\newcommand{\ch}{{\mathrm{ch}}}
\newcommand{\GL}{{\mathrm{GL}}}
\newcommand{\Hom}{{\mathrm{Hom}}}
\newcommand{\Ind}{{\mathrm{Ind}}}
\newcommand{\ind}{{\mathrm{ind}}}
\newcommand{\inv}{{\mathrm{inv}}}
\newcommand{\Id}{{\mathrm{Id}}}
\newcommand{\Mat}{{\mathrm{Mat}}}
\newcommand{\ord}{{\mathrm{ord}}}
\newcommand{\PSR}{\mathrm{PSR}}
\newcommand{\pvs}{\mathrm{pvs}}
\newcommand{\pv}{\mathrm{pv}}
\renewcommand{\Re}{{\mathrm{Re}}}
\newcommand{\Res}{{\mathrm{Res}}}
\newcommand{\Spec}{{\mathrm{Spec}}}
\newcommand{\SO}{{\mathrm{SO}}}
\newcommand{\Sym}{{\mathrm{Sym}}}
\newcommand{\Sp}{{\mathrm{Sp}}}
\newcommand{\std}{{\mathrm{std}}}
\newcommand{\supp}{{\mathrm{supp}}}
\newcommand{\tr}{{\mathrm{tr}}}
\newcommand{\ud}{\,\mathrm{d}}
\newcommand{\vol}{{\mathrm{vol}}}
\newcommand{\ovl}{\overline}
\newcommand{\wt}{\widetilde}
\newcommand{\wh}{\widehat}
\newcommand{\ol}{\overline}
\newcommand{\bs}{\backslash}
\newcommand{\one}{\mathds{1}}
\def\alp{{\alpha}}
\def\bet{{\beta}}
\def\bks{{\backslash}}
\def\del{{\delta}}
\def\Del{{\Delta}}
\def\diag{{\rm diag}}
\def\eps{{\epsilon}}
\def\veps{{\varepsilon}}
\def\sig{{\sigma}}
\def\zet{{\zeta}}
\def\std{\rm std}
\def\Ome{{\Omega}}
\def\gam{{\gamma}}
\def\Gam{{\Gamma}}
\def\wb{\overline} 
\def\vpi{\varpi}
\def\vphi{\varphi}
\def\p{\prime}
\newtheorem{thm}{Theorem}[section]
\newtheorem{dfn}[thm]{Definition}
\newtheorem{rmk}[thm]{Remark}
\newtheorem{pro}[thm]{Proposition}
\newtheorem{lem}[thm]{Lemma}
\newtheorem{cor}[thm]{Corollary}
\newtheorem{cnj}[thm]{Conjecture}
\newcommand{\Rmnum}[1]{\expandafter\@slowromancap\romannumeral #1@}
\newcommand{\apair}[1]{\left\langle {#1} \right\rangle}
\newcommand{\ppair}[1]{\left({#1}\right)}
\begin{document}
\renewcommand{\theequation}{\arabic{equation}}
\numberwithin{equation}{section}

\title[Harmonic Analysis and $\gamma$-Functions]
{Harmonic Analysis and Gamma Functions on Symplectic Groups}

\author{Dihua Jiang}
\address{School of Mathematics,
University of Minnesota,
Minneapolis, MN 55455, USA}
\email{dhjiang@math.umn.edu}

\author{Zhilin Luo}
\address{School of Mathematics,
University of Minnesota,
Minneapolis, MN 55455, USA}
\email{luoxx537@umn.edu}

\author{Lei Zhang}
\address{Department of Mathematics,
National University of Singapore,
Singapore 119076}
\email{matzhlei@nus.edu.sg}

\subjclass[2010]{Primary 11F66, 43A32, 46S10; Secondary 11F70, 22E50, 43A80}

\date{\today}

\dedicatory{In memory of Ilya Piatetski-Shapiro and Stephen Rallis}

\thanks{The research of the first named author is supported in part by the NSF Grants DMS--1600685 and DMS--1901802,
and that of the third named author is supported in part by AcRF Tier 1 grants R-146-000-237-114,  R-146-000-277-114, and R-146-000-308-114 of National University of Singapore.}

\keywords{Invariant Distribution, Fourier Operator, Langlands Local Gamma Function and $L$-function, Representation, Symplectic Group, $p$-adic Local Field}

\begin{abstract}
Over a $p$-adic local field $F$ of characteristic zero, we develop a new type of harmonic analysis on an extended symplectic group $G=\BG_m\times\Sp_{2n}$. It is associated to the Langlands
$\gamma$-functions attached to any irreducible admissible representations $\chi\otimes\pi$ of $G(F)$ and the standard representation $\rho$ of the dual group $G^\vee(\BC)$, and confirms a series of the conjectures
in the local theory of the Braverman-Kazhdan proposal (\cite{BK00}) for the case under consideration.
Meanwhile, we develop a new type of harmonic analysis on $\GL_1(F)$, which is associated to a $\gamma$-function $\beta_\psi(\chi_s)$ (a product of $n+1$ certain abelian $\gamma$-functions).
Our work on $\GL_1(F)$ plays an indispensable role in the development of our work on $G(F)$. These two types of harmonic analyses both specialize to the well-known local theory developed in Tate's thesis (\cite{Tt50})
when $n=0$. The approach is to use the compactification of $\Sp_{2n}$ in the Grassmannian variety of $\Sp_{4n}$,
with which we are able to utilize the well developed local theory of Piatetski-Shapiro and Rallis (\cite{PSR86} and many other works) on the doubling local zeta integrals for the standard $L$-functions of $\Sp_{2n}$.

The method can be viewed as an extension of the work of Godement-Jacquet (\cite{GJ72}) for the standard $L$-function of $\GL_n$ and is expected to work for all classical groups.
We will consider the Archimedean local theory and the global theory in our future work.
\end{abstract}

\maketitle

\tableofcontents


\section{Introduction}\label{sec-I}


In his celebrated 1950 Princeton Thesis (\cite{Tt50} and also \cite{CF67}), J. Tate establishes the basic analytic properties (the Hecke theory) of the Hecke $L$-functions $L(s,\chi)$ associated to grossencharacters $\chi$ of a global field $k$
by Fourier analysis over global fields. In this approach, the Hecke $L$-functions $L(s,\chi)$ can be expressed by means of global zeta integrals
that are the Mellin transform on $\BA^\times$, the idele group of the global field $k$, of Schwartz functions on $\BA$, the adele ring of $k$. The meromorphic continuation and the functional equation
of the Hecke $L$-functions are essentially consequences of the Fourier transform and the associated Poisson summation formula defined over the global field $k$.
This adelic approach is based on the Euler product structure of the $L$-functions, which reflects the local-global principle in the theory.

This line of ideas was taken up by
R. Godement and H. Jacquet in 1972 (\cite{GJ72}) to represent the standard $L$-functions associated to irreducible cuspidal automorphic representations of $\GL_n(\BA)$ by their global zeta integrals:
\begin{equation}\label{GJ-gzi}
\CZ(s, \phi, \varphi_\pi, \chi)
:=
\int_{\GL_n(\BA)}\phi(g)\varphi_\pi(g)\chi(\det(g))|\det(g)|_\BA^{s+{\frac{n-1}{2}}}\ud g,
\end{equation}
where $\phi$ is a certain function of Schwartz type in $\CS(\GL_n(\BA))$, $\varphi_\pi$ is a matrix coefficient associated to the irreducible cuspidal
automorphic representation $\pi$ of $\GL_n(\BA)$, $\chi$ is an automorphic character of $\BA^\times$, $s\in\BC$, and ${\rm d} g$ is the Haar measure on $\GL_n(\BA)$.
In \cite[Theorem 13.8]{GJ72}, they prove that the global zeta integral in \eqref{GJ-gzi} converges absolutely for $\Re(s)>\frac{n+1}{2}$, can be
analytically continued to an entire function in $s\in\BC$, and satisfies the following functional equation
\begin{equation}\label{GJ-gfe}
\CZ(s, \phi, \varphi_\pi, \chi)
=
\CZ(1-s, \wh{\phi}, \varphi_\pi^\vee, \chi^{-1})
\end{equation}
where $\wh{\phi}$ is the restriction of the Fourier transform on the affine space $\Mat_n(\BA)$ of $n\times n$ matrices, and $\varphi_\pi^\vee(g):=\varphi_\pi(g^{-1})$. It is important to note that the global functional equation in \eqref{GJ-gfe} is the
consequence of the Poisson summation formula associated to the Fourier transform over the affine space $\Mat_n(\BA)$. In consequence, they establish
the Hecke theory for the standard (principal) $L$-functions of $\GL_n$.
This is a beautiful generalization
of the method of the Tate thesis and of the special case by T. Tamagawa (\cite{Tm63}).

For simplicity, we take $\RG$ to be a reductive algebraic group that splits over a number field $k$, and $\RG^\vee(\BC)$ the complex dual group of $\RG$.
Let $\rho$ be a complex representation of $\RG^\vee(\BC)$ in a complex vector space $V_\rho$ with dimension $d_\rho$. For any irreducible cuspidal automorphic
representation $\pi$ of $\RG(\BA)$, R. Langlands introduced in \cite{L70} the notion of automorphic $L$-functions associated to the pair $(\pi,\rho)$, which are defined
by the following Euler product of local $L$-factors
\begin{equation}\label{L-lfn}
L(s,\pi,\rho):=\prod_{\nu\in|k|}L(s,\pi_\nu,\rho)
\end{equation}
where $|k|$ denotes the set of all local places of $k$. Note that at the finite local places where the representations $\pi_\nu$ are unramified, the local $L$-functions can be defined to be 
\[
L(s,\pi_\nu,\rho):=\det(\RI_{d_\rho}-\rho(c(\pi_\nu))q_\nu^{-s})^{-1}
\]
where $c(\pi_\nu)$ is the Frobenius-Hecke conjugacy class in $\RG^\vee(\BC)$ attached to $\pi_\nu$ through 
the Satake isomorphism and $q_\nu$ is the cardinality of the residue field of $k_\nu$. At the Archimedean 
local places, the local $L$-functions are defined through the Langlands classification theory (\cite{L89}). Finally 
at remaining ramified local places, the local $L$-functions may be defined through the Langlands-Shahidi method or 
the Rankin-Selberg method, and are eventually confirmed through the local Langlands 
conjecture, which is known in many cases, but still one of the major problems in the local theory of 
the Langlands program. 
Langlands proved that the Euler product in \eqref{L-lfn} converges absolutely for $\Re(s)$ sufficiently positive and
made his conjecture that the Hecke theory holds for the general automorphic $L$-functions $L(s,\pi,\rho)$. In order to understand the nature of the Langlands conjecture, it is essential to
understand the analytic nature of the global $L$-functions, and that of the local $L$-functions $L(s,\pi_\nu,\rho)$ and the associated local $\gamma$-functions $\gam(s,\pi_\nu,\rho,\psi_\nu)$.
It is a desirable question: 

{\sl How to establish the basic analytic properties (or the analogous Hecke theory) for general Langlands automorphic $L$-functions $L(s,\pi,\rho)$ via harmonic analysis
on $\RG(\BA)$ for general reductive algebraic group $\RG$ defined over $k$?}

In 2000, A. Braverman and D. Kazhdan proposed in \cite{BK00} a framework (a series of conjectures) to establish the Hecke theory or the Langlands conjecture for general
automorphic $L$-functions $L(s,\pi,\rho)$ using the {\sl harmonic  analysis} on $\RG(\BA)$,
for reductive algebraic groups $\RG$ defined over a number field $k$.
The global aspect of the Braverman-Kazhdan proposal is to understand the Langlands conjecture for the complete automorphic $L$-functions $L(s,\pi,\rho)$, while the local aspect is to understand the analytic properties of
the local $\gamma$-functions $\gam(s,\pi_\nu,\rho,\psi_\nu)$ and the local $L$-functions $L(s,\pi_\nu,\rho)$.
So far, the work of Godement-Jacquet (\cite{GJ72}) still stands as the only established case that can be understood by means of
harmonic analysis on $\RG(\BA)$ according to the the Braverman-Kazhdan proposal.

Our objective is to explore new possible examples of the local and global theory of automorphic $L$-functions by developing new types of harmonic analysis on $\RG(\BA)$ as proposed in \cite{BK00}. As we explained before, the Braverman-Kazhdan proposal intends to generalize the work of Godement-Jacquet to 
general reductive groups $\RG$ with any finite dimensional representation $\rho$ of the $L$-group $^L\RG$. 
It is well-known that the doubling integrals of I. Piatetski-Shapiro and S. Rallis in \cite{GPSR87} is an 
extension of the work of Godement-Jacquet on $\GL_n$ to classical groups. It is natural to start our 
investigation with the automorphic $L$-functions that can be represented by the doubling zeta integrals of Piatetski-Shapiro and Rallis in the framework of the Braverman-Kazhdan proposal. 

In this paper we consider a case over a non-Archimedean local field $F$ of characteristic zero from the doubling method of Piatetski-Shapiro and Rallis for symplectic groups $\Sp_{2n}$. In this case, we have to consider the extended symplectic group $G=\BG_m\times\Sp_{2n}$, where $\BG_m$ is the algebraic group over $F$ such that $\BG_m(F)=F^\times$. In the framework of the Braverman-Kazhdan proposal, the 
algebraic group considered is the unit group of a reductive monoid $\CM_\rho$ associated to a given $\rho$
via the Vinberg theory of universal monoids. For the standard representation $\rho$ of the dual group 
$\SO_{2n+1}(\BC)$ of $\Sp_{2n}$, the correct algebraic group for the Braverman-Kazhdan proposal 
is $G=\BG_m\times\Sp_{2n}$ (see \cite{Li18} and \cite{Sh18}, for instance).
We intend to establish the local theory of harmonic analysis on $G(F)$ associated 
to the standard local $\gamma$-functions and $L$-functions of $G$. We leave the Archimedean local theory and the global theory for future consideration. 

As an algebraic group defined over $F$, the extended symplectic group $G=\BG_m\times\Sp_{2n}$ has its complex dual group: $G^\vee(\BC)=\BC^\times\times\SO_{2n+1}(\BC)$.
Let $\rho$ be the standard representation of $G^\vee(\BC)$, given by
\begin{equation}\label{rho}
\rho={\std}\colon G^\vee(\BC)=\BC^\times\times\SO_{\text{$2n$}+1}(\BC)\rightarrow\GL_{\text{$2n$}+1}(\BC)
\end{equation}
where the restriction of $\rho$ to $\BC^\times$ is the multiplication and the restriction of $\rho$ to $\SO_{2n+1}(\BC)$ is the standard representation (the natural embedding into $\GL_{2n+1}(\BC)$) of $\SO_{2n+1}(\BC)$.

Let $\Pi(G)$ be the set of equivalence classes of irreducible admissible representations of $G(F)$. It clear that the members in $\Pi(G)$ are of the form $\chi\otimes \pi$ with
$\pi\in\Pi(\Sp_{2n})$ and $\chi$ a quasi-character of $\BG_m(F)$.
By the local Langlands transfer from $G(F)$ to $\GL_{2n+1}(F)$, one is able to define the local $L$-functions $L(s,\chi\otimes \pi,\rho)$ in terms of the local $L$-functions for $\GL_{2n+1}(F)$, and define the local
$\gamma$-functions $\gamma(s,\chi\otimes \pi,\rho,\psi)$ with a nontrivial additive character $\psi$ of $F$, as well.

The backbone in the local harmonic analysis on $G(F)$ associated to the $\gamma$-functions $\gamma(s,\chi\otimes \pi,\rho,\psi)$ in the Braverman-Kazhdan proposal is their conjecture that asserts
the existence of a $G(F)$-invariant distribution $\Phi_{\rho,\psi}$ on $G(F)$. This $G(F)$-invariant distribution $\Phi_{\rho,\psi}$ on $G(F)$ defines a Fourier operator
 $\CF_{\rho,\psi}$ over $G(F)$ by
\begin{equation}\label{CFrho}
\CF_{\rho,\psi}(\phi)(a,h):=|a|^{-2n-1}(\Phi_{\rho,\psi}\ast\phi^\vee)(a,h),
\end{equation}
for $(a,h)\in G(F)$, where $\phi\in\CC_c^\infty(G)$, the space of smooth, compactly supported functions on $G(F)$, and $\phi^\vee(g)=\phi(g^{-1})$. This Fourier operator $\CF_{\rho,\psi}$
was called a $\rho$-Fourier transform in \cite{BK00}.

It is Conjecture 5.4 of \cite{BK00} that the Fourier operator $\CF_{\rho,\psi}$ over $G(F)$ extends to a unitary operator on the space of square-integrable functions on $G(F)$ with respect to a certain measure, and
preserves a conjectural $\rho$-Schwartz space $\CS_\rho(G)$. In order to understand the $\rho$-Schwartz space $\CS_\rho(G)$, they consider the local zeta integral:
\begin{equation}\label{rho-LZI}
\CZ(s, \phi, \varphi_{\chi\otimes \pi})
:=
\int_{G(F)}\phi(g)\varphi_{\chi\otimes \pi}(g)|\sigma(g)|^{s+n}\ud g,
\end{equation}
and assert a series of conjectural statements in \cite[Conjecture 5.11]{BK00} that $\CZ(s, \phi, \varphi_{\chi\otimes \pi})$ converges for $\Re(s)$ sufficiently positive, admits meromorphic continuation to $s\in\BC$, and is a
holomorphic multiple of the Langlands local $L$-function $L(s,\chi\otimes \pi,\rho)$. Moreover, with the Fourier operator $\CF_{\rho,\psi}$ over $G(F)$, the local zeta integral
$\CZ(s, \phi, \varphi_{\chi\otimes \pi})$ should satisfy the following functional equation:
\begin{equation}\label{rho-FE}
\CZ(1-s, \CF_{\rho,\psi}(\phi), \varphi_{\chi\otimes \pi}^\vee)=\gamma(s,\chi\otimes \pi,\rho,\psi)\CZ(s, \phi, \varphi_{\chi\otimes \pi}),
\end{equation}
where $\gamma(s,\chi\otimes \pi,\rho,\psi)$ is the Langlands local $\gamma$-function associated to $\chi\otimes\pi$ and $\rho$. Finally, for any $\chi\otimes \pi\in \Pi(G)$, the
$\chi\otimes\pi$-Fourier coefficient of the $G(F)$-invariant distribution $\Phi_{\rho,\psi}$ on $G(F)$ should yield the Langlands local $\gamma$-function:
\begin{equation}\label{rho-gamma}
(\chi\otimes \pi)(\Phi_{\rho,\psi})=\gamma(\cdot,\chi\otimes \pi,\rho,\psi)\cdot\Id_{\chi\otimes \pi},
\end{equation}
except for those $\chi\otimes \pi\in \Pi(G)$ such that $\gamma(\cdot,\chi\otimes \pi,\rho,\psi)$ has a pole.

It is clear that the local theory of the Braverman-Kazhdan proposal depends on the existence of the $G(F)$-invariant distribution $\Phi_{\rho,\psi}$ on $G(F)$. On the one hand, it is desirable to have
a conceptual understanding of such $G(F)$-invariant distributions $\Phi_{\rho,\psi}$
for any reductive algebraic group $G$ and any irreducible finite dimensional representations $\rho$ of the $L$-group $^LG$. On the other hand, it is fundamentally important
to have an explicit construction of such $G(F)$-invariant distributions $\Phi_{\rho,\psi}$ with analytic significance so that the analytic aspects of the relevant local theory can be well established.

In \cite{BK00}, Braverman and Kazhdan provide a conjectural description of the $G(F)$-invariant distribution based on the framework of {\sl algebraic integrations} as outlined by Kazhdan in \cite{K00}. In particular, they
formulate their conjecture on the construction of the $G(F)$-invariant distributions $\Phi_{\rho,\psi}$ over finite fields (\cite[Section 9]{BK00}). The finite field case has been well studied by S. Cheng and B. C. Ng\^o in \cite{CN18},
by T.-H. Chen in \cite{C16}, and by G. Laumon and E. Letellier in \cite{LL19}. Over a local field $F$ of characteristic zero, Z. Luo in \cite{Luo19} explains the spherical part of the $G(F)$-invariant
distribution $\Phi_{\rho,\psi}^K$ for
any split connected reductive group $G$ and any $\rho$ with $K$ the standard maximal compact subgroup of $G(F)$ (see also \cite{Gz18} and \cite{Sak18}). Motivated by his consideration of the Langlands beyond endoscopy proposal via the Arthur-Selberg trace formula (\cite{FLN10}), Ng\^o explains the conjectural description of the $G(F)$-invariant distributions in the framework of Hankel transform and related harmonic analysis (\cite{N20}). 
However,
the case of Godement-Jacquet for the standard representation $\rho$ of $\GL_n(\BC)$ (\cite{GJ72}) is still the only case so far that the local theory of the Braverman-Kazhdan proposal has been fully understood.
It is worthwhile to mention that some preliminary work towards the understanding of the local theory of the Braverman-Kazhdan proposal was taken place in the work of W.-W. Li (\cite{Li17} and \cite{Li18}),
of F. Shahidi (\cite{Sh18}), and of J. Getz and B. Liu (\cite{GL20}).
In Section \ref{ssec-mrG}, we are going to explain what we have done for the case 
$G=\BG_m\times\Sp_{2n}$ over a $p$-adic local field $F$ of characteristic zero.

\subsection{Harmonic analysis on $G$}\label{ssec-mrG}

With $\rho$ as given in \eqref{rho}, we first define a {\bf $G(F)$-invariant distribution}
on $G(F)=\BG_m(F)\times\Sp_{2n}(F)$ (Definition \ref{dfn-SDFT} and Proposition \ref{pro:Phi}) to be
\begin{align}\label{SDG}
\Phi_{\rho,\psi}(a,h):=
c_0\cdot\eta_{\pvs,\psi}(a\det(h+\RI_{2n}))\cdot|\det(h+\RI_{2n})|^{-\frac{2n+1}{2}}
\end{align}
where $\eta_{\pvs,\psi}(x)$ is a locally constant function on $F^\times$ (Theorem \ref{thm:eta}). A rich theory to understand the function $\eta_{\pvs,\psi}(x)$
will be explained with more details in Section \ref{ssec-eta-lt}. Here $c_0$ is a constant depending on the choice of relevant measures.
Then we define the associated {\bf Fourier operator} $\CF_{\rho,\psi}$ (Definition \ref{dfn-SDFTG}) to be
\begin{align}\label{FOG}
\CF_{\rho,\psi}(\phi)(a,h)
&:=
(\Phi_{\rho,\psi}*\phi^\vee)(a,h)\nonumber\\
&=
\int_{F^\times}^\pv\int_{\Sp_{2n}(F)}\Phi_{\rho,\psi}(t,y)\phi^\vee(t^{-1}a,y^{-1}h)\ud y\ud^*t,
\end{align}
for all $\phi\in\CC^\infty_c(G)$, the space of smooth, compactly supported functions on $G(F)$, where $\phi^\vee(g)=\phi(g^{-1})$ and ${\rm d}^* t$ is the local Tamagawa measure on $F^\times$.
We may have to take the {\sl principal value integration} on ${\rm d}^*t$-integration because of the generalized function $\eta_{\pvs,\psi}(x)$
in the definition of the $G(F)$-invariant distribution $\Phi_{\rho,\psi}$ in \eqref{SDG}.
Comparing with \eqref{CFrho}, one notices that our definition of the Fourier operator $\CF_{\rho,\psi}$ has a different normalization.
In Definition \ref{dfn:rho-SsG}, we introduce the space $\CS_\rho(G)$ of {\bf Schwartz functions} of $G(F)$, which consists of certain smooth functions on $G(F)$ and contains $\CC^\infty_c(G)$ as a proper subspace.

The first main result of this paper is to prove Conjecture 5.4 in \cite{BK00} for the case under consideration (Theorems \ref{thm:Pl-FOG} and \ref{thm:FOG-Ss}).

\begin{thm}[Fourier Operator]\label{thm:L2}
The Fourier operator $\CF_{\rho,\psi}$ extends to a unitary operator on the space $L^2(G,\ud^*a\ud h)$ of square-integrable functions on $G(F)$  and satisfies the following identity:
\[
\CF_{\rho,\psi^{-1}}\circ\CF_{\rho,\psi}=\Id.
\]
Moreover, the Fourier operator $\CF_{\rho,\psi}$ stabilizes the Schwartz space $\CS_\rho(G)$.
\end{thm}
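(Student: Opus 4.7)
The approach is to reduce the harmonic analysis on $G(F)=\BG_m(F)\times\Sp_{2n}(F)$ to the abelian harmonic analysis on $\GL_1(F)$ developed earlier in the paper. The essential feature is that the invariant distribution $\Phi_{\rho,\psi}(a,h)$ depends on $(a,h)$ only through the scalar $\sigma(a,h):=a\det(h+\RI_{2n})$, together with the explicit density $|\det(h+\RI_{2n})|^{-(2n+1)/2}$. This factorization should allow the Fourier operator $\CF_{\rho,\psi}$ to be expressed as an orbital pushforward along $\sigma$, followed by (principal-value) convolution with $\eta_{\pvs,\psi}$ on $F^\times$, followed by a pullback.

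First I would establish a Fubini-type identity rewriting $\Phi_{\rho,\psi}*\phi^\vee$ as a one-dimensional principal-value convolution of $\eta_{\pvs,\psi}$ with the orbital integral $\CO_\phi$ of $\phi$ along fibers of $\sigma$, after normalization by the density. Since $\eta_{\pvs,\psi}$ is only a generalized function, this rearrangement requires care near the boundary locus $\det(h+\RI_{2n})=0$, so I would first verify it on a dense subspace of functions supported away from this locus and then extend by continuity. Given this reduction, the $L^2$-unitarity of $\CF_{\rho,\psi}$ on $L^2(G,\ud^*a\,\ud h)$ would follow from the unitarity of the $\GL_1(F)$-Fourier operator attached to $\eta_{\pvs,\psi}$ on $L^2(F^\times,\ud^*x)$, a consequence of the abelian Plancherel formula for $\beta_\psi(\chi_s)$ established in the $\GL_1$-part of the paper.

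The involution identity $\CF_{\rho,\psi^{-1}}\circ\CF_{\rho,\psi}=\Id$ descends through the same reduction to the abelian statement that convolving $\eta_{\pvs,\psi^{-1}}$ with $\eta_{\pvs,\psi}$ yields the Dirac mass at $1\in F^\times$; this in turn reflects the local functional equation $\gamma(s,\chi\otimes\pi,\rho,\psi)\,\gamma(1-s,(\chi\otimes\pi)^\vee,\rho,\psi^{-1})=1$. For the stabilization of $\CS_\rho(G)$ under $\CF_{\rho,\psi}$, the strategy is again to work orbit-by-orbit through $\sigma$: elements of $\CS_\rho(G)$ should, by Definition \ref{dfn:rho-SsG}, be characterized by smoothness together with orbital asymptotics lying in the $\GL_1$-Schwartz space adapted to $\eta_{\pvs,\psi}$, and that latter space is preserved by the $\GL_1$-Fourier operator by the $\GL_1$-theory already in place.

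The main obstacle will be the careful analysis of the orbital pushforward along $\sigma$: controlling the asymptotic behavior of the orbital integrals of $\phi\in\CC_c^\infty(G)$ near the boundary strata where $\det(h+\RI_{2n})=0$ or $a\to 0,\infty$, so that the principal-value convolution with $\eta_{\pvs,\psi}$ is well defined and can be inverted. This is precisely where the compactification of $\Sp_{2n}$ inside the Grassmannian of $\Sp_{4n}$ and the doubling zeta integrals of Piatetski-Shapiro-Rallis should enter, providing both the geometric framework for the boundary analysis and the analytic identification of the resulting orbital integrals with doubling local zeta integrals that have known meromorphic behavior.
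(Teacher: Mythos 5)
Your high-level instinct---reduce $\CF_{\rho,\psi}$ to the $\GL_1$-theory of $\eta_{\pvs,\psi}$, with the doubling geometry controlling the fiber analysis---is the right one, but the architecture you propose has two genuine gaps. First, $\CF_{\rho,\psi}$ is \emph{not} of the form $(\text{pullback along }\sigma)\circ(\text{convolution with }\eta_{\pvs,\psi})\circ(\text{pushforward along }\sigma)$. Although the kernel $\Phi_{\rho,\psi}(t,y)$ factors through $t\det(y+\RI_{2n})$, the convolution $(\Phi_{\rho,\psi}*\phi^\vee)(a,h)$ involves $\phi^\vee(t^{-1}a,y^{-1}h)$, and its output genuinely depends on $h$ beyond $\det(h+\RI_{2n})$; there is no single orbital integral $\CO_\phi$ on $F^\times$ from which $\CF_{\rho,\psi}(\phi)$ can be recovered by pullback. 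The correct fibered structure (Sections \ref{sec-etapvs-FT} and \ref{sec-FOG}) is over the compact flag direction, not over $F^\times$: for each fixed $g$ (equivalently each $k\in K_\Del$), the $M_\Del^{\ab}$-profile of $\CF_{X,\psi}(f)$ at $g$ is $\FL_{\eta_{\pvs,\psi}}$ applied to the $M_\Del^{\ab}$-profile of the Radon transform $\RR_X(f)(g)=\int_{N_\Del(F)}f(w_\Del n g)\ud n$ (Proposition \ref{pro:Fg}); this $N_\Del$-integration, transported to $\Sp_{2n}(F)$ by the Cayley transform with Jacobian $c_0|\det(h-\RI_{2n})|^{-(2n+1)}$, is an essential non-abelian ingredient that your pushforward along $\sigma$ conflates away.

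Second, and more seriously, the abelian operator $\FL_{\eta_{\pvs,\psi}}$ is \emph{not} unitary on $L^2(F^\times,\ud^*x)$, and the paper proves no such ``abelian Plancherel formula'': the $\gamma$-factors constituting $\beta_\psi(\chi_s)$ are centered on different vertical lines, so $|\beta_\psi(\chi_s)|\neq 1$ on any single line when $n>0$, and indeed $\FL$ maps $\CS^+_\pvs(F^\times)$ to the \emph{different} space $\CS^-_\pvs(F^\times)$ with different asymptotics at $x=0$. Hence unitarity of $\CF_{\rho,\psi}$ cannot be deduced fiber-by-fiber from the $\GL_1$-theory. In the paper, unitarity (Proposition \ref{pro:Fx-norm}, transferred to $G(F)$ in Theorem \ref{thm:Pl-FOG}) comes from the adjoint identity \eqref{56-3} together with \eqref{eq:M-FE} for the \emph{normalized intertwining operator} $\RM^\dag_{w_\Del}(s,\chi,\psi)$, i.e., for the composite $\beta_\psi(\chi_s)\cdot\RM_{w_\Del}(s,\chi)$ of the abelian symbol with the Radon transform; neither factor is separately unitary, and the $N_\Del$-integration exactly compensates the non-unitarity of the abelian convolution. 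The same issue undercuts your argument for the stability of $\CS_\rho(G)$: neither $\CS^+_\pvs(F^\times)$ nor $\CS^-_\pvs(F^\times)$ is $\FL$-stable by itself, and the paper instead deduces stability from the inversion formula $\CF_{X,\psi}\circ\CF_{X,\psi}=c^2\cdot\iota$ applied to $\CS_\pvs(X_{P_\Del})=\CC^\infty_c(X_{P_\Del})+\CF_{X,\psi}(\CC^\infty_c(X_{P_\Del}))$, combined with the characterization of Theorem \ref{thm:asymSf}.
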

The advantage of our normalization of the Fourier operator $\CF_{\rho,\psi}$ in \eqref{FOG} is that it extends to a unitary operator in the space $L^2(G,\ud^*a\ud h)$ with a measure independent of $\rho$,
comparing to Conjecture 5.4 of \cite{BK00}.

The second main result is to understand the Fourier coefficients of the $G(F)$-invariant distribution $\Phi_{\rho,\psi}$ on $G(F)$. For any irreducible admissible representation
$\chi\otimes\pi\in\Pi(G)$, if a distribution $\Phi$ on $G(F)$ is {\sl essentially compact},
then one can define the $\chi\otimes\pi$-{\sl Fourier coefficient} of $\Phi$ by the convolution operator $(\chi\otimes\pi)(\Phi)$ (Section \ref{ssec-pf-Thm13}). Since the distribution $\Phi_{\rho,\psi}$ on
$G(F)$ is {\sl not} essentially compact, we define the $\chi\otimes\pi$-Fourier coefficient of $\Phi_{\rho,\psi}$ by decomposing it in terms of the distributions in the {\sl Bernstein center} $\CZ(G)$ of $G(F)$
in Theorem \ref{thm:Phi-gamma}. The following result follows from Theorem \ref{thm:Phi-gamma} and Corollary \ref{gam-gam}.

\begin{thm}[Fourier Coefficients and Local $\gamma$-functions]\label{thm:LGF}
For any $\chi\otimes\pi\in\Pi(G)$, the $\chi_s\otimes\pi$-Fourier coefficient of the $G(F)$-invariant distribution $\Phi_{\rho,\psi}$ is well-defined and is given by
\[
(\chi_s\otimes\pi)(\Phi_{\rho,\psi})=\gam(\frac{1}{2},\chi_{s}^{-1}\otimes \wt{\pi},\rho,\psi)\Id_{\chi_s\otimes \pi},
\]
except for those $\chi_s\otimes\pi$ such that $\gam(\frac{1}{2},\chi_{s}^{-1}\otimes \wt{\pi},\rho,\psi)$ has a pole, where $\chi_s(a)=\chi(a)|a|^s$,
$\gam(\frac{1}{2},\chi_{s}^{-1}\otimes \wt{\pi},\rho,\psi)$ is the Langlands local $\gamma$-function associated to the contragredient $\chi_{s}^{-1}\otimes \wt{\pi}$ of $\chi_s\otimes\pi$.
\end{thm}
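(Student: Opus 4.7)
The plan is to make sense of the $(\chi_s\otimes\pi)$-Fourier coefficient of the non-essentially-compact distribution $\Phi_{\rho,\psi}$ via decomposition against the Bernstein center $\CZ(G)$, and then to compute the resulting scalar using the local functional equation \eqref{rho-FE} furnished by Theorem~\ref{thm:L2} together with the doubling method of Piatetski-Shapiro and Rallis. The first step is to make the Fourier coefficient well-defined: the distribution $\Phi_{\rho,\psi}(a,h)$ in \eqref{SDG} is not essentially compact, owing both to the unbounded $\BG_m$-direction and to the principal-value nature of $\eta_{\pvs,\psi}$, but crucially it factors through the single combination $a\cdot\det(h+\RI_{2n})$. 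I would therefore take a Mellin transform in the $\BG_m$-variable against $\chi_s$ (using the principal-value theory of $\eta_{\pvs,\psi}$ developed earlier in the paper) to obtain a family of distributions $\Phi_{\rho,\psi,\chi_s}$ on $\Sp_{2n}(F)$, then exploit the product decomposition $\CZ(G)=\CZ(\BG_m)\wh{\otimes}\CZ(\Sp_{2n})$ and project onto the Bernstein component of $\Sp_{2n}(F)$ containing $\pi$; Schur's lemma on that component yields a scalar which, by definition, is the $(\chi_s\otimes\pi)$-Fourier coefficient of $\Phi_{\rho,\psi}$.

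To evaluate this scalar explicitly, I would invoke the local functional equation
\[
\CZ(1-s,\CF_{\rho,\psi}(\phi),\varphi^\vee_{\chi_s\otimes\pi})=\gam(s,\chi_s\otimes\pi,\rho,\psi)\cdot\CZ(s,\phi,\varphi_{\chi_s\otimes\pi})
\]
stemming from the doubling zeta integrals \eqref{rho-LZI}. Since $\CF_{\rho,\psi}(\phi)=\Phi_{\rho,\psi}*\phi^\vee$, unfolding the convolution on the left transports $\Phi_{\rho,\psi}$ into a pairing against a matrix coefficient of the contragredient $\chi_s^{-1}\otimes\wt{\pi}$, so the Schur scalar extracted is $\gam(\tfrac{1}{2},\chi_s^{-1}\otimes\wt{\pi},\rho,\psi)$: the shift from the formal $s$-variable to $\tfrac{1}{2}$ reflects the substitution $s\mapsto 1-s$ combined with the normalization $|\sigma(g)|^{s+n}$ in \eqref{rho-LZI}, while the passage to the contragredient reflects the natural duality of matrix coefficients under convolution. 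Corollary~\ref{gam-gam} is then invoked to identify this quantity with the Langlands local $\gamma$-factor, and the resulting identity is valid away from the poles of $\gam(\tfrac{1}{2},\chi_s^{-1}\otimes\wt{\pi},\rho,\psi)$, which match precisely the singular locus of the Bernstein-center projection.

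The main obstacle is the joint regularization. The distribution $\Phi_{\rho,\psi}$ is only defined as a principal-value integral, and the Bernstein-center projection must be shown to commute with this regularization uniformly across the family of characters $\chi_s$. Establishing that the Mellin-transformed distributions $\Phi_{\rho,\psi,\chi_s}$ sit, Bernstein-component by Bernstein-component, in the center of $\Sp_{2n}(F)$ and depend meromorphically on $s$ with pole locus exactly matching that of $\gam(\tfrac{1}{2},\chi_s^{-1}\otimes\wt{\pi},\rho,\psi)$ requires a delicate interplay between the Mellin theory of $\eta_{\pvs,\psi}$ developed in the paper and the convergence/continuation properties of the doubling zeta integrals. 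In addition, the normalizing constant $c_0$ in \eqref{SDG} must be pinned down so that the extracted scalar coincides with the Langlands local $\gamma$-factor on the nose; this is typically achieved by an explicit unramified spherical-vector calculation and then propagated to all $\chi_s\otimes\pi$ by the Bernstein-center formalism.
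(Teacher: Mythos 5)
Your overall strategy matches the paper's: regularize $\Phi_{\rho,\psi}$ along the $\BG_m$-direction so that Bernstein's theorem applies, extract the Schur scalar, compute it from the functional equation by unfolding $\CF_{\rho,\psi}(\phi)=\Phi_{\rho,\psi}*\phi^\vee$ inside the zeta integral, and identify the result with the Langlands $\gamma$-factor via the doubling method. However, the step you flag as ``the main obstacle'' is precisely the substantive content of the proof, and your proposal does not supply the idea that resolves it. The paper does not Mellin-transform $\Phi_{\rho,\psi}$ into a family of distributions on $\Sp_{2n}(F)$, nor does it use a factorization $\CZ(G)=\CZ(\BG_m)\wh{\otimes}\CZ(\Sp_{2n})$. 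It writes $\Phi_{\rho,\psi}=\sum_{\ell\in\BZ}\Phi_{\rho,\psi,\ell}$ with $\Phi_{\rho,\psi,\ell}=\mathbf{1}_{\ell}\cdot\Phi_{\rho,\psi}$ supported on the shell $G_{\ell}=\{(a,h):|a|=q^{-\ell}\}$, and proves (Proposition \ref{pro:Phiell}) that each piece lies in the Bernstein center $\CZ(G)$ of the \emph{full} group. Essential compactness comes from the identity $\Phi_{\rho,\psi,\ell}*\mathrm{ch}_{K}=\mathbf{1}_{\ell}\cdot\CF_{\rho,\psi}(\mathrm{ch}_{K})$ combined with the asymptotic characterization of $\CS_{\pvs}(X_{P_\Del})$ (Theorem \ref{thm:asymSf}), which forces the restriction of $\CF_{\rho,\psi}(\mathrm{ch}_{K})$ to a single shell to be compactly supported \emph{in $h$ as well as in $a$}. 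Your proposed route loses exactly this coupling: after a Mellin transform in $a$ the residual distribution on $\Sp_{2n}(F)$ is essentially $\chi_s^{-1}(\det(h+\RI_{2n}))|\det(h+\RI_{2n})|^{-\frac{2n+1}{2}}$ up to locally constant factors, and its convolution with $\mathrm{ch}_{K}$ is not compactly supported on $\Sp_{2n}(F)$, so membership in $\CZ(\Sp_{2n})$ fails. The Fourier coefficient is then only defined as the sum $\sum_{\ell}f_{\ell}(\chi_s\otimes\pi)$, convergent for $\Re(s)$ sufficiently positive and continued meromorphically (Lemma \ref{lem:Phiell} and \eqref{gammaell}); it is not a convergent integral against $\chi_s$.

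Two smaller points. First, once Corollary \ref{gam-gam} is granted there is nothing left to normalize: the constant $c_0$ is already built into $\Phi_{\rho,\psi}$ through the Jacobian of the Cayley transform (Lemma \ref{lem:jacobian}), and the identification $\Gamma_{\rho,\psi}=\gam(\cdot,\cdot,\rho,\psi)$ is obtained for \emph{all} $\chi\otimes\pi$ at once by matching $\CZ(s,\phi,\vphi)$ exactly with the Piatetski-Shapiro--Rallis doubling integral (Theorem \ref{thm-lfe}). Second, the fallback you suggest --- an unramified spherical computation propagated by the Bernstein-center formalism --- would only determine the scalar on the unramified Bernstein component, since unramified representations are not Zariski-dense in the other components; this is exactly the limitation of the earlier unramified result of Luo that the present argument is designed to remove.
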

This precise identity was first found in \cite[Lemma~2.4.4]{Luo19}, where a rigorous proof was given only when $\chi\otimes \pi$ is unramified. It is easy to notice
the difference between our formula in Theorem \ref{thm:LGF} and the conjectured formula of Braverman-Kazhdan in \eqref{rho-gamma}.

In order to understand the $\rho$-Schwartz space $\CS_\rho(G)$ in terms of the matrix coefficients of $\chi\otimes\pi\in\Pi(G)$, we consider the {\sl local zeta integrals}.
For $\phi\in\CS_\rho(G)$ and $\varphi\in\CC(\chi\otimes\pi)$, the space of matrix coefficients associated to the representation $\chi\otimes\pi\in\Pi(G)$,
the {\bf local zeta integral} as defined in \eqref{lzi} is
\begin{equation}\label{LZI-0}
\CZ(s,\phi,\vphi): =\int_{G(F)} \phi(a,h)\vphi(a,h) |a|^{s-\frac{1}{2}}\ud^{*} a \ud h,
\end{equation}
where $g=(a,h)\in G(F)=\BG_m(F)\times\Sp_{2n}(F)$.
Note that when $n=0$ we consider $\Sp_{2n}(F)$ as the trivial group and then $\CZ(s,\phi,\varphi)$ is the local zeta integral of Tate (\cite{Tt50}). Comparing with \eqref{rho-LZI}, the local zeta integral in
\eqref{LZI-0} has a different normalization, due to the fact that our Schwartz space $\CS_\rho(G)$ is a subspace of $L^2(G,\ud^*a\ud h)$.

The following is Theorem \ref{thm-lfe} that establishes the {\bf local functional equation}
for the local zeta integrals with the Fourier operator $\CF_{\rho,\psi}$.

\begin{thm}[Functional Equation and Local $\gamma$-functions]\label{thm:LFE-CF-BK}
The local zeta integrals $\CZ(s,\phi,\vphi)$ converge absolutely for $\Re(s)$ sufficiently positive, admit meromorphic continuation to $s\in\BC$, and satisfy the following functional equation:
\begin{align}\label{lfe-0}
\CZ(1-s, \CF_{\rho,\psi}(\phi) , \vphi^\vee)  =\gam(s,\chi\otimes \pi,\rho,\psi) \CZ(s,\phi,\vphi),
\end{align}
where $\varphi\in \CC(\chi\otimes\pi)$,
${\vphi}^\vee(g)=\varphi(g^{-1})\in\CC(\chi^{-1}\otimes\wt{\pi})$ with $\chi^{-1}\otimes\wt{\pi}$ being the contragredient of $\chi\otimes\pi$, and $\phi\in\CS_\rho(G)$.
\end{thm}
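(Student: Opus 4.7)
The plan is to derive the functional equation directly from the Fourier-coefficient formula of Theorem~\ref{thm:LGF} by re-expressing both sides of the local zeta integral as matrix coefficients of twisted representations paired against convolution operators, with absolute convergence for $\Re(s)\gg 0$ and meromorphic continuation handled separately. For absolute convergence I would reduce by linearity to $\vphi(a,h)=\chi(a)\vphi_\pi(h)$ with $\vphi_\pi$ a matrix coefficient of $\pi$, and observe that the $h$-integral pairs a function of Schwartz type in $h\in\Sp_{2n}(F)$ (coming from the definition of $\CS_\rho(G)$) against a bounded matrix coefficient, while the remaining $a$-integral is of Tate type and converges for $\Re(s)$ large. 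In the case $n=0$ this is precisely Tate's estimate.

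For the functional equation itself, set $\pi'=\chi\otimes\pi$ and $\pi'_t=\chi_t\otimes\pi=\chi|\cdot|^t\otimes\pi$, so that $|a_g|^t\vphi(g)$ is a matrix coefficient of $\pi'_t$ and $|a_g|^{1/2-s}\vphi^\vee(g)$ is a matrix coefficient of $(\pi'_{s-1/2})^\vee$. Writing $\vphi(g)=\langle \pi'(g)v,\tilde v\rangle$, one obtains
\[
\CZ(s,\phi,\vphi)=\langle \pi'_{s-1/2}(\phi)v,\tilde v\rangle,\qquad \CZ(1-s,\CF_{\rho,\psi}(\phi),\vphi^\vee)=\langle (\pi'_{s-1/2})^\vee(\CF_{\rho,\psi}(\phi))\tilde v,v\rangle.
\]
Because $\CF_{\rho,\psi}(\phi)=\Phi_{\rho,\psi}\ast\phi^\vee$ and convolution is intertwined with composition under the representation, $(\pi'_{s-1/2})^\vee(\CF_{\rho,\psi}(\phi))$ factors as $(\pi'_{s-1/2})^\vee(\Phi_{\rho,\psi})\circ (\pi'_{s-1/2})^\vee(\phi^\vee)$. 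Applying Theorem~\ref{thm:LGF} to the contragredient $(\pi'_{s-1/2})^\vee=\chi_{s-1/2}^{-1}\otimes\wt\pi$ yields
\[
(\pi'_{s-1/2})^\vee(\Phi_{\rho,\psi})=\gam\bigl(\tfrac{1}{2},\chi_{s-1/2}\otimes\pi,\rho,\psi\bigr)\cdot \Id=\gam(s,\chi\otimes\pi,\rho,\psi)\cdot\Id,
\]
where the second equality uses the standard twist $\gam(\tfrac12,\chi_t\otimes\pi,\rho,\psi)=\gam(\tfrac12+t,\chi\otimes\pi,\rho,\psi)$. Combining this with the elementary identity $\CZ(1-s,\phi^\vee,\vphi^\vee)=\CZ(s,\phi,\vphi)$, proved by the substitution $g\mapsto g^{-1}$ together with the unimodularity of $G$, gives the functional equation \eqref{lfe-0}.

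Meromorphic continuation would then follow by combining the region $\Re(s)\gg 0$ of absolute convergence with the functional equation to deduce convergence in the region $\Re(s)\ll 0$ (using Theorem~\ref{thm:L2} to ensure $\CF_{\rho,\psi}(\phi)\in\CS_\rho(G)$), which then interpolates to all of $s\in\BC$; alternatively, the compactification of $\Sp_{2n}$ inside the Grassmannian of $\Sp_{4n}$ permits an identification of $\CZ(s,\phi,\vphi)$ with a doubling zeta integral of Piatetski-Shapiro and Rallis, whose meromorphic continuation is standard.

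The hard part will be rigorously justifying the identity $(\pi'_{s-1/2})^\vee(\Phi_{\rho,\psi}\ast\phi^\vee)=(\pi'_{s-1/2})^\vee(\Phi_{\rho,\psi})\circ(\pi'_{s-1/2})^\vee(\phi^\vee)$, since $\Phi_{\rho,\psi}$ is only a principal-value distribution on $G(F)$ (in particular, not essentially compact), the convolution $\Phi_{\rho,\psi}\ast\phi^\vee$ is Schwartz but not compactly supported, and the operator $(\pi'_{s-1/2})^\vee(\cdot)$ is a priori defined on $\CC^\infty_c(G)$ rather than on $\CS_\rho(G)$. Overcoming this obstacle requires the Bernstein-center decomposition of $\Phi_{\rho,\psi}$ already employed in proving Theorem~\ref{thm:LGF}, together with a quantitative pairing estimate valid in the strip of absolute convergence that extends the action of $(\pi'_{s-1/2})^\vee$ continuously from $\CC^\infty_c(G)$ to $\CS_\rho(G)$.
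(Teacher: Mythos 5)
Your argument has a genuine circularity at its core. You derive \eqref{lfe-0} by applying Theorem \ref{thm:LGF} to compute $(\chi_{s-\frac{1}{2}}^{-1}\otimes\wt{\pi})(\Phi_{\rho,\psi})$ as the Langlands $\gamma$-function. But in the paper's logical architecture, Theorem \ref{thm:LGF} is itself a corollary of Theorem \ref{thm:LFE-CF-BK}: what is proved independently of the present theorem (namely Theorem \ref{thm:Phi-gamma}, via the Bernstein-center decomposition $\Phi_{\rho,\psi}=\sum_\ell\Phi_{\rho,\psi,\ell}$ and Lemma \ref{lem:Phiell}) is only that the Fourier coefficient equals the \emph{abstract} gamma function $\Gamma_{\rho,\psi}(\frac{1}{2},\chi_s^{-1}\otimes\wt{\pi})$ produced by multiplicity one (Proposition \ref{pro:lfe}); the identification $\Gamma_{\rho,\psi}=\gamma(\cdot,\chi\otimes\pi,\rho,\psi)$ (Corollary \ref{gam-gam}) is deduced precisely from Theorem \ref{thm-lfe}, i.e.\ from the statement you are trying to prove. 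The Langlands $\gamma$-function enters the paper at exactly one point: the Lapid--Rallis normalization of the doubling local functional equation (Corollary \ref{FE-Langlands gamma}). Your proposal never touches the doubling integrals, so it has no mechanism for identifying the proportionality constant with $\gamma(s,\chi\otimes\pi,\rho,\psi)$; at best it re-derives Proposition \ref{pro:lfe} and Theorem \ref{thm:Phi-gamma} with the unspecified factor $\Gamma_{\rho,\psi}$.

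Two further points. First, your convolution computation, including the cut-off by $|a|=q^{-\ell}$ and the Bernstein-center justification you flag as "the hard part", is essentially the paper's proof of Theorem \ref{thm:Phi-gamma}, but that argument is carried out only for $\phi\in\CC_c^\infty(G)$. The extension to all of $\CS_\rho(G)$ is not obtained in the paper by a continuity estimate; it comes from the identity $\CZ(s+\frac{1}{2},\phi_f,\vphi)=\RZ(\CP_{\chi_s}(f),\vphi')$, which converts $\phi\in\CS_\rho(G)$ into a good section in $\RI^\dag(s,\chi)$ (Proposition \ref{pro:FTX}) and converts $\CF_{\rho,\psi}$ into the normalized intertwining operator $\RM^\dag_{w_\Del}(s,\chi,\psi)$ (Corollary \ref{cor:ComOnG}); the theorem then follows from \eqref{eq:FE-dag}. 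Second, your convergence argument for $\Re(s)\gg 0$ is too quick: matrix coefficients of a general irreducible admissible $\pi$ need not be bounded, and elements of $\CS_\rho(G)$ are not "Schwartz in $h$" in a way that makes the $h$-integral converge on its own. The convergence is inherited from the Piatetski-Shapiro--Rallis theory through the same identification with the doubling zeta integral, which is the route you would need to take anyway to reach the Langlands $\gamma$-function.
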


As a consequence, we prove in Corollaries \ref{cor:gfgg} and \ref{gam-gam} that the Langlands $\gamma$-function $\gam(s,\chi\otimes \pi,\rho,\psi)$ is a gamma function in the sense of Gelfand and Graev (\cite{GG63}, \cite{GGPS}, and \cite{ST66}).
In other words, as distributions on $G(F)$, we have that
\begin{align}
\CF_{\rho,\psi}^*(\varphi_{\chi^{-1}_{s}\otimes\wt{\pi}})=\gamma(\frac{1}{2},\chi_{s}\otimes\pi,\rho,\psi)\varphi_{\chi_s\otimes\pi}
\end{align}
holds, via meromorphic continuation in $s$,
for any irreducible admissible representation $\chi_s\otimes\pi$ of $G(F)$, where $\varphi_{\chi_s\otimes\pi}$ is a matrix coefficient of $\chi_s\otimes\pi$ and
$\varphi_{\chi^{-1}_{s}\otimes\wt{\pi}}=\varphi_{\chi_s\otimes\pi}^\vee$ is the matrix coefficient of the contragredient $\chi^{-1}_{s}\otimes\wt{\pi}$ of $\chi_s\otimes\pi$. As a consequence, we obtain that
\begin{align}\label{gam-gam=1}
\gamma(\frac{1}{2},\chi_{s}\otimes\pi,\rho,\psi)\cdot\gamma(\frac{1}{2},\chi_{s}^{-1}\otimes\wt{\pi},\rho,\psi^{-1})=1.
\end{align}
When $n=0$, this is the classical identity for the gamma functions of $F^\times$.

The relation of the local zeta integrals $\CZ(s,\phi,\vphi)$ with the local $L$-function $L(s,\chi\otimes\pi,\rho)$ for the given $(G,\rho)$ is explained in the following result, which is a combination of
Theorem \ref{thm:rho-Ss} and Proposition \ref{pro:FT-bf}.

\begin{thm}[Local $L$-functions]\label{thm:LLF}
For any $\phi\in\CS_\rho(G)$ and any $\varphi\in\CC(\chi\otimes\pi)$, the local zeta integral $\CZ(s,\phi,\vphi)$ is a holomorphic multiple of the Langlands
local $L$-function $L(s,\chi\otimes\pi,\rho)$. Moreover, when $F$ has the residue characteristic $p\neq 2$ and $\chi\otimes\pi$ is unramified,
if $\phi$ is taken to be the basic function $\BL_\rho\in\CS_\rho(G)$ and $\varphi$ is taken to be the normalized unramified matrix coefficient $\varphi_\circ$, then $\CZ(s,\BL_\rho,\vphi_\circ)=L(s,\chi\otimes\pi,\rho)$.
\end{thm}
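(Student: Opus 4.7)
\emph{Proof Plan.} The plan is to reduce both assertions to the local theory of the Piatetski-Shapiro--Rallis doubling zeta integrals for $\Sp_{2n}$, combined with Tate's local theory on $\BG_m$ for the $\chi$-twist. By linearity in $\varphi$, I may reduce to the case $\varphi(a,h)=\chi(a)\varphi_\pi(h)$ with $\varphi_\pi\in\CC(\pi)$, so that the local zeta integral factors as
\[
\CZ(s,\phi,\varphi)=\int_{F^\times}\int_{\Sp_{2n}(F)}\phi(a,h)\chi(a)\varphi_\pi(h)|a|^{s-\frac{1}{2}}\ud h\,\ud^{*}a.
\]

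The key step is a dictionary between the Schwartz space $\CS_\rho(G)$ and sections of the Siegel degenerate principal series of $\Sp_{4n}(F)$. Using the compactification of $\Sp_{2n}$ inside the Grassmannian variety of $\Sp_{4n}$---which is the geometric backbone of the definition of $\CS_\rho(G)$---I would construct, for each $\phi\in\CS_\rho(G)$, a holomorphic section $f_{s,\phi}$ of the Siegel degenerate principal series twisted by $\chi$, so that under this correspondence the inner integral over $\Sp_{2n}(F)$ coincides with the Piatetski-Shapiro--Rallis doubling integral $Z(s,f_{s,\phi},\varphi_\pi)$ for the twisted standard $L$-function. Given this dictionary, the divisibility assertion (Theorem \ref{thm:rho-Ss}) follows from the corresponding result of \cite{PSR86} for doubling integrals, once one verifies that the Tate-type integration on $F^\times$ contributes precisely the factor matching the restriction of $\rho$ to the $\BC^\times$ component of $G^\vee(\BC)$. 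Absolute convergence in a right half-plane follows from standard matrix-coefficient bounds combined with the Cartan decomposition, and meromorphic continuation is inherited from the doubling construction. Alternatively, one could exploit the functional equation (Theorem \ref{thm:LFE-CF-BK}) together with the pole structure of $\gam(s,\chi\otimes\pi,\rho,\psi)$ (via Theorem \ref{thm:LGF}) to establish holomorphicity of the quotient $\CZ(s,\phi,\varphi)/L(s,\chi\otimes\pi,\rho)$ by a symmetry argument around $s=\tfrac{1}{2}$.

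For the unramified equality (Proposition \ref{pro:FT-bf}), I would verify that the basic function $\BL_\rho$ is normalized so that its image $f_{s,\BL_\rho}$ under the above dictionary is the $\Sp_{4n}(\CO_F)$-spherical section $f^\circ_s$ of the Siegel degenerate principal series. The classical unramified doubling computation---pairing $f^\circ_s$ against the normalized unramified matrix coefficient $\varphi_\circ$---then produces the standard $L$-factor $L(s,\pi,\std)$ on the $\Sp_{2n}$ side, and combining it with Tate's unramified $\GL_1$ computation on the $\BG_m$-factor yields $L(s,\chi\otimes\pi,\rho)$ exactly.

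The principal obstacle is to install the geometric/analytic dictionary $\phi\mapsto f_{s,\phi}$ in a form that is both compatible with the Grassmannian compactification used to define $\CS_\rho(G)$ and precise enough to identify $\BL_\rho$ with the normalized spherical section. This requires tracking carefully how the parameter $s$, the character $\chi$, and the $\BG_m$-factor interact with Siegel induction on $\Sp_{4n}(F)$ and with the spherical $K$-type on $G(F)$. Once this identification is in place, the analytic statements transfer cleanly from the Piatetski-Shapiro--Rallis framework.
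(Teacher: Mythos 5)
Your overall architecture --- transporting $\CZ(s,\phi,\vphi)$ to a doubling integral $\RZ(f_{\chi_{s-\frac12}},\vphi')$ via the Grassmannian compactification --- is exactly the paper's route (the dictionary is the projection $\CP_{\chi_s}$ composed with the transition relation \eqref{f-phi}). But two specific points in your plan, as written, would fail.

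First, your dictionary is claimed to land in \emph{holomorphic} sections of $\RI(s,\chi)$, and you invoke \cite{PSR86} for divisibility. For $\phi$ in the full Schwartz space $\CS_\rho(G)$ --- which contains $\CF_{\rho,\psi}(\CC^\infty_c(G))$, not just compactly supported functions --- the corresponding section is a \emph{good} section in $\RI^\dag(s,\chi)$ (Definition \ref{defn:good-f}), which is in general only $b_{2n}(s,\chi)$ times a holomorphic section. The statement that the g.c.d.\ of the doubling integrals equals the Langlands $L$-factor $L(s,\chi\otimes\pi,\rho)$ is precisely Yamana's theorem for \emph{good} sections (\cite[\S 5]{Y14}); over holomorphic sections alone the fractional ideal is a different (smaller) one, and \cite{PSR86} by itself does not identify the g.c.d.\ with the Langlands $L$-function. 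So you need the surjectivity of $\CP_{\chi_s}\colon \CS_\pvs(X_{P_\Del})\to\RI^\dag(s,\chi)$ (Proposition \ref{pro:FTX}) and the good-section theory, not the holomorphic-section theory.

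Second, the unramified identification is off by the normalizing factor. The classical computation (\cite[Proposition 3]{LR05}) gives
\[
\RZ(f^\circ_{\chi_s},\vphi'_\circ)=\frac{L(s+\tfrac12,\chi\otimes\pi,\rho)}{b_{2n}(s,\chi)},
\]
so pairing the normalized spherical \emph{holomorphic} section against $\vphi_\circ$ does \emph{not} produce the $L$-factor. The basic function $\BL_\rho$ must correspond to the spherical good section $b_{2n}(s,\chi)f^\circ_{\chi_s}$, and one again needs Proposition \ref{pro:FTX} to know this section is the image of a (unique, bi-$K_\Del$-invariant) element of $\CS_\pvs(X_{P_\Del})$; in particular $\BL_\rho$ is not compactly supported and is not matched to $f^\circ_{\chi_s}$ itself. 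Relatedly, there is no separate ``Tate factor on the $\BG_m$-side'' to multiply in: the $F^\times$-integration is entirely absorbed into forming the section $f_{\chi_s}=\CP_{\chi_s}(f)$, and the full degree-$(2n+1)$ factor $L(s,\chi\otimes\pi,\rho)$ already comes out of the single doubling computation.
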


We take the assumption that $F$ has the residue characteristic $p\neq 2$ when consider the basic function $\BL_\rho$ in the theorem, due to a technical reason that will be explained in Section \ref{ssec-bf}.

It is desirable to know the relation between the definition of the $G(F)$-invariant distribution $\Phi_{\rho,\psi}(g)$ on $G(F)$ and the construction of such distributions as proposed by Braverman and Kazhdan in
Conjecture 7.11 of \cite{BK00} and that proposed by Ng\^o in Section 6.2 of \cite{N20}. It is also desirable to understand the relation between our definition of the Fourier operator $\CF_{\rho,\psi}$ over $G(F)$
and that of L. Lafforgue defined by using the Plancherel decomposition and the properties of local $L$-functions in \cite[Proposition II.6]{Lf14} and \cite[Definition 6.1]{Lf16}. It is expected that our construction of
the $G(F)$-invariant distribution $\Phi_{\rho,\psi}$ on $G(F)$ should have its geometric background related to the reductive monoid $\CM_\rho$ of $(G,\rho)$, which is not discussed in this paper.
We plan to come back to all those issues in our future work.


\subsection{Main idea of the approach}\label{ssec-mip}

The main idea of our approach is to consider the compactification of the symplectic group $\Sp_{2n}$ in the Grassmannian variety associated to $\Sp_{4n}$. This geometric structure was first used by a pioneering
work of I. Piatetski-Shapiro and S. Rallis to study the analytic properties of the standard $L$-functions for symplectic groups $\Sp_{2n}$ by their doubling method (\cite[Part A]{GPSR87} and \cite{PSR86}). It can be
displayed in the following diagram:
\begin{equation}\label{2OpenX}
\begin{matrix}
&&&&\\
&&\Sp_{4n}&&\\
&&&&\\
&&\downarrow&&\\
&&&&\\
M^{\ab}_\Del w_\Del N_\Del&\rightarrow& X_{P_\Del}&\leftarrow& M^{\ab}_\Del(\Sp_{2n}\times\{\RI_{2n}\})\cong \BG_m\times\Sp_{2n}\\
&&&&
\end{matrix}
\end{equation}
where $P_\Del=M_\Del N_\Del$ is a Siegel parabolic subgroup of $\Sp_{4n}$, $w_\Del$ is the Weyl element of $\Sp_{4n}$ that takes $P_\Del$ to its opposite $P_\Del^-$, $M_\Del^\ab=[M_\Del,M_\Del]\bks M_\Del$
is the maximal abelian quotient of $M_\Del$, and
$X_{P_\Del}:=[P_\Del,P_\Del]\bks\Sp_{4n}$. On the left-hand side, $M^{\ab}_\Del w_\Del N_\Del$ is Zariski open in $X_{P_\Del}$. On the right-hand side, $G=\BG_m\times\Sp_{2n}$ is also embedded into $X_{P_\Del}$ as a Zariski
open subset via the doubling embedding as explained in Section \ref{ssec-lzidm}.

It is important to note that the analytic properties of the local zeta integrals of Piatetski-Shapiro and Rallis is essentially determined by the analytic properties of the intertwining operator
$\RM_{w_\Del}(s,\chi)$ from the normalized induced representation $\RI(s,\chi)$ of $\Sp_{4n}(F)$ to $\RI(-s,\chi^{-1})$, as defined in \eqref{lio-0}.
Geometrically, it focuses on the left-hand side of Diagram \eqref{2OpenX}.
As explained in Section \ref{sec-LT-DZI}, from a series of work (\cite{PSR86}, \cite{Ik92}, \cite{LR05}, \cite{Y14}, \cite{Ik17}, and \cite{Kk18}), the analytic properties of the local zeta integrals
of Piatetski-Shapiro and Rallis can be well established if one can properly normalize the local intertwining operator $\RM_{w_\Del}(s,\chi)$ (Section \ref{ssec-lgf}) by the
following formula:
\begin{equation}\label{nlio-beta}
\RM^\dag_{w_{\Del}}(s,\chi,\psi)=\chi_{s}(2)^{2n}|2|^{n(2n+1)}\beta_{\psi}(\chi_s)\cdot	
\RM_{w_{\Del}}(s,\chi),
\end{equation}
with the factor $\beta_{\psi}(\chi_s)$ given by a finite product of local $\gamma$-functions of abelian type:
\begin{equation}\label{beta}
\beta_{\psi}(\chi_s)=\gam(s-\frac{2n-1}{2},\chi,\psi)\prod_{r=1}^{n}\gam(2s-2n+2r,\chi^{2},\psi).
\end{equation}
Note that the factor $\chi_{s}(2)^{2n}|2|^{n(2n+1)}$ occurs due to the geometric structure of $P_\Del$ relative to the standard Siegel parabolic subgroup of $\Sp_{4n}$.

In \cite{BK02}, in order to understand the local theory of the zeta integrals of Piatetski-Shapiro and Rallis, Braverman and Kazhdan developed
Fourier analysis on the algebraic variety $X_{P_\Del}$, which is related to a reductive algebraic monoid associated to $(G,\rho)$ (refer to \cite[Chapter 7]{Li18} for a detailed discussion). Geometrically, this sits in the middle of Diagram \eqref{2OpenX}.
To this end, Braverman and Kazhdan propose in \cite{BK02} a framework
(for all parabolic subgroups of a general $G$) to define a {\bf Fourier transform} $\CF_{X,\psi}$ on $X_{P_\Del}(F)$ by the following integral:
\begin{equation}\label{FTX}
\CF_{X,\psi}(f)(g):=\int_{F^\times}^\pv\eta^{\bk}_{\psi}(x)|x|^{-\frac{2n+1}{2}}\int_{N_\Del(F)}f(w_\Del n \Fs_x g)\ud n\ud^* x,
\end{equation}
for $f\in\CC^\infty_c(X_{P_\Del})$, the space of all compactly supported, smooth functions on $X_{P_\Del}(F)$, where $\eta^{\bk}_{\psi}(x)$ is the Braverman-Kazhdan generalized function on $F^\times$ that normalizes the
relevant local intertwining operator and recovers the local normalizing factor $\beta(\chi_s)$ via convolution.
We refer to Definition \ref{dfn:FTX} for other unexplained notations here. It is expected from \cite{BK02} that the Fourier transform $\CF_{X,\psi}$ over $X_{P_\Del}(F)$ should be compatible with
the normalized local intertwining operator
$\RM^\dag_{w_{\Del}}(s,\chi,\psi)$, in the sense of Theorem \ref{thm:CFP-M} in Section \ref{sec-etapvs-FT}. This expectation has been discussed in detail by Getz, Hsu and Leslie in their recent preprint (\cite{GHL}). 

In this paper, in order to obtain more explicit analytic properties of the Schwartz spaces that are defined in 
Sections \ref{sec-etapvs-FT} and \ref{sec-FOG}, 
we find an analytic construction (formula) of the generalized function $\eta_{\pvs,\psi}(x)$ on $F^\times$, based on the theory of local zeta functions attached to certain prehomogeneous vector spaces,
where $\pvs$ is for {\sl prehomogeneous vector space}, in Sections \ref{sec-FEbeta} and \ref{sec-FTbeta}. With replacement of the Braverman-Kazhdan generalized function $\eta^{\bk}_{\psi}(x)$ by our generalized function
$\eta_{\pvs,\psi}(x)$ in the definition of the Fourier transform $\CF_{X,\psi}$ in \eqref{FTX}, we are able to prove Theorem \ref{thm:CFP-M} on the compatibility of $\CF_{X,\psi}$ with $\RM^\dag_{w_{\Del}}(s,\chi,\psi)$.
Theorem \ref{thm:CFP-M} is one of the technical key results in the paper, which makes possible for us to establish Theorem \ref{thm:L2} on the Fourier operator $\CF_{\rho,\psi}$ on $G(F)=\BG_m(F)\times\Sp_{2n}(F)$
via the compactification of $\Sp_{2n}$ in the Grassmannian variety of $\Sp_{4n}$ as displayed in Diagram \eqref{2OpenX}, and prove Theorem \ref{thm:asymSf} that characterizes the functions in the Schwartz space $\CS_\pvs(X_{P_\Del})$ by means of their asymptotic behavior on the $M_\Del^\ab$-part.

It is expected that the proof of Theorems \ref{thm:LGF}, \ref{thm:LFE-CF-BK}, and \ref{thm:LLF} needs the following conjecture.

\begin{cnj}[Multiplicity One]\label{cnj:mo}
For any irreducible admissible representation $\chi\otimes\pi$ of $G(F)$, the Schwartz space $\CS_\rho(G)$ enjoys the following multiplicity-one property:
\begin{align}\label{m-one}
\dim \Hom_{G(F)\times G(F)}(\CS_\rho(G),(\chi\otimes\pi)\otimes(\chi^{-1}\otimes\wt{\pi}))\leq 1,
\end{align}
where $\chi^{-1}\otimes\wt{\pi}$ is the contragredient of $\chi\otimes\pi$.
\end{cnj}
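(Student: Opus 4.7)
The plan is to reduce the multiplicity-one assertion on $\CS_\rho(G)$ to the corresponding property on the compactly supported subspace $\CC_c^\infty(G)$, using the asymptotic characterization of $\CS_\rho(G)$ from Theorem \ref{thm:asymSf} together with the $L^2$-unitarity and Schwartz-stability of $\CF_{\rho,\psi}$ from Theorems \ref{thm:Pl-FOG} and \ref{thm:FOG-Ss}; since the conjecture is invoked in the proofs of Theorems \ref{thm:LGF}, \ref{thm:LFE-CF-BK}, and \ref{thm:LLF}, the argument must be carried out without appealing to them. First I would record the classical consequence of Bernstein's theory that
\[
\dim \Hom_{G(F) \times G(F)}\bigl(\CC_c^\infty(G), (\chi \otimes \pi) \otimes (\chi^{-1} \otimes \wt{\pi})\bigr) = 1,
\]
spanned by the canonical matrix-coefficient pairing map $f \mapsto \bigl( v \otimes \wt{v} \mapsto \langle (\chi \otimes \pi)(f) v, \wt{v} \rangle\bigr)$. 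Restriction to $\CC_c^\infty(G) \subset \CS_\rho(G)$ thus yields a linear map from the $\Hom$-space in Conjecture \ref{cnj:mo} to a one-dimensional target, and the conjecture becomes equivalent to the injectivity of this restriction, i.e., to the nonexistence of a nonzero $G \times G$-equivariant map
\[
\ovl{L} \colon \CS_\rho(G)/\CC_c^\infty(G) \longrightarrow (\chi \otimes \pi) \otimes (\chi^{-1} \otimes \wt{\pi}).
\]

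To rule out such $\ovl{L}$, I would analyze the quotient $\CS_\rho(G)/\CC_c^\infty(G)$ via the asymptotic description. By Theorem \ref{thm:asymSf}, elements of $\CS_\pvs(X_{P_\Del})$ are characterized by prescribed leading terms along the $M_\Del^{\ab}$-direction of the compactification; via the doubling embedding $G \hookrightarrow X_{P_\Del}$ from Diagram \eqref{2OpenX}, this should identify $\CS_\rho(G)/\CC_c^\infty(G)$ with a subquotient constructed from representations parabolically induced from the Siegel Levi of $\Sp_{2n}$, twisted by asymptotic data along the $\BG_m$-direction that arises from the principal-value prescription in \eqref{FOG}. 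For $\chi \otimes \pi$ lying in a Bernstein block disjoint from the blocks occurring on the boundary, a standard cuspidal-support argument immediately rules out $\ovl{L}$. For $\chi \otimes \pi$ sharing cuspidal support with a boundary piece, I would exploit the stability of $\CS_\rho(G)$ under $\CF_{\rho,\psi}$: composing a hypothetical $L$ with $\CF_{\rho,\psi}$ produces another element of the $\Hom$-space (possibly after swapping the two $G$-factors), and the Fourier operator interchanges the two open $P_\Del$-orbits in $X_{P_\Del}$, namely the image of $G$ itself and the cell $M_\Del^{\ab} w_\Del N_\Del$; iterating this construction should produce a uniform obstruction, since otherwise one would contradict the $L^2$-unitarity $\CF_{\rho,\psi^{-1}} \circ \CF_{\rho,\psi} = \Id$ of Theorem \ref{thm:Pl-FOG}.

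The main obstacle will be making the boundary analysis sufficiently explicit without invoking the conjectural Fourier-coefficient formula of Theorem \ref{thm:LGF}. The delicate point is the singular locus $\det(h + \RI_{2n}) = 0$, where $\Phi_{\rho,\psi}$ has nontrivial singular support and the principal-value integration is required; here the boundary asymptotic mixes the open cell $M_\Del^{\ab} w_\Del N_\Del$ with closed strata, and controlling this mixing carefully enough to read off the $G \times G$-composition factors is the technical heart of the problem. A possible fallback is to introduce a natural $G \times G$-stable filtration on $\CS_\rho(G)$ refining the inclusion $\CC_c^\infty(G) \subset \CS_\rho(G)$ whose associated graded can be described layer-by-layer in terms of Schwartz spaces on the boundary strata of $X_{P_\Del}$, and then to verify directly that $(\chi \otimes \pi) \boxtimes (\chi^{-1} \otimes \wt{\pi})$ does not occur outside the one copy already accounted for inside $\CC_c^\infty(G)$; a dual approach would be to establish a Fr\'echet-type topology on $\CS_\rho(G)$ for which $\CC_c^\infty(G)$ is dense and every $G \times G$-equivariant functional into an admissible representation is automatically continuous, thereby forcing injectivity of the restriction map without any boundary analysis at all.
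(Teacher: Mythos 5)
You should first be aware that this statement is genuinely a \emph{conjecture} in the paper: the authors do not prove it and say so explicitly. They establish only the weaker Lemma \ref{lem:wmo}, in which $\CS_\rho(G)$ is replaced by $\CC_c^\infty(G)$, and then organize the rest of the argument so that the full conjecture is never needed --- the gamma function $\Gamma_{\rho,\psi}(s,\chi\otimes\pi)$ is defined in Proposition \ref{pro:lfe} using only test functions in $\CC_c^\infty(G)$, and the functional equation on all of $\CS_\rho(G)$ is obtained in Theorem \ref{thm-lfe} by transporting the zeta integrals to the doubling integrals of Piatetski-Shapiro and Rallis via \eqref{RHS} and \eqref{LHS}, where the requisite uniqueness (Theorem \ref{thm:PSR86}, Part (3)) and the theory of good sections are already available. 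So there is no proof in the paper to compare against, and a complete proof from you would be new.

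Your proposal is a strategy outline rather than a proof, and it stops exactly where the difficulty begins. The sound part is the reduction: writing $\tau=(\chi\otimes\pi)\otimes(\chi^{-1}\otimes\wt{\pi})$, Lemma \ref{lem:wmo} (in dual form) makes the target of the restriction map $\Hom_{G(F)\times G(F)}(\CS_\rho(G),\tau)\to\Hom_{G(F)\times G(F)}(\CC_c^\infty(G),\tau)$ one-dimensional, so it \emph{suffices} (not ``is equivalent'') to show $\Hom_{G(F)\times G(F)}(\CS_\rho(G)/\CC_c^\infty(G),\tau)=0$. The unresolved step is the one you yourself flag: by Theorem \ref{thm:asymSf} and Definition \ref{CS-}, the quotient is built from the finitely many asymptotic exponents $|x|^{n}$, $|x|^{i}$, $|x|^{i}(-1)^{\ord(x)}$ along the $M_\Del^{\ab}$-direction, and for $\chi\otimes\pi$ whose exponents match these --- precisely the locus where $L(s,\chi\otimes\pi,\rho)$ can have a pole --- the quotient can a priori contain $\tau$, with the residues of the zeta integrals providing natural candidate functionals supported there. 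Your proposed escape via the Fourier operator does not obviously help: $\CF_{\rho,\psi}$ preserves $\CS_\rho(G)$ but not $\CC_c^\infty(G)$, so it does not descend to the quotient in a useful way, and there is no contradiction between unitarity of $\CF_{\rho,\psi}$ on $L^2(G,\ud^*a\ud h)$ and the existence of additional equivariant functionals on the Schwartz space. The ``fallback'' filtration by boundary strata is the right shape of argument, but carrying it out at the bad points is the entire content of the conjecture and is not done here.
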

In Section \ref{sec-MOGF}, we prove a weaker version of Conjecture \ref{cnj:mo} with the space $\CS_\rho(G)$ replaced by the space $\CC_c^\infty(G)$ (Lemma \ref{lem:wmo}). This is enough to prove Proposition
\ref{pro:lfe} and define a gamma function $\Gamma_{\rho,\psi}(s,\chi\otimes\pi)$, based on Theorem \ref{thm:L2} as proved in Section \ref{sec-FOG}.
Then we show this gamma function enjoys the desired properties (Corollary \ref{cor:gfgg} and Theorem \ref{thm:Phi-gamma}).
In order to prove that the gamma function $\Gamma_{\rho,\psi}(s,\chi\otimes\pi)$ as defined in Proposition \ref{pro:lfe} is equal to the Langlands $\gamma$-function $\gam(s,\chi\otimes\pi,\rho,\psi)$
(Corollary \ref{gam-gam}),
we prove Theorem \ref{thm-lfe} by using the functional equation for the Piatetski-Shapiro and Rallis zeta integrals, which completes the proof of
Theorems \ref{thm:LGF} and \ref{thm:LFE-CF-BK}. Theorem \ref{thm:LLF} is proved by using the work \cite[Part A]{GPSR87}, \cite{LR05} and \cite{Y14}.

\subsection{Harmonic analysis on $\GL_1(F)$ associated to $\beta_\psi(\chi_s)$}\label{ssec-eta-lt}

In this paper, we also develop {\sl harmonic analysis} on $\BG_m(F)=\GL_1(F)$ associated to the $\gamma$-function $\beta_\psi(\chi_s)$ as in \eqref{beta} (Sections \ref{sec-FEbeta} and \ref{sec-FTbeta}).
This theory recovers the local theory in Tate's thesis (\cite{Tt50}) when $n=0$. The key ingredient in the theory is the analytic construction of the generalized function $\eta_{\pvs,\psi}(x)$ on $F^\times$, which
is parallel to the explicit construction of the distribution $\Phi_{\rho,\psi}$ in the harmonic analysis on $G(F)$ associated to the Langlands local $\gamma$-function $\gamma(s,\chi\otimes\pi,\rho,\psi)$ as
described in Section \ref{ssec-mrG}.

Our analytic construction of the generalized function $\eta_{\pvs,\psi}(x)$ on $F^\times$ uses the theory of local zeta integrals associated to prehomogeneous vector space
$(\GL_{2n+1}(F),\Sym^2(F^{2n+1}))$. This theory has been developed through the work of
A. Weil (\cite{W65}), J. Igusa (\cite{Ig78} and \cite{Ig00}), Piatetski-Shapiro and Rallis (\cite{PSR87}), G. Shimura (\cite{S97}), and T. Ikeda (\cite{Ik17}), which extends the thesis of Tate (\cite{Tt50}), the work of Tamagawa (\cite{Tm63}), and the cases considered by Weil in \cite{W82}.

 A starting point of our approach is the following functional equation for the local zeta integrals $\CZ_{\Phi}(s,\chi)$ associated to prehomogeneous vector space $(\GL_{2n+1},\Sym^2(F^{2n+1}))$:
\begin{equation}\label{fe-pvs}
 |2|^{-2ns+2n^2}\chi^{-2n}(2)\CZ_{\varrho\cdot\wh{\Phi}}(-s-\frac{1}{2},\chi^{-1})
= \beta_\psi(\chi_{s})\CZ_{\Phi}(s+\frac{1}{2}-(n+1), \chi),
\end{equation}
with $\beta_\psi(\chi_s)$ given in \eqref{beta} occurring as the $\gamma$-factor. We refer to Proposition \ref{lm:Ikeda} (or {\cite[Theorem 2.1]{Ik17}}) for unexplained notations.

Our goal is to find a generalized function $\eta(x)$ on $F^\times$ so that a theory of harmonic analysis on $\GL_1(F)$ can be established for the given $\gamma$-function $\beta_\psi(\chi_s)$.
To this end, we consider the $F$-morphism
\begin{align}\label{det}
\det\ :\ \Sym^2(F^{2n+1})\rightarrow F,
\end{align}
and develop the harmonic analysis on $F^\times$ for the $\gamma$-function $\beta_\psi(\chi_s)$ by means of the relevant fiber integrations.
More precisely, we study the behavior of the functions on $F^\times$ obtained through the fiber integrations, as defined in \eqref{radon-1} and \eqref{radon-2}, from
$\CC_c^\infty(\Sym^2(F^{2n+1}))$, and the behavior of a generalized Fourier transform on $F^\times$ obtained from the usual Fourier transform on the affine space $\Sym^2(F^{2n+1})$ through such fiber integrations. As a consequence, we obtain the following functional equation on
$F^\times$ in Theorem \ref{thm:LT}:
\begin{equation}\label{FE-eta}
\int_{F^\times}\FL(f)(t)\chi_{s+\frac{2n+1}{2}}(t)^{-1}\ud^* t
=\beta_{\psi}(\chi_s)\int_{F^\times}f(t)\chi_{s+\frac{2n+1}{2}}(t)\ud^* t
\end{equation}
with the same $\beta_\psi(\chi_s)$ occurring as the $\gamma$-factor. Here $f\in \CS^+_{\pvs}(F^\times)$ are functions obtained through the fiber integration from $\CC_c^\infty(\Sym^2(F^{2n+1}))$
(Definition \ref{schwartzF}), and $\FL(f)$ (as defined in \eqref{LT})
is the generalized Fourier transform on $F^\times$ from the Schwartz space $\CS^+_{\pvs}(F^\times)$ to the Schwartz space $\CS^-_{\pvs}(F^\times)$ (Definition \ref{schwartzF}), which
is deduced from the usual Fourier transform on the affine space $\Sym^2(F^{2n+1})$. We refer to Sections \ref{ssec-bbf} and \ref{ssec-ssgft}, and
in particular Theorem \ref{thm:LT} for unexplained notations here. It is important to point out that the functional equation in \eqref{FE-eta} holds as meromorphic functions in $s\in\BC$. However, the domain of
convergence of the one side of the functional equation has {\sl no} overlap with that of the other side when $n>0$. We only find a way to establish such a functional equation in a $\GL_1$-theory by using the theory on the
prehomogeneous vector space $(\GL_{2n+1}(F),\Sym^2(F^{2n+1}))$.

The next step in our approach is to construct our generalized function $\eta_{\pvs,\psi}$ on $F^\times$, such that the the generalized Fourier transform $\FL(f)$ turns out to be a
Fourier (convolution) operator
with kernel function $\eta_{\pvs,\psi}$ as in Theorem \ref{thm:eta}:
\begin{equation}\label{LT-eta}
\FL(f)(t)=\FL_{\eta_{\pvs,\psi}}(f)(t)=((\eta_{\pvs,\psi}|\cdot|^{\frac{2n+1}{2}})*f^\vee)(t).
\end{equation}
The generalized function $\eta_{\pvs,\psi}$ on $F^\times$ will be explicitly defined in \eqref{def:eta-rho-psi} with the property that $\eta_{\pvs,\psi}(\chi_s)=\beta_\psi(\chi_s)$ via convolution, which is the
$\chi_s$-Fourier coefficient of $\eta_{\pvs,\psi}(x)$ for any quasi-character $\chi_s$ of $F^\times$. To further develop such a theory, we
characterize the spaces of functions from relevant constructions by their asymptotic behaviours and establish a version of the classical Paley-Wiener Theorem for the spaces $\CS^\pm_{\pvs}(F^\times)$ under the
Mellin transform (Theorem \ref{thm:PWM}). It is very important to point out that the functional equation (in \eqref{FE-eta}) as established in Theorem \ref{thm:LT} with the Fourier operator $\FL_{\eta_{\pvs,\psi}}$
(in \eqref{LT-eta} and  as given in Theorem \ref{thm:eta}) plays an indispensable role in our proof of Theorem \ref{thm:CFP-M}, which is the key technical result towards the harmonic analysis on $G(F)$ developed in
Sections \ref{sec-FOG} and \ref{sec-Thm-1-2-3}. The work in Sections \ref{sec-FEbeta} and \ref{sec-FTbeta} can be summarized in the following diagram:
\[
\tiny{
\begin{xy}
 (-36,15)*+{\CC_c^\infty(S_{2n+1})}="S1";
 (36,15)*+{\CC_c^\infty(S_{2n+1})}="S2";
 (-53,0)*+{\CS_n^+(F^\times)}="A+";
 (-36,0)*+{\CS_{n,\beta}^+(F^\times)}="B+";
 (-12,0)*+{\CS_{\pvs}^+(F^\times)}="C+";
 (12,0)*+{\CS_{\pvs}^-(F^\times)}="C-";
 (36,0)*+{\CS_{n,\beta}^-(F^\times)}="B-";
 (53,0)*+{\CS_n^-(F^\times)}="A-";
 (-53,-15)*+{\CZ_{n}^+(\widehat{F^\times})}="Z1+";
 (-36,-15)*+{\CZ_{n,\beta}^+(\widehat{F^\times})}="Z2+";
 (36,-15)*+{\CZ_{n,\beta}^-(\widehat{F^\times})}="Z2-";
 (53,-15)*+{\CZ_{n}^-(\widehat{F^\times})}="Z1-";
{\ar@{->}^{\text{Fourier Transform}} "S1";"S2"};
{\ar@{->>}_{F.I.} "S1";"B+"};
{\ar@{->>}^{F.I.} "S2";"B-"};
 {\ar@{}|(0.47){\displaystyle{\supset}} "A+";"B+"};
{\ar@{->}^{|\cdot|^{-2n}}_{\simeq} "B+";"C+"};
{\ar@{->}^{\FL_{\eta_{\pvs,\psi}}} "C+";"C-"};
{\ar@/_1pc/@{.>}_{F.E.~\beta(\chi_s)} "C+";"C-"};
{\ar@{<-}^{|\cdot|^{n+1}}_{\simeq} "C-";"B-"};
{\ar@{}|(0.52){\displaystyle{\subset}} "B-";"A-"};
 {\ar@{}|(0.47){\displaystyle{\supset}} "Z1+";"Z2+"};
{\ar@{}|(0.52){\displaystyle{\subset}} "Z2-";"Z1-"};
 {\ar@{<->}^{\CM} "A+";"Z1+"};
 {\ar@{<->}^{\CM} "B+";"Z2+"};
 {\ar@{<->}^{\CM} "A-";"Z1-"};
 {\ar@{<->}^{\CM} "B-";"Z2-"};
 \end{xy}
}
\]
where $F.I.$ stands for the fiber integration defined in Definition \ref{schwartzF}, $F.E.$ stands for the functional equation for $\beta(\chi_s)$ in Theorem \ref{thm:LT}, $\CM$ is the Mellin transform as defined in
\eqref{mellin}, and $\CC_c^\infty(S_{2n+1})=\CC_c^\infty(\Sym^2(F^{2n+1}))$. We refer to Sections \ref{sec-FEbeta} and \ref{sec-FTbeta} (see also Section \ref{ssec-sp}) for other unexplained notations here.

\subsection{Structure of the paper}\label{ssec-sp}

We recall the basic local theory of the zeta integrals of Piatetski-Shapiro and Rallis in Section \ref{sec-LT-DZI}.
Section \ref{ssec-lzidm} is to review briefly, the main results of Piatetski-Shapiro and Rallis
on the analytic properties of their local zeta integrals with connection to the local $L$-functions and the local $\gamma$-functions of $\Sp_{2n}$ via their doubling method (\cite{GPSR87} and \cite{PSR86}).
Based on the work of E. Lapid and S. Rallis (\cite{LR05}),
of T. Ikeda (\cite{Ik92}), of S. Yamana (\cite{Y14}) and of H. Kakuhama (\cite{Kk18}), Section \ref{ssec-lgf} discusses briefly the right normalization of the local intertwining operator $\RM_{w_\Del}(s,\chi)$, so that
the local functional equation of the local doubling zeta integrals yields the Langlands local $\gamma$-functions for the standard $L$-functions of $\Sp_{2n}$ (Corollary \ref{FE-Langlands gamma}).

Sections \ref{sec-FEbeta} and \ref{sec-FTbeta} is devoted to develop harmonic analysis on $F^\times$ associated to the $\gamma$-functions $\beta_\psi(\chi_s)$. Section \ref{ssec-zi-sm} reviews the functional
equation for the local zeta integrals associated to the prehomogeneous vector space $(\GL_m(F),\Sym^2(F^m))$, following the work of Piatetski-Shapiro and Rallis (\cite[Appendix 1]{PSR87}) and
of Ikeda (\cite{Ik17}) (Proposition \ref{pro:zeta-pvs}).
In Section \ref{ssec-zi-io}, we discuss analytic properties of the local intertwining operators $\RM_{w_\Del}(s,\chi)$
in terms of those of the local zeta integrals associated to $(\GL_{2n+1}(F),\Sym^2(F^{2n+1}))$ (Propositions \ref{pro:pole-Mw} and \ref{pro:analyticalgoodsection}).
In Section \ref{ssec-fife}, by using fiber integrations along the $F$-morphism $\det$ as displayed in \eqref{det}, we deduce a preliminary version of the local functional equation on $\GL_1(F)$
(Corollary \ref{fe-Radon}) from that in Proposition \ref{lm:Ikeda} (which is deduced from \cite[Theorem 2.1]{Ik17}).

In Section \ref{ssec-bbf}, in order to understand the functions on
$F^\times$ obtained by the fiber integration through the determinant morphism $\det$ (Proposition \ref{pro-2fi}), we introduce two spaces of functions $\CS_n^\pm(F^\times)$ in Definitions \ref{CS+} and \ref{CS-},
which are characterized by their
asymptotic behaviors near the boundary point $x=0$ of $F^\times$, respectively. Their behaviors under the Mellin transform $\CM(s,\chi)$ can be deduced from \cite[Theorem 5.3]{Ig78}
(Proposition \ref{Igusa78-53}). Their image spaces under the Mellin transforms are the space $Z_n^\pm(\wh{F^\times})$, as given in Definition \ref{Z+-}, in which the functions
are characterized by their possible poles. With these spaces of functions, we are able to re-normalize the preliminary functional equation for $\beta_\psi(\chi_s)$ in Corollary \ref{fe-Radon} to obtain
the functional equation for $\beta_\psi(\chi_s)$ in Theorem \ref{thm:LT}, with (normalized) Schwartz spaces $\CS_\pvs^\pm(F^\times)$ and the Fourier operator $\FL_{\eta_{\pvs,\psi}}$
from the space $\CS_\pvs^+(F^\times)$ to
the space $\CS_\pvs^-(F^\times)$. This is done in Section \ref{ssec-ssgft}. The functional equation in Theorem \ref{thm:LT}, as displayed in \eqref{FE-eta}, explains the deep symmetry of the
$\gamma$-functions $\beta_\psi(\chi_s)$
in terms of harmonic analysis on $F^\times$.
We continue with Section \ref{ssec-etapvs} to construct in an explicit and analytic way a {\sl generalized function} $\eta_{\pvs,\psi}(x)$ on $F^\times$
as in \eqref{def:eta-rho-psi},
such that the generalized Fourier transform $\FL$ (as defined in \eqref{LT})
from the space $\CS_\pvs^+(F^\times)$ to the space $\CS_\pvs^-(F^\times)$ becomes a Fourier operator $\FL_{\eta_{\pvs,\psi}}$ with $\eta_{\pvs,\psi}(x)$ as
the kernel function in
Theorem \ref{thm:eta}, as displayed in \eqref{LT-eta}. Theorem \ref{thm:eta} is another technical key result in the paper. In Section \ref{ssec-gcd}, we characterize the spaces $\CS_\pvs^\pm(F^\times)$ by their
image spaces $\CZ_{n,\beta}^\pm(\wh{F^\times})$ under the Mellin transform (Theorem \ref{thm:PWM}), which is a version of the classical Paley-Wiener Theorem for Mellin transform. The spaces
$\CZ^{\pm}_{n,\bet}(\wh{F^\times})$ are defined in Definition \ref{defin:Mellin-gcd} by means of their possible poles.

In Section \ref{sec-etapvs-FT}, we are ready to define (Definition \ref{dfn:FTX}) the Fourier transform $\CF_{X,\psi}$ over $X_{P_\Del}(F)$,
by using the generalized function $\eta_{\pvs,\psi}$ on $F^\times$ in Section \ref{sec-FTbeta}. In Section \ref{ssec-FoL2}, we show that the integral in \eqref{FTX} defining the Fourier transform $\CF_{X,\psi}$
converges in Corollary \ref{FX-conv},
by using properties of the Radon transform $\RR_X$ (the ${\rm d} n$-integration part in \eqref{FTX}) (Lemma \ref{lem:Rx} and Proposition \ref{pro:Fg}). Theorem \ref{thm:CFP-M}
shows that the Fourier transform $\CF_{X,\psi}$ is compatible with $\beta_\psi(\chi_s)\cdot\RM_{w_\Del}(s,\chi)$ with respect to the projections $\CP_{\chi_s}$ and $\CP_{\chi_s^{-1}}$.
Then we extend $\CF_{X,\psi}$ to the space $L^2(X_{P_\Del})$ of square-integrable functions on $X_{P_\Del}(F)$ (Proposition \ref{pro:Fx-norm}, Corollary \ref{Fx-invs}, and Remark \ref{FXunitary}).
In Section \ref{ssec-FoSs}, we introduce the Schwartz space $\CS_\pvs(X_{P_\Del})$ (Definition \ref{dfn:SSFX}) and show that the Schwartz space is $\CF_{X,\psi}$-stable  and
produces the space of {\sl good sections} $\RI^\dag(s,\chi)$ via the projection $\CP_{\chi_s}$ (Proposition \ref{pro:FTX}). Theorem \ref{thm:asymSf} characterizes the functions in $\CS_\pvs(X_{P_\Del})$ by means of their asymptotic behavior on the $M_\Del^\ab$-part.

Section \ref{sec-FOG} is to develop the harmonic analysis on $G(F)$ and prove Theorem \ref{thm:L2} on the Fourier operator $\CF_{\rho,\psi}$ on $G(F)$. Section \ref{sec-MOGF} is to define the gamma function
$\Gamma_{\rho,\psi}(s,\chi\otimes\pi)$ and establish its desired properties. Sections \ref{sec-FOG} and \ref{sec-MOGF} complete the theory of harmonic analysis and gamma functions associated to the $G(F)$-invariant
distribution $\Phi_{\rho,\psi}$ on $G(F)$.

Section \ref{sec-Thm-1-2-3} is to show that the gamma function $\Gamma_{\rho,\psi}(s,\chi\otimes\pi)$ as defined in Proposition \ref{pro:lfe} is equal to the Langlands $\gamma$-function
$\gam(s,\chi\otimes\pi,\rho,\psi)$ (Corollary \ref{gam-gam}) and to complete the proof of
Theorems \ref{thm:LGF}, \ref{thm:LFE-CF-BK}, and \ref{thm:LLF}. The proofs in this section depend on the local theory of the Piatetski-Shapiro and Rallis zeta integrals.

Although most of the arguments in the paper automatically covers the case when $n=0$, which is the local theory of the Tate thesis (\cite{Tt50}), we may assume in this paper that $n>0$ in order to be more focused
on the essence of the work.

It takes us a long period of time to make progress and finish up this part of the local theory. We would like to thank C. P. Mok for a series of lectures in the one-week workshop at University of Minnesota in May, 2013,
on the relevant topics, which refreshed our interests in the Braverman-Kazhdan proposal. We have benefited from our participation in the two workshops at the American Institute of
Mathematics (Automorphic Kernel Functions, 2015, and Functoriality and the Trace Formula, 2017). We would like to thank the Institute for providing productive discussion environment and hospitality during the workshops.
During the 2017 Workshop, the authors presented their preliminary calculation of the transfer from the left-hand side of Diagram \eqref{2OpenX} to its right-hand side in a discussion group.
We would like to thank Zhiwei Yun for his clear explanation of the geometric structure behind our calculations, and thank Ng\^o and Shahidi for their helpful conversations and encouragement.
We would also like to thank the organizers Altug, Arthur, Casselman, and Kaletha for inviting us to the 2017 Workshop. During the one-month program on the Langlands Program: Endoscopy and Beyond at the Institute for Mathematical Sciences, National University of Singapore, from December 17, 2018 to January 19, 2019, we got together and made partial progress on this project. We would like to thank the Institute for invitation and hospitality during the program.
We would also like to thank Ng\^o for inviting us to his two-week program in June 2019, where we gave more detailed lectures on our progress and the main obstacles in this project, and for his explanation of
possible relation of our work with his own construction and that of L. Lafforgue. We are grateful to H. Jacquet for his encouraging historical comments on our work and his work with R. Godement.
Thank the referee for  many helpful comments and suggestions, which greatly improved the exposition of this paper.


\section{Local Theory of Piatetski-Shapiro and Rallis}\label{sec-LT-DZI}


Let $F$ be a local field of characteristic zero, which is a finite extension of $\BQ_p$ for some prime $p$. Let $\psi$ be a non-trivial additive character of $F$. We consider
$G=\BG_m\times\Sp_{2n}$, where $\Sp_{2n}$ is the symplectic group associated to the non-degenerate symplectic space $(V,\RJ_n)$ with
$$
\RJ_n=\begin{pmatrix}0&\RI_n\\-\RI_n&0\end{pmatrix}.
$$
Fix a basis $\{e_1,e_2,\cdots,e_{2n-1},e_{2n}\}$ of $V$ with respect to the symplectic form $\langle\cdot,\cdot\rangle_{\RJ_n}$ defined by $\RJ_n$. It follows that
$\langle e_i,e_{j}\rangle_{\RJ_n}=-\langle e_j,e_{i}\rangle_{\RJ_n}$ for any $i$ and $j$, and also $\langle e_i,e_{j}\rangle_{\RJ_n}=\langle e_{n+i},e_{n+j}\rangle_{\RJ_n}=0$ and
$\langle e_i,e_{n+j}\rangle_{\RJ_n}=\delta_{i,j}$ for $i,j=1,2,\cdots,n$.
 The symplectic group $\Sp(V)=\Sp_{2n}$ acts on the vector space by the left action: $v\mapsto  g\cdot v$ for any $v\in V$ and any $g\in\Sp_{2n}$.
Under the fixed basis, $\Sp_{2n}(F)$, as matrices,  acts on row vectors $\vec{v}$, as the coordinate of $v$, by the right
multiplication.

We fix a flag of totally isotropic subspaces of $V$:
$$
\{0\}\subset\{e_{n+1}\}\subset\{e_{n+1},e_{n+2}\}\subset\cdots\subset\{e_{n+1},e_{n+2},\cdots,e_{2n}\}\subset V,
$$
which determines a Borel subgroup $B=TU$ of $\Sp_{2n}$, with a maximal $F$-split torus $T$ and the unipotent radical $U$. The elements of $T(F)$ are of the form
\begin{equation}
 t:=\diag(t_{1},\cdots,t_n,t_{1}^{-1},\cdots,t_n^{-1}),
\end{equation}
with $t_i\in F^\times$, and the elements of $U(F)$ are of the form
$$
u=\begin{pmatrix}Z&X\\ 0&Z^*\end{pmatrix}
$$
where $Z$ is an upper-triangular maximal unipotent matrix in $\GL_n(F)$ and $Z^*={^t\!Z}^{-1}$, and $X$ is an $n\times n$-matrix with $Z\cdot{^t\!X}=X\cdot{^t\!Z}$. The standard Siegel parabolic subgroup
is the stabilizer of the totally isotropic partial flag or a Lagrangian of $V$
$$
\{0\}\subset\{e_{n+1},e_{n+2},\cdots,e_{2n}\}\subset V.
$$
The general linear group $\GL_n(F)$ is embedded as the Levi subgroup by
\begin{equation}\label{siegel-levi}
a\in\GL_n(F) \mapsto \begin{pmatrix}a&0\\0&a^*\end{pmatrix}\in\Sp_{2n}(F)
\end{equation}
where $a^*={^t\!a^{-1}}$.

Let $\CO=\CO_F$ be the ring of integers of $F$ and $\CP=\CP_F$ be the maximal ideal of $\CO_F$. Take $K_{2n}:=\Sp_{2n}(\CO)$ to be the fixed maximal open compact subgroup of $\Sp_{2n}(F)$.
The Iwasawa decomposition is given by $\Sp_{2n}(F)=B(F)K_{2n}$. Let ${\rm d} g$ be the Haar measure on $\Sp_{2n}(F)$ such that $K_{2n}$ has volume one. Let ${\rm d} u$ be the Haar measure on $U(F)$ such that $U(\CO)$ has volume one.
Take the Haar measure ${\rm d} t$ on $T(F)$ such that $T(\CO)$ has volume one.
For any integrable function $f(g)$ on $\Sp_{2n}(F)$, one has
\begin{eqnarray}
\int_{G(F)}f(g)\ud g
&=&
\int_{K_{2n}}\int_{T(F)}\int_{U(F)}f(utk) \delta_B(t)^{-1}\ud u \ud t \ud k\nonumber\\
&=&
\int_{K_{2n}}\int_{U(F)}\int_{T(F)}f(tuk)\ud t \ud u \ud k,
\end{eqnarray}
where $\delta_B(t)=|\det\Ad (t)\vert_{\Fu}|_F$ with $\Fu$ being the Lie algebra of $U$.

\subsection{Local zeta integrals of doubling method}\label{ssec-lzidm}

We recall the local zeta integrals introduced by Piatetski-Shapiro and Rallis via the doubling method (\cite[Part A]{GPSR87}).
The geometric structure of the doubling method for symplectic groups can be briefly described as follows.

Define a $4n$-dimensional symplectic vector space given by $$W=V^{+}\oplus V^{-},$$
where $V^{+}$ is the given $2n$-dimensional symplectic vector space $V$ with symplectic form $\langle\cdot,\cdot\rangle_{\RJ_n}$, and $V^{-}$ is the $2n$-dimensional
symplectic vector space with symplectic form $\langle\cdot,\cdot\rangle_{-\RJ_n}$. The non-degenerate symplectic form on $W$ is given by
$\langle\cdot,\cdot\rangle_{\RJ_n} \oplus \langle\cdot,\cdot\rangle_{-\RJ_n}$.
Then the diagonal embedding of $V$ into $W$ has the image $$L_{\Del} = \{ (v,v)\in W\mid   v\in V \}$$ which is a Lagrangian subspace of $W$. Let $P_{\Del}$ be the stabilizer of $L_{\Del}$, which
is a Siegel parabolic subgroup of $\Sp(W)$ with the unipotent radical $N_{\Del}$ and the associated longest Weyl element $w_{\Del}$ that takes $P_{\Del}$ to its opposite.
More precisely, following the choice in \cite{LR05}, we take
\begin{align}\label{wDel0}
w_\Del=(\RI_{V^{+}},-\RI_{V^{-}})\in \Sp(V^+)\times\Sp(V^-)
\end{align}
where $\RI_{V^\pm}$ is the identity element in $\Sp(V^\pm)$, respectively.
With the natural embedding of $\Sp(V^+)\times \Sp(V^-)$ into $\Sp(W)$, the subgroup
$\Sp(V^+)\times \Sp(V^-)$ acts on $W$ by
\begin{equation}\label{action-2n2n}
(g_{1},g_{2})\cdot (v_1,v_2) = (g_1\cdot v_1,g_2\cdot v_2)
\end{equation}
for $(v_1,v_2) \in W$ and for $(g_1,g_2)\in\Sp(V^+)\times \Sp(V^-)$. It follows that under this action, the stabilizer of the diagonal Lagrangian subspace
$L_{\Del}$ is $\Sp(V)^{\Del}$, which is the image of the diagonal embedding of $\Sp(V)$ in $\Sp(V^+)\times \Sp(V^-)$. As proved in \cite[Part A]{GPSR87}, the double coset
\begin{align}\label{z-open}
P_\Del\cdot(\Sp(V^+)\times\Sp(V^-))=P_\Del\cdot(\Sp(V^+)\times\{\RI_{V^-}\})
\end{align}
is Zariski open dense in $\Sp(W)$. It is clear that the Levi subgroup $M_\Del$ of $P_{\Del}$ is isomorphic to $\GL(L_\Del)$.

We fix the ordered basis $\{ e_{1},e_{2},...,e_{2n} \}$ for $V^{+}$. For $V^{-}$, we take the same basis, but denote it by
$\{ f_{1},f_{2},...,f_{2n} \}$. It follows that
$$\langle f_i,f_{j}\rangle_{-\RJ_n}=\langle f_{n+i},f_{n+j}\rangle_{-\RJ_n}=0\quad {\rm and} \quad\langle f_i,f_{n+j}\rangle_{-\RJ_n}=-\delta_{i,j}
$$
for $i,j=1,2,\cdots,n$.
Accordingly, we take an ordered basis of $W=V^{+}\oplus V^-$ as follows:
\begin{equation}\label{sbasis}
\{ e_{1}, e_{2},...,e_{n}, f_{1}, f_{2},...,f_{n}, e_{n+1},e_{n+2},...,e_{2n}, -f_{n+1},-f_{n+2},...,-f_{2n}\}.
\end{equation}
It is clear that the matrix defining the symplectic form $\langle \cdot,\cdot\rangle_{\RJ_n\oplus-\RJ_n}$ of $W$ is $\RJ_{2n}$, with the ordered basis.
The embedding of $\Sp(V^+)\times \Sp(V^-)$ into $\Sp(W)$ as defined in \eqref{action-2n2n} has the following explicit expression for $\Sp_{2n}(F)\times\Sp_{2n}(F)$ embedded into $\Sp_{4n}(F)$:
\begin{align*}
(
\left(
\begin{smallmatrix}
A	&	B\\
C	&	D
\end{smallmatrix}
\right)
,
\left(
\begin{smallmatrix}
M	&	N\\
P	&	Q
\end{smallmatrix}
\right)
)
\mapsto
\left(
\begin{smallmatrix}
A 	&	&	B	&	\\
	&M	&		&-N	\\
C 	&	&	D	&	\\
 	&-P	&		&Q	\\
\end{smallmatrix}
\right).
\end{align*}
Especially, the group $\Sp_{2n}(F)\times \{\RI_{2n}\}$ embeds into $\Sp_{4n}(F)$ by
\begin{align}\label{embsp}
(
\left(
\begin{smallmatrix}
A	&	B\\
C	&	D
\end{smallmatrix}
\right)
,
\left(
\begin{smallmatrix}
\RI_{n}	&	\\
	&	\RI_{n}
\end{smallmatrix}
\right)
)
\mapsto
\left(
\begin{smallmatrix}
A 	&	&	B	&	\\
	&\RI_{n}	&		&	\\
C 	&	&	D	&	\\
 	&	&		&\RI_{n}	\\
\end{smallmatrix}
\right);
\end{align}
and
\begin{equation}\label{wDel}
w_\Del= \left(\begin{smallmatrix}
\RI_n&&&\\
&-\RI_n&&\\
&&\RI_n&\\
&&&-\RI_n 	
\end{smallmatrix}\right).	
\end{equation}
In the standard Lagrangian $L_{\std}$ in $W$, we may take an ordered basis
$$
\{ e_{n+1},e_{n+2},...,e_{2n},-f_{n+1},-f_{n+2},...,-f_{2n} \}.
$$
The stabilizer of $L_{\std}$ is the standard Siegel parabolic subgroup $P_{\std}$ in $\Sp(W)$. With the ordered basis \eqref{sbasis},  $P_{\std}(F)$ consists of matrices of the following form in $\Sp_{4n}(F)$,
\begin{equation}\label{siegelp}
\left(
\begin{matrix}
A	&	X	\\
0	&	A^*
\end{matrix}
\right),
\end{equation}
with $A\in\GL_{2n}(F)$ and $A^*={^t\!A^{-1}}$.
On the other hand, the Lagrangian subspace $L_{\Del}$, corresponding to the diagonal embedding of $V$ into $W$, has an ordered basis
$$
\{ e_{1}+f_{1},e_{2}+f_{2},...,e_{2n}+f_{2n} \}.
$$
Under the choice of the ordered basis for the $4n$-dimensional symplectic vector space $W$,
we have the following diagram for $F$-rational points of relevant algebraic groups (i.e. matrices over $F$) and the relevant varieties:
\begin{equation}\label{twoOpen}
\begin{matrix}
&&&&\\
&&\Sp_{4n}&&\\
&&&&\\
&&\downarrow&&\\
&&&&\\
P_\Del\bks P_\Del w_\Del N_\Del&\rightarrow& P_\Del\bks\Sp_{4n}&\leftarrow&P_\Del\bks P_\Del(\Sp_{2n}\times\{\RI_{2n}\})\\
&&&&
\end{matrix}
\end{equation}
with both $P_\Del\bks P_\Del w_\Del N_\Del$ and $P_\Del\bks P_\Del(\Sp_{2n}\times\{\RI_{2n}\})$ being Zariski open in $P_\Del\bks\Sp_{4n}$.

In order to find the explicit relation between $P_\Del$ and $P_{\std}$, which is important to the explicit calculation for the transformation from the left-hand side of the Diagram in \eqref{twoOpen} to the right-hand side,
we take
\begin{equation}\label{g0}
g_{0} =
\left(
\begin{smallmatrix}
	0&	0	&\frac{-\RI_{n}}{2}&\frac{-\RI_n}{2}\\
 \frac{\RI_n}{2}& \frac{-\RI_{n}}{2}&0	&	0	\\
\RI_{n}	&\RI_{n}	&	0&	0\\
0	&	0	&\RI_{n}&-\RI_{n}
\end{smallmatrix}
\right)\in\Sp_{4n}(F).
\end{equation}
By a direct verification, $g_{0}$ takes $L_{\std}$ to $L_{\Del}$.
It follows that as subgroups of $\Sp_{4n}(F)$, one has that $P_{\Del}(F) = g_{0}^{-1}P_{\std}(F)g_{0}$, with
\begin{equation}\label{g0-1}
g_{0}^{-1} =
\left(
\begin{smallmatrix}
    0 &     \RI_{n} & \frac{\RI_{n}}{2} &  0             \\
    0  &    -\RI_{n}  &	\frac{\RI_n}{2}&    0   \\
     -\RI_{n} &  0    &   0	&  \frac{\RI_n}{2}    \\
    -\RI_{n} &0	   &		0 & \frac{-\RI_{n}}{2}
\end{smallmatrix}
\right).
\end{equation}
Similar to the definition in \cite[Page 314]{LR05},
we normalize the abelianization morphism
\begin{equation}\label{M-Mab}
\begin{matrix}
\Fa&:&M_\Del&\rightarrow &M_\Del^{\ab}&\cong&\BG_m\\
&&&&&&\\
   & &m_\Del&\mapsto&\ol{m_\Del}=[M_\Del,M_\Del]m_\Del&\mapsto&\det_{L_\Del}(m_\Del)^{-1}.
\end{matrix}
\end{equation}
More precisely, for $A\in \GL_{2n}(F)$, we write
$m_{\std}(A)=\left(\begin{smallmatrix}
A&0\\0&A^*	
\end{smallmatrix}\right)
\in M_{\std}(F)$.  Then for
$$
m_\Del=m_\Del(A)=g_0^{-1}m_{\std}(A)g_0\in M_{\Del}(F),
$$
we have $\Fa(m_\Del(A))=\det(A)$. It is clear that
the modular character is calculated as follows:
\begin{align}\label{dPDel}
\delta_{P_\Del}(m_\Del(A))=|\Fa(m_\Del(A))|^{2n+1}=|\det(A)|^{2n+1}.
\end{align}
We define a section of $\Fa$ to be
\begin{equation}\label{section:Fs}
\Fs\ :\ \BG_m\cong M_\Del^{\ab}\rightarrow M_\Del,\quad a\mapsto \Fs_{a}
\end{equation}
with the property that $\Fa(\Fs_{a})=a$ for any $a\in\BG_m(F)$. For instance, we may take $\Fs_{a}=g_0^{-1}m_{\std}(\diag(a,\RI_{2n-1}))g_0$.
For a character $\chi$ of $F^\times$, define
\begin{equation}\label{chis}
\chi_{s}(m_\Del(A)): = \chi(\Fa(m_\Del(A)))|\Fa(m_\Del(A))|^s=\chi(\det(A))|\det(A)|^{s}.
\end{equation}
As usual, we define the normalized smooth induced representation of $\Sp_{4n}(F)$: $\RI(s,\chi) := \Ind^{\Sp_{4n}(F)}_{P_{\Del}(F)}(\chi_{s})$.
For the Weyl element $w_{\Del}$, which takes $P_\Del$ to its opposite, the local intertwining operator $\RM_{w_{\Del}}(s,\chi)$
from $\RI(s,\chi)$ to $\RI(-s,\chi^{-1})$ is defined by
\begin{equation}\label{lio-0}
\RM_{w_{\Del}}(s,\chi)(f_{\chi_{s}})(g) = \int_{N_{\Del}(F)}f_{\chi_{s}}(w_{\Del}n_\Del g)\ud n_\Del.
\end{equation}
The operator $\RM_{w_{\Del}}(s,\chi)$ is defined for $\Re(s)$ sufficiently positive, and has meromorphic continuation to the whole complex plane.

For any irreducible admissible representation $\pi$ of $\Sp_{2n}(F)$ with its contragredient $\wt{\pi}$,
the {\sl local zeta integrals $\RZ(f_{\chi_s},\varphi_{v_{\pi},v_{\wt{\pi}}})$ of Piatetski-Shapiro and Rallis }(\cite[Part A]{GPSR87} and \cite{PSR86}) is defined by
\begin{align}\label{doublingzeta}
\RZ(f_{\chi_{s}},\varphi_{v_\pi,v_{\wt{\pi}}})
= \int_{\Sp_{2n}(F)}f_{\chi_{s}}(g,1)\varphi_{v_{\pi},v_{\wt{\pi}}}(g)\ud g,
\end{align}
with $\varphi_{v_{\pi},v_{\wt{\pi}}}(g):=\langle v_{\wt{\pi}},\pi(g)v_{\pi}\rangle$ being the matrix coefficient associated to
$v_{\pi}\in\pi$ and $v_{\wt{\pi}}\in\wt{\pi}$, and $f_{\chi_{s}}\in \RI (s,\chi)$ being any section.

The local theory for $\RZ(s,f_{\chi_s},\phi_{v_{\pi},v_{\wt{\pi}}})$
can be summarized in the following theorem
of Piatetski-Shapiro and Rallis (\cite{PSR86} and \cite[Part A]{GPSR87}). We take $K_\Del=g_0^{-1}K_{4n}g_0$, which is a maximal open compact subgroup of $\Sp_{4n}(F)$, such that $\Sp_{4n}(F)=P_\Del(F)\cdot K_\Del$
is the Iwasawa decomposition of $\Sp_{4n}(F)$. When the residue characteristic of $F$ is odd, we have that $g_0\in K_{4n}=\Sp_{4n}(\CO)$.

\begin{thm}[Piatetski-Shapiro and Rallis]\label{thm:PSR86}
Let $\pi$ be an irreducible admissible representation of $\Sp_{2n}(F)$ and $\wt{\pi}$ be its contragredient.
Assume that $f_{\chi_{s}}\in \RI (s,\chi)$ is a holomorphic section, i.e. its restriction to $K_\Del \times \BC$ is holomorphic. Let $\varphi_{v_{\pi},v_{\wt{\pi}}}$ be the matrix coefficient associated to
$v_{\pi}\in\pi$ and $v_{\wt{\pi}}\in\wt{\pi}$.
\begin{enumerate}
\item The local zeta integral $\RZ(f_{{\chi}_{s}},\varphi_{v_{\pi },v_{\wt{\pi}}})$ as defined in \eqref{doublingzeta} converges absolutely when $\Re(s)$ is sufficiently positive,
and admits meromorphic continuation to $s\in \BC$.
\item When $\pi$ and $\chi$ are unramified, the integral $\RZ (f_{{\chi}_{s}},\varphi_{v_{\pi },v_{\wt{\pi}}})$ computes the unramified local $L$-function $L(s+\frac{1}{2},\pi \times\chi ,\std)$.
\item The $\Hom$-space
$
\Hom_{\Sp_{2n}(F )\times \Sp_{2n}(F)}(\RI (s,\chi ), \pi \otimes \wt{\pi})
$
is at most one dimensional for all but a finite number of $q^{-s}$.
\item There exists a meromorphic function $\Gam_{\PSR}(\pi ,\chi ,s)$ so that
the local functional equation
$$
\RZ (\RM_{w_{\Del} }(s,\chi )(f_{\chi_{s }}),\varphi_{v_{\pi },v_{\wt{\pi}}})
 = \Gam_{\PSR}(\pi ,\chi ,s)  \RZ (f_{\chi_{s }},\varphi_{v_{\pi },v_{\wt{\pi}}})
$$
holds. Here $\Gam_{\PSR}(\pi ,\chi ,s)$ denotes the local $\gamma$-function defined by Piatetski-Shapiro and Rallis in this case.
\end{enumerate}
\end{thm}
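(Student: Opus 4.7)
The plan is to prove the four parts in the order they are stated, exploiting the Zariski open dense orbit $P_\Del\cdot(\Sp_{2n}\times\{\RI_{2n}\})\subset \Sp_{4n}$ from Diagram~\eqref{twoOpen} as the geometric backbone.

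\textbf{Convergence and meromorphic continuation.} For part (1), the first step is to transport the integrand to the Iwasawa decomposition for $\Sp_{4n}(F)=P_\Del(F)K_\Del$. Given $g\in\Sp_{2n}(F)$, I would use the Cartan decomposition $g=k_1 t k_2$ with $t=\diag(t_1,\dots,t_n,t_1^{-1},\dots,t_n^{-1})$ satisfying $|t_1|\geq\cdots\geq|t_n|\geq 1$, then compute the Iwasawa factorization of $(g,1)\in\Sp_{4n}(F)$ relative to $P_\Del$; since $(g,1)$ lies in the open orbit, the $P_\Del$-component has abelianization $\Fa$ of a controlled size in terms of the Cartan parameters of $g$. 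Combining this with the standard bound on matrix coefficients of admissible representations (by Howe--Moore / Casselman), one majorizes the zeta integral by a sum of convergent Harish-Chandra type integrals over $T/W$ with an $|a|^{s+\tfrac{1}{2}}$-weight, giving absolute convergence for $\Re(s)$ sufficiently large. For meromorphic continuation, the cleanest route is Bernstein's rationality principle: view $\RZ(f_{\chi_s},\varphi_{v_\pi,v_{\wt\pi}})$ as a system of linear equations with polynomial (in $q^{-s}$) coefficients on the space of sections, and apply the abstract continuation theorem.

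\textbf{Uniqueness of the Hom-space.} For part (3), I would decompose $\Sp_{4n}$ under the left action of $P_\Del$ and right action of $\Sp_{2n}\times\Sp_{2n}$. This is the classical doubling orbit analysis of Piatetski-Shapiro and Rallis: there is a unique open orbit (represented by $(g,1)$ for $g\in\Sp_{2n}$, with stabilizer the diagonal $\Sp_{2n}^\Del$ inside $M_\Del$), plus finitely many lower-dimensional orbits. Using a Bruhat-type filtration of $\RI(s,\chi)$ by the orbit stratification together with Frobenius reciprocity and Bernstein's geometric lemma, one shows that:
\begin{itemize}
\item the open orbit contributes a one-dimensional space, identified with the canonical pairing on $\pi\otimes\wt\pi$ given by evaluation $(v_\pi,v_{\wt\pi})\mapsto\langle v_{\wt\pi},v_\pi\rangle$;
\item each closed orbit contributes nothing for all but finitely many $q^{-s}$, because the relevant Jacquet module of $\pi\otimes\wt\pi$ does not generically contain the twist by $\chi_s$ appearing on that stratum.
\end{itemize}
Combining these gives the bound $\dim\Hom_{\Sp_{2n}(F)\times\Sp_{2n}(F)}(\RI(s,\chi),\pi\otimes\wt\pi)\leq 1$ outside a finite set of $q^{-s}$.

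\textbf{Functional equation and the unramified case.} For part (4), note that both $f_{\chi_s}\mapsto \RZ(f_{\chi_s},\varphi_{v_\pi,v_{\wt\pi}})$ and $f_{\chi_s}\mapsto \RZ(\RM_{w_\Del}(s,\chi)(f_{\chi_s}),\varphi_{v_\pi,v_{\wt\pi}})$ define elements of the Hom-space in (3), the latter after precomposition with $\RM_{w_\Del}(s,\chi)\colon\RI(s,\chi)\to\RI(-s,\chi^{-1})$. By uniqueness, they are proportional, with proportionality a meromorphic function $\Gam_{\PSR}(\pi,\chi,s)$; this defines the local gamma factor. For part (2), when everything is unramified I would take $f_{\chi_s}$ to be the normalized $K_\Del$-fixed section and $\varphi_{v_\pi,v_{\wt\pi}}$ the normalized zonal spherical function. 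Using the Iwasawa decomposition and changing variables via $(g,1)\mapsto p\cdot(h,1)k$, the integral reduces to a Macdonald-type integral over $\Sp_{2n}(F)$ of the product of a spherical section of $\RI(s,\chi)$ against the spherical function of $\pi$; the classical Macdonald/Casselman spherical formula evaluates this to $L(s+\tfrac{1}{2},\pi\times\chi,\std)$.

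\textbf{Main obstacle.} The main technical hurdle is the uniqueness statement (3), because away from the open orbit one must control contributions from several closed $P_\Del$-orbits that depend on the reductive structure of the stabilizers; the argument requires either a careful Jacquet module computation or a geometric lemma argument, and the statement is only true generically in $q^{-s}$. The unramified computation in (2), while classical, is also delicate in its bookkeeping of the Macdonald formula normalizations, but the orbit/uniqueness step is where the real conceptual work lies.
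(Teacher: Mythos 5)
The paper does not prove this theorem: it is quoted verbatim as a known result of Piatetski-Shapiro and Rallis, with the proof deferred to \cite{PSR86} and \cite[Part A]{GPSR87}. Your outline is essentially a faithful reconstruction of the arguments in those references — Bernstein rationality for the continuation in (1), the $P_\Del\times(\Sp_{2n}\times\Sp_{2n})$ orbit stratification and Bruhat filtration for the generic multiplicity-one statement (3), and the standard "two functionals in a one-dimensional space" argument for (4) — so there is no divergence from the paper to report, only from the cited sources, from which you do not diverge in any essential way.

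Two small cautions. First, in (2) the phrase "the classical Macdonald/Casselman spherical formula evaluates this to $L(s+\frac{1}{2},\pi\times\chi,\std)$" compresses the genuinely hard step: knowing the spherical function explicitly still leaves a nontrivial integral against the spherical section, and the original computation in \cite[Part A]{GPSR87} proceeds by a rank induction rather than by direct appeal to Macdonald's formula. Second, the unramified value is really $L(s+\frac{1}{2},\chi\otimes\pi,\rho)/b_{2n}(s,\chi)$ for the normalized spherical section (this is \cite[Proposition 3]{LR05}, used later in Section \ref{ssec-bf}), so "computes the $L$-function" must be read up to that normalizing denominator; your sketch should record this, since the normalization is exactly what the rest of the paper's construction of the basic function hinges on.
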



\subsection{Local $\gamma$-functions}\label{ssec-lgf}
One has to compare the Piatetski-Shapiro and Rallis local $\gamma$-function $\Gam_{\PSR}(\pi ,\chi ,s)$ and the Langlands local
$\gamma$-function $\gam(s,\chi\otimes\pi,\rho,\psi )$, with $\rho$ as defined in \eqref{rho}.
This has been explicitly discussed
by Lapid and Rallis in \cite{LR05} (and also \cite{Ik92} and \cite{Kk18}).

Let $\psi$ be a non-trivial additive character of $F$. According to \cite{Ik92}, \cite{LR05} and \cite{Kk18}, one defines the {\sl normalized} local intertwining operator:
\begin{equation}\label{eq:IT-eta-rho}
\RM^\dag_{w_{\Del}}(s,\chi,\psi):=\chi_{s}(2)^{2n}|2|^{n(2n+1)}\beta_{\psi}(\chi_s)\cdot	
\RM_{w_{\Del}}(s,\chi),
\end{equation}
with the function $\beta_{\psi}(\chi_s)$ given by
\begin{equation}\label{eq:eta-intertwining}
\beta_{\psi}(\chi_s)=\gam(s-\frac{2n-1}{2},\chi,\psi)\prod_{r=1}^{n}\gam(2s-2n+2r,\chi^{2},\psi),
\end{equation}
a finite product of local $\gamma$-functions of abelian type. Note that the factor $\chi_{s}(2)^{2n}|2|^{n(2n+1)}$ occurs due to the geometry of the maximal parabolic subgroup $P_\Del$ relative to the standard
maximal parabolic subgroup $P_{\std}$. With this normalized local intertwining operator $\RM^\dag_{w_{\Del}}(s,\chi,\psi)$, the local functional equation in Theorem \ref{thm:PSR86}, Part (4) can be re-written as follows.

\begin{cor}\label{FE-Langlands gamma}
The local zeta integral $\RZ(f_{\chi_{s}},\phi)$ of Piatetski-Shapiro and Rallis satisfies the functional equation
\begin{equation}\label{eq:FE-dag}
\RZ(\RM^\dag_{w_{\Del}}(s,\chi,\psi)f_{\chi_{s}}, \varphi_{v_{\pi },v_{\wt{\pi}}}) =\pi(-1) \gamma(s+\frac{1}{2},\chi\otimes\pi,\rho,\psi)\RZ(f_{\chi_{s}}, \varphi_{v_{\pi },v_{\wt{\pi}}}), 		
\end{equation}
with the Langlands $\gamma$-function $\gamma(s+\frac{1}{2},\chi\otimes\pi,\rho,\psi)$.
\end{cor}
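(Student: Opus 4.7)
The plan is to derive the corollary essentially by substitution, combining Theorem \ref{thm:PSR86}(4) with the definition \eqref{eq:IT-eta-rho} of the normalized intertwining operator $\RM^\dag_{w_\Del}(s,\chi,\psi)$, and then invoking the known identification between the Piatetski-Shapiro--Rallis gamma factor $\Gam_{\PSR}(\pi,\chi,s)$ and the Langlands gamma function $\gam(s+\frac{1}{2},\chi\otimes\pi,\rho,\psi)$.

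First I would start from Theorem \ref{thm:PSR86}(4), which gives the functional equation
\[
\RZ(\RM_{w_\Del}(s,\chi)f_{\chi_s},\varphi_{v_\pi,v_{\wt{\pi}}})
= \Gam_{\PSR}(\pi,\chi,s)\,\RZ(f_{\chi_s},\varphi_{v_\pi,v_{\wt{\pi}}}).
\]
By linearity of the doubling zeta integral in the section argument, one may multiply both sides by the scalar $\chi_s(2)^{2n}|2|^{n(2n+1)}\beta_\psi(\chi_s)$, and then recognize the left-hand side, via \eqref{eq:IT-eta-rho}, as $\RZ(\RM^\dag_{w_\Del}(s,\chi,\psi)f_{\chi_s},\varphi_{v_\pi,v_{\wt{\pi}}})$. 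This reduces the corollary to the scalar identity
\[
\chi_s(2)^{2n}|2|^{n(2n+1)}\beta_\psi(\chi_s)\cdot\Gam_{\PSR}(\pi,\chi,s)
= \pi(-1)\,\gam(s+\tfrac{1}{2},\chi\otimes\pi,\rho,\psi),
\]
understood as an identity of meromorphic functions in $s$.

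The crucial step is precisely this scalar identity. It is the content of the normalization work of Lapid--Rallis \cite{LR05}, building on Ikeda \cite{Ik92} and refined by Kakuhama \cite{Kk18}: the abelian gamma factor $\beta_\psi(\chi_s)$ in \eqref{eq:eta-intertwining} is constructed exactly so that, after correcting for the twist $\chi_s(2)^{2n}|2|^{n(2n+1)}$ coming from the passage between the standard Siegel parabolic $P_{\std}$ and the doubling parabolic $P_\Del$ (via the element $g_0$ of \eqref{g0}), the normalized intertwining operator $\RM^\dag_{w_\Del}(s,\chi,\psi)$ intertwines the doubling functional equation with the Langlands gamma factor. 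Accordingly, my plan would be to invoke this identity as established in those references rather than rederive it.

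The main obstacle, and the only genuinely delicate point, is the precise bookkeeping of the scalar. There are two subtleties to check: first, that the factor $\chi_s(2)^{2n}|2|^{n(2n+1)}$ correctly records the Jacobian/modular-character shift between $P_{\std}$ and $P_\Del$, consistent with the modular character computation \eqref{dPDel}; second, that the sign $\pi(-1)$ on the right-hand side is accounted for by the specific choice $w_\Del=(\RI_{V^+},-\RI_{V^-})$ in \eqref{wDel0}, which, under the embedding \eqref{embsp}, acts as the central element $-\RI_{2n}$ on the $\Sp(V^-)$-component after conjugating back to the diagonally embedded $\Sp_{2n}(F)$. Both items are treated explicitly in \cite{LR05} and \cite{Kk18} for the symplectic case; once they are in hand, \eqref{eq:FE-dag} follows immediately.
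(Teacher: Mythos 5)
Your proposal is correct and matches the paper's treatment: the paper gives no independent proof of this corollary, but simply records it as the content of the normalization results of Lapid--Rallis, Ikeda, and Kakuhama, and then reads off the scalar relation $\gam(s,\chi\otimes\pi,\rho,\psi)=\pi(-1)\chi_{s}(2)^{2n}|2|^{n(2n+1)}\beta_{\psi}(\chi_s)\,\Gam_{\PSR}(\pi,\chi,s)$ by comparing with Theorem \ref{thm:PSR86}(4) --- exactly the identity you isolate as the crux. The only cosmetic difference is the direction of deduction (you derive the corollary from the scalar identity, the paper derives the scalar identity from the corollary), which is immaterial since both rest on the same cited references.
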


From the functional equation in Theorem \ref{thm:PSR86}, Part (4) and that in Corollary \ref{FE-Langlands gamma}, one can easily deduce the relation between the Piatetski-Shapiro and Rallis local $\gamma$-function
$\Gam_{\PSR}(\pi ,\chi ,s)$ and the Langlands local
$\gamma$-function $\gam(s,\chi\otimes\pi,\rho,\psi)$:
\[
\gam(s,\chi\otimes\pi,\rho,\psi)=\pi(-1)\chi_{s}(2)^{2n}|2|^{n(2n+1)}\beta_{\psi}(\chi_s)\cdot\Gam_{\PSR}(\pi ,\chi ,s).
\]

Note that the normalized intertwining operator $\RM^\dag_{w_{\Del}}(s,\chi,\psi)$
depends only on a choice of additive characters $\psi$.
Moreover, by \cite[Appendix B.3]{Y14} (and also \cite[Lemma 1.1]{Ik92}), one deduces that
\begin{equation}\label{eq:M-FE}
\RM^\dag_{w_{\Del}}(-s,\chi^{-1},\psi^{-1})\circ \RM^\dag_{w_{\Del}}(s,\chi,\psi)={\rm Id},
\end{equation}
which is an identity required for the normalization of local intertwining operators.
As before, we take the maximal compact subgroup $K_\Del=g_0^{-1}\Sp_{4n}(\CO)g_{0}$ of $\Sp(W)$. When $\chi$ is unramified, define
$f^\circ_{\chi_s}$ to be the $K_\Del$-invariant function  in $\RI(s,\chi)$ that is normalized by $f^\circ_{\chi_s}(\RI_{4n})=1$.
If $\psi$ is of conductor $\CO$, recall from  \cite[Proposition 3.1 (5)]{Y14} that
\begin{equation}\label{eq:M-Unramified}
\RM^\dag_{w_{\Del}}(s,\chi,\psi)(b_{2n}(s,\chi)f^\circ_{\chi_s})=b_{2n}(-s,\chi^{-1})f^\circ_{\chi_s^{-1}},	
\end{equation}
where
$b_{2n}(s,\chi)=L(s+\frac{2n+1}{2},\chi)\prod^{n}_{r=1}L(2s+2r-1,\chi^2)$.


\section{Functional Equation for $\beta_\psi(\chi_s)$}\label{sec-FEbeta}


In the spirit of the Braverman-Kazhdan proposal, we study the finite product of local gamma factors of abelian type:
\[
\beta_{\psi}(\chi_s)=\gam(s-\frac{2n-1}{2},\chi,\psi)\prod_{r=1}^{n}\gam(2s-2n+2r,\chi^{2},\psi),
\]
as defined in \eqref{eq:eta-intertwining}, in terms of the local harmonic analysis on $F^\times$. To this end, we are going to find a pair of distributions on $F^\times$,
which are of the Mellin transform type and satisfy a functional equation with
$\beta_{\psi}(\chi_s)$ as the proportional factor, i.e. the gamma factor of the functional equation (Theorem \ref{thm:LT}).
We deduce the two distributions and the associated functional equation with $\beta_\psi(\chi_s)$ from the theory
of the local zeta integrals associated to prehomogeneous vector space (\cite{Ki03}) that consists of a pair
$$
(\GL_{2n+1}(F),\Sym^2(F^{2n+1})),
$$
and the corresponding functional equation. This theory was developed through the work of  G. Shimura (\cite{S97}), J. Igusa (\cite{Ig78} and \cite{Ig00}), Piatetski-Shapiro and Rallis (\cite{PSR87}),
and T. Ikeda (\cite{Ik17}). By using the theory of fiber
integrations as developed by A. Weil (\cite{W65}), we push the theory on $\Sym^2(F^{2n+1})$ down to a theory on $\BG_m(F)=F^\times$ through the algebraic morphism
$$
\det\ :\ \Sym^2(F^{2n+1})\rightarrow\BG_a(F)=F.
$$
We review the theory on $\Sym^2(F^{2n+1})$ in Section \ref{ssec-zi-sm} and develop harmonic analysis on $F^\times$ for the $\gamma$-functions $\beta_\psi(\chi_s)$ through Sections \ref{ssec-bbf} and \ref{ssec-ssgft}.

\subsection{Zeta integrals associated to symmetric matrices}\label{ssec-zi-sm}

It is well known that the space of $m\times m$-symmetric matrices with the natural $\GL_m$-action is a prehomogeneous vector space defined over the algebraic closure $\ovl{F}$ of $F$ (\cite{Ki03}).
Denote by $S_{m}(F):=\Sym^2(F^m)$ the space of all $m\times m$-symmetric matrices with entries in $F$.
The general linear group $\GL_{m}(F)$ acts on $S_{m}(F)$ by $g\cdot X:=g\cdot X\cdot{^t\!g}$.
Let $S_{m}^\circ$ be the set of all $\GL_{m}(F)$-orbits of $S_{m}(F)$ of maximal dimension (i.e. the semistable orbits).
For $X\in S_{m}(F)$ with $\det(X)\ne 0$, write $S_{m}^X$ to be the $\GL_{m}(F)$-orbit containing $X$. It is clear that $S_{m}^X\in S_{m}^\circ$, if $\det(X)\ne 0$. With fixed representatives, there exist
finitely many elements $X_{1},\cdots,X_{t}\in S^{\circ}_{m}$ such that $S^{\circ}_{m} = \bigsqcup_{i=1}^{t} S^{X_{i}}_{m}$.

For any $\Phi\in \CC^\infty_c(S_{m})$, the space of smooth, compactly supported functions on $S_m(F)$,
the local zeta integral $\CZ_{S_m}(s,\Phi,\chi)$ is defined, as in \cite{Ik17}, by
\begin{equation}\label{lzi-sm}
\CZ_{S_m}(s,\Phi,\chi)=\int_{S_m(F)}\Phi(X)\chi(\det(X))|\det(X)|^{s-\frac{m+1}{2}}\ud X,
\end{equation}
for any unitary character $\chi$ of $F^\times$, where the measure ${\rm d} X$ is normalized so that $\vol(S_{m}(\CO_{F}),\ud X)=1$.
Note that this normalization differs from the one used in \cite[p. 53]{Ik17}
by a constant $|2|^{\frac{m(m-1)}{4}}$.
It is known that the integral defining $\CZ_{S_m}(s,\Phi,\chi)$ converges absolutely for $\Re(s)>\frac{m-1}{2}$, and has a meromorphic continuation to $s\in\BC$.
For any $X\in S^{\circ}_{m}$, we may also define a zeta integral on the $F$-rational orbit $S^{X}_m(F)$ by
$$
\CZ_{S^{X}_{m}}(s,\Phi,\chi) =
\int_{S^{X}_{m}}\Phi(X)
\chi(\det(X))
|\det(X)|^{s-\frac{m+1}{2}}\ud X.
$$
For convenience, we fix an additive character $\psi$ of $F$ with conductor $\CO_{F}$. The Fourier transform is defined by
$$
\wh{\Phi}(X)=
\int_{S_{m}(F)}
\Phi(Y)\psi(\tr(XY))\ud Y,
$$
for any $\Phi\in \CC^\infty_c(S_{m})$.
The following proposition is a reformulation of the main relevant results from \cite[Appendix~1 to Chapter~3]{PSR87} and \cite{Ik17}.

\begin{pro}\label{pro:zeta-pvs}
Assume that the character $\chi$ is unitary.
The local zeta integrals $\CZ_{S_m}(s,\Phi,\chi)$ and $\CZ_{S^{X}_{m}}(s,\Phi,\chi)$ converge absolutely for $\Re(s)>\frac{m-1}{2}$ and admit meromorphic continuations to $s\in \BC$, and enjoy the following
functional equation:
$$
\CZ_{S_m}(s,\Phi,\chi) =
\sum_{i=1}^{t}
c_{S^{X_{i}}_{m}}(\chi,s)
\CZ_{S^{X_{i}}_{n}}
(\frac{m+1}{2}-s,\wh{\Phi},\chi^{-1})
$$
where
\begin{enumerate}
\item if $m=2n+1$, the coefficients have the following expression:
\begin{align*}
c_{S^{X_{i}}_{m}}(\chi,s)
&=
\gam(s-n,\chi,\psi)^{-1}
\prod_{r=1}^{n}
\gam(2s-2n-1+2r,\chi^{2},\psi)^{-1}
\\
&\qquad\times
|2|^{-2ns}
\chi^{-n}(4)\eta_{X_{i}};
\end{align*}
\item if $m=2n$, the coefficients have the following expression:
\begin{align*}
c_{S^{X_{i}}_{m}}(\chi,s)
&=
\gam(s-n+\frac{1}{2},\chi,\psi)^{-1}
\prod_{r=1}^{n}
\gam(2s-2n+2r,\chi^{2},\psi)^{-1}
\\
&\qquad\times
|2|^{-2ns}
\chi^{-n}(4)
\frac{\alp(D_{X_{i}})}{\alp(1)}
\gam(s+\frac{1}{2},\chi\cdot \chi_{D_{X_{i}}},\psi)^{-1}.
\end{align*}
\end{enumerate}
Here $\gam(s,\chi,\psi)$ is the abelian $\gam$-function associated to character $\chi$, which can be defined via Tate integrals. 
\end{pro}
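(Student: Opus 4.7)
The statement is explicitly labeled as a reformulation of results from \cite[Appendix 1 to Chapter 3]{PSR87} and \cite{Ik17}; my plan is therefore to invoke the established local theory of zeta integrals attached to the prehomogeneous vector space $(\GL_m, \Sym^2(F^m))$ while carefully tracking the normalization differences between the present paper and the cited sources.

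For absolute convergence on $\Re(s) > \frac{m-1}{2}$, since $\Phi$ is compactly supported the integral is dominated by $\int_{K} |\det(X)|^{\Re(s) - \frac{m+1}{2}}\,\ud X$ for some compact $K \subset S_m(F)$. I would verify integrability by slicing through the determinant morphism $\det\colon S_m \to F$: the generic fibers are smooth of codimension one, and Weil's theory of fiber integration reduces the question to integrability of $|t|^{\sigma}$ near $t = 0$ on $F$, which holds for $\sigma > -1$; matching exponents yields precisely the stated half-plane. Convergence of the orbit integrals $\CZ_{S^X_m}(s,\Phi,\chi)$ follows by the same argument restricted to a single $\GL_m$-orbit.

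The meromorphic continuation and the functional equation follow from the Sato--Shintani theory for prehomogeneous vector spaces, specialized to $(\GL_m, \Sym^2)$. To extract the explicit form of the coefficients $c_{S^{X_i}_m}(\chi,s)$, I would use an Iwasawa-type decomposition $X = u \cdot t \cdot {}^t u$ with $u$ upper unipotent and $t$ diagonal, which reduces the zeta integral over each $\GL_m$-orbit to a product of one-dimensional Tate-type integrals; applying Tate's local functional equation factor-by-factor produces the expected $\gam(s-n,\chi,\psi)$ together with $\gam(2s-2n-1+2r,\chi^{2},\psi)$ for $r = 1,\ldots,n$. The dichotomy between $m = 2n+1$ and $m = 2n$ reflects the structure of the generic stabilizer: for odd $m$, the discriminant of a generic orbit is a square up to $\GL_m$-scaling, leaving only the orbital constants $\eta_{X_i}$, while for even $m$ the discriminant class in $F^\times/(F^\times)^2$ determines a quadratic extension $F(\sqrt{D_{X_i}})$ whose local $L$- and $\gamma$-factors yield the extra term $\gam(s+\tfrac{1}{2},\chi \cdot \chi_{D_{X_i}},\psi)$ and the ratio $\alp(D_{X_i})/\alp(1)$.

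The main technical obstacle is the bookkeeping of the normalization constants $|2|^{-2ns}$ and $\chi^{-n}(4)$, together with the orbital factors. As the paper explicitly notes, its measure $\ud X$ differs from the one in \cite{Ik17} by $|2|^{\frac{m(m-1)}{4}}$, and this shift must be distributed consistently across the coefficients. The cleanest way to pin these down is to first verify the functional equation on a test function for which both sides can be computed in closed form---e.g.\ the characteristic function of $S_m(\CO)$ with unramified $\chi$ in odd residue characteristic, where the left side reduces to the local Siegel series and the right side to an explicit Gauss-type sum---and then extend by density in $\CC_c^\infty(S_m)$ and meromorphic continuation in $s$ and $\chi$ to obtain the general statement.
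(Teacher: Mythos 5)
The paper offers no proof of this proposition at all: it is stated verbatim as ``a reformulation of the main relevant results from \cite[Appendix~1 to Chapter~3]{PSR87} and \cite{Ik17}'', with the only original content being the bookkeeping of the measure normalization (the constant $|2|^{\frac{m(m-1)}{4}}$ noted just above \eqref{lzi-sm}). Your opening move --- invoke the cited literature and track the normalization --- is therefore exactly what the paper does, and if you had stopped there the proposal would match the paper.

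The independent derivation you sketch afterwards, however, has genuine gaps if it is meant to stand on its own. First, the convergence argument via fiber integration is circular as stated: integrability of $|t|^{\sigma}$ near $t=0$ only suffices if the fiber integral $f_{\Phi}(t)$ stays bounded as $t\to 0$, and that boundedness is itself a nontrivial theorem about the singularities of $\det=0$ on $S_m$ (it is precisely the content of Igusa's asymptotic expansion, Proposition \ref{pro-2fi} in the paper, whose leading exponent happens to be $0$). Second, the claim that an Iwasawa-type decomposition $X=u\,t\,{}^t\!u$ reduces the \emph{functional equation} to a product of one-dimensional Tate functional equations does not work: the Fourier transform on $\Sym^{2}(F^{m})$ with respect to $\psi(\tr(XY))$ does not factor through those coordinates, so one cannot apply Tate's local functional equation ``factor-by-factor'' to produce the gamma factors. (The paper's own inductive use of the open cell $S_m^{m-1}$ in Theorem \ref{thm:gcd-zi} exploits this decomposition only for locating poles, not for the functional equation; the actual coefficients in \cite{Ik17} and \cite{PSR87} are computed by quite different means.) Third, your proposed verification on the single test function $\mathrm{ch}_{S_m(\CO)}$ followed by ``density'' cannot determine the full matrix of coefficients $c_{S^{X_i}_m}(\chi,s)$: there are $t$ semistable orbits and one test function yields only one linear relation among the $t$ orbital zeta integrals; one needs either test functions separating the orbits or the standard uniqueness argument for $(\chi^{-2}|\det|^{-2s})$-equivariant distributions supported on $S_m^{\circ}$. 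Since the paper deliberately does not reprove these facts, the safe course is to cite \cite[Theorem 2.1]{Ik17} outright and confine the original work to the measure comparison.
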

The other unexplained notations associated to  $X_{i}$ for the $m=2n$ even case can be found in 
\cite[Theorem 2.1]{Ik17}. 
Since we only need the odd case in our calculation, we will omit the notations for the even case here. 

The following lemma can be proved via straightforward computation.

\begin{lem}\label{lem:homogenityofzeta}
The distribution $\CZ_{S_m}(s,\cdot,\chi)$ satisfies the homogeneity, with respect to the action of $g\in\GL_m(F)$,
$$
\CZ_{S_m}(s,g^{-1}\cdot\Phi,\chi)  = \chi^{-2}(\det(g))|\det(g)|^{-2s}\CZ_{S_m}(s,\Phi,\chi).
$$
The same property holds for $\CZ_{S^{X}_{m}}(s,\cdot,\chi)$.
\end{lem}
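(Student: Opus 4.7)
The plan is to establish the identity by a direct change of variables. The $\GL_m(F)$-action on $S_m(F)$ is $g\cdot X=gX{}^{t}g$, so the induced action on functions is $(g^{-1}\cdot\Phi)(X)=\Phi(g\cdot X)=\Phi(gX{}^{t}g)$. Substituting into \eqref{lzi-sm}, I would write
\begin{equation*}
\CZ_{S_m}(s,g^{-1}\cdot\Phi,\chi)=\int_{S_m(F)}\Phi(gX{}^{t}g)\,\chi(\det(X))\,|\det(X)|^{s-\frac{m+1}{2}}\,\ud X,
\end{equation*}
and then make the substitution $Y=gX{}^{t}g$.

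Two elementary ingredients drive the computation. First, $\det(Y)=\det(g)^{2}\det(X)$, which after solving for $\det(X)$ produces the factor $\chi^{-2}(\det g)\,|\det g|^{-2s+(m+1)}$ when inserted into the integrand. Second, the Jacobian of the linear map $X\mapsto gX{}^{t}g$ on the $\binom{m+1}{2}$-dimensional space $S_m$ equals $|\det g|^{m+1}$, giving $\ud X=|\det g|^{-(m+1)}\,\ud Y$. Combining these two, the powers of $|\det g|$ cancel to yield exactly $|\det g|^{-2s}$, and the character contribution collects to $\chi^{-2}(\det g)$, producing the claimed identity.

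The only non-routine step is the Jacobian computation. The cleanest way I would present it is to note that on a Zariski-dense open subset of $\GL_m$ the element $g$ can be written as a product of elementary types (diagonal, permutation, and elementary unipotent), on each of which the Jacobian of $X\mapsto gX{}^{t}g$ on $S_m$ is easily computed and equals $|\det g|^{m+1}$; multiplicativity then gives the general case. Alternatively, one observes that $\det(\Ad_{S_m}(g))$ is a rational character of $\GL_m$ (hence a power of $\det$), and pins down the exponent by evaluating on scalar matrices $g=tI_m$, where $gX{}^{t}g=t^{2}X$ scales an $\binom{m+1}{2}$-dimensional space by $t^{2}$, giving $|t|^{m(m+1)}=|\det(tI_m)|^{m+1}$.

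The same argument applies verbatim to $\CZ_{S_{m}^{X}}(s,\cdot,\chi)$, since the $\GL_m(F)$-orbit $S_m^X$ is stable under the action $Y\mapsto gY{}^{t}g$ and the restriction of the measure $\ud X$ to $S_m^X$ transforms by the same Jacobian (being the restriction of a Haar-type measure compatible with the linear action). The main obstacle is purely bookkeeping, namely checking the signs of the exponents of $|\det g|$ and the power of $\chi$ against the normalization $s-\frac{m+1}{2}$ in the integrand; no analytic subtlety arises since the identity can first be proved in the region $\Re(s)>\frac{m-1}{2}$ of absolute convergence and then extended to all $s\in\BC$ by meromorphic continuation, as both sides are meromorphic in $s$.
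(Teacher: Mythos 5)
Your proof is correct and is precisely the ``straightforward computation'' that the paper asserts but does not write out: the change of variables $Y=gX\,{}^t g$ together with $\det(Y)=\det(g)^2\det(X)$ and the Jacobian $\ud Y=|\det g|^{m+1}\ud X$ (correctly pinned down via scalar matrices or elementary factors) yields exactly the stated factor $\chi^{-2}(\det g)|\det g|^{-2s}$, and the argument restricts to the open orbits $S_m^X$ as you say. Nothing is missing.
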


Define a product of abelian $L$-functions by
\begin{align}\label{am}
a_{m}(s,\chi) = L(s-\frac{m-1}{2},\chi)
\prod_{r=1}^{\lfloor{\frac{m}{2}\rfloor}}
L(2s-m+2r,\chi^{2}).
\end{align}

We have the following theorem generalizing \cite[Appendix~1 to Chapter~3, Theorem]{PSR87}. The proof is almost parallel and for the convenience of reader we include the proof here.
\begin{thm}\label{thm:gcd-zi}
Assume that the character $\chi$ is unitary.
For any $\Phi\in \CC^{\infty}_{c}(S_{m})$, the zeta integral $\CZ_{S_m}(s,\Phi,\chi)$ can be expressed as a product of $a_{m}(s,\chi)$ and a polynomial in $\BC[q^{s},q^{-s}]$. In other words, the fractional ideal
$\CI_\CZ$ generated by $\CZ_{S_m}(s,\Phi,\chi)$ is given by
$$
\CI_{\CZ} = \{ \CZ_{S_m}(s,\Phi,\chi)\ |\ \Phi\in \CC^{\infty}_{c}(S_{m}) \} =
a_{m}(s,\chi)\cdot \BC[q^{s},q^{-s}].
$$
\end{thm}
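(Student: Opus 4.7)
My plan is to prove the theorem by establishing the two containments separately. First I would note that $\CI_\CZ$ is naturally a $\BC[q^s,q^{-s}]$-module: since $|\cdot|^s$ on $F^\times$ is periodic in $s$ with period $2\pi i/\log q$, each $\CZ_{S_m}(s,\Phi,\chi)$ lies in $\BC(q^s)$, and the $\BC[q^s,q^{-s}]$-action is realized through the homogeneity of Lemma \ref{lem:homogenityofzeta}: taking $g = \varpi^k \RI_m$ multiplies $\CZ_{S_m}(s,\Phi,\chi)$ by a unit times $q^{2kms}$, so passing to scaled copies of $\Phi$ generates multiplication by $q^{\pm 2ms}$ inside $\CI_\CZ$.

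For the upper bound $\CI_\CZ \subseteq a_m(s,\chi)\BC[q^s,q^{-s}]$, I would identify the possible poles of $\CZ_{S_m}(s,\Phi,\chi)$ by combining two pole-free regions with the functional equation of Proposition \ref{pro:zeta-pvs}. On $\Re(s) > \frac{m-1}{2}$ the integral converges absolutely and is therefore holomorphic. The functional equation writes $\CZ_{S_m}(s,\Phi,\chi)$ as a sum of $c_{S^{X_i}_m}(\chi,s)\,\CZ_{S^{X_i}_m}(\tfrac{m+1}{2}-s,\wh\Phi,\chi^{-1})$, so on $\Re(s) < 1$, where the orbital integrals on the right are absolutely convergent, all poles come from the coefficients $c_{S^{X_i}_m}(\chi,s)$. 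Inspecting these in the odd case $m = 2n+1$, the inverse $\gamma$-factor $\gam(s-n,\chi,\psi)^{-1}$ produces the $L$-factor $L(s-n,\chi)$ and each $\gam(2s-2n-1+2r,\chi^2,\psi)^{-1}$ produces $L(2s-2n-1+2r,\chi^2)$, which are exactly the factors assembling $a_m(s,\chi)$; the even case is parallel, with the extra $\gam(s+\frac{1}{2},\chi\chi_{D_{X_i}},\psi)^{-1}$ handled analogously. To control possible poles in the intermediate strip $1 \leq \Re(s) \leq \frac{m-1}{2}$, I would invoke the Bernstein--Sato $b$-function of $\det$ on $\Sym^2(F^m)$, whose zeros are classically known and lie among the poles of $a_m(s,\chi)$; by Bernstein's theorem this bounds all remaining poles.

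For the reverse inclusion $a_m(s,\chi)\BC[q^s,q^{-s}] \subseteq \CI_\CZ$, the plan is to exhibit an explicit test function $\Phi_0 \in \CC^\infty_c(S_m)$ whose zeta integral matches $a_m(s,\chi)$ up to a unit in $\BC[q^s,q^{-s}]$. For unramified unitary $\chi$ the natural candidate is $\Phi_0 = \one_{S_m(\CO_F)}$; its zeta integral decomposes as a sum of orbital volumes over the $\GL_m(\CO_F)$-orbits on integral symmetric matrices of nonzero determinant, and this sum reduces via a Hironaka-type orbit computation to the product of $L$-factors making up $a_m(s,\chi)$. For ramified $\chi$ the target $L$-factors collapse to $1$, and a test function supported near a regular representative suffices. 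Combining with the $\BC[q^s,q^{-s}]$-module structure then generates the full ideal. I expect the main obstacle to be this explicit orbit computation: summing the geometric series to produce the full product $L(s - \frac{m-1}{2},\chi)\prod_{r=1}^{\lfloor m/2\rfloor} L(2s - m + 2r, \chi^2)$ requires a careful analysis of the stratification of $S_m(\CO_F)$ by $\GL_m(\CO_F)$-orbits and their volumes, which is more intricate than the $\Mat_n$ computation in Godement--Jacquet, particularly in residue characteristic $2$ where discriminants behave anomalously.
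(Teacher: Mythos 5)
Your overall scaffolding (the $\BC[q^{s},q^{-s}]$-module structure from Lemma \ref{lem:homogenityofzeta}, then an upper bound on poles plus an explicit lower bound) is sensible, but two of your key steps do not go through. First, the appeal to the Bernstein--Sato $b$-function to control poles in the strip $1\leq \Re(s)\leq \frac{m-1}{2}$ is not available over a $p$-adic field: the archimedean argument that bounds poles of $|f|^{s}$ by zeros of $b(s)$ rests on the differential-operator identity $b(s)f^{s}=P(s,x,\partial)f^{s+1}$, which has no $p$-adic analogue. The statement that poles of $p$-adic local zeta functions lie among roots of the $b$-function is (a form of) the monodromy conjecture, not a theorem you can invoke, so the intermediate strip is left uncontrolled. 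Second, your lower bound splits into ``$\chi$ unramified'' and ``$\chi$ ramified, where the target collapses to $1$'', but this dichotomy is wrong: if $\chi$ is ramified while $\chi^{2}$ is unramified (e.g.\ $\chi$ a ramified quadratic character), then $L(s-\frac{m-1}{2},\chi)=1$ but $\prod_{r}L(2s-m+2r,\chi^{2})\neq 1$, so $a_{m}(s,\chi)\neq 1$; a test function supported near a regular semistable point only produces entire zeta integrals and cannot realize these poles. (For the unramified case your deferred ``Hironaka-type'' computation is genuinely in the literature --- Shimura and Igusa, as the paper's own remark after Proposition \ref{pro-2fi} records --- so that part could be repaired by citation, but the mixed case above cannot.)

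For comparison, the paper avoids both problems by inducting on $m$ through the open cell $S_{m}^{m-1}(F)\cong F^{m-1}\times S_{m-1}(F)\times F^{\times}$: for $\Phi$ of product type supported there, $\CZ_{S_m}(s,\Phi,\chi)$ factors as a constant times $\CZ_{S_{m-1}}(s-\tfrac12,\cdot,\chi)$ times an entire Tate-type integral, which by induction produces the fractional ideal $a_{m-1}(s-\frac12,\chi)\BC[q^{s},q^{-s}]$ simultaneously for the upper and lower bounds on that cell and for all $\chi$ at once. The remaining possible poles for general $\Phi$ are then killed by a distribution argument: the leading Laurent coefficient at a putative extra pole is a quasi-invariant distribution supported on the complement of $\GL_m(F)\cdot(S_{m}^{m-1}\cup S_{m}^{\circ})=\{0\}$, forcing $\chi^{2}|\cdot|^{2s_0}$ to be trivial, after which the functional equation of Proposition \ref{pro:zeta-pvs} shows such a pole occurs only for $m$ even and is accounted for by the extra factor $L(2s,\chi^{2})$. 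If you want to keep your two-sided strategy, you would need to replace the Bernstein--Sato step by an argument of this support-theoretic kind (or by the induction itself), and to supply test functions realizing the poles of $\prod_{r}L(2s-m+2r,\chi^{2})$ when $\chi$ is ramified but $\chi^{2}$ is not.
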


\begin{proof}
For any $\Phi\in \CC^{\infty}_{c}(S_{m})$, the zeta integral $\CZ_{S_m}(s,\Phi,\chi)$ can be expressed as a rational function in $q^{-s}$.
From Lemma \ref{lem:homogenityofzeta}, the set $\CI_{\CZ}$ is a $\BC[q^{s},q^{-s}]$-module, and hence a fractional ideal in $\BC(q^s,q^{-s})$. It is enough to show that the quotient
$$
\frac{\CZ_{S_m}(s,\cdot,\chi)}{a_{m}(s,\chi)}
$$
is entire, and for any $s=s_{0}\in \BC$, there exists $\Phi_{0}\in \CC^{\infty}_{c}(S_{m})$ such that
$$
\frac{\CZ_{S_m}(s,\Phi,\chi)}{a_{m}(s,\chi)}
$$
is nonzero at $s=s_{0}$.

The proof of this statement goes via induction on $m$. When $m=1$ it follows from Tate's thesis (\cite{Tt50} and also \cite{CF67}). For a general $m>1$, we consider the variety
$$
S_{m}^{m-1}(F):
=\{
\left(
\begin{smallmatrix}
X & Y\\
{}^t\!Y & x
\end{smallmatrix}
\right)
\in S_{m}(F)
\ |\ X\in S_{m-1}(F), Y\in F^{m-1}, x\in F^{\times}
 \}
$$
which is open in $S_{m}(F)$. It is clear that $S_{m}^{m-1}(F)$ is isomorphic to $F^{m-1}\times S_{n-1}(F)\times F^{\times}$ via the following map
$$
\left(\begin{smallmatrix}
\RI & U\\
0 & 1
\end{smallmatrix}\right)
\times
\left(\begin{smallmatrix}
A 	& 0\\
0	& b
\end{smallmatrix}\right)
\to
\left(\begin{smallmatrix}
\RI & U\\
0 & 1
\end{smallmatrix}\right)
\left(\begin{smallmatrix}
A 	& 0\\
0	& b
\end{smallmatrix}\right)
\left(\begin{smallmatrix}
\RI & 0\\
{}^t\!U & 1
\end{smallmatrix}\right)
=
\left(\begin{smallmatrix}
A+Ub\cdot {}^t\!U 	& bU\\
{}^t\!Ub 	&	b
\end{smallmatrix}\right)
$$
where $U\in F^{m-1}, A\in S_{m-1}(F)$, and $b\in F^{\times}$.

Consider the zeta integral $\CZ_{S_m}(s,\Phi,\chi)$ for $\Phi\in\CC^{\infty}_{c}(S^{m-1}_{m})$. For any $\Phi\in \CC^{\infty}_c(S^{m-1}_{m})$, the space of smooth, compactly supported functions on $S^{m-1}_{m}(F)$,
there exists $\Phi_{1}\in \CC^{\infty}_{c}(F^{m-1}),
\Phi_{2}\in \CC^{\infty}_{c}(S_{m-1})$ and $\Phi_{3}\in \CC^{\infty}_{c}(F^{\times})$ such that
$$
\Phi(\left(\begin{smallmatrix}
A+Ub\cdot {}^t\!U 	& bU\\
{}^t\!Ub 	&	b
\end{smallmatrix}\right))
=\Phi_1(U)\Phi_2(A)\Phi_3(b).
$$
Due to the choice of the relevant measures, up to a constant, we have
\begin{align*}
\CZ_{S_m}(s,\Phi,\chi)
&=
\int_{F^{m-1}}
\Phi_{1}(U)\ud U\\
&\qquad\times
\int_{S_{m-1}(F)}
\Phi_{2}(A)\chi(\det (A))
|\det (A)|^{s-\frac{m+1}{2}}\ud A
\\
&\qquad\times
\int_{F^\times}
\Phi_{3}(b)\chi(b)|b|^{s-\frac{m+1}{2}+1}\ud^\times b.
\end{align*}
Note that the integral associated to $\Phi_{3}$ is entire since $\Phi_{3}\in \CC^{\infty}_{c}(F^{\times})$, while the integral $\int_{F^{m-1}}\Phi_{1}(U)\ud U$ is a constant.
By the assumption of the induction, we have that
$$
\{ \frac{\CZ_{S_m}(s,\Phi,\chi)}{a_{m-1}(s-\frac{1}{2},\chi)}\ |\ \Phi\in \CC^{\infty}_{c}(S^{m-1}_{m}) \} = \BC[q^{s},q^{-s}].
$$

Next, we consider the poles of the local zeta integral $\CZ_{S_m}(s,\Phi,\chi)$ for general $\Phi\in \CC^{\infty}_{c}(S_{m})$. To this end, we consider the following Laurent expansion of
$\CZ_{S_m}(s,\Phi,\chi)$ at $s=s_0$:
$$
\CZ_{S_m}(s,\Phi,\chi) =
\sum_{k\geq a}\ell_{k}(\Phi,\chi)(s-s_{0})^{k}.
$$
It is clear that $\Phi\to \ell_{k}(\Phi,\chi)$ defines a distribution on $\CC^{\infty}_{c}(S_{m})$. We are interested in the case that $s=s_0$ is a pole of $\CZ_{S_m}(s,\Phi,\chi)$, i.e., $a=a(s_0)$ is negative.
In such a situation, it is easy to check that the leading term (or the Cauchy principal value) $\ell_{a}(\Phi,\chi)$ is a distribution with the quasi-invariant property with respect to the action of $\GL_m(F)$ as
given in Lemma \ref{lem:homogenityofzeta}. By the definition of $\CZ_{S_m}(s,\Phi,\chi)$ as in \eqref{lzi-sm}, for any $\Phi\in\CC_c^\infty(S_m^\circ)$, the local zeta integral $\CZ_{S_m}(s,\Phi,\chi)$ is entire
as function in $s$. Hence when $a=a(s_0)<0$, the support of the distribution $\ell_{a}(\Phi,\chi)$ is contained in
$$
S_m(F)\bs S_m^\circ(F)=\{X\in S_m(F)\ |\ \det(X)=0\}.
$$

Assume that $s=s_0$ is not a pole of $\CZ_{S_m}(s,\Phi,\chi)$ for $\Phi$ running through $\CC^{\infty}_{c}(S^{m-1}_{m})$. Then the support of the distribution $\ell_{a}(\Phi,\chi)$ with
$\Phi\in\CC_c^\infty(S_m)$ is contained in the following subset
\[
(S_{n}(F)\bs S^{m-1}_{m}(F))\cap (S_m(F)\bs S_m^\circ(F))=S_m(F)\bs (S^{m-1}_{m}(F)\cup S_m^\circ(F)),
\]
and stable under the action of $\GL_m(F)$.
Over the $p$-adic local field $F$, it is easy to check that the set
$$
S_m(F)\bs \GL_m(F)\cdot(S^{m-1}_{m}(F)\cup S_m^\circ(F))
$$
consists only of the unique closed $\GL_m(F)$-orbit $\{0\}$. Hence, we have that $\ell_{a}(\Phi,\chi) = \alpha_{s_0,\chi}\cdot\Phi(0)$ for some constant $\alpha_{s_0,\chi}$ that depends on $\chi$ and $s_0$,
as a distribution on $\CC^{\infty}_{c}(S_{m})$.
It follows from Lemma \ref{lem:homogenityofzeta} that the character $\chi^{-2}\cdot |\cdot|^{-2s_0}$ must be the trivial character.
Note that such a situation never occurs when $\chi^{2}$ is a ramified unitary character of $F^{\times}$, in which case $a_{n}(s,\chi)  =1$ identically. Therefore, in order to understand the situation that
$s=s_0$ is a pole of $\CZ_{S_m}(s,\Phi,\chi)$ for $\Phi\in \CC^{\infty}_{c}(S_{m})$, but is not a pole for $\Phi\in\CC^{\infty}_{c}(S^{m-1}_{m})$, we only need to consider the case when the character
$\chi^{2}$ is unramified.

In such a situation,
 $|\cdot|^{2s_0}$ must be a unitary character, which happens only when $\Re(s_0)=0$.
From the functional equation in Proposition \ref{pro:zeta-pvs}, if $\CZ_{S_m}(s,\Phi,\chi)$ has such a pole at $s=s_{0}$, then the right-hand side of the functional equation must
have the same pole at $s=s_0$. Since $\Re(\frac{m+1}{2}-s_{0}) = \frac{m+1}{2}>\frac{m-1}{2}$, the local zeta integral $\CZ_{S^{X_{i}}_{m}}(\frac{m+1}{2}-s,\wh{\Phi},\chi^{-1})$ is absolutely convergent at $s=s_0$, and
hence is holomorphic at $s=s_0$ for every $i$. Hence there exists at least one $S^{X_{i}}_{m}$ such that the coefficient $c_{S^{X_{i}}_{m}}(s,\chi)$ must have a pole at such $s_{0}$.
According to the explicit expression for the coefficients $c_{S^{X_{i}}_{m}}(s,\chi)$ in Proposition \ref{pro:zeta-pvs}, the coefficient $c_{S^{X_{i}}_{m}}(s,\chi)$ has a pole at such  $s_0$
only when $m$ is even, which is a simple pole.
In this situation, the distribution $\ell_{a}(\cdot,\chi)$ gives the extra simple pole for $\CZ_{S_m}(s,\Phi,\chi)$ with $\Phi\in \CC^{\infty}_{c}(S_{m})$,
which can exactly be captured by the standard $L$-factor $L(2s,\chi^{2})$.

By summarizing the above discussion, we deduce that the poles of the local zeta integral $\CZ_{S_m}(s,\Phi,\chi)$ with $\Phi\in \CC^{\infty}_{c}(S_{m})$ are exactly the same as those of
$$
a_{m}(s,\chi) =
\bigg\{
\begin{matrix}
a_{m-1}(s-\frac{1}{2},\chi) & \text{ if $m$ is odd},\\
L(2s,\chi^{2})a_{m-1}(s-\frac{1}{2},\chi) & \text{ if $m$ is even}.
\end{matrix}
$$
We are done.
\end{proof}


\subsection{Zeta integrals and intertwining operators}\label{ssec-zi-io}
As discussed in \cite[Appendix~1 to Chapter~3]{PSR87}, we are going to take a close look at the relation between the poles of the local zeta integrals $\CZ_{S_m}(s,\Phi,\chi)$ and those of the local intertwining operator associated to the
standard Siegel parabolic subgroup $P=MN$ of $\Sp_{2m}$.
Following the notation in Section \ref{sec-LT-DZI}, we define
$\RI(s,\chi) = \Ind^{\Sp_{2m}(F)}_{P(F)}(\chi_{s})$ to be the normalized induced representation $\Sp_{2m}(F)$ from $P$, which consists of holomorphic sections. As in \eqref{chis}, the character $\chi_s$ is defined to be
\[
\chi_s(m(A)):=\chi(\det (A))|\det (A)|^s
\]
where $m(A)=\left(\begin{smallmatrix}A&0\\0 &A^*\end{smallmatrix}\right)\in M(F)\subset\Sp_{2m}(F)$ with $A\in\GL_m(F)$.
Take the Weyl element $w_{m}=
\left(
\begin{smallmatrix}
0 & -\RI_{m} \\
\RI_{m} & 0
\end{smallmatrix}
\right)
$ of $\Sp_{2m}$, which takes $P$ to its opposite $P^-$.  The intertwining operator $\RM_{w_{m}}(s,\chi)$ from $\RI(s,\chi)$ to $\RI(-s,\chi^{-1})$, as in \eqref{lio-0}, is defined by
$$
\RM_{w_{m}}(s,\chi)(f_{\chi_s})(g) =
\int_{N(F)}
f_{\chi_s}(w_{m}ng)\ud n,
$$
with $f_{\chi_s}\in \RI(s,\chi)$, which converges absolutely for $\Re(s)$ sufficiently positive and admits meromorphic continuation to $s\in \BC$ (\cite{Wal03}, for instance).

\begin{pro}\label{pro:pole-Mw}
For any holomorphic section $f_{\chi_s}\in \RI(s,\chi)$, the quotient
$$
\frac{\RM_{w_m}(s,\chi)(f_{\chi_s})}{a_{m}(s,\chi)}
$$
is a holomorphic section in $\RI(-s,\chi^{-1})$. Moreover, for any $s=s_{0}$, there exists a holomorphic section $f_{\chi_{s}}\in \RI(s,\chi)$ such that
the quotient $\frac{\RM_{w_m}(s,\chi)(f_{\chi_s})}{a_{m}(s,\chi)}$
is non-zero at $s=s_0$.
\end{pro}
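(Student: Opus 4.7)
The plan is to recognize $\RM_{w_m}(s,\chi)(f_{\chi_s})(e)$ as a local zeta integral of the prehomogeneous type studied in Section~\ref{ssec-zi-sm} and to read off the poles from Theorem~\ref{thm:gcd-zi}.

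First, for $X\in S_m(F)$ with $\det X\neq 0$, a direct matrix computation gives the Bruhat-style identity
\begin{equation*}
w_m\, n(X)\;=\;m(X^{-1})\,n(-X)\,n^-(X^{-1}),\qquad n^-(Y):=\left(\begin{smallmatrix}\RI_m & 0\\ Y & \RI_m\end{smallmatrix}\right).
\end{equation*}
Combined with $\delta_P(m(A))=|\det A|^{m+1}$, the $P(F)$-equivariance of $f_{\chi_s}$, and the substitution $Y=X^{-1}$ (so $\ud X=|\det Y|^{-(m+1)}\,\ud Y$), this rewrites
\begin{equation*}
\RM_{w_m}(s,\chi)(f_{\chi_s})(e)\;=\;\int_{S_m(F)}\chi(\det Y)\,|\det Y|^{s-\frac{m+1}{2}}\,f_{\chi_s}(n^-(Y))\,\ud Y.
\end{equation*}
Evaluation at a general $g\in\Sp_{2m}(F)$ is reduced to $g=e$ by right translation.

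Next, for each $\Phi\in\CC_c^\infty(S_m(F))$ I would introduce the Bruhat-type section $f_{\Phi,\chi_s}\in\RI(s,\chi)$ characterised by $f_{\Phi,\chi_s}(p\cdot n^-(Y))=\chi_s(p)\delta_P^{1/2}(p)\Phi(Y)$ on the open Bruhat cell $P(F)\cdot N^-(F)$ and by $0$ off that cell. Since $\supp(\Phi)$ is compact in $S_m(F)$, the support of $f_{\Phi,\chi_s}$ modulo $P(F)$ is compact inside the open cell, so the extension by zero is smooth. The identity above then specialises to $\RM_{w_m}(s,\chi)(f_{\Phi,\chi_s})(e)=\CZ_{S_m}(s,\Phi,\chi)$. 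Theorem~\ref{thm:gcd-zi} delivers the non-vanishing part of the proposition at once: given any $s_0\in\BC$, choose $\Phi_0\in\CC_c^\infty(S_m(F))$ so that $\CZ_{S_m}(s,\Phi_0,\chi)/a_m(s,\chi)$ is non-zero at $s=s_0$, and take $f_{\chi_s}:=f_{\Phi_0,\chi_s}$.

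The remaining and harder assertion is that $\RM_{w_m}(s,\chi)(f_{\chi_s})/a_m(s,\chi)$ is holomorphic for \emph{every} holomorphic section, not just the Bruhat-type ones; here $\Phi(Y):=f_{\chi_s}(n^-(Y))$ is smooth on $S_m(F)$ but no longer compactly supported. Following the strategy of \cite[Appendix~1 to Chapter~3]{PSR87}, I would split the $Y$-integration into $|Y|\leq R$ and $|Y|>R$, equivalently, employ a partition of unity on $\Sp_{2m}(\CO)$ adapted to the two Bruhat cells that meet it. The first piece is a genuine zeta integral $\CZ_{S_m}(s,\Phi_c,\chi)$ with $\Phi_c\in\CC_c^\infty(S_m(F))$ and is controlled by Theorem~\ref{thm:gcd-zi}. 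For the tail, the Iwasawa decomposition $n^-(Y)=p(Y)k(Y)$ with $p(Y)\sim m(Y)$ as $|Y|\to\infty$ makes $f_{\chi_s}(n^-(Y))$ asymptotic to $\chi(\det Y)\,|\det Y|^{s+(m+1)/2}$ times a $K$-bounded factor, so the substitution $Z=Y^{-1}$ converts the tail into a compactly supported zeta integral for the reciprocal data $(\chi^{-1},\tfrac{m+1}{2}-s)$. Lemma~\ref{lem:homogenityofzeta} tracks the Jacobian, the functional equation of Proposition~\ref{pro:zeta-pvs} matches the two sides, and a second application of Theorem~\ref{thm:gcd-zi} confines the tail to $a_m(s,\chi)\cdot\BC[q^s,q^{-s}]$. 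This reciprocal change of variables is the core technical point and the main obstacle I expect.
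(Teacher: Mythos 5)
Your reduction of $\RM_{w_m}(s,\chi)(f_{\chi_s})(e)$ to an integral over $S_m(F)$ via the Bruhat identity is correct, and your treatment of the non-vanishing half — Bruhat-type sections built from $\Phi\in\CC_c^\infty(S_m)$, for which the intertwining integral literally equals $\CZ_{S_m}(s,\Phi,\chi)$, combined with Theorem \ref{thm:gcd-zi} — is essentially the paper's argument (the paper evaluates at $w_m$ on sections supported in $Pw_mN$ rather than at $e$ on sections supported in $PN^-$, which is the same thing up to translation and a harmless constant $\chi((-1)^m)$).

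The gap is in the holomorphy half, and you correctly flagged it as the obstacle. Your plan to handle the tail $\|Y\|>R$ by the substitution $Z=Y^{-1}$ does not go through as described: the region at infinity in $S_m(F)$ is not the inversion image of a compact neighbourhood of $0$. A symmetric matrix of large norm need not be invertible, and even when it is, its inverse need not be small (e.g. $Y=\diag(t,t^{-1})$ with $|t|$ large), so the tail is stratified by rank and the "reciprocal zeta integral" only accounts for one stratum. Relatedly, your asymptotic for $f_{\chi_s}(n^-(Y))$ has the wrong sign in the exponent (the identity $n^-(Y)=n(Y^{-1})m(\cdot)w_m n(Y^{-1})$ gives decay like $\chi(\det Y)^{-1}|\det Y|^{-s-\frac{m+1}{2}}$, as it must for convergence at $\Re(s)\gg0$), and in any case it is only valid in the full-rank direction. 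The paper sidesteps this entirely: it invokes the Rallis Lemma (\cite[Lemma 4.1]{PSR87}) to assert that the analytic properties of $\RM_{w_m}(s,\chi)(f_{\chi_s})$ over \emph{all} holomorphic sections already coincide with those over big-cell sections, and it quotes \cite[Lemma 3.1]{Y14} for the entireness of $\RM_{w_m}(s,\chi)/a_m(s,\chi)$. Without one of these inputs (or a genuinely complete stratified analysis of the boundary, which is substantial), your argument establishes only that the ideal of values \emph{contains} $a_m(s,\chi)\cdot\BC[q^s,q^{-s}]$, not that it is contained in it.
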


\begin{proof}
By the Rallis Lemma \cite[Lemma 4.1]{PSR87}, the analytical properties of
$\RM_{w_m}(s,\chi)(f_{\chi_s})$ with $f_{\chi_s}$ running through the holomorphic sections in $\RI(s,\chi)$
coincide with those of
$\RM_{w_m}(s,\chi)(f_{\chi_s})(w_m)$ with $f_{\chi_s}$ running through the holomorphic sections in $\RI(s,\chi)$ that have their support $\supp(f_{\chi_s})$ contained in
$Pw_{m}N$.

For any $\Phi\in \CC^{\infty}_{c}(\CS_{m})$, one may construct a holomorphic section $f\in \RI(s,\chi)$ by defining $f(w_{m}n(X)) = \Phi(X)$. It is clear that such a constructed holomorphic section $f$
has its support contained in $Pw_{m}N$. For such a holomorphic section $f$, the intertwining operator can be calculated as follows:
\begin{align*}
\RM_{w_m}(s,\chi)(f)(w_m)
&=
\int_{S_{m}(F)}
f(w_mn(X)w_m)\ud X\\
&=
\int_{S_{m}(F)}
\chi(\det (X^{-1}))
|\det (X^{-1})|^{s+\frac{m+1}{2}}
\Phi(-X^{-1})\ud X.
\end{align*}
Here $\del_{P}(m(A)) = |\det (A)|^{m+1}$ for any $A\in \GL_{m}(F)$.  For any $X\in S_{m}^{0}(F)$, one has that
$$
w_{m}n(X)w_{m} = n(-X^{-1})
m({}^t\!X^{-1})w_{m}n(-{}^t\!X^{-1}).
$$
By \cite[p.57]{M97}, one has that ${\rm d}(X^{-1}) = |\det (X)|^{-m-1}\ud X$. By changing variables, we obtain that
\begin{align*}
\RM_{w_m}(s,\chi)(f)(w_{m})
=
&\chi((-1)^{m})
\int_{S_{m}(F)}
\chi(\det (X))|\det (X)|^{s-\frac{m+1}{2}}\Phi(X)\ud X
\\
=
&
\chi((-1)^{m})\CZ_{S_m}(s,\Phi,\chi).
\end{align*}
By Theorem \ref{thm:gcd-zi}, we have that
\[
\{ \RM_{w_m}(s,\chi)(f_{\chi_s})(w_{m})\ |\ f_{\chi_s}\in \RI(s,\chi) \text{ with } \supp(f_{\chi_s})\subset Pw_mN \}
\]
contains the fractional ideal $ a_{m}(s,\chi)\cdot \BC[q^{s},q^{-s}]$.
On the other hand, according to \cite[Lemma 3.1]{Y14}, the quotient $\frac{\RM_{w_m}(s,\chi)}{a_{m}(s,\chi)}$ is entire as function in $s$.  Therefore, we obtain that the set
\[
\{ \RM_{w_m}(s,\chi)(f_{\chi_s})(w_m)\ |\ f\in \RI(s,\chi) \text{ with } \supp(f_{\chi_s})\subset Pw_{m}N \} \cdot \BC[q^{s},q^{-s}]
\]
is equal to the fractional ideal $ a_{m}(s,\chi)\cdot \BC[q^{s},q^{-s}]$.
We are done.
\end{proof}

Let $\RM^{\dag}_{w_m}(s,\chi,\psi)$ be the normalized intertwining operator as defined in \eqref{eq:IT-eta-rho} (see also \cite[p.668]{Y14}). Note that up to a nonzero holomorphic factor given by constant multiple of $\veps$-factors,
\begin{align}\label{nM-nM}
\frac{1}{b_{m}(-s,\chi^{-1})}\RM^{\dag}_{w_m}(s,\chi,\psi) \text{ ``=" }
\frac{1}{a_{m}(s,\chi)}\RM_{w_m}(s,\chi)
\end{align}
where
$$
b_{m}(s,\chi) = L(s+\frac{m+1}{2},\chi)
\prod_{r=1}^{\lfloor{\frac{m}{2}\rfloor}}L(2s+2r-1,\chi^{2}).
$$
In the local theory of the zeta integrals from the Piatetski-Shapiro and Rallis doubling method, the notion of {\sl good sections} plays an important role in the determination of the analytic properties of the
local zeta integrals. We recall from \cite{Ik92} and \cite[Definition 3.1]{Y14} the space of good sections of $\Sp_{2m}(F)$ associated to the standard Siegel parabolic subgroup $P$, which is denoted by
$\RI^{\dag}(s,\chi)$.

\begin{dfn}[Good Sections]\label{defn:good-f}
Let $\chi$ be any unitary character of $F^\times$. For $s_0\in\BC$,  a meromorphic section $f_{\chi_s}$ of $\RI(s,\chi)$ is called {\bf good} at $s=s_0$ if when $\Re(s_0)>-\frac{1}{2}$,
$f_{\chi_s}$ is holomorphic at $s=s_0$;  and when $\Re(s_0)<0$,
$\RM^{\dag}_{w_m}(s,\chi,\psi)(f_{\chi_s})$ is holomorphic at $s=s_0$. In general, a section $f_{\chi_s}\in\RI(s,\chi)$ is called a {\bf good section} if $f_{\chi_s}$ is good at every $s_0\in\BC$.
\end{dfn}

\begin{pro}\label{pro:analyticalgoodsection}
For any good section $f_{\chi_s}\in \RI^{\dag}(s,\chi)$, the quotient $\frac{f_{\chi_s}(g)}{b_{m}(s,\chi)}$ is a holomorphic section in $\RI(s,\chi)$. Moreover, for any $s=s_{0}$,
there exists a good section $f_{\chi_s}\in \RI^{\dag}(s,\chi)$ such that the quotient $\frac{f_{\chi_s}(g)}{b_{m}(s,\chi)}$ is non-zero at $s=s_0$.
\end{pro}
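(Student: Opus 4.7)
The plan is to prove both parts of the proposition by combining the comparison identity \eqref{nM-nM}, the involution \eqref{eq:M-FE}, and Proposition \ref{pro:pole-Mw} applied both to the data $(s,\chi)$ and to the swapped data $(-s,\chi^{-1})$. A fundamental observation underpinning the whole argument is that, for unitary $\chi$, $b_m(s,\chi)$ is a finite product of abelian local $L$-factors and therefore has no zeros; moreover its only possible poles arise when $\chi|\cdot|^{s+\frac{m+1}{2}}$ or $\chi^2|\cdot|^{2s+2r-1}$ becomes trivial, forcing either $\Re(s)=-\frac{m+1}{2}$ or $\Re(s)=\frac{1-2r}{2}\le -\frac{1}{2}$. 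Hence $b_m(s,\chi)$ has no zeros anywhere and is holomorphic on $\Re(s)>-\frac{1}{2}$.

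For the holomorphicity claim, I would argue on the two overlapping regions $\Re(s_0)>-\frac{1}{2}$ and $\Re(s_0)<0$, whose union is all of $\BC$. In the first region, $f_{\chi_s}$ is holomorphic by Definition \ref{defn:good-f}, and division by $b_m(s,\chi)$ preserves holomorphy since $b_m$ has no zeros. In the second region, set $g_{\chi^{-1}_{-s}}:=\RM^{\dag}_{w_m}(s,\chi,\psi)(f_{\chi_s})$, which is holomorphic at $s_0$ by the second defining property of a good section; the involution \eqref{eq:M-FE} then recovers $f_{\chi_s}=\RM^{\dag}_{w_m}(-s,\chi^{-1},\psi^{-1})(g_{\chi^{-1}_{-s}})$. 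Applying \eqref{nM-nM} with the data $(-s,\chi^{-1})$ in place of $(s,\chi)$, together with Proposition \ref{pro:pole-Mw} for the pair $(-s,\chi^{-1})$, exhibits $f_{\chi_s}$ as $b_m(s,\chi)$ times a holomorphic section of $\RI(s,\chi)$, and hence the quotient $f_{\chi_s}/b_m(s,\chi)$ is holomorphic at $s_0$.

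For the non-vanishing claim, the construction above suggests producing good sections of the form $f_{\chi_s}:=\RM^{\dag}_{w_m}(-s,\chi^{-1},\psi^{-1})(g_{\chi^{-1}_{-s}})$ with $g_{\chi^{-1}_{-s}}\in\RI(-s,\chi^{-1})$ a holomorphic section. By \eqref{eq:M-FE}, $\RM^{\dag}_{w_m}(s,\chi,\psi)(f_{\chi_s})=g_{\chi^{-1}_{-s}}$ is holomorphic everywhere, while the structural identity $f_{\chi_s}=b_m(s,\chi)\cdot h_{\chi_s}$ with $h_{\chi_s}$ holomorphic combined with the holomorphy of $b_m$ on $\Re(s)>-\frac{1}{2}$ shows that $f_{\chi_s}$ is itself holomorphic there; thus $f_{\chi_s}$ is a good section. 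Given any $s_0\in\BC$, I would then invoke Proposition \ref{pro:pole-Mw} applied to $(-s,\chi^{-1})$ to choose $g_{\chi^{-1}_{-s}}$ such that $\RM_{w_m}(-s,\chi^{-1})(g_{\chi^{-1}_{-s}})/a_m(-s,\chi^{-1})$ is nonzero at $s=s_0$. The identity \eqref{nM-nM} identifies $f_{\chi_s}/b_m(s,\chi)$, up to a nonvanishing holomorphic factor, with this quotient, so it is nonzero at $s_0$.

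The main technical care is in rigorously handling the qualifier ``up to a nonzero holomorphic factor given by a constant multiple of $\veps$-factors'' in \eqref{nM-nM}; one must verify that these abelian Tate $\veps$-factors are entire and nonvanishing, which is standard local Tate theory. A secondary point is that the two regional arguments in the first part must agree on the overlap $-\frac{1}{2}<\Re(s_0)<0$, which is automatic since both express $f_{\chi_s}/b_m(s,\chi)$ as the same holomorphic section.
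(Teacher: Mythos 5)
Your proof is correct, but it takes a genuinely different route from the paper's. The paper proves holomorphy of $\frac{f_{\chi_s}}{b_m(s,\chi)}$ by invoking the structural decomposition of good sections from \cite[Proposition~3.1]{Y14}: every $f_{\chi_s}\in\RI^{\dag}(s,\chi)$ is written as $f_{1}+\RM^{\dag}_{w_m}(-s,\chi^{-1},\psi^{-1})(f_{2})$ with $f_1,f_2$ holomorphic sections, after which the first summand is handled by the entirety of $\frac{1}{b_m(s,\chi)}$ and the second by \eqref{nM-nM} together with Proposition \ref{pro:pole-Mw}. You instead work directly from Definition \ref{defn:good-f} by a two-region argument ($\Re(s_0)>-\frac{1}{2}$ versus $\Re(s_0)<0$) combined with the involution \eqref{eq:M-FE}, thereby avoiding the decomposition as an input; your construction of good sections of the form $\RM^{\dag}_{w_m}(-s,\chi^{-1},\psi^{-1})(g)$ for the non-vanishing part in effect re-derives the piece of \cite[Proposition~3.1]{Y14} that the paper later uses again in Proposition \ref{pro:FTX}. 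The trade-off: your region-$\Re(s_0)<0$ step applies Proposition \ref{pro:pole-Mw} to $g_{\chi^{-1}_{-s}}=\RM^{\dag}_{w_m}(s,\chi,\psi)(f_{\chi_s})$, which is only guaranteed holomorphic on a half-plane rather than being a (globally) holomorphic section as that proposition literally requires; to make this rigorous you should appeal to the operator-level statement behind it, namely that $\frac{\RM_{w_m}(s,\chi)}{a_m(s,\chi)}$ is entire (\cite[Lemma~3.1]{Y14}, already quoted in the proof of Proposition \ref{pro:pole-Mw}), which sends a section holomorphic near $s_0$ to a section holomorphic near $s_0$. With that refinement, and with your correct observations that $b_m(s,\chi)$ is zero-free with all poles in $\Re(s)\le-\frac{1}{2}$ and that the $\veps$-factors hidden in \eqref{nM-nM} are units in $\BC[q^s,q^{-s}]$, the argument is complete.
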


\begin{proof}
According to \cite[Proposition~3.1]{Y14}, for any good section $f_{\chi_s}\in \RI^{\dag}(s,\chi)$, there exist holomorphic sections $f_{1}\in \RI(s,\chi)$ and $f_{2}\in \RI(-s,\chi^{-1})$, respectively, such that
$$
f_{\chi_s} = f_{1}+\RM^{\dag}_{w_m}(-s,\chi^{-1},\psi^{-1})(f_{2}).
$$
Note that up to a nonzero entire factor,
$$
\RM^{\dag}_{w_m}(-s,\chi^{-1},\psi^{-1})(f_{2}) ``="
\frac{b_{m}(s,\chi)}{a_{m}(-s,\chi^{-1})}\RM_{w_m}(-s,\chi^{-1})(f_{2}).
$$
From Proposition \ref{pro:pole-Mw}, the quotient $\frac{\RM_{w_m}(-s,\chi^{-1})(f_{2})}{a_{m}(-s,\chi^{-1})}$ is entire as function in $s$. Clearly, $\frac{f_1}{b_{m}(s,\chi)}$ is holomorphic since $\frac{1}{b_m(s,\chi)}$ is holomorphic. Hence for any good section $f_{\chi_s}\in \RI^{\dag}(s,\chi)$, the
quotient $\frac{f_{\chi_s}(g)}{b_{m}(s,\chi)}$ is entire as function in $s$. The non-vanishing of the quotient at $s=s_0$ also follows directly from Proposition \ref{pro:pole-Mw}.
\end{proof}

We will come back to have more conceptual discussions on the space of good sections (Proposition \ref{pro:FTX}) in Section \ref{ssec-FoSs}.


\subsection{Fiber integrations and functional equations}\label{ssec-fife}
We are going to specialize the general discussion in Section \ref{ssec-zi-sm} to the case of $m=2n+1$ and deduce a functional equation for $\beta_{\psi}(\chi_s)$ by considering the fiber integration
through the fibration morphism:
\begin{equation}\label{det-map}
\det\ :\ S_{2n+1}(F)\rightarrow F
\end{equation}
following \cite{W65} and \cite[Section 8.3]{Ig00}. From \cite[Lemma 2.1]{Ik17}, the Clifford invariant $\varrho(X)$ of $X\in S_{2n+1}(F)$ is given by
\[
\varrho(X)=\apair{-1,-1}^{\frac{n(n+1)}{2}}\apair{(-1)^n,\det (X)}\eps_X,	
\]
where $\apair{\cdot,\cdot}$ is the Hilbert symbol and $\eps_X$ is the Hasse invariant of $X$.
In this case, we renormalize the local zeta integral by defining
\begin{equation*} 
\CZ_\Phi(s,\chi)=\int_{S_{2n+1}(F)}\Phi(X)\chi(\det (X))|\det (X)|^{s}\ud X,
\end{equation*}
The results developed in Section \ref{ssec-zi-sm} will have a shift from $s$ there to $s+(n+1)$ here, since $\frac{m+1}{2}=n+1$ with $m=2n+1$.
Note here that we use the Haar measure ${\rm d} X$ on $S_{2n+1}(F)$ that is normalized with $\vol(S_{2n+1}(\CO),\ud X)=1$.
It is important to note that the measure ${\rm d} X$ chosen in \cite{Ik17} is self-dual with respect to the Fourier transform, and is defined to be
${\rm d} X=|2|^{\frac{n(2n+1)}{2}} \prod_{i\leq j}\ud X_{ij}$, assuming that the additive character $\psi$ has conductor $\CO$. This implies that the formulas from
\cite{Ik17} appear here with the constants different by $|2|^{\frac{n(2n+1)}{2}}$!

Because the Clifford invariant $\varrho(X)$ of $X\in S_{2n+1}(F)$ is constant on the $\GL_{2n+1}(F)$-orbit of $X$, the functional equation in Proposition \ref{pro:zeta-pvs} can be reformulated as

\begin{pro}[{\cite[Theorem 2.1]{Ik17}}]\label{lm:Ikeda}
For $\Phi\in \CC^\infty_c(S_{2n+1})$, and for
any unitary character $\chi$ of $F^\times$, the local zeta integrals $\CZ_{\Phi}(s, \chi)$ and $\CZ_{\varrho\cdot\wh{\Phi}}(s,\chi^{-1})$ satisfy the following functional equation:
\begin{equation}\label{fe-lzism}
 |2|^{-2ns+2n^2}\chi^{-2n}(2)\CZ_{\varrho\cdot\wh{\Phi}}(-s-\frac{1}{2},\chi^{-1})
= \beta_\psi(\chi_{s})\CZ_{\Phi}(s+\frac{1}{2}-(n+1), \chi),
\end{equation}
where $\beta_\psi(\chi_s)$ is defined in \eqref{eq:eta-intertwining} and $\wh{\Phi}$ is the Fourier transform of $\Phi$.
\end{pro}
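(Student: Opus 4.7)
The plan is to deduce the stated functional equation directly from Proposition \ref{pro:zeta-pvs} specialized to $m = 2n+1$, by (a) reconciling the two normalizations of the zeta integral, and (b) combining the sum of orbital integrals on the right-hand side into a single integral weighted by the Clifford invariant $\varrho$.

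First, I would align conventions. Since $\CZ_{S_m}(s,\Phi,\chi) = \int \Phi(X)\chi(\det X)|\det X|^{s-\frac{m+1}{2}}\ud X$ while $\CZ_\Phi(s,\chi) = \int \Phi(X)\chi(\det X)|\det X|^s\ud X$, for $m=2n+1$ we have $\CZ_\Phi(s,\chi) = \CZ_{S_{2n+1}}(s+n+1,\Phi,\chi)$. Applying Proposition \ref{pro:zeta-pvs} at parameter $s+\frac{1}{2}$ produces an identity whose left-hand side is $\CZ_\Phi(s+\frac{1}{2}-(n+1),\chi)$, and whose right-hand side is a sum of orbital integrals $\CZ_{S^{X_i}_{2n+1}}(\cdot,\widehat\Phi,\chi^{-1})$ evaluated at the parameter corresponding to $-s-\frac{1}{2}$ in the normalization used here.

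Second, I would split the coefficients $c_{S^{X_i}_{2n+1}}(\chi,s+\frac{1}{2})$ from Proposition \ref{pro:zeta-pvs} into an orbit-independent $\gamma$-part and an orbit-dependent scalar. The $\gamma$-part specializes to
\[
\gam(s-\tfrac{2n-1}{2},\chi,\psi)^{-1}\prod_{r=1}^{n}\gam(2s-2n+2r,\chi^{2},\psi)^{-1} = \beta_\psi(\chi_s)^{-1},
\]
which is precisely the reciprocal of the $\gamma$-factor required in the target. The remaining constant piece $|2|^{-2n(s+1/2)}\chi^{-n}(4) = |2|^{-2ns-n}\chi^{-2n}(2)$ contributes the $\chi^{-2n}(2)$ on the left-hand side of the desired equation and part of the total $|2|$-power.

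Third --- the essential combinatorial step --- I would identify the orbital invariants $\eta_{X_i}$ with the Clifford invariant $\varrho(X_i)$ up to a uniform scalar. Over the non-archimedean $F$, the semistable $\GL_{2n+1}(F)$-orbits of fixed nonzero determinant are parametrized by the Hasse--Witt invariant, and the formula $\varrho(X) = \apair{-1,-1}^{n(n+1)/2}\apair{(-1)^n,\det(X)}\eps_X$ distinguishes them accordingly. Because $\varrho$ is constant on each $S^{X_i}_{2n+1}$, multiplying $\widehat\Phi$ by $\varrho$ collapses the orbital sum $\sum_i \eta_{X_i}\CZ_{S^{X_i}_{2n+1}}(\cdot,\widehat\Phi,\chi^{-1})$, up to this scalar, into the single integral $\CZ_{\varrho\cdot\widehat\Phi}(-s-\frac{1}{2},\chi^{-1})$ appearing on the left side of the assertion.

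The main obstacle will be the careful bookkeeping of the constants $|2|^{-2ns+2n^2}$. The convention here has $\vol(S_{2n+1}(\CO),\ud X) = 1$, while the self-dual measure used in \cite{Ik17} differs by a factor of $|2|^{n(2n+1)/2}$; tracking this discrepancy on both sides of the $s \leftrightarrow -s-\frac{1}{2}$ functional equation, together with the $|2|^{-2ns-n}$ coming from the $\gamma$-coefficient and the $|2|$-powers hidden in $\varrho$ via the Hilbert symbols, must produce exactly $|2|^{-2ns+2n^2}$ as stated. A more conceptual way to fix the overall constant, once the shape of the identity is established, is to exploit Lemma \ref{lem:homogenityofzeta}: both sides are $\GL_{2n+1}(F)$-quasi-invariant with the same character, both are rational in $q^{-s}$, so equality is forced by testing against one convenient $\Phi$ (e.g.\ the characteristic function of $S_{2n+1}(\CO)$ with unramified $\chi$) and matching a single value.
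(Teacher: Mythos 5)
Your proposal is correct and follows essentially the same route as the paper, which treats this proposition as a direct reformulation of Proposition \ref{pro:zeta-pvs} (specialized to $m=2n+1$, shifted to $s+\frac{1}{2}$) using the fact that the Clifford invariant $\varrho$ is constant on each semistable $\GL_{2n+1}(F)$-orbit, so that the orbital sum collapses into $\CZ_{\varrho\cdot\wh{\Phi}}$; the paper likewise flags the $|2|^{n(2n+1)/2}$ measure discrepancy with \cite{Ik17} as the source of the remaining constant bookkeeping. Your suggestion to pin down the overall constant by testing on the characteristic function of $S_{2n+1}(\CO)$ is a sound way to finish that bookkeeping.
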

Note that the local zeta integral $\CZ_{\Phi}(s+\frac{1}{2}-(n+1), \chi)$ converges absolutely for $\Re(s)>n-\frac{1}{2}$ and
$\CZ_{\varrho\cdot\wh{\Phi}}(-s-\frac{1}{2},\chi^{-1})$ converges absolutely for $\Re(s)<\frac{1}{2}$, and the functional equation \eqref{fe-lzism} holds by meromorphic continuation of the local zeta functions.

In order to understand the factor $\beta_\psi(\chi_s)$ in terms of the harmonic analysis on $\GL_1(F)$,
we consider,
for $\Phi\in \CC^\infty_c(S_{2n+1})$ and $t\in F^\times$, by \cite[Proposition 1]{W65} and \cite[Section 6]{W65}, and \cite[Lemma 8.3.2]{Ig00}, there exists a unique measure $\mu_t$ on the fiber of $t$
\[
{\det}^{-1}(t):=\{X\in S_{2n+1}(F)\mid \det(X)=t\},
\]
which is a $p$-adic manifold, such that the following fiber integration:
\begin{equation}\label{radon-1}
f_\Phi(t)=\int_{{\det}^{-1}(t)}\Phi(X)\mu_t(X)
\end{equation}
has the property that
\begin{equation}\label{mut}
 \int_{S_{2n+1}(F)}\Phi(X)\ud X=\int_{F}\int_{{\det}^{-1}(t)}\Phi(X) \mu_t(X)\ud t=\int_Ff_\Phi(t)\ud t,
\end{equation}
where ${\rm d} t$ is the Haar measure on $F$ with $\vol(\CO,\ud t)=1$.
Similarly, one defines the following fiber integration:
\begin{equation}\label{radon-2}
f_{\varrho\cdot\wh{\Phi}}(t)=\int_{{\det}^{-1}(t)}(\varrho\cdot\widehat{\Phi})(X)\mu_t(X)=\int_{{\det}^{-1}(t)}\varrho(X)\widehat{\Phi}(X)\mu_t(X).
\end{equation}
Since $|\widehat{\Phi}(X)\varrho(X)|=|\widehat{\Phi}(X)|$, the integral defining $f_{\varrho\cdot\wh{\Phi}}$ is absolutely convergent.
Following \cite[Theorem 8.4.1]{Ig00}, for any unitary character $\chi$ of $F^\times$ and for any $\Phi\in \CC^\infty_c(S_{2n+1})$,
we have that
\begin{equation}\label{eq:Igusa-fiber-1}
\CZ_{\Phi}(s,\chi)=
(1-q^{-1})\int_{F^\times}f_{\Phi}(t)\chi(t)|t|^{s+1}\ud^*t,
\end{equation}
where ${\rm d}^* t:=(1-q^{-1})^{-1}|t|^{-1}\ud t$ with  $\vol(\CO^\times, \ud^* t)=1$.
Define the Mellin transform of $f_\Phi$ by
\begin{equation}\label{mellin}
\CM(f_\Phi)(s,\chi):=\int_{F^\times}f_\Phi(t)\chi(t)|t|^{s}\ud^*t.
\end{equation}
It converges absolutely for $\Re(s)>1$ and has meromorphic continuation to $s\in\BC$.
By Proposition \ref{lm:Ikeda}, we obtain the following corollary.

\begin{cor}\label{fe-Radon}
For $\Phi\in \CC^\infty_c(S_{2n+1})$,  the following functional equation:
\begin{equation}\label{eq:FE-F-0}
|2|^{-2ns+2n^2}\chi^{-2n}(2) \CM(f_{\varrho\cdot\wh{\Phi}})(-s+\frac{1}{2},\chi^{-1})
=\beta_{\psi}(\chi_{s})\CM(f_{\Phi})(s-n+\frac{1}{2},\chi)	
\end{equation}
holds for all unitary characters $\chi$, with $\beta_\psi(\chi_s)$ as in \eqref{eq:eta-intertwining}.
\end{cor}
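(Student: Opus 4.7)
The plan is to deduce the functional equation directly from Proposition~\ref{lm:Ikeda} by rewriting both sides of \eqref{fe-lzism} in terms of the Mellin transforms of the fiber integrations via the disintegration formula \eqref{eq:Igusa-fiber-1}.

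First, I would observe that, since $\Phi\in \CC_c^\infty(S_{2n+1})$ and $\det\colon S_{2n+1}(F)\to F$ is the $F$-morphism from \eqref{det-map}, the disintegration identity \eqref{mut} gives, after writing $\ud t = (1-q^{-1})|t|\ud^* t$ on $F^\times$, the relation
\[
\CZ_{\Phi}(s,\chi)
=\int_{F^\times}\Big(\int_{\det^{-1}(t)}\Phi(X)\mu_t(X)\Big)\chi(t)|t|^{s}\ud t
=(1-q^{-1})\,\CM(f_{\Phi})(s+1,\chi),
\]
which is exactly \eqref{eq:Igusa-fiber-1}. The same computation, applied to $\varrho\cdot\wh{\Phi}$ instead of $\Phi$, yields
\[
\CZ_{\varrho\cdot\wh{\Phi}}(s,\chi^{-1})
=(1-q^{-1})\,\CM(f_{\varrho\cdot\wh{\Phi}})(s+1,\chi^{-1}).
\]
Here one needs the absolute integrability of $\varrho(X)\wh{\Phi}(X)\chi^{-1}(\det X)|\det X|^{s}$ on $S_{2n+1}(F)$, which holds (as for $\CZ_\Phi$) in some right half-plane $\Re(s)$ sufficiently positive; the formula then extends by meromorphic continuation in $s$. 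Note that $|\varrho|=1$, so the disintegration \eqref{mut} applies to the absolutely integrable function, justifying the interchange of integration order.

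Next, I would substitute both identities into the functional equation \eqref{fe-lzism} of Proposition~\ref{lm:Ikeda}. The left-hand side of \eqref{fe-lzism} becomes
\[
|2|^{-2ns+2n^{2}}\chi^{-2n}(2)\,(1-q^{-1})\,\CM(f_{\varrho\cdot\wh{\Phi}})\bigl(-s-\tfrac{1}{2}+1,\chi^{-1}\bigr)
\]
with shifted spectral parameter $-s+\tfrac{1}{2}$, and the right-hand side becomes
\[
\beta_{\psi}(\chi_s)\,(1-q^{-1})\,\CM(f_{\Phi})\bigl(s+\tfrac{1}{2}-(n+1)+1,\chi\bigr)
\]
with shifted spectral parameter $s-n+\tfrac{1}{2}$. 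The common factor $(1-q^{-1})$ cancels on both sides, producing exactly the identity \eqref{eq:FE-F-0}.

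The two places that need a careful word are the interchange of integration (justified in the region of absolute convergence by Fubini and $|\varrho|=1$) and the passage from unitary $\chi$ to the full meromorphic statement (obtained because both $\CM(f_\Phi)(\,\cdot\,,\chi)$ and $\CM(f_{\varrho\cdot\wh{\Phi}})(\,\cdot\,,\chi^{-1})$ are meromorphic in $s$, as follows from the meromorphy of the Igusa-type zeta integrals on $S_{2n+1}$ already recorded before Proposition~\ref{pro:zeta-pvs}). Neither issue is substantive, so the corollary is essentially an algebraic consequence of Ikeda's functional equation combined with Igusa's fiber-integration identity.
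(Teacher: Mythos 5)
Your proposal is correct and follows exactly the route the paper intends: the corollary is obtained by substituting the fiber-integration identity \eqref{eq:Igusa-fiber-1} (applied to both $\Phi$ and $\varrho\cdot\wh{\Phi}$) into Ikeda's functional equation \eqref{fe-lzism}, cancelling the factor $1-q^{-1}$, and extending by meromorphic continuation since the two sides converge in disjoint half-planes. The shift bookkeeping and the remarks on $|\varrho|=1$ and meromorphy are all accurate.
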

It is clear that $\CM(f_{\Phi})(s-n+\frac{1}{2},\chi)$ converges absolutely for $\Re(s)>n+\frac{1}{2}$ and $\CM(f_{\varrho\cdot\wh{\Phi}})(-s+\frac{1}{2},\chi^{-1})$ converges absolutely
for $\Re(s)<-\frac{1}{2}$, and the functional equation holds by meromorphic continuations of the both sides.


\section{Harmonic Analysis for $\beta_\psi(\chi_s)$}\label{sec-FTbeta}


In Corollary \ref{fe-Radon}, we obtain a functional equation with $\beta_\psi(\chi_s)$ occurring from the functional equation for local zeta integrals associated to a prehomogeneous vector space
$(\GL_{2n+1},S_{2n+1})$. In this section, we intend to understand the functional equation in Corollary \ref{fe-Radon} in terms of harmonic analysis on $\GL_1(F)$, following the idea of the Tate thesis (\cite{Tt50}) or
more generally the suggestion from the Braverman-Kazhdan proposal (\cite{BK00}). When $n=0$, it is the $\GL_1$-theory from the Tate thesis (\cite{Tt50}). We will assume that $n>0$ in this section, although most of the
discussions automatically cover the case of $n=0$.


\subsection{Behavior of both $f_{\Phi}$ and $f_{\varrho\cdot\wh{\Phi}}$}\label{ssec-bbf}
In order to understand the space of all functions $f_{\Phi}$, the fiber integration as defined in \eqref{radon-1}, and the space of all functions $f_{\varrho\cdot\wh{\Phi}}$, as in \eqref{radon-2},
we introduce the following spaces $\CS^{+}_{n}(F^\times)$ and $\CS^{-}_{n}(F^\times)$, as in \cite[Chapter 1, \S 5.2]{Ig78}.

\begin{dfn}[Space $\CS^{+}_{n}(F^\times)$]\label{CS+}
Define $\CS^{+}_{n}(F^\times)$ to be the subspace of $\CC^\infty(F^\times)$ consisting of functions $f_{+}$ with property:
\begin{itemize}
	\item[(1)] ${\rm supp}(f_+)$ is bounded on $F^\times$, i.e., $f_+(x)=0$ for $|x|\gg 1$;
	\item[(2)] for $|x|\ll 1$, $f_+$ has the following asymptotic behavior:
	\begin{equation} \label{eq:asymptotic-1}
	f_+(x)=a^{+}_{0}({\rm ac}(x)) +\sum_{i=0}^{n-1}\bigg(a^{+}_{i,+}({\rm ac}(x))|x|^{i+\frac{1}{2}}+a^{+}_{i,-}({\rm ac}(x))|x|^{i+\frac{1}{2}}(-1)^{{\rm ord}(x)}\bigg)
	\end{equation}
	for some locally constant functions $a^+_{0}({\rm ac}(x))$ and $a^+_{i,\pm}({\rm ac}(x))$ on $\CO^\times$.
\end{itemize}
Here $(-1)^{{\rm ord}(x)}=|x|^{\pi i/\log(q)}$, and
${\rm ac}(x):=x\varpi^{-{\rm ord}(x)}$ is the angular component of $x\in F^\times$, depending on the choice of the uniformizer $\varpi$.
\end{dfn}

\begin{dfn}[Space $\CS^{-}_{n}(F^\times)$]\label{CS-}
Define $\CS^{-}_{n}(F^\times)$ to be the subspace of $\CC^\infty(F^\times)$ consisting of functions $f_{-}$ with property:
\begin{itemize}
	\item[(1)] ${\rm supp}(f_-)$ is bounded on $F^\times$, i.e., $f_-(x)=0$ for $|x|\gg 1$;
	\item[(2)] for $|x|\ll 1$, $f_-$ has the following asymptotic behavior:
	\begin{equation}  \label{eq:asymptotic-2}
	f_-(x)=a^{-}_{0}({\rm ac}(x))|x|^{n} +\sum_{i=0}^{n-1}\bigg(a^{-}_{i,+}({\rm ac}(x))|x|^{i}+a^{-}_{i,-}({\rm ac}(x))|x|^{i}(-1)^{{\rm ord}(x)}\bigg),
	\end{equation}
	for some locally constant functions $a^-_{0}({\rm ac}(x))$ and $a^-_{i,\pm}({\rm ac}(x))$ on $\CO^\times$.
\end{itemize}
Here ${\rm ac}(x)$ and $(-1)^{{\rm ord}(x)}$ are the same as in Definition \ref{CS+}.
\end{dfn}

It is important to note that the functions in $\CS_n^{\pm}(F^\times)$ are $\CO^\times$-finite and hence are uniformly locally constant. More precisely
we have the following lemma.

\begin{lem}\label{K-finite}
For any $f$ in $\CS^{+}_{n}(F^{\times})$ or $\CS^{-}_{n}(F^{\times})$, there exists $N\in \BN$ such that $f$ is $(1+\vpi^{N}\CO)$-invariant.
\end{lem}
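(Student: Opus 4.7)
The plan is to prove the two cases in parallel, writing $f$ for an element of $\CS^{+}_{n}(F^{\times})$ or $\CS^{-}_{n}(F^{\times})$ and splitting $F^{\times}$ into two regions: a compact annulus on which $f$ is merely smooth, and a punctured neighborhood of $0$ on which $f$ is explicitly given by its asymptotic expansion. Because multiplication by $u\in 1+\vpi^{N}\CO$ preserves $|\cdot|$ (so also preserves the annular decomposition), the two regions can be handled independently and the final $N$ taken to be the maximum of the two contributions.

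For the compact annulus, I would proceed as follows. By bounded support there exists $M_{1}$ with $f(x)=0$ for $|x|>q^{M_{1}}$, and by the asymptotic condition there exists $M_{2}$ such that $f(x)$ is given by the right-hand side of \eqref{eq:asymptotic-1} (respectively \eqref{eq:asymptotic-2}) for $|x|\leq q^{-M_{2}}$. The set $A=\{x\in F^{\times}:q^{-M_{2}}<|x|\leq q^{M_{1}}\}$ is compact in $F^{\times}$, and on $A$ the function $f$ is smooth in the non-Archimedean sense, hence locally constant at each point. By a standard compactness argument applied to the continuous multiplication action of $1+\vpi\CO$, there exists $N_{0}\in\BN$ with $f(xu)=f(x)$ for all $x\in A$ and $u\in 1+\vpi^{N_{0}}\CO$.

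For the region near $0$, write $B=\{x\in F^{\times}:|x|\leq q^{-M_{2}}\}$ and note the three invariances: for $u\in\CO^{\times}$ one has $|xu|=|x|$, $\ord(xu)=\ord(x)$ (so $(-1)^{\ord(xu)}=(-1)^{\ord(x)}$), and $\ac(xu)=\ac(x)\cdot u$. Hence every summand in \eqref{eq:asymptotic-1} or \eqref{eq:asymptotic-2} is invariant under $x\mapsto xu$ precisely when each coefficient function $a_{0}^{\pm},\,a_{i,\pm}^{\pm}$ satisfies $a(wu)=a(w)$ for all $w\in\CO^{\times}$. Since each such coefficient is locally constant on the compact set $\CO^{\times}$, it is uniformly locally constant, and taking $N_{1}$ to exceed all the individual moduli of local constancy gives $f(xu)=f(x)$ for $x\in B$ and $u\in 1+\vpi^{N_{1}}\CO$. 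Setting $N=\max(N_{0},N_{1})$ completes the argument.

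The only step worth flagging is the passage from pointwise to uniform local constancy of the asymptotic coefficients; this is the place where the compactness of $\CO^{\times}$ is essential, and without it the asymptotic expansion alone would not force $\CO^{\times}$-finiteness of $f$. Everything else reduces to the elementary compactness argument on the annulus $A$ together with the explicit observation that $|\cdot|$, $\ord(\cdot)$, and the factor $\ac(\cdot)$ transform in a controlled way under multiplication by elements of a small enough congruence subgroup.
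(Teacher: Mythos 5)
Your proof is correct and follows essentially the same route as the paper's: split $F^\times$ into a compact annulus, where smoothness plus compactness give uniform local constancy, and a neighborhood of $0$, where the asymptotic expansion reduces the claim to the uniform local constancy of the coefficient functions $a_{0}^{\pm},a_{i,\pm}^{\pm}$ on $\CO^{\times}$. The only difference is that you spell out the transformation of $|\cdot|$, $\ord$, and $\ac$ under multiplication by units, which the paper leaves implicit.
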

\begin{proof}
For an $f$ in $\CS^{+}_{n}(F^{\times})$, there exists $u_{1}>0$ such that $f(x) = 0$ for any $x\in F^{\times}$ satisfying $|x|>u_{1}$.
There also exists $u_{2}>0$ such that $f(x)$ has an asymptotic expression as in \eqref{eq:asymptotic-1} for any $|x|<u_{2}$.
Let $\mathrm{ch}_{u_{1},u_{2}}$ be the characteristic function of the open compact set $$\{ x\in F^{\times}\ |\ u_{2}\leq |x|\leq u_{1}\}.$$
It is clear that $f\cdot \mathrm{ch}_{u_{1},u_{2}}\in \CC^{\infty}_{c}(F^{\times})$. It follows that in the range $|x|\geq u_{2}$, there exists an integer $N_{1}$ such that $f$ is $(1+\vpi^{N_{1}}\CO)$-invariant.
On the other hand, for $|x|\leq u_{2}$, from the asymptotic expression in \eqref{eq:asymptotic-1}, there exists an integer $N_{2}$ such that $f$ is $(1+\vpi^{N_{2}}\CO)$-invariant. Taking $N= \max\{ N_{1},N_{2}\}$,
we know that $f$ is $(1+\vpi^{N}\CO)$-invariant.

For any $f$ in $\CS^{-}_{n}(F^{\times})$, the proof is the same. We omit the details.
\end{proof}

In order to understand the spaces $\CS^{+}_{n}(F^\times)$ and $\CS^{-}_{n}(F^\times)$ via the Mellin transform and its inversion on $F^\times$, we introduce the following two spaces
$\CZ^{+}_n(\widehat{F^\times})$ and $\CZ^{-}_n(\widehat{F^\times})$, as in
Theorem 5.3 of \cite[Chapter 1]{Ig78}.

\begin{dfn}\label{Z+-}
Let $\wh{\CO^\times}$ be the Pontryagin dual of $\CO^\times$ and write $\widehat{F^\times}=\BC^\times\times \wh{\CO^\times}$ via $\chi=(\chi(\varpi),\chi\vert_{\CO^\times})$.
Define $\CZ^{\pm}_n(\widehat{F^\times})$ to be the space of all complex-valued functions $Z_\pm(z,\chi)$ on $\widehat{F^\times}$, respectively, with properties:
\begin{itemize}
	\item[(1)] for almost all $\chi\in \wh{\CO^\times}$, $Z_{\pm}(z,\chi)$ are identically zero; and
	\item[(2)] for every $\chi\in \wh{\CO^\times}$, there exist constants $b^{\pm}_{0,\chi}$ and $b^{\pm}_{i,\pm,\chi}$  such that
	\begin{equation}\label{eq:Z-asymptotic-1}
	Z_+(z,\chi)- \frac{b_{0,\chi}^+}{1- z}-\sum^{n-1}_{i=0}\frac{b_{i,+,\chi}^+}{1-q^{-(i+\frac{1}{2})} z}+\frac{b_{i,-,\chi}^+}{1+q^{-(i+\frac{1}{2})} z}	
	\end{equation}
	and
	\begin{equation}\label{eq:Z-asymptotic-2}
	Z_{-}(z,\chi)- \frac{b_{0,\chi}^-}{1-q^{-n} z}-\sum^{n-1}_{i=0}\frac{b_{i,+,\chi}^-}{1-q^{-i} z}+\frac{b_{i,-,\chi}^-}{1+q^{-i} z}
	\end{equation}
	are polynomials in $z$ and $z^{-1}$ with complex coefficients.
\end{itemize}
\end{dfn}

The Mellin transform as defined in \eqref{mellin} enjoys the following properties, according to Theorem 5.3 of \cite[Chapter 1]{Ig78}.

\begin{pro}[Theorem 5.3, Chapter 1 of \cite{Ig78}]\label{Igusa78-53}
For $\chi\in \widehat{F^\times}$ with $\chi(\varpi)=1$, the Mellin transform $\CM(s,\chi)=\CM(z,\chi)$, with $z=q^{-s}$, is defined in \eqref{mellin} for $\Re(s)>1$,
admits meromorphic continuation to $s\in\BC$, and
provides  isomorphisms from $\CS^{\pm}_{n}(F^\times)$ onto $\CZ^{\pm}_n(\widehat{F^\times})$, respectively.
\end{pro}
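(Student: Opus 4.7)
The plan is to establish the ``$+$'' case; the ``$-$'' case is entirely parallel. Both directions reduce to an explicit computation that uses the asymptotic structure built into the definitions of $\CS_n^+(F^\times)$ and $\CZ_n^+(\widehat{F^\times})$, together with Mellin inversion.

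\textbf{Forward direction.} Fix $f\in \CS_n^+(F^\times)$. By Lemma \ref{K-finite}, $f$ is invariant under some subgroup $1+\varpi^N\CO$, so $\CM(f)(s,\chi)=0$ unless $\chi|_{\CO^\times}$ factors through $\CO^\times/(1+\varpi^N\CO)$, which is a finite quotient; this yields condition~(1) of Definition~\ref{Z+-}. For condition~(2), split $f=f_1+f_2$ where $f_2$ is the restriction of $f$ to $\{|x|\leq q^{-M}\}$ for $M$ large enough that the asymptotic expansion \eqref{eq:asymptotic-1} is valid on $\supp(f_2)$. Writing $z=q^{-s}$, one has
\begin{align*}
\CM(f_1)(s,\chi)=\sum_{k} c_k(\chi)\, z^k,\qquad c_k(\chi)=\int_{\CO^\times} f(\varpi^k\epsilon)\chi(\epsilon)\,d^*\epsilon,
\end{align*}
which is a Laurent polynomial in $z$ since $\supp(f_1)$ is compact in $F^\times$. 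For $\CM(f_2)$, substitute \eqref{eq:asymptotic-1} and integrate term by term; the leading term yields
\begin{align*}
\int_{|x|\leq q^{-M}} a_0^+(\ac(x))\chi(x)|x|^s\,d^*x=b_{0,\chi}^+\sum_{k\geq M} z^k=\frac{b_{0,\chi}^+\, z^M}{1-z},
\end{align*}
where $b_{0,\chi}^+:=\int_{\CO^\times}a_0^+(\epsilon)\chi(\epsilon)\,d^*\epsilon$. The remaining terms contribute, analogously, expressions of the form $\frac{b^+_{i,+,\chi}\,z^Mq^{-M(i+1/2)}}{1-q^{-(i+1/2)}z}$ and $\frac{b^+_{i,-,\chi}\,(-z)^M q^{-M(i+1/2)}}{1+q^{-(i+1/2)}z}$, matching exactly the poles prescribed in \eqref{eq:Z-asymptotic-1}.

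\textbf{Inverse direction.} Given $Z_+\in \CZ_n^+(\widehat{F^\times})$, only finitely many $\chi\in \wh{\CO^\times}$ contribute. For each such $\chi$, write $Z_+(z,\chi)$ as its prescribed pole part plus a Laurent polynomial $L_\chi(z,z^{-1})$, and expand each pole term as a geometric series on $|z|<1$:
\begin{align*}
\frac{1}{1-q^{-\alpha}z}=\sum_{k\geq 0}q^{-\alpha k}z^k,\qquad \frac{1}{1+q^{-\alpha}z}=\sum_{k\geq 0}(-1)^k q^{-\alpha k}z^k.
\end{align*}
Reading off the coefficient $c_k(\chi)$ of $z^k$ in $Z_+(z,\chi)$ and defining $f(\varpi^k\epsilon):=\sum_{\chi}c_k(\chi)\chi(\epsilon)^{-1}$ yields a locally constant function on $F^\times$ which, by the finite support condition and the geometric series expansions, is supported on $\{|x|\leq u\}$ for some $u$ (coming from $L_\chi$) and satisfies \eqref{eq:asymptotic-1} near $0$: the pole at $1-z$ produces the constant term $a_0^+(\ac(x))$, the pole at $1-q^{-(i+1/2)}z$ produces $a_{i,+}^+(\ac(x))|x|^{i+1/2}$, and the pole at $1+q^{-(i+1/2)}z$ produces $a_{i,-}^+(\ac(x))|x|^{i+1/2}(-1)^{\ord(x)}$. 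Hence $f\in \CS_n^+(F^\times)$, and the forward computation shows $\CM(f)=Z_+$. Injectivity of $\CM$ is automatic: if $\CM(f)\equiv 0$ for all $\chi$, then all Laurent coefficients $c_k(\chi)$ vanish, forcing $f=0$.

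\textbf{Main subtlety.} The delicate point is the rigidity of the correspondence: the prescribed exponents $\{0,1/2,\ldots,n-1/2\}$ together with their signed variants match exactly the pole locations $\{1,q^{\pm 1/2},\ldots,q^{\pm(n-1/2)}\}$ in $z$, so any extra pole in $Z_+$ would produce a forbidden term in the asymptotic expansion, and conversely any missing pole would fail to account for one of the asymptotic pieces. Verifying that the inverse construction produces a genuine element of $\CS_n^+(F^\times)$ (not just a formal series) requires both the finiteness condition~(1) on $Z_+$, which makes the $\chi$-sum finite, and the precise pole structure~(2), which forces the geometric series expansions to sum to the form \eqref{eq:asymptotic-1}. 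The ``$-$'' case proceeds identically with exponents $0,1,\ldots,n-1$ and leading exponent $n$, giving the pole pattern \eqref{eq:Z-asymptotic-2}. This is essentially the $p$-adic Paley--Wiener theorem of Igusa \cite[Theorem 5.3, Chapter~1]{Ig78}.
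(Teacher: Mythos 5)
Your proof is correct. Note that the paper itself gives no proof of this proposition — it is stated as a direct citation of Theorem 5.3, Chapter 1 of \cite{Ig78} — and your argument (splitting off the compactly supported part, integrating the exact expansion near $x=0$ term by term to produce the prescribed simple poles in $z=q^{-s}$, and inverting via finite Fourier analysis on $\CO^\times/(1+\varpi^N\CO)$ together with geometric-series expansion of the pole terms) is precisely the standard proof of Igusa's theorem, so there is nothing to compare beyond agreement.
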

We denote by $\CM^{-1}$ the inverse of Mellin transform, which takes any function $Z\in \CZ^{\pm}_n(\widehat{F^\times})$ to $\CM^{-1}(Z)\in \CS^{\pm}_{n}(F^\times)$.
For a $p$-adic local field $F$, it is more precisely given by
\begin{equation*} 
\CM^{-1}(Z)(x)=	\sum_{\chi\in \widehat{\CO^\times}} {\rm Res}_{z=0}(Z(z,\chi)z^{-{\rm ord}(x)-1})\chi({\rm ac}(x))^{-1},
\end{equation*}
according to \cite[Chapter 1, Theorem 5.3]{Ig78}.

\begin{pro}\label{pro-2fi}
For any $\Phi\in \CC^\infty_c(S_{2n+1})$,
the fiber integration $f_\Phi$ belongs to $\CS^+_{n}(F^\times)$ and similarly, $f_{\varrho\cdot\wh{\Phi}}$ belongs to $\CS^-_{n}(F^\times)$.
\end{pro}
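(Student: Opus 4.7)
My plan is to characterize $f_\Phi$ and $f_{\varrho \cdot \wh{\Phi}}$ through their Mellin transforms and invoke the isomorphism $\CM: \CS_n^{\pm}(F^\times) \xrightarrow{\sim} \CZ_n^{\pm}(\wh{F^\times})$ of Proposition \ref{Igusa78-53}. First I would verify basic regularity: since $\det: S_{2n+1} \to F$ is submersive on $\det^{-1}(F^\times)$ and $\Phi, \wh{\Phi} \in \CC_c^\infty(S_{2n+1})$, the fiber integrals are locally constant on $F^\times$ with bounded support, and the Clifford invariant $\varrho$ is likewise locally constant on $\det^{-1}(F^\times)$. Uniform local constancy follows from the change of variables $X \mapsto g X g^t$ with $g = \diag(v, 1, \ldots, 1)$ for $v \in 1 + \varpi^N\CO$ and $N$ sufficiently large: this map has unit Jacobian, preserves $\Phi$, $\wh{\Phi}$ and $\varrho$ (the latter because $\varrho$ is $\GL_{2n+1}(F)$-invariant via the multiplicativity of the Hilbert symbol), and scales $\det$ by $v^2$, yielding $f_\Phi(v^2 t) = f_\Phi(t)$ and similarly for $f_{\varrho \cdot \wh{\Phi}}$. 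Since the squaring map has open image on $1 + \varpi^N\CO$, both functions are $(1 + \varpi^{N'}\CO)$-invariant for some $N'$, so their Mellin transforms are supported on finitely many $\chi \in \wh{\CO^\times}$.

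The core step identifies the Mellin transforms with zeta integrals and pins down their pole structure. From \eqref{eq:Igusa-fiber-1} one has $\CM(f_\Phi)(s, \chi) = (1 - q^{-1})^{-1}\CZ_\Phi(s - 1, \chi)$, and by Theorem \ref{thm:gcd-zi} this is a Laurent-polynomial multiple of $a_{2n+1}(s + n, \chi) = L(s, \chi) \prod_{r=1}^n L(2s + 2r - 1, \chi^2)$. Factoring each quadratic $L$-factor as $(1 - q^{-(r - 1/2)}z)^{-1}(1 + q^{-(r - 1/2)}z)^{-1}$ with $z = q^{-s}$ shows the possible poles lie exactly at $z = 1$ and $z = \pm q^{i + 1/2}$ for $i = 0, \ldots, n - 1$, matching the pattern \eqref{eq:Z-asymptotic-1} of $\CZ_n^+(\wh{F^\times})$. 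For $\CM(f_{\varrho \cdot \wh{\Phi}})$, I would apply Corollary \ref{fe-Radon} to express it (up to holomorphic factors) as $\beta_\psi(\chi_s)\CM(f_\Phi)(s - n + 1/2, \chi)$ evaluated at $s = 1/2 - u$, where $u$ is the new Mellin variable. A careful combination of the shifted poles of $\CM(f_\Phi)$ with the poles and zeros of $\beta_\psi(\chi_{1/2 - u})$ produces, after cancellations, precisely the pattern in \eqref{eq:Z-asymptotic-2}: surviving poles at $z = q^n$ (from $\beta_\psi$), at $z = q^{n-1}$ (from $\CM(f_\Phi)$), and at $z = \pm q^i$ for $i = 0, \ldots, n - 1$. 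Mellin inversion via Proposition \ref{Igusa78-53} then yields $f_\Phi \in \CS_n^+(F^\times)$ and $f_{\varrho \cdot \wh{\Phi}} \in \CS_n^-(F^\times)$.

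The hard part will be the pole/zero bookkeeping in the $\beta_\psi$-factor. In $\beta_\psi(\chi_s) = \gamma(s - (2n-1)/2, \chi, \psi) \prod_{r=1}^n \gamma(2s - 2n + 2r, \chi^2, \psi)$, the quadratic $\gamma$-factors have poles and zeros of period $\pi i/\log q$ as functions of $s$, producing both positive and negative $z$-values that must be tracked simultaneously. The half-integer power $z$-poles of $\CM(f_\Phi)$ that appear after the variable shift $v = 1 - n - u$ (namely $z = \pm q^{n - 3/2 - i}$ for $i = 0, \ldots, n - 1$) must be shown to be cancelled by corresponding zeros of $\beta_\psi$, while the integer-power poles $z = q^n$ and $z = -q^i$ arising from $\beta_\psi$, together with the surviving pole at $z = q^{n-1}$ from $\CM(f_\Phi)$, must fill out precisely the $\CZ_n^-$ pattern. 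This verification must be carried out for each character $\chi \in \wh{\CO^\times}$ contributing nontrivially (the trivial and quadratic ones), since only those produce poles in the relevant $L$- and $\gamma$-factors.
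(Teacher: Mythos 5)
Your proposal is correct and follows essentially the same route as the paper's proof: Mellin transform, the fractional-ideal computation of Theorem \ref{thm:gcd-zi} (which the paper reaches through the intertwining operator in Proposition \ref{pro:pole-Mw}, itself just a repackaging of that theorem), the functional equation of Corollary \ref{fe-Radon}, coprimality of the linear factors $1-z$, $1\pm q^{-(i+\frac{1}{2})}z$ (resp.\ $1-q^{-n}z$, $1\pm q^{-i}z$), and Mellin inversion via Proposition \ref{Igusa78-53}. The pole/zero bookkeeping you flag as the hard part is carried out in the paper by writing each abelian $\gamma$-factor in $\beta_\psi(\chi_s^{-1})$ as an $\varepsilon$-factor (a unit in $\BC[z,z^{-1}]$) times a ratio of $L$-factors, after which the quotient of $\CM(f_{\varrho\cdot\wh{\Phi}}|\cdot|^{1/2})(z,\chi)$ by $L(s+\frac{2n+1}{2},\chi)\prod_{i=0}^{n-1}L(2s+2i+1,\chi^2)$ equals, up to a unit, the corresponding normalized quotient of $\CM(f_{\Phi}|\cdot|^{-(n-\frac{1}{2})})(z^{-1},\chi^{-1})$ already shown to be a Laurent polynomial --- exactly the cancellation you anticipate.
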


\begin{proof}
First, we show that $f_\Phi$ belongs to $\CS^+_{n}(F^\times)$.
By \cite[Theorem  8.4.1]{Ig00}, we have that
\[
f_{\Phi}=\CM^{-1}(\CM(f_\Phi)(z,\chi))	
\]
for any $\Phi\in \CC^\infty_c(S_{2n+1})$.
Hence it is sufficient to show that the Mellin transform $\CM(f_\Phi)(z,\chi)$ is in $\CZ^{+}_n(\widehat{F^\times})$ as $\CM$ is an isomorphism from $\CS^{+}_n(F^\times)$ to $\CZ^{+}_n(\widehat{F^\times})$
(Proposition \ref{Igusa78-53}).

By \cite[Theorems 8.2.1 and 8.4.1]{Ig00}, $\CM(f_\Phi)$ admits a meromorphic continuation to a rational function in $\BC(z)$ and there is a positive integer $e(\Phi)$ such that
$\CM(f_\Phi)(z,\chi)$ is identically zero unless the conductor $e(\chi)\leq e(\Phi)$.
It suffices to prove that $\CM(f_\Phi)(z,\chi)$ satisfies \eqref{eq:Z-asymptotic-1}. To this end, we need the results on the local intertwining operator $\RM_{w_{2n+1}}(s,\chi)$ of $\Sp_{4n+2}$
from \cite{Ik92}.

Let $P=MN$ be the standard Siegel parabolic subgroup of $\Sp_{4n+2}$, with $M\cong\GL_{2n+1}$ and $N(F)\cong S_{2n+1}(F)$.
Consider the normalized induced representation
$$
\RI^{4n+2}(s,\chi)=\Ind^{\Sp_{4n+2}(F)}_{P(F)}(\chi(\det (A))|\det (A)|^s),
$$
with $m(A)=\left(\begin{smallmatrix}A&0\\0&A^*\end{smallmatrix}\right)\in M(F)$. Take the Weyl element
$w_{2n+1}=\left(\begin{smallmatrix}0&-\RI\\ \RI&0\end{smallmatrix}\right)$ with $\RI=\RI_{2n+1}$.
The associated
local intertwining operator $\RM_{w_{2n+1}}(s,\chi)$ takes $\RI^{4n+2}(s,\chi)$ to $\RI^{4n+2}(-s,\chi^{-1})$. Note that the modular character $\delta_P$ is given by $\delta_P(m(A))=|\det(A)|^{2n+2}$.

According to  \cite{PSR87}, \cite[p.~206]{Ik92} and Proposition \ref{pro:pole-Mw}, the following operator
\[
\frac{\RM_{w_{2n+1}}(s,\chi)}{L(s-n,\chi)\prod^{n}_{i=1}L(2s-2n-1+2i,\chi^2)}
\]
is holomorphic in $s$. It follows that
\begin{equation}\label{eq:CM-z-s}
\frac{\CM(f_\Phi)(z,\chi)}{L(s,\chi)\prod_{i=0}^{n-1}L(2s+2i+1,\chi^2)}	
\end{equation}
has no pole for all $\chi\in\widehat{\CO^{\times}}$, and hence is a polynomial of $z=q^{-s}$.
Note that we mix the use of $z=q^{-s}$ and $s$ in \eqref{eq:CM-z-s} to emphasize the different roles they play.
In fact, $\CM(f_\Phi)(z,\chi)$ in the numerator is considered as a function of variable $(z,\chi)\in \widehat{F^\times}$ for Mellin transform, studied by Igusa in \cite{Ig78}. 
We follow the notation there  as in Definition~\ref{Z+-}.
The normalizer in the denominator is a rational function of $z$, also a product of $L$-factors. 
To trace the involved $L$-factors, we follow usual convention to consider it as a function of variable $(s,\chi)$ instead of $(z,\chi)$.  

In order to show that $\CM(f_\Phi)(z,\chi)$ has the asymptotic behavior \eqref{eq:Z-asymptotic-1}, we do the following calculations.
Whenever $\chi$ is unramified (i.e. $\chi$ is trivial), we have
$$
L(s,\chi) = \frac{1}{1-\chi(\vpi)q^{-s}} = \frac{1}{1-z}
$$
and
$$
L(2s+2i+1,\chi^{2}) = \frac{1}{1-\chi^{2}(\vpi)q^{-2s-2i-1}} =
\frac{1}{1-q^{-2i-1}z^{2}},
$$
where the last identity, up to constant, is equal to
$$
\frac{1}{1-q^{-i-\frac{1}{2}}z}+\frac{1}{1+q^{-i-\frac{1}{2}}z}.
$$
Using the fact that the following polynomials in $z$
$$
\{ 1-z, 1-q^{-i-\frac{1}{2}}z, 1+q^{-i-\frac{1}{2}}z\mid i=0,...,n-1\}
$$
are coprime to each other, we get that
$
\CM(f_{\Phi})(z,\chi)
$
has the desired asymptotic behavior in \eqref{eq:Z-asymptotic-1}.

Now, let us show that $f_{\varrho\cdot\wh{\Phi}}$ belongs to $\CS^{-}_n(F^\times)$.
By definition, $f_{\varrho\cdot\wh{\Phi}}$ is locally constant and $|f_{\varrho\cdot\wh{\Phi}}|\leq f_{|\widehat{\Phi}|}$.
It follows that $\CM(f_{\varrho\cdot\wh{\Phi}})(z,\chi)$ admits a meromorphic continuation to a rational function in $\BC(z)$
and is identically zero for almost all $\chi\in\widehat{\CO^\times}$.
Since $\CM(f_{\varrho\cdot\wh{\Phi}})$ is absolutely convergent for $\Re(s)>0$,
we may apply  \cite[Proposition 7.2.2]{Ig00}  and the Mellin inversion formula in \cite[\S 5.3, Remark 2]{Ig78}, and get
$$
f_{\varrho\cdot\wh{\Phi}}(x)=\CM^{-1}(\CM(f_{\varrho\cdot\wh{\Phi}})(z,\chi))(x)
$$
with $x\in\varpi^n\CO^\times$ for all $n$, and hence for all $x\in F^\times$.

In order to prove that $f_{\varrho\cdot\wh{\Phi}} \in \CS^{-}_n(F^\times)$, or equivalently that $\CM(f_{\varrho\cdot\wh{\Phi}})\in \CZ^{-}_{n}(\wh{F^\times})$,
it is sufficient to show that the Mellin transform $\CM(f_{\varrho\cdot\wh{\Phi}})(z,\chi)$ has the asymptotic
behavior \eqref{eq:Z-asymptotic-2}. This can be carried out by using the functional equation \eqref{eq:FE-F-0} and then the same arguments we used to deal with $\CM(f_\Phi)$.

First, we write
\begin{eqnarray*}
\CM(f_\Phi\cdot|\cdot|^{-n+\frac{1}{2}})(z,\chi)&=&\CM(f_{\Phi})(s-n+\frac{1}{2},\chi)\\
\CM(f_{\varrho\cdot\wh{\Phi}}\cdot|\cdot|^{\frac{1}{2}})(z^{-1},\chi^{-1})&=&\CM(f_{\varrho\cdot\wh{\Phi}})(-s+\frac{1}{2},\chi^{-1})
\end{eqnarray*}
with $z=q^{-s}$,
to emphasize the Mellin transforms as the functions of variable $(z,\chi)\in \widehat{F^\times}$ instead of $(s,\chi)$.
Then the functional equation in \eqref{eq:FE-F-0} turns to be
\begin{equation}\label{fe-f}
\CM(f_{\varrho\cdot\wh{\Phi}}|\cdot|^{\frac{1}{2}})(z,\chi)=|2|^{-2n^2}\chi_s(2)^{-2n}\beta_{\psi}(\chi_s^{-1})\CM(f_\Phi|\cdot|^{-(n-\frac{1}{2})})(z^{-1},\chi^{-1}),
\end{equation}
i.e.,
\begin{equation*} 
\CM(f_{\varrho\cdot\wh{\Phi}}|\cdot|^{\frac{1}{2}})(q^{-s},\chi)=|2|^{-2n^2}\chi_s(2)^{-2n}\beta_{\psi}(\chi_s^{-1})\CM(f_\Phi|\cdot|^{-(n-\frac{1}{2})})(q^{s},\chi^{-1}),
\end{equation*}
In order to understand the asymptotic behavior of the right-hand side in the above functional equation, we write out the local gamma factors in $\beta_{\psi}(\chi_s^{-1})$ in terms of the corresponding
local $L$-factors and local $\varepsilon$-factors.
Hence we are able to write the left-hand side of the above functional equation as follows:
\begin{eqnarray}
 &&\CM(f_{\varrho\cdot\wh{\Phi}}|\cdot|^{\frac{1}{2}})(z,\chi)=
 \CM(f_{\varrho\cdot\wh{\Phi}}|\cdot|^{\frac{1}{2}})(q^{-s},\chi)\label{eq:M-f-hat}\nonumber\\
=&&|2|^{-2n^2}\chi_s(2)^{-2n}\varepsilon(-s-\frac{2n-1}{2},\chi^{-1},\psi)
\prod^{n-1}_{i=0}\varepsilon(-2s-2i,\chi^{-2},\psi) \label{eq:M-f-hat-eps}\nonumber\\
&&\qquad\times L(s+\frac{2n+1}{2},\chi)\prod_{i=0}^{n-1}L(2s+2i+1,\chi^2) \label{eq:M-f-hat-L}\\
&&\qquad\qquad\times\frac{\CM(f_\Phi|\cdot|^{-(n-\frac{1}{2})})(z^{-1},\chi^{-1})}
{L(-s-\frac{2n-1}{2},\chi)\prod_{i=0}^{n-1}L(-s-2i,\chi^2)}, \label{eq:M-f-hat-hol} \nonumber
\end{eqnarray}
with $z=q^{-s}$.
Like \eqref{eq:CM-z-s}, we mix the use of $z=q^{-s}$ and $s$ in \eqref{eq:M-f-hat-L} to 
study the numerator as a function of variable $(z,\chi)\in \widehat{F^\times}$ and 
trace the $L$-factors in the denominator.
In the remaining of this proof, we follow in this manner.. 

Recall that under the assumption that the additive character $\psi$ has conductor $\CO$  (i.e., $\psi\vert_{\CO}=1$ and $\psi\vert_{\varpi^{-1}\CO}\ne 1$) one has
$$\varepsilon(s,\chi,\psi)=q^{e(\chi)(\frac{1}{2}-s)}\varepsilon(\frac{1}{2},\chi,\psi),
$$
where $e(\chi)$ is the conductor of $\chi$ (i.e., the smallest positive integer such that $\chi\vert_{1+\varpi^m\CO}=1$). It follows that $\veps(s,\chi,\psi)$ and $\veps(2s,\chi^{2},\psi)$ are always units in
$
\BC[q^{-s},q^{s}] = \BC[z,z^{-1}].
$
Hence up to a unit in $\BC[z,z^{-1}]$, we obtain the following identity:
\begin{align*}
&\frac{\CM(f_{\varrho\cdot\wh{\Phi}}|\cdot|^{1/2})(z,\chi)}{L(s+\frac{2n+1}{2},\chi)
\prod_{i=0}^{n-1}L(2s+2i+1,\chi^{2})
}
\\
&\qquad\qquad\qquad \text{ ``=" }
\frac{\CM(f_{\Phi}|\cdot|^{-(n-\frac{1}{2})})(z^{-1},\chi^{-1})}{
L(-s-\frac{2n-1}{2},\chi^{-1})\prod_{i=0}^{n-1}
L(-s-2i, \chi^{-2})
}.
\end{align*}
Using the result we just proved for $f_{\Phi}$, we obtain immediately  that
$$
\frac{\CM(f_{\varrho\cdot\wh{\Phi}}|\cdot|^{1/2})(z,\chi)}{L(s+\frac{2n+1}{2},\chi)
\prod_{i=0}^{n-1}L(2s+2i+1,\chi^{2})
}\in \BC[z,z^{-1}].
$$
By a shift from $s+\frac{1}{2}$ to $s$, we obtain that
$$
\frac{\CM(f_{\varrho\cdot\wh{\Phi}})(z,\chi)}{L(s+n,\chi)
\prod_{i=0}^{n-1}L(2s+2i,\chi^{2})
}\in \BC[z,z^{-1}].
$$
Finally by using the same argument for $f_{\Phi}$ as given above, we obtain indeed the desired leading terms for $\CM(f_{\varrho\cdot\wh{\Phi}})(z,\chi)$ as in \eqref{eq:Z-asymptotic-2} and deduce that
$
\CM(\wh{f}_{\Phi})(z,\chi)\in \CZ^{-}_{n}(\wh{F}^{\times}).
$
\end{proof}

\begin{rmk}
When $\Phi$ is taken to be the characteristic function of $S_{2n+1}(\CO)$, by \cite[Chapter III]{S97} and \cite[Proposition 10.3.1]{Ig00}, one has
$$
\CM(f_{\Phi})(z,1) =
\frac{1}{(1-z)\prod_{i=0}^{n-1}(1-q^{-(2i+1)}z^{2})}.
$$
This formula is more precise, but more special than what we got in Proposition \ref{pro-2fi}. Due to our choice of the Haar measure ${\rm d}^*t$ on $F^\times$,
we need a shift from $s$ to $s-1$ in the formula in \cite[Proposition 10.3.1]{Ig00}.
\end{rmk}

\subsection{Schwartz spaces and Fourier operator}\label{ssec-ssgft}

We are ready to introduce certain spaces of Schwartz type and define a Fourier (convolution) operator over $F^\times$ with the generalized function $\eta_{\pvs,\psi}(x)$ on $F^\times$ as the kernel function,
in order to understand the $\gamma$-function $\beta_\psi(\chi_s)$ as in \eqref{eq:eta-intertwining}, by means of harmonic analysis on $F^\times$.

First, we re-arrange the functional equation in Corollary \ref{fe-Radon} as
\begin{align}
 & |2|^{2n^2}\chi_s(2)^{-2n}\int_{F^\times}f_{\varrho\cdot\wh{\Phi}}(t)|t|^{n+1}\chi^{-1}(t)|t|^{-s-\frac{2n+1}{2}}\ud^* t\nonumber \\
&\qquad\qquad\qquad\qquad =  \beta_{\psi}(\chi_s)\int_{F^\times}f_\Phi(t)|t|^{-2n}\chi(t)|t|^{s+\frac{2n+1}{2}}\ud^*t. \label{eq:FE-F-1}	
\end{align}
By changing variable $t\mapsto 2^{-2n}t$ on the left-hand side of \eqref{eq:FE-F-1}, we obtain a new version of the functional equation that relates $f_\Phi$ to
$f_{\varrho\cdot\wh{\Phi}}$:
\begin{align}
 & |2|^{2n^2-n}\int_{F^\times}f_{\varrho\cdot\wh{\Phi}}(2^{-2n}t)|t|^{n+1}\chi_{s+\frac{2n+1}{2}}(t)^{-1}\ud^*t\nonumber \\
&\qquad\qquad\qquad\qquad\qquad = \beta_{\psi}(\chi_s)\int_{F^\times}f_\Phi(t)|t|^{-2n}\chi_{s+\frac{2n+1}{2}}(t)\ud^* t. \label{eq:FE-F}
\end{align}

\begin{dfn}[Spaces $\CS^\pm_{\pvs}(F^\times)$]\label{schwartzF}
Define $\CS^+_{\pvs}(F^\times)$ to be the subspace of $\CC^\infty(F^\times)$ that consists of all functions of the form: $|t|^{-2n}f_\Phi(t)$ with all $\Phi\in\CC^\infty_c(S_{2n+1})$; and define
$\CS^-_{\pvs}(F^\times)$ to be the subspace of $\CC^\infty(F^\times)$ that consists of all functions of the form: $|t|^{n+1}f_{\varrho\cdot\wh{\Phi}}(t)$ with all $\Phi\in\CC^\infty_c(S_{2n+1})$.
\end{dfn}

The notation $\CS^\pm_{\pvs}(F^\times)$ indicates that they are the certain spaces of the smooth functions on $F^\times$ that come from the prehomogeneous vector space ($\pvs$) $S_{2n+1}(F)$.
By Proposition \ref{pro-2fi}, it is clear from Definition \ref{schwartzF} that
\[
\CC^\infty_{c}(F^\times)\subset	\CS^+_{\pvs}(F^\times)\subset |\cdot|^{-2n}\CS^{+}_n(F^\times)
\]
and
\[
\CC^\infty_{c}(F^\times)\subset\CS^-_{\pvs}(F^\times)\subset |\cdot|^{n+1}\CS^{-}_n(F^\times),	
\]
where $|\cdot|^m \CS^{\pm}_n(F^\times):=\{|t|^mf(t) \mid f(t)\in \CS^{\pm}_n(F^\times)\}$.  For any $f\in \CS^+_{\pvs}(F^\times)$, we write $f(t)=|t|^{-2n}f_\Phi$ for some $\Phi\in\CC_c^\infty(S_{2n+1})$.
We define
\begin{equation}\label{whf}
\wh{f|\cdot|^{2n}}(t):=f_{\varrho\cdot\wh{\Phi}}(t).
\end{equation}
Define a linear transform $\FL$
from  $\CS^+_{\pvs}(F^\times)$ to $\CS^-_{\pvs}(F^\times)$  by
\begin{equation}\label{LT}
 \FL(f)(t):=|2|^{2n^2-n}|t|^{n+1} \widehat{f|\cdot|^{2n}}(2^{-2n}t)
\in \CS^-_{\pvs}(F^\times)
\end{equation}
for any $f(t)\in \CS^+_{\pvs}(F^\times)$. Finally, we are able to establish a functional equation with the $\gamma$-factor $\beta_\psi(\chi_s)$.

\begin{thm}[Functional Equation for $\beta_\psi(\chi_s)$]\label{thm:LT}
The linear transform $\FL$ as in \eqref{LT} is well-defined, and for any $f\in \CS^+_{\pvs}(F^\times)$, the following functional equation
\begin{equation}\label{eq:FE-eta}
\int_{F^\times}\FL(f)(t)\chi_{s+\frac{2n+1}{2}}(t)^{-1}\ud^* t
=\beta_{\psi}(\chi_s)\int_{F^\times}f(t)\chi_{s+\frac{2n+1}{2}}(t)\ud^* t
\end{equation}
holds, where $\beta_{\psi}(\chi_s)$ is defined as in \eqref{eq:eta-intertwining}.
\end{thm}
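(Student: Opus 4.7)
The plan is to deduce Theorem~\ref{thm:LT} essentially by rearranging the functional equation \eqref{eq:FE-F-0} from Corollary~\ref{fe-Radon} and tracking the various normalizations built into $\CS^\pm_\pvs(F^\times)$ and $\FL$. There are three items to settle: (i) that $\FL(f)$ depends only on $f$ and not on the representative $\Phi \in \CC_c^\infty(S_{2n+1})$ with $f(t) = |t|^{-2n}f_\Phi(t)$; (ii) that $\FL(f)$ in fact lies in $\CS^-_\pvs(F^\times)$; and (iii) the functional equation \eqref{eq:FE-eta} itself.

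For well-definedness (i), suppose $\Phi,\Phi' \in \CC_c^\infty(S_{2n+1})$ produce the same $f$, so $f_{\Phi - \Phi'} \equiv 0$ on $F^\times$ and hence $\CM(f_{\Phi-\Phi'})(s-n+\tfrac{1}{2},\chi) \equiv 0$ for every unitary character $\chi$. Plugging $\Phi-\Phi'$ into \eqref{eq:FE-F-0} gives
\[
|2|^{-2ns+2n^2}\chi^{-2n}(2)\,\CM(f_{\varrho\cdot\widehat{\Phi-\Phi'}})(-s+\tfrac{1}{2},\chi^{-1}) \equiv 0
\]
as meromorphic functions in $s$. Since the leading factor is nonzero, $\CM(f_{\varrho\cdot\widehat{\Phi-\Phi'}})$ vanishes identically for every $\chi$; Mellin inversion, exactly as applied in the proof of Proposition~\ref{pro-2fi}, then forces $f_{\varrho\cdot\widehat{\Phi-\Phi'}} \equiv 0$ on $F^\times$, so $\FL(f)$ is independent of the chosen $\Phi$.

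For (ii), I would realize the scaling $t \mapsto 2^{-2n}t$ in the definition \eqref{LT} through the action $X \mapsto AXA^t$ of $A = \diag(2^{-n},1,\ldots,1) \in \GL_{2n+1}(F)$ on $S_{2n+1}(F)$. A standard change of variables (with Jacobian $|\det A|^{2n+2}$) gives $\widehat{\Phi_A}(Y) = |\det A|^{2n+2}\widehat{\Phi}(A^t Y A)$, and since the Clifford invariant $\varrho$ depends only on the $\GL$-equivalence class of the underlying quadratic form, $\varrho(A^t Y A) = \varrho(Y)$. Combining with the corresponding transformation of the fiber measure $\mu_t$ yields
\[
f_{\varrho\cdot\widehat{\Phi_A}}(t) = |2|^{-2n}\,f_{\varrho\cdot\widehat{\Phi}}(2^{-2n}t),
\]
so that $\FL(f)(t) = |t|^{n+1}f_{\varrho\cdot\widehat{\Psi}}(t)$ with $\Psi := |2|^{2n^2+n}\Phi_A \in \CC_c^\infty(S_{2n+1})$; this exhibits $\FL(f)$ as an element of $\CS^-_\pvs(F^\times)$.

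The functional equation (iii) is then an unpacking in terms of the Mellin transforms. On $\Re(s) > n+\tfrac{1}{2}$, the right-hand side of \eqref{eq:FE-eta} equals $\beta_\psi(\chi_s)\cdot\CM(f_\Phi)(s-n+\tfrac{1}{2},\chi)$. On $\Re(s) < -\tfrac{1}{2}$, substituting $t \mapsto 2^{2n}t$ inside the definition of $\FL$ collapses the left-hand side to $|2|^{-2ns+2n^2}\chi(2)^{-2n}\CM(f_{\varrho\cdot\widehat{\Phi}})(-s+\tfrac{1}{2},\chi^{-1})$, and the equality of these two expressions as meromorphic functions on all of $\BC$ is precisely \eqref{eq:FE-F-0}. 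All of the analytic content is thus already contained in Corollary~\ref{fe-Radon}; the only genuinely delicate part of the argument is the bookkeeping of the powers of $|2|$ and values $\chi(2)$ introduced by the shifts $|t|^{-2n}$, $|t|^{n+1}$ in Definition~\ref{schwartzF}, the factor $|2|^{2n^2-n}$ in \eqref{LT}, and the translation between the Mellin variable $s-n+\tfrac{1}{2}$ and the pairing against $\chi_{s+\frac{2n+1}{2}}$.
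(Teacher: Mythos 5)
Your proposal is correct and follows essentially the same route as the paper: well-definedness of $f_\Phi\mapsto f_{\varrho\cdot\wh{\Phi}}$ via the functional equation of Corollary \ref{fe-Radon} together with injectivity of the Mellin transform on $\CS_n^-(F^\times)$ (Proposition \ref{Igusa78-53}), and then the identity \eqref{eq:FE-eta} as a direct rewriting of \eqref{eq:FE-F-0} after the substitution $t\mapsto 2^{-2n}t$; your bookkeeping of the powers of $|2|$ and of $\chi(2)$ checks out. Your item (ii), verifying via the $\GL_{2n+1}(F)$-action by $A=\diag(2^{-n},1,\dots,1)$ that the dilated function still lies in $\CS^-_{\pvs}(F^\times)$, is a point the paper leaves implicit in Definition \eqref{LT}, and it is a welcome addition.
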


\begin{proof}
In fact, it suffices to show that $f_\Phi\mapsto f_{\varrho\cdot\wh{\Phi}}$ is a well-defined mapping.
That is,
if there exist $\Phi_1$ and $\Phi_2$ in $\CC^\infty_c(S_{2n+1})$ such that $f_{\Phi_1}(t)=f_{\Phi_2}(t)$,
we need to show that  $f_{\varrho\cdot\wh{\Phi}_1}(t)=f_{\varrho\cdot\wh{\Phi}_2}(t)$.
In fact,  take $\Phi=\Phi_1-\Phi_2$.
Then  $f_{\varrho\cdot\wh{\Phi}}=f_{\varrho\cdot\wh{\Phi}_1}-f_{\varrho\cdot\wh{\Phi}_2}\in  \CS^{-}_n(F^\times)$. If assume that $f_\Phi(t)=f_{\Phi_1}(t)-f_{\Phi_2}(t)\equiv 0$,
then by the functional equation in \eqref{eq:FE-F}, one must have that
$\CM(f_{\varrho\cdot\wh{\Phi}})\equiv 0$ for all  $\chi$. By \cite[Chapter 1, Theorem 5.3]{Ig78} (Proposition \ref{Igusa78-53}), $\CM$ is injective on $\CS^{-}_n(F^\times)$. Hence we must have that
$f_{\varrho\cdot\wh{\Phi}}(t)=0$. This implies that $f_{\varrho\cdot\wh{\Phi}_1}(t)=f_{\varrho\cdot\wh{\Phi}_2}(t)$.

The functional equation relating $f\in\CS^+_\pvs(F^\times)$ and $\FL(f)\in\CS^-_\pvs(F^\times)$ follows from the functional equation in \eqref{eq:FE-F}.
\end{proof}

It is important to note that the integral on the left-hand side of the functional equation in \eqref{eq:FE-eta} is absolutely convergent for $\Re(s)$ sufficiently negative,
while the integral on the right-hand side is absolutely convergent for $\Re(s)$ sufficiently positive.
However, the functional equation  \eqref{eq:FE-eta} holds by meromorphic continuation as functionals with the parameter $s\in\BC$ on the both sides.
This functional equation relating $f\in\CS^+_\pvs(F^\times)$ to $\FL(f)\in\CS^-_\pvs(F^\times)$ is essential to our construction of the generalized function $\eta_{\pvs,\psi}$ (Theorem \ref{thm:eta}),
and to the proof of the key technical result of the paper (Theorem \ref{thm:CFP-M}).


\subsection{Generalized function $\eta_{\pvs,\psi}$}\label{ssec-etapvs}
We are going to construct the generalized function $\eta_{\pvs,\psi}(x)$ on $F^\times$ in an analytic way, so that the generalized Fourier transform from
$f\in\CS^+_{\pvs}(F^\times)$ to $\FL(f)\in\CS^-_{\pvs}(F^\times)$ as defined in \eqref{LT} becomes a Fourier (convolution) operator $\FL_{\eta_{\pvs,\psi}}$ with $\eta_{\pvs,\psi}(x)$ as the kernel function
(Theorem \ref{thm:eta}).

For any generalized function $\eta(t)$ on $F^\times$, the convolution of $\eta$ with a proper function $f$ on $F^\times$ is defined to be the {\sl principle value integral}:
\begin{align}\label{eta-conv-1}
\eta(f)(t)=(\eta*f)(t)
&:=\int_{F^\times}^\pv \eta(x)f(x^{-1}t)\ud^*x\nonumber\\
&=\lim_{k\to \infty}\int\limits_{q^{-k}\leq |x|\leq q^{k}}\eta(x)f(x^{-1}t)\ud^* x,
\end{align}
if the limit exists.

\begin{thm} \label{thm:eta}
There exists a locally constant function $\eta_{\pvs,\psi}$ on $F^\times$, which will be explicitly defined in \eqref{def:eta-rho-psi} below,
such that the generalized Fourier transform $\FL(f)$, as defined in \eqref{LT}, can be expressed as the principal value integral of the form:
\begin{equation}\label{eq:tilde-f-convolution}
\FL(f)(t)=\FL_{\eta_{\pvs,\psi}}(f)(t):=((\eta_{\pvs,\psi}|\cdot|^{\frac{2n+1}{2}})*f^\vee)(t)
\end{equation}
for all $f\in\CS^{+}_\pvs(F^\times)$, where $f^\vee(t)=f(t^{-1})$.
Moreover, for any character $\chi_s$ of $F^\times$, the $\chi_s$-Fourier coefficient of the generalized function $\eta_{\pvs,\psi}$ is given by the following formula:
\begin{equation}\label{eta-beta}
\eta_{\pvs,\psi}(\chi_s)=\beta_\psi(\chi_s),
\end{equation}
where $\beta_{\psi}(\chi_s)$ is defined as in \eqref{eq:eta-intertwining}.
\end{thm}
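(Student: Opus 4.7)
My plan is to construct $\eta_{\pvs,\psi}$ as the multiplicative convolution on $F^\times$ of $n+1$ elementary distributions, each corresponding (via Tate's local functional equation) to one of the abelian gamma factors in $\beta_\psi(\chi_s)$, and then deduce the convolution formula \eqref{eq:tilde-f-convolution} from the functional equation of Theorem \ref{thm:LT} via Mellin inversion.

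The first step is to realize each factor in
\[
\beta_\psi(\chi_s)=\gamma(s-\tfrac{2n-1}{2},\chi,\psi)\prod_{r=1}^{n}\gamma(2s-2n+2r,\chi^2,\psi)
\]
as a Mellin transform of an explicit generalized function on $F^\times$. By Tate's thesis, $\gamma(s-\tfrac{2n-1}{2},\chi,\psi)$ is (up to a $\zeta$-factor) the regularized integral of $\psi(x)\chi(x)|x|^{s-\frac{2n-1}{2}-1}$, and thus corresponds to a shifted twist $\eta_0$ of $\psi$; similarly, after the change of variable $x\mapsto x^2$, each factor $\gamma(2s-2n+2r,\chi^2,\psi)$ corresponds to a distribution $\eta_r$ supported on the image of squaring. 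Setting
\[
\eta_{\pvs,\psi}:=\eta_0\ast\eta_1\ast\cdots\ast\eta_n
\]
in multiplicative convolution on $F^\times$ (with principal value regularization), the identity $\eta_{\pvs,\psi}(\chi_s)=\beta_\psi(\chi_s)$ in \eqref{eta-beta} is then immediate, since Mellin transform on $F^\times$ sends convolution to pointwise product.

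The second step is the convolution formula \eqref{eq:tilde-f-convolution}. For any $f\in\CS^+_{\pvs}(F^\times)$, I would apply the Mellin transform against $\chi_{s+\frac{2n+1}{2}}^{-1}$ to both proposed sides. On the proposed right-hand side, multiplicative Parseval gives
\[
\int_{F^\times}^{\pv}\!\!\bigl((\eta_{\pvs,\psi}|\cdot|^{\frac{2n+1}{2}})\ast f^\vee\bigr)(t)\,\chi_{s+\frac{2n+1}{2}}^{-1}(t)\,\ud^* t
=\beta_\psi(\chi_s)\!\int_{F^\times}\!\! f(t)\,\chi_{s+\frac{2n+1}{2}}(t)\,\ud^* t,
\]
which by the functional equation \eqref{eq:FE-eta} of Theorem \ref{thm:LT} equals $\int \FL(f)(t)\chi_{s+\frac{2n+1}{2}}^{-1}(t)\ud^* t$. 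Since $\FL(f)\in\CS^-_{\pvs}(F^\times)\subset|\cdot|^{n+1}\CS^-_n(F^\times)$ and the Mellin transform is injective on $\CS^-_n(F^\times)$ by Proposition \ref{Igusa78-53}, the two sides of \eqref{eq:tilde-f-convolution} agree. Local constancy of $\eta_{\pvs,\psi}$ on $F^\times$ would then be verified by inverting the Mellin transform on a vertical line $\Re(s)=\sigma_0$ to the right of all poles of $\beta_\psi(\chi_s)$: the polynomial boundedness of each gamma factor in $\Im(s)$ makes the $s$-integral on each character term converge, and the bounded conductor of $\psi$ reduces the sum over $\chi\in\widehat{\CO^\times}$ to finitely many orbits on any compact subset of $F^\times$, yielding $\CO^\times$-finiteness and hence local constancy.

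\textbf{Main obstacle.} The chief technical difficulty lies in making rigorous the principal-value convolution $\eta_0\ast\cdots\ast\eta_n$: the factors $\eta_r$ are not locally integrable on $F^\times$ but only regularized, so a priori the iterated convolution is only formal. The explicit formula in \eqref{def:eta-rho-psi} is presumably the device that produces $\eta_{\pvs,\psi}$ as an honest locally constant function; verifying directly from that formula that its $\chi_s$-Mellin coefficient equals $\beta_\psi(\chi_s)$, for all (possibly ramified) characters, is where the real work occurs. A related but more benign point is ensuring that the principal value integral $(\eta_{\pvs,\psi}|\cdot|^{\frac{2n+1}{2}})\ast f^\vee$ actually converges for every $f\in\CS^+_{\pvs}(F^\times)$; this follows once the asymptotic control of $f$ at $0$ and its compact support for large $|x|$ (Definitions \ref{CS+} and \ref{schwartzF}) are combined with the local growth estimates for $\eta_{\pvs,\psi}$ obtained from the Mellin inversion above.
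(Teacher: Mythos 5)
There is a genuine gap, and it sits exactly where you flagged it: your construction of $\eta_{\pvs,\psi}$ is never actually carried out. Defining $\eta_{\pvs,\psi}=\eta_0\ast\cdots\ast\eta_n$ as an iterated principal-value convolution of Tate-type kernels cannot be made rigorous by appealing to ``Mellin sends convolution to product'': the $\chi_s$-Mellin coefficients of the individual kernels $\eta_0,\dots,\eta_n$ (i.e.\ the factors $\gamma(s-\frac{2n-1}{2},\chi,\psi)$ and $\gamma(2s-2n+2r,\chi^2,\psi)$) are given by integrals converging in regions of $s$ that have \emph{no common overlap} when $n>0$ --- this non-overlap is precisely the phenomenon the paper emphasizes for the functional equation \eqref{eq:FE-eta} --- so neither the iterated pv-convolution nor the multiplicativity of its Mellin transform is available without essentially redoing the whole argument. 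Your own ``Main obstacle'' paragraph concedes this and defers to ``the explicit formula in \eqref{def:eta-rho-psi}'' as the device that saves the construction, but that formula is not a regularization of your convolution product; it arises from a different construction altogether. The paper defines $\eta_{\pvs,\psi}(x)=|x|^{-\frac{2n+1}{2}}\lim_{k\to\infty}\FL(\mathds{1}_k)(x)$, where $\mathds{1}_k$ is the normalized characteristic function of $1+\varpi^k\CO$; the functional equation of Theorem \ref{thm:LT} computes $\CM(\FL(\mathds{1}_k)|\cdot|^{-\frac{2n+1}{2}})$ exactly as a truncation of $\beta_\psi(\chi_s^{-1})$, and the stability of the limit (hence local constancy) is proved by the $\veps$-factor conductor estimates culminating in \eqref{4.18}, yielding the residue formula \eqref{eq:eta-MI}. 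Your suggested Mellin-inversion argument for local constancy is morally in this direction, but the ``polynomial boundedness in $\Im(s)$'' and ``bounded conductor of $\psi$'' heuristics are Archimedean stand-ins for what is really a finiteness statement about which $\chi\in\wh{\CO^\times}$ contribute a nonzero residue at a fixed $\ord(x)$.

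The second step has the same defect in a milder form. Invoking ``multiplicative Parseval'' to identify $\CM\bigl((\eta_{\pvs,\psi}|\cdot|^{\frac{2n+1}{2}})\ast f^\vee\bigr)$ with $\beta_\psi(\chi_s)\CM(f)$ assumes the interchange of the principal-value convolution integral with the Mellin integral against a non-integrable kernel; this interchange \emph{is} the content of \eqref{eq:tilde-f-convolution} and of the verification $\eta_{\pvs,\psi}(\chi_s)=\beta_\psi(\chi_s)$, not a tool one may cite. The paper proves it by a term-by-term residue computation: using the $(1+\varpi^N\CO)$-invariance of $f$ (Lemma \ref{K-finite}) to reduce all sums over $\chi$ and over $i=\ord$-shells to finite sums, expanding both sides as double sums of residues of $\beta_\psi(\chi_s^{-1})$ against shell integrals of $f$, and matching them; the Fourier-coefficient identity then needs the auxiliary Lemma \ref{eta=}. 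Your overall skeleton (functional equation of Theorem \ref{thm:LT} plus injectivity of the Mellin transform on $\CS^-_n(F^\times)$ from Proposition \ref{Igusa78-53}) is the right frame, but as written the proposal assumes the two analytic facts that constitute the theorem.
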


Note that when $n=0$, the generalized function can be written as $\eta_{\pvs,\psi}(t)=\psi(t)|t|^{\frac{1}{2}}\zeta(1)^{-1}$. It defines the Fourier operator over $F^\times$, which is the restriction to $F^\times$
of the the usual Fourier transform on $F$.

\begin{proof}
Define $\mathds{1}_{k}=\frac{1}{{\rm vol}(1+\varpi^k\CO,\mathrm{d}^* x)}\mathrm{ch}_{1+\varpi^k\CO}$, where $\mathrm{ch}_{1+\varpi^k\CO}$ is  the characteristic function of $1+\varpi^k\CO$ for $k\geq 1$.
For $\chi\in\widehat{\CO^{\times}}$, denote $Z_k=\CM(\FL(\mathds{1}_{k})|\cdot|^{-\frac{2n+1}{2}})$.
By Proposition \ref{pro-2fi} and the functional equation in \eqref{eq:FE-eta}, we have $Z_k(z,\chi)\in \CZ_{n}^{-}(\widehat{F}^\times)$ and
\begin{align*}
 Z_k(z,\chi)=&  \beta_{\psi}(\chi_s^{-1})
\CM(\mathds{1}_{k}|\cdot|^{\frac{2n+1}{2}})(z^{-1},\chi^{-1})\\
 =& \beta_{\psi}(\chi_s^{-1})\times
\begin{cases}
	1 &\text{ if } e(\chi)\leq k\\
	0 &\text{ if } e(\chi)> k.\\
\end{cases}
\end{align*}
Recall that $e(\chi)$ is the smallest positive integer $m$ such that $\chi\vert_{1+\varpi^m\CO}=1$ and   $z=q^{-s}$.
By \cite[Theorem 5.3]{Ig78} (Proposition \ref{Igusa78-53}), the Mellin inversion $\CM^{-1}(Z_k(z,\chi))$  is given by
\begin{align}
 \CM^{-1}(Z_k(z,\chi))(x)=&\sum_{\chi\in \wh{\CO^\times}}
 \chi({\rm ac}(x))^{-1} \Res_{z=0}Z_k(z,\chi)z^{-{\rm ord}(x)-1}\nonumber\\
=&\sum_{\substack{\chi\in \wh{\CO^\times},\ e(\chi)\leq k}}\chi({\rm ac}(x))^{-1}\Res_{z=0}\beta_{\psi}(\chi^{-1}_s)z^{-{\rm ord}(x)-1}, \label{eq:Mellin-inversion}
\end{align}
which is a finite sum.

From \cite[p.143]{BH06}, we have
\begin{equation}\label{epsilon}
\veps(s,\chi,\psi)  =q^{e(\chi)(\frac{1}{2}-s)} \veps(\frac{1}{2},\chi,\psi) =
q^{\frac{1}{2}e(\chi)}\veps(\frac{1}{2},\chi,\psi)
z^{e(\chi)}.
\end{equation}
Hence we deduce that
\begin{equation}
{\rm Res}_{z=0}~\varepsilon(s,\chi,\psi)z^{-m-1}=0, \text{ unless } m=e(\chi).		
\end{equation}
Whenever $\chi^{2}$ is ramified, we have, from \eqref{eq:eta-intertwining},
$$
\beta_{\psi}(\chi^{-1}_{s}) =
\veps(-s-\frac{(2n-1)}{2},\chi^{-1},\psi)
\prod_{i=0}^{n-1}\veps(-2s-2i, \chi^{-2},\psi),
$$
which, up to a nonzero constant, is equal to
$$
q^{s\cdot e(\chi^{-1})+2ns\cdot e(\chi^{-2})} = z^{-(e(\chi)+2n\cdot e(\chi^{2}))}.
$$
Hence we obtain that if $\chi^{2}$ is ramified, then
\begin{equation}\label{4.18}
\Res_{z=0}\beta_{\psi}(\chi^{-1}_{s})z^{-m-1} = 0, \text{ unless $m=-e(\chi)-2n\cdot e(\chi^{2})$}.
\end{equation}
If $\chi^2$ is unramified, then
\begin{equation}
{\rm Res}_{z=0}\beta_{\psi}(\chi_s^{-1})z^{-m-1}=0, 	
\text{ unless } m\geq -(2n+1+e(\chi)).	
\end{equation}

Let $N\geq 1$. For any $\ell>k\gg \frac{N+\ord(2)n}{2n+1}$,
from the Mellin inversion formula in \eqref{eq:Mellin-inversion}, we deduce that
\begin{align*}
&(\FL(\mathds{1}_{\ell})(x)-\FL(\mathds{1}_{k})(x))|x|^{-\frac{2n+1}{2}}\\
&\qquad\qquad=\quad\CM^{-1}(Z_\ell(z,\chi))(x)-\CM^{-1}(Z_k(z,\chi))(x)\\
&\qquad\qquad= \sum_{k< e(\chi)\leq \ell }	\chi({\rm ac}(x))^{-1} {\rm Res}_{z=0}\beta_{\psi}(\chi_s^{-1})z^{-{\rm ord }(x)-1}.
\end{align*}
When $k$ is sufficiently positive and $e(\chi)>k$, we must have that $\chi$ is ramified. Through straightforward calculation we obtain that $e(\chi) = e(\chi^{2})+\ord(2)$.

For any $x$ with ${\rm ord}(x)\geq -N$, we have that
$$
-{\rm ord}(x)\leq N\ll (2n+1)k+2n\cdot\ord(2) \leq  e(\chi)+2n\cdot e(\chi^{2}),
$$
 and by \eqref{4.18}, we obtain that
 \[
 \FL(\mathds{1}_{\ell})(x)=\FL(\mathds{1}_{k})(x).
 \]
Hence $\lim_{k\to \infty}\FL(\mathds{1}_{k})(x)$ is stably convergent, that is, for any $x$, there exists $N$ such that $\FL(\mathds{1}_{k})(x)=\FL(\mathds{1}_{N})(x)$ for all $k\geq N$.

Define the generalized function $\eta_{\pvs,\psi}(x)$ by
\begin{equation}\label{def:eta-rho-psi}
\eta_{\pvs,\psi}(x)=|x|^{-\frac{2n+1}{2}}\lim_{k\to \infty}\FL(\mathds{1}_{k})(x)
\end{equation}
which is a locally constant function on $F^\times$. By the definition of $Z_k(z,\chi)$ at the beginning of this proof, and the Mellin inversion of $Z_k(z,\chi)$ in \eqref{eq:Mellin-inversion}, together with the
definition of the generalized function $\eta_{\pvs,\psi}$ in \eqref{def:eta-rho-psi}, we are able to obtain a formula for $\eta_{\pvs,\psi}$ below, which is essential to the rest of the proof. More precisely,
for a fixed $k$, we obtain from \eqref{eq:Mellin-inversion} that
\begin{align*}
|x|^{-\frac{2n+1}{2}}\FL(\mathds{1}_{k})(x) &= \CM^{-1}(Z_{k}(z,\chi))(x)
\\
&=
\sum_{\chi\in \wh{\CO^{\times}}, e(\chi)\leq k}
\chi(\ac(x))^{-1}\Res_{z=0}\bet_{\psi}(\chi^{-1}_{s})z^{-\ord(x)-1}.
\end{align*}
After passing the limit $k\to \infty$, we obtain the following formula:
\begin{equation}\label{eq:eta-MI}
\eta_{\pvs,\psi}(x)= \sum_{\chi\in \wh{\CO^\times}}
 \chi({\rm ac}(x))^{-1}\Res_{z=0}\beta_{\psi}(\chi_s^{-1})z^{-{\rm ord}(x)-1}.	
\end{equation}
Note that the sum over $\chi$ is a finite sum for any $x$ belonging to any given open compact subgroup of $F^\times$.

In order to establish the identity in \eqref{eq:tilde-f-convolution} for the generalized function $\eta_{\pvs,\psi}(x)$,
we start with its left-hand side. For $f\in\CS^+_\pvs(F^\times)$, we may assume that $f$ is $(1+\varpi^N\CO)$-invariant, according to Lemma \ref{K-finite}.
Applying the Mellin inversion on the both sides of \eqref{eq:FE-eta},
we have
\[
\FL(f)(t)=|t|^{\frac{2n+1}{2}}\CM^{-1}\left(\beta_{\psi}(\chi_s^{-1}) \CM(f|\cdot|^{\frac{2n+1}{2}})(z^{-1},\chi^{-1})\right)(t),
\]
for $t\in F^\times$,
where
$$
\CM(f|\cdot|^{\frac{2n+1}{2}})(z^{-1},\chi^{-1})=\int_{F^\times}f(x)|x|^{-s+\frac{2n+1}{2}}\chi^{-1}(x)\ud x.
$$
Since $f$ is $(1+\varpi^N\CO)$-invariant, $\CM(f|\cdot|^{\frac{2n+1}{2}})(z^{-1},\chi^{-1})=0$ for any $\chi\in\wh{\CO^\times}$ with $e(\chi)=e(\chi^{-1})>N$.
By the Mellin inversion formula in \eqref{eq:Mellin-inversion}, we obtain that $\FL(f)(t)$ equals
\begin{align}
&|t|^{\frac{2n+1}{2}}\sum_{\substack{\chi\in \wh{\CO^\times}\\ e(\chi)\leq N}} {\rm Res}_{z=0}\left(\beta_{\psi}(\chi_s^{-1})\CM(f|\cdot|^{\frac{2n+1}{2}})(z^{-1},\chi^{-1})z^{-{\rm ord}(t)-1}\right)\chi({\rm ac}(t))^{-1}\nonumber\\
&\qquad\qquad =|t|^{\frac{2n+1}{2}}\sum_{\substack{\chi\in \wh{\CO^\times}\\  e(\chi)\leq N}}\chi({\rm ac}(t))^{-1}
\sum_{i\in \BZ}{\rm Res}_{z=0} \beta_{\psi}(\chi_s^{-1})z^{-i-1} \nonumber\\
&\qquad\qquad\qquad\qquad\qquad\times
 \Res_{z=0} \CM(f|\cdot|^{\frac{2n+1}{2}})(z^{-1},\chi^{-1})z^{-({\rm ord}(t)-i)-1}.\label{eq:f-residue}
\end{align}
Note that the second sum over $i$ is a finite sum. Here we use the fact that for two meromorphic functions $g,h$ near $z=0$ with poles at $z=0$,
$$
\mathrm{Res}_{z=0}(g\cdot h) = \sum_{i}\mathrm{Res}_{z=0}fz^{i}\cdot \mathrm{Res}_{z=0}gz^{-i-1}
$$
with $i$ running over a finite set.

First, we compute the residual term
$$
\Res_{z=0} \CM(f|\cdot|^{\frac{2n+1}{2}})(z^{-1},\chi^{-1})z^{-({\rm ord}(t)-i)-1}
$$
from \eqref{eq:f-residue}. For $\Re(s)$ sufficiently negative, we have that
\begin{align*}
\CM(f|\cdot|^{\frac{2n+1}{2}})(z^{-1},\chi^{-1})
&=\int_{F^{\times}}
f(x)|x|^{\frac{2n+1}{2}}\chi^{-1}(x)|x|^{-s}\ud^*x
\\
&=
\sum_{k\in \BZ}
\int_{\ord(x) = k}
f(x)|x|^{\frac{2n+1}{2}}\chi^{-1}(x)|x|^{-s}\ud^*x
\\
&=
\sum_{k\in \BZ}
z^{-k}
\int_{\ord(x) = k}
f(x)|x|^{\frac{2n+1}{2}}
\chi^{-1}(x)\ud^*x.
\end{align*}
Here we abuse the notation and restrict the measure ${\rm d}^*x$ to the open compact subset $\{ x\in F^{\times}\mid \ord(x) = k \}$.
It follows that
\begin{align*}
&\CM(f|\cdot|^{\frac{2n+1}{2}})(z^{-1},\chi^{-1})
z^{-(\ord(t) - i)-1}
\\
&\qquad\qquad\qquad =\sum_{k\in \BZ}
z^{-k-(\ord(t)-i)-1}
\int_{\ord(x) = k}f(x)|x|^{\frac{2n+1}{2}}\chi^{-1}(x)\ud^*x.
\end{align*}
Hence we obtain the following residue formula:
\begin{align*}
&\mathrm{Res}_{z=0}
\CM(f|\cdot|^{\frac{2n+1}{2}})(z^{-1},\chi^{-1})
z^{-(\ord(t)-i)-1}
\\
&\qquad\qquad=
\int_{\ord(x) = -\ord(t)+i}
f(x)|x|^{\frac{2n+1}{2}}\chi^{-1}(x)\ud^*x.
\end{align*}
Then by plugging the residue formula back into \eqref{eq:f-residue} we obtain the following identity for $\FL(f)(t)$,
\begin{align}\label{4.25}
\nonumber&
|t|^{\frac{2n+1}{2}}
\sum_{\chi\in \wh{\CO^{\times}}, e(\chi)\leq N}
\chi(\ac(t))^{-1}
\sum_{i}
\mathrm{Res}_{z=0}
\beta_{\psi}(\chi^{-1}_{s})z^{-i-1}
\\
&\qquad\qquad\qquad\times
\int_{\ord(x) = -\ord(t)+i}
f(x)|x|^{\frac{2n+1}{2}}\chi^{-1}(x)\ud^*x.
\end{align}
Note that the summations over both $\chi\in \wh{\CO^\times}$ and $i\in \BZ$ are finite.

In order to compute the right-hand side of \eqref{eq:tilde-f-convolution}, we are going to use the formula for the generalized function $\eta_{\pvs,\psi}(x)$ in \eqref{eq:eta-MI}.
By definition, the right-hand side of \eqref{eq:tilde-f-convolution} is equal to
$$
\lim_{k\to \infty}
\sum_{i=-k}^{k}
\int_{\ord(m) = i}
\eta_{\pvs,\psi}(m)|m|^{\frac{2n+1}{2}}f(mt^{-1})\ud^*m.
$$
We are going to show that the series is convergent, and is equal to $\FL(f)(t)$ as given in \eqref{4.25}.

By using \eqref{eq:eta-MI},  we obtain the following expression for the compact integration $\int_{\ord(m) = i}$ with a fixed $i$
\begin{align}\label{ord=i-1}
&\int_{\ord(m) = i}
\eta_{\pvs,\psi}(m)|m|^{\frac{2n+1}{2}}f(mt^{-1})\ud^*m\nonumber\\
&\qquad\qquad=
\int_{\ord(m) = i}
(\sum_{\chi\in \wh{\CO^{\times}}}\chi(\ac(m))^{-1}
\Res_{z=0}\beta_{\psi}(\chi^{-1}_{s})z^{-i-1}
)
\nonumber\\
&\qquad\qquad\qquad\qquad\qquad\qquad\times |m|^{\frac{2n+1}{2}}
f(mt^{-1})\ud^*m.
\end{align}
From (\ref{4.18}) we know that the summation over $\chi\in \wh{\CO^{\times}}$ is finite. Hence we can exchange the order of the summation over $\chi\in \wh{\CO^{\times}}$ and
the compact integration $\int_{\ord(m) = i}$, and obtain that the right-hand side of \eqref{ord=i-1} is equal to
\begin{equation}\label{ord=i-2}
\sum_{\chi\in \wh{\CO^{\times}}}
\Res_{z=0}\beta_{\psi}(\chi^{-1}_{s})z^{-i-1}
\int_{\ord(m)=i}
\chi(\ac(m))^{-1}|m|^{\frac{2n+1}{2}}f(mt^{-1})\ud^*m.
\end{equation}
By changing variable $m\to mt$, we get that \eqref{ord=i-2} is equal to
\begin{align}\label{ord=i-3}
&
|t|^{\frac{2n+1}{2}}
\sum_{\chi\in \wh{\CO^{\times}}}
\chi(\ac(t))^{-1}
\Res_{z=0}\beta_{\psi}(\chi^{-1}_{s})z^{-i-1}\nonumber
\\
&\qquad\qquad\qquad\times \int_{\ord(m)=-\ord(t)+i}
\chi(\ac(m))^{-1}|m|^{\frac{2n+1}{2}}f(m)\ud^*m.
\end{align}
Since $\chi\in \wh{\CO^{\times}}$, one must have that $\chi(\ac(m)) = \chi(m)$.  Hence \eqref{ord=i-3} can be written as
\begin{align}\label{ord=i-4}
&|t|^{\frac{2n+1}{2}}
\sum_{\chi\in \wh{\CO^{\times}}}
\chi(\ac(t))^{-1}
\Res_{z=0}\beta_{\psi}(\chi^{-1}_{s})z^{-i-1}\nonumber
\\
&\qquad\qquad\qquad\times
\int_{\ord(m)=-\ord(t)+i}
\chi(m)^{-1}|m|^{\frac{2n+1}{2}}f(m)\ud^*m.
\end{align}
Since $f$ is $(1+\vpi^{N}\CO)$-invariant, the integral
$$
\int_{\ord(m)=-\ord(t)+i}
\chi(m)^{-1}|m|^{\frac{2n+1}{2}}f(m)\ud^*m
$$
is possibly non-zero only for $\chi\in \wh{\CO^{\times}}$ with $e(\chi)\leq N$. Therefore we deduce that
\begin{equation}\label{ord=i-5}
\sum_{i=-k}^{k}
\int_{\ord(m) = i}
\eta_{\pvs,\psi}(m)|m|^{\frac{2n+1}{2}}f(mt^{-1})\ud^*m
\end{equation}
is equal to
\begin{align}\label{ord=i-6}
&|t|^{\frac{2n+1}{2}}
\sum_{i=-k}^{k}
\sum_{\chi\in \wh{\CO^{\times}},e(\chi)\leq N}
\chi(\ac(t))^{-1}
\Res_{z=0}\beta_{\psi}(\chi^{-1}_{s})z^{-i-1}\nonumber
\\
&\qquad\qquad\qquad\times \int_{\ord(m)=-\ord(t)+i}
\chi(m)^{-1}|m|^{\frac{2n+1}{2}}f(m)\ud^*m.
\end{align}
By exchanging the order of the summation $\sum_{i=-k}^{k}$ and the summation $\sum_{\chi\in \wh{\CO^{\times}},e(\chi)\leq N}$, we obtain that \eqref{ord=i-5} is equal to
\begin{align}\label{ord=i-7}
&
|t|^{\frac{2n+1}{2}}
\sum_{\chi\in \wh{\CO^{\times}},e(\chi)\leq N}
\sum_{i=-k}^{k}
\chi(\ac(t))^{-1}
\Res_{z=0}\beta_{\psi}(\chi^{-1}_{s})z^{-i-1}\nonumber
\\
&\qquad\qquad\qquad\times
\int_{\ord(m)=-\ord(t)+i}
\chi(m)^{-1}|m|^{\frac{2n+1}{2}}f(m)\ud^*m.
\end{align}
Finally, the right-hand side of \eqref{eq:tilde-f-convolution} is the limit by taking $k\to \infty$ of \eqref{ord=i-7}. By taking the limit $\lim_{k\to\infty}$ of the expression
in \eqref{ord=i-7}, we arrive at (\ref{4.25}). This establishes the identity in \eqref{eq:tilde-f-convolution}.

It remains to show that for any character $\chi_s$ of $F^\times$, the $\chi_s$-Fourier coefficient $\eta_{\pvs,\psi}(\chi_s)$ is equal to $\beta_\psi(\chi_s)$. To do so,
we need the functional equation \eqref{eq:FE-eta} in Theorem \ref{thm:LT}.
Recall that for a fixed $\chi\in \wh{\CO^{\times}}$ with fixed integer $N>e(\chi)$, and $\Re(s)$ sufficiently positive, the convolution action of the generalized function $\eta_{\pvs,\psi}(x)$ on $\chi_{s}^{-1}$ is defined,
as in \eqref{eta-conv-1}, to be
\begin{align}\label{eta4-1}
\eta_{\pvs,\psi}(\chi_s^{-1})
=\eta_{\pvs,\psi}*\chi_s^{-1}(1)
=\lim_{k\to \infty}
\int_{q^{-k}\leq |t|\leq q^{k}}
\eta_{\pvs,\psi}(t)\chi_{s}(t)\ud^*t.
\end{align}
By Lemma \ref{eta=} below, we get that
\begin{equation}\label{eta4-2}
\eta_{\pvs,\psi}(\chi_s^{-1})=\lim_{k\to \infty}
\int_{q^{-k}\leq |t|\leq q^{k}}
(\eta_{\pvs,\psi}*\one^{\vee}_{N})(t)\chi_{s}(t)\ud^*t.
\end{equation}
Since $\mathds{1}_{N}=\frac{1}{{\rm vol}(1+\varpi^N\CO,\mathrm{d}^* x)}\mathrm{ch}_{1+\varpi^N\CO}$, the normalized characteristic function of $1+\varpi^N\CO$, we have that $\one_N(mt^{-1})=\one_N(mt^{-1})|mt^{-1}|^{\frac{2n+1}{2}}$.
Then the convolution $\eta_{\pvs,\psi}*\one^{\vee}_{N}(t)$ can be calculated as follows.
\begin{align*}
\eta_{\pvs,\psi}*\one^{\vee}_{N}(t)
&= \int_{F^{\times}}
\eta_{\pvs,\psi}(m)\one_{N}(mt^{-1})\ud^*m\\
&=
\int_{F^{\times}}
\eta_{\pvs,\psi}(m)\one_{N}(mt^{-1})|mt^{-1}|^{\frac{2n+1}{2}}\ud^* m\\
&=
|t|^{-\frac{2n+1}{2}}
\int_{F^{\times}}
\eta_{\pvs,\psi}(m)
|m|^{\frac{2n+1}{2}}
\one_{N}(mt^{-1})
\ud^*m,
\end{align*}
which, from \eqref{eq:tilde-f-convolution} that we just proved, is equal to
\begin{align}\label{eta4-3}
|t|^{-\frac{2n+1}{2}}
\FL({\one}_{N})(t).
\end{align}
Therefore we obtain the following identity
\begin{equation}\label{eta4-4}
\eta_{\pvs,\psi}(\chi_s^{-1})
=\int_{F^{\times}}
\FL({\one}_{N})(t)|t|^{-\frac{2n+1}{2}}
\chi_{s}(t)
\ud^*t.
\end{equation}
By the functional equation \eqref{eq:FE-eta}, we obtain that
\begin{equation}\label{eta4-5}
\eta_{\pvs,\psi}(\chi_s^{-1})
=
\beta_{\psi}(\chi_{s}^{-1})
\int_{F^{\times}}
\one_{N}(t)|t|^{\frac{2n+1}{2}}
\chi_{s}^{-1}(t)\ud^*t = \beta_{\psi}(\chi^{-1}_{s}),
\end{equation}
because $\int_{F^{\times}}
\one_{N}(t)|t|^{\frac{2n+1}{2}}
\chi_{s}^{-1}(t)\ud^*t=1$ when $N>e(\chi)$.
Therefore, we have proved that
$$
\eta_{\pvs,\psi}(\chi_s)=\beta_{\psi}(\chi_{s})
$$
for all characters $\chi_s$.
\end{proof}

We are now coming back to prove the following formula that was used in the proof of Theorem \ref{thm:eta}.

\begin{lem}\label{eta=}
Given $\chi\in \wh{\CO^{\times}}$, if $N>e(\chi)$, then the following identity holds
\begin{align*}
\eta_{\pvs,\psi}(\chi_{s}^{-1})
=
\lim_{k\to \infty}
\int_{q^{-k}\leq |t|\leq q^{k}}
(\eta_{\pvs,\psi}*\one_{N}^{\vee})(t)\chi_{s}(t)\ud^*t.
\end{align*}
\end{lem}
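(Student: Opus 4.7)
The plan is to establish the stronger statement that the two integrals agree \emph{for every finite} $k$, so that the conclusion follows immediately upon passing to the limit. The mechanism is this: convolution with $\one^{\vee}_{N}$ acts on $\eta_{\pvs,\psi}$ as a projection onto the isotypic components indexed by characters $\xi\in\wh{\CO^{\times}}$ with $e(\xi)\leq N$, while integration against $\chi_{s}$ selects only the single component $\xi=\chi$, and this component is retained by the projection precisely because $e(\chi)<N$.

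First I would insert the explicit Mellin-inversion formula \eqref{eq:eta-MI} for $\eta_{\pvs,\psi}$ into
\[
(\eta_{\pvs,\psi}*\one^{\vee}_{N})(t)
= \frac{1}{\vol(1+\vpi^{N}\CO,\ud^{*}u)}
\int_{1+\vpi^{N}\CO}\eta_{\pvs,\psi}(tu)\ud^{*}u.
\]
For $u\in 1+\vpi^{N}\CO$ one has $\ord(tu)=\ord(t)$ and $\ac(tu)=\ac(t)u$, so the $\xi$-sum in \eqref{eq:eta-MI} is finite (for fixed $\ord(t)$) and uniform in $u$. Swapping it with the $u$-integration and invoking the orthogonality relation
\[
\frac{1}{\vol(1+\vpi^{N}\CO,\ud^{*}u)}\int_{1+\vpi^{N}\CO}\xi(u)^{-1}\ud^{*}u = \begin{cases} 1 & e(\xi)\leq N,\\ 0 & \text{otherwise},\end{cases}
\]
yields
\[
(\eta_{\pvs,\psi}*\one^{\vee}_{N})(t)
= \sum_{\substack{\xi\in\wh{\CO^{\times}}\\ e(\xi)\leq N}}
\xi(\ac(t))^{-1}\Res_{z=0}\bet_{\psi}(\xi^{-1}_{s})\,z^{-\ord(t)-1}.
\]

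Next I would compute both $\int\eta_{\pvs,\psi}(t)\chi_{s}(t)\ud^{*}t$ and $\int(\eta_{\pvs,\psi}*\one^{\vee}_{N})(t)\chi_{s}(t)\ud^{*}t$ over the annulus $\{q^{-k}\leq |t|\leq q^{k}\}$ by slicing as $t=\vpi^{m}u$ with $u\in\CO^{\times}$ and $-k\leq m\leq k$. Writing $\chi_{s}(t)=\chi(\ac(t))\,q^{-s\,\ord(t)}$ and using the orthogonality $\int_{\CO^{\times}}\xi(u)^{-1}\chi(u)\ud^{*}u = \delta_{\xi=\chi}$, only the single character $\xi=\chi$ survives in each $\xi$-sum. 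Since $e(\chi)<N$, this $\xi$ lies inside the truncation produced by the convolution, so both integrals collapse to the identical finite partial sum
\[
\sum_{-k\leq m\leq k}\bigl(\Res_{z=0}\bet_{\psi}(\chi^{-1}_{s})\,z^{-m-1}\bigr)\,q^{-sm}.
\]

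Passing to the limit $k\to\infty$ then gives the lemma, with the left side producing $\eta_{\pvs,\psi}(\chi_{s}^{-1})$ by \eqref{eta4-1}. The only delicate point is the interchange of the $\xi$-sum with the two integrals, which is justified by the finiteness of the $\xi$-sum for each fixed value of $\ord(t)$ (as noted after \eqref{eq:eta-MI}), together with the orthogonality relations on $1+\vpi^{N}\CO$ and on $\CO^{\times}$. I do not anticipate any serious obstacle; the argument formalizes the principle that the averaging operator $(\cdot)*\one^{\vee}_{N}$ is invisible to pairings with $(1+\vpi^{N}\CO)$-invariant functions, of which $\chi_{s}$ is one.
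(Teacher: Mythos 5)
Your proof is correct, but it takes a genuinely different route from the paper's. The paper proves the same stronger statement (equality of the two truncated integrals for every finite $k$) by a bare change-of-variables argument: it unfolds $(\eta_{\pvs,\psi}*\one_N^\vee)(t)$ as the average of $\eta_{\pvs,\psi}$ over translates by $1+\vpi^N\CO$, applies Fubini, substitutes $t\to tm$ (which preserves the annulus since $|m|=1$), and factors out $\int\one_N(m)\chi_s(m)\ud^*m=1$, the last equality being exactly where $N>e(\chi)$ enters. That argument never opens up $\eta_{\pvs,\psi}$ at all. You instead substitute the explicit residue formula \eqref{eq:eta-MI}, use orthogonality on $1+\vpi^N\CO$ to show that convolution with $\one_N^\vee$ truncates the character sum to $e(\xi)\leq N$, and then use orthogonality on $\CO^\times$ shell by shell to see that the pairing with $\chi_s$ extracts only the $\xi=\chi$ component, which survives the truncation because $e(\chi)<N$. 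Both interchanges you perform are legitimate for the reason you give (the $\xi$-sum is finite for fixed $\ord(t)$, by \eqref{4.18} and the display following it), and there is no circularity, since \eqref{eq:eta-MI} is established in the paper before Lemma \ref{eta=} is invoked. What the paper's proof buys is brevity and robustness — it works for any locally integrable kernel and uses only the $(1+\vpi^N\CO)$-invariance of $\chi_s$. What your proof buys is the explicit value of the common partial sums as truncations of the Laurent expansion of $\beta_\psi(\chi_s^{-1})$, which makes the final identity $\eta_{\pvs,\psi}(\chi_s^{-1})=\beta_\psi(\chi_s^{-1})$ of Theorem \ref{thm:eta} visible directly, whereas the paper obtains that separately via the functional equation in \eqref{eta4-4}--\eqref{eta4-5}.
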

\begin{proof}
For any $k\in \BN$, both the convolution $\eta_{\pvs,\psi}*\one^{\vee}_{N}$ and the integral
$$
\int_{q^{-k}\leq |t|\leq q^{k}}
(\eta_{\pvs,\psi}*\one^{\vee}_{N})(t)\chi_{s}(t)\ud^*t
$$
are absolutely convergent when $\Re(s)$ sufficiently positive. By the definition of the convolution $(\eta_{\pvs,\psi}*\one^{\vee}_{N})(t)$ and then by changing the variable, we write
\begin{align*}
&\int_{q^{-k}\leq |t|\leq q^{k}}
(\eta_{\pvs,\psi}*\one^{\vee}_{N})(t)\chi_{s}(t)\ud^*t
\\
&\qquad\qquad\qquad=
\int_{q^{-k}\leq |t|\leq q^{k}}
\int_{m\in F^{\times}}
\eta_{\pvs,\psi}(tm^{-1})\one_{N}(m)\ud^*m
\chi_{s}(t)\ud^*t,
\end{align*}
which is equal to
\begin{align}\label{eta4-7}
\frac{1}{\vol_N}
\int_{q^{-k}\leq |t|\leq q^{k}}
\int_{m\in (1+\vpi^{N}\CO)}
\eta_{\pvs,\psi}(tm^{-1})\ud^*m
\chi_{s}(t)\ud^*t,
\end{align}
where $\vol_N=\vol(1+\vpi^{N}\CO,\ud^*x)$.
By changing variable $t\to tm$, we deduce that \eqref{eta4-7} is equal to
\begin{align*}
&\frac{1}{\vol_N}
\int_{q^{-k}\leq |t|\leq q^{k}}
\int_{m\in 1+\vpi^{N}\CO}
\eta_{\pvs,\psi}(t)\ud^*m \chi_{s}(t)\chi_{s}(m)\ud^*t
\\
&\qquad\qquad=\int_{q^{-k}\leq |t|\leq q^{k}}
\eta_{\pvs,\psi}(t)\chi_{s}(t)\ud^*t
\int_{m\in F^{\times}}\one_{N}(m)\chi_{s}(m)\ud^*m\\
&\qquad\qquad\qquad\qquad=
\int_{q^{-k}\leq |t|\leq q^{k}}
\eta_{\pvs,\psi}(t)\chi_{s}(t)\ud^*t.
\end{align*}
The last identity holds because $N>e(\chi)$. Finally, by taking the limit with $k\to \infty$, we obtain the desired identity.
\end{proof}

\subsection{A Paley-Wiener type theorem for $\CS^{\pm}_\pvs(F^\times)$}\label{ssec-gcd}
To develop harmonic analysis over $X_{P_\Del}(F)$ in Section \ref{sec-etapvs-FT}, we need the asymptotic behavior of the image of the functions in $\CS^{\pm}_\pvs(F^\times)$ under the Mellin transform, which is a
version of the classical Paley-Wiener theorem for Fourier transform.

\begin{dfn}[Spaces $\CZ^{\pm}_{n,\bet}(\wh{F^\times})$]\label{defin:Mellin-gcd}
Denote $\CZ^{\pm}_{n,\bet}(\wh{F^\times})$ to be the subspace of $\CZ^{\pm}_{n}(\wh{F^\times})$ consisting of all complex-valued functions $Z_{\pm}(z,\chi)$ on $\wh{F^\times}$ with properties:
\begin{enumerate}
\item For every $\chi\in \widehat{\CO^{\times}}$, there exist constants $b^{\pm}_{0,\chi}$ and $b^{\pm}_{i,\pm,\chi}$  such that
	\begin{equation}\label{eq:Z-asymptotic-11}
	Z_+(z,\chi)- \frac{b_{0,\chi}^+}{1- z}-\sum^{n-1}_{i=0}\frac{b_{i,+,\chi}^+}{1-q^{-(i+\frac{1}{2})} z}+\frac{b_{i,-,\chi}^+}{1+q^{-(i+\frac{1}{2})} z}	
	\end{equation}
	and
	\begin{equation}\label{eq:Z-asymptotic-22}
	Z_{-}(z,\chi)- \frac{b_{0,\chi}^-}{1-q^{-n} z}-\sum^{n-1}_{i=0}\frac{b_{i,+,\chi}^-}{1-q^{-i} z}+\frac{b_{i,-,\chi}^-}{1+q^{-i} z}
	\end{equation}
	are polynomials in $z$ and $z^{-1}$ with complex coefficients; and
\item The coefficients satisfy the conditions:  $b^{\pm}_{0,\chi}= 0$ unless $\chi$ is the trivial character, and for each $i$, $b^{\pm}_{i,\pm,\chi}= 0$ unless $\chi^{2}$ is the trivial character.
\end{enumerate}
\end{dfn}

By Proposition \ref{Igusa78-53}, we may define the subspaces $\CS^{\pm}_{n,\bet}(F^{\times})$ of $\CS_n^{\pm}(F^\times)$ associated to $\CZ^{\pm}_{n,\bet}(\wh{F^\times})$
via the Mellin inversion $\CM^{-1}$. From Definition \ref{defin:Mellin-gcd}, we have the following short exact sequence for $\CS^{\pm}_{n,\bet}(F^{\times})$,
$$
0\to \CC^{\infty}_{c}(F^{\times})\to \CS^{\pm}_{n,\bet}(F^{\times})\to \FA_{\{ 0\}}^{\pm} \to 0
$$
where $\FA_{\{0\}}^{\pm}$ is a $1+2n\cdot |\CO^{\times}/\CO^{\times 2}| = 1+\frac{4n}{|2|}$ dimensional vector space over $\BC$ capturing the asymptotic behavior of functions in $\CS^{\pm}_{n,\bet}(F^{\times})$ whenever $|x|$ tends to $0$.

\begin{thm}[Paley-Wiener Theorem]\label{thm:PWM}
The spaces $\CS_\pvs^{\pm}(F^\times)$ as defined in Definition \ref{schwartzF} and spaces $\CS^{\pm}_{n,\bet}(F^{\times})$ as defined above share the following properties:
$$
\CS_\pvs^{+}(F^\times)=|\cdot|^{-2n}\CS^{+}_{n,\bet}(F^{\times});
$$
and
$$
\CS_\pvs^{-}(F^\times)=|\cdot|^{n+1}\CS^{-}_{n,\bet}(F^{\times}).
$$
\end{thm}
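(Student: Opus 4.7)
The plan is to refine Proposition~\ref{pro-2fi} by upgrading its holomorphic-quotient statements to exact image identifications, using the fractional-ideal structure provided by Theorem~\ref{thm:gcd-zi}.

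For the first identity $\CS_\pvs^+(F^\times)=|\cdot|^{-2n}\CS_{n,\bet}^+(F^\times)$, the forward inclusion is essentially already contained in the proof of Proposition~\ref{pro-2fi}, which shows that $\CM(f_\Phi)(z,\chi)$ lies in the fractional ideal $L(s,\chi)\prod_{i=0}^{n-1}L(2s+2i+1,\chi^2)\cdot\BC[z,z^{-1}]$. Since $L(s,\chi)$ contributes the pole factor $(1-z)^{-1}$ only when $\chi|_{\CO^\times}$ is trivial, and $L(2s+2i+1,\chi^2)$ contributes $[(1-q^{-(i+\frac12)}z)(1+q^{-(i+\frac12)}z)]^{-1}$ only when $\chi^2|_{\CO^\times}$ is trivial, the asymptotic expansion of $\CM(f_\Phi)(z,\chi)$ automatically satisfies the vanishing conditions of Definition~\ref{defin:Mellin-gcd}, and hence $f_\Phi\in\CS_{n,\bet}^+(F^\times)$.

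For the reverse inclusion, I would first translate Theorem~\ref{thm:gcd-zi} via \eqref{eq:Igusa-fiber-1} into the exact ideal equality
\begin{equation*}
\{\CM(f_\Phi)(z,\chi)\ :\ \Phi\in\CC_c^\infty(S_{2n+1})\}=L(s,\chi)\prod_{i=0}^{n-1}L(2s+2i+1,\chi^2)\cdot\BC[z,z^{-1}]
\end{equation*}
for each fixed $\chi\in\wh{\CO^\times}$. Given a target $Z\in\CZ_{n,\bet}^+(\wh{F^\times})$ with finite $\chi$-support, the task reduces to finding a single $\Phi$ whose Mellin transform simultaneously hits every prescribed $\chi$-slice. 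To separate characters, I would exploit two commuting $\CO^\times$-scaling actions on $S_{2n+1}$: the scalar action $L_a\Phi(X)=\Phi(a^{-1}X)$, which via $\det(aX)=a^{2n+1}\det X$ transforms $\CM(f_{L_a\Phi})(z,\chi)=\chi(a)^{2n+1}\CM(f_\Phi)(z,\chi)$, and the diagonal congruence action $R_a\Phi(X)=\Phi(\diag(a^{-1},1,\ldots,1)\,X\,\diag(a^{-1},1,\ldots,1))$, which contributes the factor $\chi(a)^2$. Because $\gcd(2n+1,2)=1$, suitable $\CO^\times$-averagings built from products of these two actions realize an isotypic projector onto any single character $\chi_0$: if two characters $\chi_1,\chi_2$ give the same factors under both actions then $(\chi_1\chi_2^{-1})^{2n+1}=(\chi_1\chi_2^{-1})^2=1$, forcing $\chi_1=\chi_2$. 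Applying these projectors to preimages of each $Z_{\chi_0}$ and summing produces the desired $\Phi$, and Mellin inversion (Proposition~\ref{Igusa78-53}) reads off $f_\Phi$.

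The second identity $\CS_\pvs^-(F^\times)=|\cdot|^{n+1}\CS_{n,\bet}^-(F^\times)$ is proved analogously. The forward direction uses the companion holomorphicity
\begin{equation*}
\CM(f_{\varrho\cdot\wh{\Phi}})(z,\chi)\in L(s+n,\chi)\prod_{i=0}^{n-1}L(2s+2i,\chi^2)\cdot\BC[z,z^{-1}]
\end{equation*}
also established in the proof of Proposition~\ref{pro-2fi}, whose $L$-factors produce exactly the pole pattern at $z=q^n$ and $z=\pm q^i$ in \eqref{eq:Z-asymptotic-22} under the same trivial/quadratic conditions on $\chi$. For the reverse direction, the cleanest route is to transport the first identity through $\FL$: the functional equation \eqref{eq:FE-eta} of Theorem~\ref{thm:LT} makes $\FL$ a $\BC$-linear bijection $\CS_\pvs^+(F^\times)\to\CS_\pvs^-(F^\times)$ (injectivity follows from the Mellin-injectivity in Proposition~\ref{Igusa78-53} together with the generic nonvanishing of $\beta_\psi(\chi_s)$), and a direct Mellin-transform computation shows that $\FL$ interchanges the pole patterns of $|\cdot|^{-2n}\CS_{n,\bet}^+(F^\times)$ and $|\cdot|^{n+1}\CS_{n,\bet}^-(F^\times)$ using the explicit $L$-factor content of $\beta_\psi(\chi_s)$. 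The main obstacle I anticipate is the simultaneous-surjectivity step in the reverse inclusion of the first identity: Theorem~\ref{thm:gcd-zi} gives surjectivity only one character at a time, and the scalar $\CO^\times$-action alone produces merely a $\chi^{2n+1}$-projector that fails to separate characters differing by a $(2n+1)$-th root of unity. The arithmetic coprimality $\gcd(2n+1,2)=1$ is precisely what upgrades the combined scalar-and-congruence projector to a true isotypic projector onto any individual character of $\CO^\times$.
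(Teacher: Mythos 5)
Your proposal is correct and rests on the same core mechanism as the paper's proof: identify both sides via the Mellin transform with the fractional ideal $L(s,\chi)\prod_{i=0}^{n-1}L(2s+2i+1,\chi^{2})\cdot\BC[q^{s},q^{-s}]$, using Theorem \ref{thm:gcd-zi} together with \eqref{eq:Igusa-fiber-1} on one side and Definition \ref{defin:Mellin-gcd} / Proposition \ref{pro:gcd} on the other, and then invert the Mellin transform. Where you genuinely add something is the reverse inclusion: the paper passes directly from the character-by-character equality of fractional ideals to the equality of the two subspaces of $\CZ^{+}_{n}(\wh{F^\times})$, whereas you correctly observe that Theorem \ref{thm:gcd-zi} only controls one $\chi$-slice at a time, and that one must be able to realize (or project onto) a single $\chi_0$-isotypic component inside $\CM(|\cdot|^{2n}\CS^{+}_{\pvs})$. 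Your fix — combining the scalar action $X\mapsto aX$ (contributing $\chi(a)^{2n+1}$) with the congruence action by $\diag(b,1,\dots,1)$ (contributing $\chi(b)^{2}$), and using $\gcd(2n+1,2)=1$ to build a true isotypic projector on $\CC^{\infty}_{c}(S_{2n+1})$ — is sound and makes explicit the $\CO^\times$-equivariance that the paper's argument implicitly relies on. For the minus identity the paper simply asserts that "the same proof works," i.e.\ reruns the ideal computation of Proposition \ref{pro-2fi} for $f_{\varrho\cdot\wh{\Phi}}$; your alternative of transporting the plus identity through $\FL$ via the functional equation \eqref{eq:FE-eta} is equally valid (the surjectivity of $\FL$ onto $\CS^{-}_{\pvs}(F^\times)$ is essentially Definition \ref{schwartzF} plus Theorem \ref{thm:LT}, and the unramified translation $t\mapsto 2^{-2n}t$ built into \eqref{LT} is absorbed by the same coprimality observation), and it buys a cleaner explanation of why the pole pattern of $\CZ^{-}_{n,\bet}$ is exactly the image of that of $\CZ^{+}_{n,\bet}$ under multiplication by $\beta_\psi(\chi_s)$.
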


\begin{proof}
We only prove the equality between $|\cdot|^{2n}\CS^{+}_{\mathrm{pvs}}(F^{\times})$ and $\CS^{+}_{n,\bet}(F^{\times})$.

From Proposition \ref{Igusa78-53}, the Mellin transform $\CM(z,\chi)$ is an isomorphism from $\CS_n^+(F^\times)$ onto $\CZ_n^+(\wh{F^\times})$. By definition, the subspace $\CS^{+}_{n,\bet}(F^{\times})$ of
$\CS_n^+(F^\times)$ is the preimage of the subspace $\CZ^{+}_{n,\bet}(\wh{F^{\times}})$ of $\CZ_n^+(\wh{F^\times})$ with respect to the isomorphism $\CM(z,\chi)$.
Recall from Definition~\ref{defin:Mellin-gcd} that the subspace $\CZ^{+}_{n,\bet}(\wh{F^{\times}})$ consists of all complex-valued functions $\CZ_{+}(z,\chi)$ on $\wh{F^{\times}}$ with properties:
\begin{enumerate}
\item For every $\chi\in \widehat{\CO^{\times}}$, there exist constants $b^{+}_{0,\chi}$ and $b^{+}_{i,\pm,\chi}$ such that
$$
Z_{+}(z,\chi)-
\frac{b^{+}_{0,\chi}}{1-z}
-\sum_{i=0}^{n-1}
\frac{b^{+}_{i,+,\chi}}{1-q^{-(i+\frac{1}{2})}z}+
\frac{b^{+}_{i,-,\chi}}{1+q^{-(i+\frac{1}{2})}z}
$$
is a polynomial in $z$ and $z^{-1}$ with complex coefficients; and
\item
The coefficients satisfy the conditions: $b^{+}_{0,\chi}\neq 0$ only when $\chi$ is the trivial character, and for each $i$, $b^{+}_{i,\pm,\chi}\neq 0$ only when $\chi^{2}$ is the trivial character.
\end{enumerate}
It is equivalent to say that for a given $f\in \CS^{+}_{n,\bet}(F^{\times})$, the Mellin transform $\CM(f)(z,\chi)$ is absolutely convergent for $\Re(s)$ sufficiently positive, admits meromorphic continuation to $s\in \BC$, and
$$
\frac{\CM(f)(z,\chi)}{L(s,\chi)\prod_{i=0}^{n-1}L(2s+2i+1,\chi^{2})}
$$
is a polynomial in $z$ and $z^{-1}$. In particular, the set
$$
\{\CM(f)(z,\chi)\ |\ f\in \CS^{+}_{n,\bet}(F^{\times}) \}
$$
forms a fractional ideal
\[
L(s,\chi)\prod_{i=0}^{n-1}L(2s+2i+1,\chi^{2})\BC[q^{s},q^{-s}].
\]

On the other hand, from Theorem \ref{thm:gcd-zi} and the discussion in Section \ref{ssec-fife}, the following set
$$
\{\CM(f)(z,\chi)\ |\ f\in |\cdot|^{2n}\CS^{+}_{\mathrm{pvs}}(F^{\times})\}
$$
forms the same fractional ideal
\[
L(s,\chi)\prod_{i=0}^{n-1}L(2s+2i+1,\chi^{2})\BC[q^{s},q^{-s}].
\]
By the Mellin inversion (Proposition \ref{Igusa78-53}), we must have the equality of the two subspaces:
$$
|\cdot|^{2n}\CS^{+}_{\mathrm{pvs}}(F^{\times})
=\CS^{+}_{n,\bet}(F^{\times}).
$$
It is clear that the same proof works for $\CS_\pvs^{-}(F^\times)=|\cdot|^{n+1}\CS^{-}_{n,\bet}(F^{\times})$. We omit the details here.
\end{proof}

We end up this section with a discussion on the relation of the space $\CZ^{\pm}_{n,\bet}(\wh{F^\times})$ with the abelian $\gamma$-factor $\beta_\psi(\chi_s)$, in order to justify the notation with $\beta$.
Because the polynomials
$$
\{1-z, 1-q^{-(i+\frac{1}{2})}z, 1+q^{-(i+\frac{1}{2})}z\mid i=0,1,...,n-1 \}
$$
are coprime to each other, and so are
$$
\{ 1-q^{-n}z, 1-q^{-i}z, 1+q^{-i}z\mid i=0,1,...,n-1 \},
$$
by using the same arguments as in the proof of Proposition \ref{pro-2fi}, we deduce immediately the following result.

\begin{pro}\label{pro:gcd}
The spaces $\CZ^{\pm}_{n,\bet}(\wh{F^\times})$ consist of functions $Z_{\pm}(z,\chi)$ belonging to $\CZ^{\pm}_{n}(\wh{F^\times})$ with the properties:
$$
\frac{Z_{+}(z,\chi)}{L(s,\chi)\prod_{i=0}^{n-1}L(2s+2i+1,\chi^{2})}
$$
and
$$
\frac{Z_{-}(z,\chi)}{L(s+n,\chi)\prod_{i=0}^{n-1}L(2s+2i,\chi^{2})}
$$
are polynomials in $z=q^{-s}$ and $z^{-1}=q^s$ with complex coefficients. Here $L(s,\chi)$ is the standard $L$-factor defined via the corresponding Tate integral.
\end{pro}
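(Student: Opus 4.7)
The plan is to verify directly that the partial-fraction shape in Definition \ref{defin:Mellin-gcd} is equivalent to divisibility of $Z_\pm(z,\chi)$ by the displayed product of $L$-factors, via a computation entirely parallel to the one executed at the end of the proof of Proposition \ref{pro-2fi}.

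First I would record the explicit forms of the $L$-factors under the Igusa normalization $\chi(\vpi)=1$, $z=q^{-s}$: the factor $L(s,\chi)$ equals $(1-z)^{-1}$ when $\chi|_{\CO^\times}$ is trivial and equals $1$ otherwise; and $L(2s+2i+1,\chi^2)$ equals $(1-q^{-2i-1}z^2)^{-1}$ when $\chi^2|_{\CO^\times}$ is trivial and equals $1$ otherwise. In the nontrivial case, the partial-fraction identity
$$
\frac{1}{1-q^{-2i-1}z^2}=\frac{1/2}{1-q^{-(i+1/2)}z}+\frac{1/2}{1+q^{-(i+1/2)}z}
$$
splits the denominator into the exact linear factors appearing in \eqref{eq:Z-asymptotic-11}. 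The same statement for $Z_-$ involves the shifted poles $z=q^{-n}$ and $z=\pm q^{-i}$, matching \eqref{eq:Z-asymptotic-22}.

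Next I would invoke the coprimality of the linear factors $\{1-z,\, 1-q^{-(i+1/2)}z,\, 1+q^{-(i+1/2)}z\}$ (respectively $\{1-q^{-n}z,\, 1-q^{-i}z,\, 1+q^{-i}z\}$), already noted in the proof of Proposition \ref{pro-2fi}, to perform a partial-fraction decomposition of any $Z_\pm(z,\chi)\in\CZ_n^\pm(\wh{F^\times})$. In the forward direction, a function $Z_+\in\CZ_{n,\bet}^+(\wh{F^\times})$ carries a $(1-z)^{-1}$ principal-part term only when $\chi|_{\CO^\times}$ is trivial, which is precisely when $L(s,\chi)$ contributes the factor $(1-z)^{-1}$ to the denominator; the analogous matching holds for the $(1\mp q^{-(i+1/2)}z)^{-1}$ terms against $L(2s+2i+1,\chi^2)$. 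Multiplying $Z_+$ by the inverse of the product of $L$-factors therefore clears every admissible simple pole without introducing new ones, yielding an element of $\BC[z,z^{-1}]$. In the reverse direction, if the quotient lies in $\BC[z,z^{-1}]$, then the pole locus of $Z_+(z,\chi)$ must be contained in that of the denominator; reading off the residues at the simple poles $z=1$ and $z=\pm q^{-(i+1/2)}$ reproduces both the shape of the asymptotic expansion \eqref{eq:Z-asymptotic-11} and the vanishing constraints $b_{0,\chi}^+=0$ unless $\chi$ is trivial and $b_{i,\pm,\chi}^+=0$ unless $\chi^2$ is trivial from Definition \ref{defin:Mellin-gcd}.

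The $\CZ_{n,\bet}^-$ case is verbatim the same argument with the translated poles. I anticipate no conceptual obstacle; the only delicate point is the bookkeeping on the two-variable convention, namely that for each fixed $\chi\in\wh{\CO^\times}$ the vanishing hypothesis on the leading coefficients corresponds exactly to the values of $\chi$ for which the relevant $L$-factor actually contributes a pole in $z$, so that the cancellation between numerator and denominator is clean in every case.
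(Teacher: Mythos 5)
Your proposal is correct and follows essentially the same route as the paper: the paper's own justification is exactly the observation that the linear factors $\{1-z,\ 1\mp q^{-(i+1/2)}z\}$ (resp.\ $\{1-q^{-n}z,\ 1\mp q^{-i}z\}$) are pairwise coprime, so that the partial-fraction shape in Definition \ref{defin:Mellin-gcd}, together with the vanishing constraints on $b^{\pm}_{0,\chi}$ and $b^{\pm}_{i,\pm,\chi}$, matches pole-for-pole the zero set of the denominator polynomial of the product of $L$-factors, exactly as in the computation at the end of the proof of Proposition \ref{pro-2fi}. Your write-up merely makes explicit (with the correct $1/2$ coefficients in the partial fraction of $(1-q^{-2i-1}z^{2})^{-1}$ and the case split on the ramification of $\chi$ and $\chi^{2}$) what the paper leaves as "we deduce immediately."
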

In other words, for a fixed character $\chi$, the spaces $\CZ^{\pm}_{n,\bet}(\wh{F^{\times}})$ are the fractional ideals
$$
L(s,\chi)\prod_{i=0}^{n-1}L(2s+2i+1,\chi^{2})\cdot \BC[z,z^{-1}]
$$
and
$$
L(s+n,\chi)\prod_{i=0}^{n-1}L(2s+2i,\chi^{2})\cdot \BC[z,z^{-1}].
$$


\section{$\eta_{\pvs,\psi}$-Fourier Transform on $X_{P_\Del}$}\label{sec-etapvs-FT}


In this section, we are ready to define the $\eta_{\pvs,\psi}$-Fourier transform $\CF_{X,\psi}$ over $X_{P_\Del}(F)$ based on the long study in Sections \ref{sec-FEbeta} and \ref{sec-FTbeta}
on the generalized function $\eta_{\pvs,\psi}$ on $F^\times$ and the $\gamma$-function $\beta_\psi(\chi_s)$, in particular, in Theorem \ref{thm:LT} and Theorem \ref{thm:eta}.
The goal here is to establish a relation between the $\eta_{\pvs,\psi}$-Fourier transform $\CF_{X,\psi}$ and
the local intertwining operator $\beta_\psi(\chi_s)\cdot \RM_{w_\Del}(s,\chi)$, as given in \eqref{eq:IT-eta-rho}, in Theorem \ref{thm:CFP-M}. With the precise analytic information of
the local intertwining operator $\RM_{w_\Del}(s,\chi)$ as obtained in Proposition \ref{pro:pole-Mw}, we are able to develop basic results about
the  $\eta_{\pvs,\psi}$-Fourier transform $\CF_{X,\psi}$ over $X_{P_\Del}(F)$, including the extension of the $\eta_{\pvs,\psi}$-Fourier transform to a unitary operator on the space $L^2(X_{P_\Del})$ of
square-integrable functions on $X_{P_\Del}(F)$ (Proposition \ref{pro:Fx-norm} and Remark \ref{FXunitary}), a relation between the Schwartz functions on $X_{P_\Del}(F)$ and the good sections on $\Sp_{4n}(F)$
(Proposition \ref{pro:FTX}), and a characterization of Schwartz functions in $\CS_\pvs(X_{P_\Del})$ by their asymptotic behavior on the $M_\Del^\ab$-part (Theorem \ref{thm:asymSf}), where $M^\ab_\Del$ is defined in \eqref{M-Mab}.

Without lose of generality, we assume that $n>0$ in this section.


\subsection{Fourier transform and $L^2$-space on $X_{P_\Del}$}\label{ssec-FoL2}

Let $M^{\ab}_\Del(F)$ and $ \Sp_{4n}(F)$ act on  $X_{P_\Del}(F)$   via the left and right translations respectively.
The action of $M^{\ab}_\Del(F)$ on $\CC^\infty_c(X_{P_\Del})$, the space of smooth, compactly supported functions on $X_{P_\Del}(F)$, is defined to be the normalized left translation by means of the section $\Fs$:
\begin{equation}\label{La}
\Fl_a(f)(x) := f(\Fs_{a}^{-1}x) \del_{P_\Del}^{\frac{1}{2}}(\Fs_{a}),
\end{equation}
for any $a\in F^\times=\BG_m(F)$ and any $f\in \CC^\infty_c(X_{P_\Del})$, where the section $a\mapsto \Fs_a$ associated to the abelianization morphism $\Fa$ is defined in \eqref{section:Fs} and
$\delta_{P_\Del}$ is the modular character of $P_{\Del}$ as in \eqref{dPDel}. In particular, we have that
\begin{align}\label{deltaa}
\del_{P_\Del}(\Fs_a)=|\Fa(\Fs_{a})|^{2n+1}=|a|^{2n+1}.
\end{align}
It is easy to verify that the definition of the action in \eqref{La} is independent of the choice of the section $\Fs$. Note that we consider here the normalized left-translation following \cite{BK02}.

Define $\CP_{\chi_s}\ :\ \CC^{\infty}_{c}(X_{P_{\Del}}) \to {\rm I}(s,\chi)$ to be the projection:
\begin{equation}\label{proj-X-I}
\CP_{\chi_s}(f)(g):=\int_{F^\times}f(\Fs_a^{-1}g)|a|^{\frac{2n+1}{2}}\chi_s(a)\ud^*a,
\end{equation}
for all characters $\chi_s$ of $F^\times$ and with $f\in\CC^\infty_c(X_{P_\Del})$.
The integral is independent of the choice of the section $\Fs$ because of \eqref{La}, and converges absolutely for all $\chi_s$.
It is clear that the projection $\CP_{\chi_s}$ defines a surjective $\Sp_{4n}(F)$-equivariant linear morphism.
Recall the unnormalized intertwining operator
$\RM_{w_\Del}(s,\chi): {\rm I}(s,\chi) \to {\rm I}(-s,\chi^{-1})$ from \eqref{lio-0},
which is absolutely convergent for $\Re(s)$ sufficiently positive and has meromorphic continuation to $s\in \BC$.

\begin{dfn}[Fourier Transform over $X_{P_\Del}$]\label{dfn:FTX}
The $\eta_{\pvs,\psi}$-Fourier transform on $\CC^\infty_c(X_{P_\Del})$ is defined by
\begin{equation}
\CF_{X,\psi}(f)(g):=\int_{F^\times}^\pv\eta_{\pvs,\psi}(x)|x|^{-\frac{2n+1}{2}}\int_{N_\Del(F)}f(w_\Del n \Fs_x g)\ud n\ud^* x,
\end{equation}
for $f\in\CC^\infty_c(X_{P_\Del})$ and the generalized function $\eta_{\pvs,\psi}(x)$ defined in \eqref{def:eta-rho-psi}.
\end{dfn}

The issue of convergence of the integral in \eqref{FTX} will be addressed in Corollary \ref{FX-conv}. We first study the following Radon transform:
\begin{align}\label{Radon}
\RR_{X}(f)(g): =
\int_{N_\Del(F)}f(w_{\Del}ng)\ud n
\end{align}
for $f\in\CC^\infty_c(X_{P_\Del})$. By using the same argument as in the proof of \cite[Th. IV. 1.1]{Wal03}, one can easily prove that the integral defining the Radon transform $\RR_X$ is absolutely convergent.

\begin{lem}\label{lem:Rx}
Let $\RR_{X}$ be the Radon transform defined on $\CC^{\infty}_{c}(X_{P_{\Del}})$ via the absolutely convergent integral in \eqref{Radon}.  The following identity
$$
\CP_{\chi^{-1}_{s}}
\circ \RR_{X}(f) (g) =
\RM_{w_\Del}(s,\chi)\circ \CP_{\chi_{s}}(f)
$$
holds for any $f\in \CC^{\infty}_{c}(X_{P_{\Del}})$, after meromorphic continuation in $s\in\BC$.
Actually both sides of the identity are absolutely convergent for $\Re(s)$ sufficiently positive.
\end{lem}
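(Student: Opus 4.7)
The plan is to expand both sides as iterated integrals and match them via Fubini together with a change of variables. First I would verify that for $\Re(s)$ sufficiently positive the right-hand side is absolutely convergent: since $f\in\CC^\infty_c(X_{P_\Del})$, the defining $a$-integral of $\CP_{\chi_s}(f)$ has compact support in $F^\times$, so $\CP_{\chi_s}(f)$ is a holomorphic section in $\RI(s,\chi)$, and the standard theory of local intertwining operators yields absolute convergence of $\RM_{w_\Del}(s,\chi)\CP_{\chi_s}(f)$ for $\Re(s)$ large. Fubini is then available to interchange the $a$ and $n$ integrations.

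The core algebraic step is to commute $\Fs_a^{-1}$ past $w_\Del n$. Setting $u(a):=w_\Del^{-1}\Fs_a w_\Del\in M_\Del$, one has $\Fs_a^{-1}w_\Del = w_\Del\, u(a)^{-1}$, and then $u(a)^{-1}n = (u(a)^{-1}n\, u(a))\cdot u(a)^{-1}$. Changing variables $n\mapsto u(a)n u(a)^{-1}$ in the $n$-integration introduces the Jacobian $|\det\Ad(u(a))|_{\Fn_\Del}|$. Because $\Ad(w_\Del)$ interchanges $\Fn_\Del$ and $\Fn_\Del^-$, this Jacobian equals $\delta_{P_\Del^-}(\Fs_a)=\delta_{P_\Del}(\Fs_a)^{-1}=|a|^{-(2n+1)}$ by \eqref{dPDel}, which combined with the factor $|a|^{(2n+1)/2}$ from the definition of $\CP_{\chi_s}$ leaves $|a|^{-(2n+1)/2}$.

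The final ingredient is the identification of $u(a)$ modulo $[M_\Del,M_\Del]$. Since $w_\Del$ normalizes $M_\Del\cong\GL(L_\Del)$ and acts by the involution $A\mapsto A^\ast={^t\!A^{-1}}$, the induced action on the abelianization $M_\Del^{\ab}\cong\BG_m$ sends $a\mapsto a^{-1}$; equivalently, $\Fa\circ\Ad(w_\Del^{-1})=\Fa^{-1}$, so $\Fa(u(a))=a^{-1}$ and hence $u(a)=\Fs_{a^{-1}}\cdot c$ for some $c\in[M_\Del,M_\Del]\subset[P_\Del,P_\Del]$. Since $f$ is a function on $X_{P_\Del}=[P_\Del,P_\Del]\bks\Sp_{4n}$, a further change of variables $n\mapsto cnc^{-1}$ (with trivial Jacobian, as $\delta_{P_\Del}$ is trivial on commutators) replaces $u(a)^{-1}$ by $\Fs_{a^{-1}}^{-1}$ in the argument of $f$. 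Substituting $b=a^{-1}$ and using $\chi_s(a)=\chi_s^{-1}(b)$, $|a|^{-(2n+1)/2}=|b|^{(2n+1)/2}$, $d^*a=d^*b$, the resulting expression becomes precisely $\CP_{\chi_s^{-1}}(\RR_X(f))(g)$. This establishes the identity for $\Re(s)$ large, and the general case follows by meromorphic continuation in $s$.

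The main obstacle will be the careful bookkeeping of the Weyl-conjugation action on $M_\Del$ together with the various modular-character Jacobians; however, these are all encoded in the explicit formulas of Section~\ref{sec-LT-DZI}, in particular \eqref{M-Mab}, \eqref{dPDel}, and the identification of $M_\Del$ with $\GL(L_\Del)$, so no delicate analytic input is required once these are unpacked. Absolute convergence of the left-hand side need not be checked independently: matching it via Fubini with the absolutely convergent right-hand side justifies it retroactively.
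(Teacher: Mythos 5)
Your proposal is correct and follows essentially the same route as the paper's proof: both interchange the $a$- and $n$-integrations for $\Re(s)\gg 0$, use that conjugation by $w_\Del$ inverts $M_\Del^{\ab}$ to move $\Fs_a^{-1}$ past $w_\Del$, absorb the conjugation of $N_\Del$ with Jacobian $\delta_{P_\Del}(\Fs_a)^{-1}=|a|^{-(2n+1)}$, and substitute $a\mapsto a^{-1}$. The paper merely compresses your explicit bookkeeping with $u(a)$ and the commutator $c$ into the single identity $f(\Fs_a^{-1}w_\Del n g)=f(w_\Del\Fs_a n g)$, so the two arguments are the same in substance.
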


\begin{proof}
For any $f\in \CC^{\infty}_{c}(X_{P_{\Del}})$, the projection  as defined in \eqref{proj-X-I}:
$$
\CP_{\chi_s}(f)(g):=\int_{F^\times}f(\Fs_a^{-1}g)|a|^{\frac{2n+1}{2}}\chi_s(a)\ud^*a,
$$
converges absolutely for any $s\in \BC$ and defines a holomorphic section belonging to $\RI(s,\chi)$. The composition $\RM_{w_\Del}(s,\chi)\circ \CP_{\chi_{s}}(f)(g)$ is given by
the double integral
\begin{align}\label{R-1}
\int_{N_{\Del}(F)}
\int_{F^{\times}}
f(\Fs_a^{-1}w_{\Del}ng)|a|^{\frac{2n+1}{2}}\chi_s(a)\ud^{*}a\ud n,
\end{align}
which converges absolutely for $\Re(s)$ sufficiently positive, according to \cite[Th. IV.1.1.]{Wal03}, for instance.
We may switch the order of the two integrations and obtain that \eqref{R-1} is equal to
\begin{align}\label{R-2}
\int_{F^{\times}}
|a|^{\frac{2n+1}{2}}\chi_s(a)
\int_{N_{\Del}(F)}f(\Fs_a^{-1}w_{\Del}ng)\ud n
\ud^{*}a.
\end{align}
Because
\[
f(\Fs_a^{-1}w_{\Del}ng)=f(w_{\Del}\Fs_ang)=f(w_{\Del}\Fs_an\Fs^{-1}_{a}\Fs_ag),
\]
by changing the variable $n\to \Fs_a^{-1}n\Fs_a$, we deduce that \eqref{R-2} is equal to
\begin{align}\label{R-3}
\int_{F^{\times}}
|a|^{-\frac{2n+1}{2}}\chi_s(a)
\int_{N_{\Del}(F)}f(w_{\Del}n\Fs_ag)\ud n \ud^{*}a.
\end{align}
By changing the variable $a\to a^{-1}$, it is clear that \eqref{R-3} is equal to
\begin{align}\label{R-4}
\int_{F^{\times}}
|a|^{\frac{2n+1}{2}}\chi_s(a)^{-1}
\int_{N_{\Del}(F)}f(w_{\Del}n\Fs_{a^{-1}}g)\ud n \ud^{*}a.
\end{align}
For $f\in \CC^{\infty}_{c}(X_{P_{\Del}})$, by a simple calculation according to the definition of the section $\Fs$, one finds that
\begin{align}\label{R-5}
f(w_{\Del}n\Fs_{a^{-1}}g)=f(w_{\Del}n\Fs_a^{-1}g).
\end{align}
Hence \eqref{R-4} is exactly equal to the composition
$\CP_{\chi^{-1}_{s}}\circ \RR_{X}(f)(g)$ when $\Re(s)$ is sufficiently positive. Therefore, for $\Re(s)$ sufficiently positive, we have the desired identity.
\end{proof}

Now we treat the convergence issue of the integral in (\ref{FTX}) that defines the Fourier transform $\CF_{X,\psi}$.

\begin{pro}\label{pro:Fg}
For any $f\in \CC^{\infty}_{c}(X_{P_{\Del}})$, and $a\in F^\times$, the function in $a$ as defined by
\begin{align}\label{53-1}
F_g(a):=|a|^{-(2n+1)}\RR_X(f)(\Fs_ag)
\end{align}
belongs to the space $\CS^+_\pvs(F^\times)$, and as functions in $a\in F^\times$, the following identity
\begin{align}\label{FX-Fg}
\FL(F_g)(a)=|a|^{2n+1}\CF_{X,\psi}(f)(\Fs_a^{-1}g)
\end{align}
holds, where the linear transform $\FL$ is defined in \eqref{LT}.
\end{pro}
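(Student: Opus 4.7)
The plan is to first show $F_g\in\CS^+_\pvs(F^\times)$ by computing its Mellin transform and identifying it with the intertwining operator side of Lemma~\ref{lem:Rx}, and then to deduce the identity $\FL(F_g)(a)=|a|^{2n+1}\CF_{X,\psi}(f)(\Fs_a^{-1}g)$ by a direct convolution manipulation based on Theorem~\ref{thm:eta}.

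For the first step I begin with the observation that a change of variable $a\mapsto a^{-1}$ in the definition \eqref{proj-X-I} of $\CP_{\chi^{-1}_{s}}(\RR_X(f))(g)$, together with the $[M_\Del,M_\Del]$-invariance of $f$ (so that $\Fs^{-1}_{a^{-1}}$ may be replaced by $\Fs_a$ inside $f$), gives the identification
\[
\CP_{\chi^{-1}_{s}}(\RR_X(f))(g) \;=\; \CM(F_g)\bigl(s+\tfrac{2n+1}{2},\chi\bigr).
\]
Lemma~\ref{lem:Rx} rewrites the left-hand side as $\RM_{w_\Del}(s,\chi)(\CP_{\chi_s}(f))(g)$. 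Since $\CP_{\chi_s}(f)(g)$ is a Mellin integral of a function of bounded support on $F^\times$ inherited from $\supp f$, it is a Laurent polynomial in $q^{-s}$, and Proposition~\ref{pro:pole-Mw} then places $\CM(F_g)(s+\tfrac{2n+1}{2},\chi)$ inside the fractional ideal $a_{2n}(s,\chi)\cdot\BC[q^s,q^{-s}]$. Unpacking $a_{2n}(s,\chi)$ via \eqref{am} and shifting $s\mapsto s+2n-\tfrac{2n+1}{2}$ yields exactly the ideal $L(s,\chi)\prod_{i=0}^{n-1}L(2s+2i+1,\chi^2)\cdot\BC[q^s,q^{-s}]$, which Proposition~\ref{pro:gcd} identifies with the Mellin image of $\CS^+_{n,\bet}(F^\times)$; Theorem~\ref{thm:PWM} then concludes $F_g\in|\cdot|^{-2n}\CS^+_{n,\bet}(F^\times)=\CS^+_\pvs(F^\times)$.

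For the second step, having established $F_g\in\CS^+_\pvs(F^\times)$, I apply Theorem~\ref{thm:eta} to write
\[
\FL(F_g)(a) \;=\; \int^\pv_{F^\times}\eta_{\pvs,\psi}(x)\,|x|^{\frac{2n+1}{2}}\,F_g(xa^{-1})\,\ud^*x.
\]
Substituting $F_g(t)=|t|^{-(2n+1)}\RR_X(f)(\Fs_tg)$ and once more replacing $\Fs_{xa^{-1}}$ by $\Fs_x\Fs_a^{-1}$ inside $f$ (by $[M_\Del,M_\Del]$-invariance), the factor $|xa^{-1}|^{-(2n+1)}$ separates off $|a|^{2n+1}$, and what remains matches $\CF_{X,\psi}(f)(\Fs_a^{-1}g)$ by Definition~\ref{dfn:FTX}.

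The main obstacle will be to transport the identification $\CM(F_g)(s+\tfrac{2n+1}{2},\chi)=\RM_{w_\Del}(s,\chi)(\CP_{\chi_s}(f))(g)$ across the poles of $a_{2n}(s,\chi)$ by meromorphic continuation (the Mellin integral being absolutely convergent only in a half-plane that misses these poles), and then to verify that the continued function is of Laurent-polynomial type in $q^{-s}$. The latter rests on the compact support of $f$ in $X_{P_\Del}$, which requires careful use of the geometry of its compactification to translate into the appropriate support condition on $F^\times$ used in the Paley--Wiener characterization.
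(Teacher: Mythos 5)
Your proposal is correct and follows essentially the same route as the paper: identify $\CM(F_g)(s+\tfrac{2n+1}{2},\chi)$ with $\CP_{\chi_s^{-1}}\circ\RR_X(f)(g)$, invoke Lemma \ref{lem:Rx} and Proposition \ref{pro:pole-Mw} to place it in the fractional ideal $a_{2n}(s,\chi)\cdot\BC[q^s,q^{-s}]$, conclude via the Paley--Wiener characterization of $\CS^+_\pvs(F^\times)$, and then unwind the convolution formula of Theorem \ref{thm:eta} together with the $[M_\Del,M_\Del]$-invariance of $f$ to obtain \eqref{FX-Fg}. The ``obstacle'' you flag at the end (checking $F_g\in|\cdot|^{-2n}\CS_n^+(F^\times)$ so that the fractional-ideal criterion applies) is handled only implicitly in the paper as well, so your treatment is at the same level of rigor.
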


\begin{proof}
We consider the Mellin transform of the function $F_g(a)$, and claim that
\begin{align}\label{53-2}
\CM(F_g)(s+\frac{2n+1}{2},\chi)=\CP_{\chi^{-1}_{s}}\circ\RR_{X}(f) (g).
\end{align}
In fact, for a fixed $g\in \Sp_{4n}(F)$ and an $f\in \CC^{\infty}_{c}(X_{P_{\Del}})$, when $\Re(s)$ is sufficiently positive, it is easy to deduce that
\begin{align*}
\CM(F_g)(s+\frac{2n+1}{2},\chi)
&=
\int_{F^{\times}}
\chi_{s}(a)|a|^{\frac{2n+1}{2}}
|a|^{-(2n+1)}
\RR_X(f)(\Fs_ag)\ud^{*}a\\
&=
\int_{F^{\times}}
\chi_{s}(a)|a|^{-\frac{2n+1}{2}}
\RR_X(f)(\Fs_ag)\ud^{*}a\\
&=
\int_{F^{\times}}
\chi^{-1}_{s}(a)|a|^{\frac{2n+1}{2}}
\RR_X(f)(\Fs_a^{-1}g)\ud^{*}a
\\
&=\CP_{\chi^{-1}_{s}}\circ\RR_{X}(f)(g).
\end{align*}
By Proposition \ref{pro:pole-Mw}, $a_{2n}(s,\chi)^{-1}\cdot \RM_{w_\Del}(s,\chi)$ is holomorphic. By using Lemma \ref{lem:Rx}, we obtain that
$$
a_{2n}(s,\chi)^{-1}\cdot
\CP_{\chi^{-1}_{s}}
\circ \RR_{X}(f)
$$
is a holomorphic section in $\RI(-s,\chi^{-1})$ for any $f\in \CC^{\infty}_{c}(X_{P_{\Del}})$. It follows that
$\CM(F_g)(s+\frac{2n+1}{2},\chi)$, with a fixed $g\in\Sp_{4n}(F)$, belongs to the fractional ideal $a_{2n}(s,\chi)\cdot\BC[q^{s},q^{-s}]$. Hence $\CM(F_g)(s,\chi)$ belongs to the fractional ideal
$$
a_{2n}(s-\frac{2n+1}{2},\chi)\cdot\BC[q^{s},q^{-s}].
$$
From the proof of Theorem \ref{thm:PWM}, it is easy to deduce that a function $h$ in the space $|\cdot|^{-2n}\CS_n^+(F^\times)$ belongs to the subspace $\CS^{+}_{\pvs}(F^{\times})$ if and only if
its Mellin transform $\CM(h)(s,\chi)$ belongs to the fractional ideal
$$
a_{2n}(s-\frac{2n+1}{2},\chi)\cdot\BC[q^{s},q^{-s}].
$$
Therefore, for any fixed $g\in\Sp_{4n}(F)$, as a function in $a\in F^{\times}$, the function $F_g(a)$ belongs to the space $\CS^{+}_{\pvs}(F^{\times})$.

Now we write $|t|^{2n+1}\CF_{X,\psi}(f)(\Fs_t^{-1}g)$ as
\begin{align}\label{53-3}
\int_{F^{\times}}^\pv
\eta_{\pvs,\psi}(x)|x|^{\frac{2n+1}{2}}
|xt^{-1}|^{-(2n+1)}\RR_X(\Fs_x\Fs_t^{-1}g)\ud^{*}x.
\end{align}
Hence it can be further written as
\[
\int_{F^{\times}}^\pv
\eta_{\pvs,\psi}(x)|x|^{\frac{2n+1}{2}}F_g(xt^{-1})\ud^{*}x=\FL(F_g)(t),
\]
according to Theorem \ref{thm:eta}. We are done.
\end{proof}

\begin{cor}\label{FX-conv}
For any $f\in \CC^{\infty}_{c}(X_{P_{\Del}})$, and $a\in F^\times$,
the function $|a|^{2n+1}\CF_{X,\psi}(f)(\Fs_a^{-1}g)$ belongs to the space $\CS^{-}_{\pvs}(F^{\times})$, as a function in $a\in F^{\times}$.
In particular the integral in \eqref{FTX} that defines the Fourier transform $\CF_{X,\psi}$ is convergent.
\end{cor}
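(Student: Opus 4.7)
The plan is to deduce this corollary as a direct consequence of Proposition~\ref{pro:Fg} together with the basic properties of the transform $\FL$ established in Section~\ref{sec-FTbeta}. No new analytic estimate should be needed; everything reduces to bookkeeping of the identities already proved.

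First, fix $g\in\Sp_{4n}(F)$ and $f\in\CC^\infty_c(X_{P_\Del})$, and consider the function
\[
F_g(a)=|a|^{-(2n+1)}\RR_X(f)(\Fs_a g),\qquad a\in F^\times,
\]
as in \eqref{53-1}. By Proposition~\ref{pro:Fg}, $F_g\in\CS^+_\pvs(F^\times)$. Since the linear transform $\FL$ of \eqref{LT} is defined from $\CS^+_\pvs(F^\times)$ to $\CS^-_\pvs(F^\times)$ (see Definition~\ref{schwartzF} and Theorem~\ref{thm:LT}), the function $\FL(F_g)$ lies in $\CS^-_\pvs(F^\times)$.

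Second, by the identity \eqref{FX-Fg} of Proposition~\ref{pro:Fg},
\[
|a|^{2n+1}\CF_{X,\psi}(f)(\Fs_a^{-1}g)=\FL(F_g)(a),
\]
so as a function of $a\in F^\times$ the left-hand side belongs to $\CS^-_\pvs(F^\times)$, which is the first assertion of the corollary.

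Finally, the convergence of the defining integral \eqref{FTX} for $\CF_{X,\psi}$ will follow from the same identity, since by Theorem~\ref{thm:eta} the function $\FL(F_g)(a)$ is given precisely by the principal value integral
\[
\FL_{\eta_{\pvs,\psi}}(F_g)(a)=((\eta_{\pvs,\psi}|\cdot|^{\frac{2n+1}{2}})*F_g^\vee)(a),
\]
and unwinding the definitions of $F_g$ and $\Fs$ this is exactly $|a|^{2n+1}$ times the principal value integral in \eqref{FTX} (with $g$ replaced by $\Fs_a^{-1}g$). Since the arbitrary $g$ can absorb the factor $\Fs_a^{-1}$, the principal value integral defining $\CF_{X,\psi}(f)(g)$ converges for every $g\in\Sp_{4n}(F)$. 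The only minor bookkeeping point to verify cleanly is that one may indeed rewrite \eqref{FTX} in the convolution form via the substitution $x\mapsto xa^{-1}$ inside the principal value, which is routine because the cut-offs $q^{-k}\le|x|\le q^k$ in \eqref{eta-conv-1} are invariant under multiplicative shifts up to finitely many residual annuli; the limit then coincides with the one in the definition of $\CF_{X,\psi}$. No substantive obstacle arises.
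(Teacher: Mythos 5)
Your proposal is correct and follows essentially the same route as the paper: both deduce membership in $\CS^{-}_{\pvs}(F^\times)$ directly from Proposition \ref{pro:Fg} (that $F_g\in\CS^+_{\pvs}(F^\times)$ together with the identity \eqref{FX-Fg}) and the fact that $\FL$ maps $\CS^+_{\pvs}(F^\times)$ into $\CS^-_{\pvs}(F^\times)$, and then read off convergence of \eqref{FTX} by specializing $a$. Your extra unwinding of the principal-value convolution in the last step is redundant, since that rewriting is already carried out in the proof of Proposition \ref{pro:Fg}; the paper simply takes $a=1$ in the established identity.
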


\begin{proof}
By Proposition \ref{pro:Fg}, we have
$$
|a|^{2n+1}\CF_{X,\psi}(f)(\Fs_a^{-1}g)=\FL(F_g)(a)
$$
as functions in $a\in F^\times$. Since $F_g(a)$ belongs to the space $\CS^+_\pvs(F^\times)$, it follows that the function $|a|^{2n+1}\CF_{X,\psi}(f)(\Fs_a^{-1}g)$ belongs to the space $\CS^{-}_{\pvs}(F^{\times})$
because the definition of the linear transform $\FL$ in \eqref{LT} and Theorem \ref{thm:eta}. In particular, by taking $a=1$, we obtain that the integral in \eqref{FTX} that defines the Fourier transform $\CF_{X,\psi}$
is convergent.
\end{proof}

Now we are ready to establish the compatibility of the Fourier transform $\CF_{X,\psi}$ with intertwining operator $\RM_{w_\Del}(s,\chi)$.

\begin{thm}\label{thm:CFP-M}
For any $f\in  \CC^\infty_c(X_{P_{\Del}})$, the composition $\CP_{\chi^{-1}_s}\circ\CF_{X,\psi}(f)(g)$ converges absolutely for $\Re(s)$ sufficiently negative, and the following identity
\begin{equation}\label{eq:wtF-Inter}
	(\CP_{\chi^{-1}_s}\circ\CF_{X,\psi})(f)(g)=\beta_\psi(\chi_s)(\RM_{w_\Del}(s,\chi)\circ\CP_{\chi_s})(f)(g)
\end{equation}
holds as meromorphic functions in $s\in\BC$, with $g\in\Sp_{4n}(F)$.
\end{thm}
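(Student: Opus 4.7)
The plan is to reduce the identity to the functional equation \eqref{eq:FE-eta} for $\beta_\psi(\chi_s)$ established in Theorem \ref{thm:LT}, by transporting both sides of \eqref{eq:wtF-Inter} to Mellin integrals of a single auxiliary function on $F^\times$. Fix $g\in\Sp_{4n}(F)$ and $f\in\CC^\infty_c(X_{P_\Del})$, and consider the auxiliary function
\[
F_g(a):=|a|^{-(2n+1)}\RR_X(f)(\Fs_a g).
\]
By Proposition \ref{pro:Fg}, $F_g\in\CS^+_{\pvs}(F^\times)$, the Mellin transform satisfies $\CM(F_g)(s+\tfrac{2n+1}{2},\chi^{-1})=\CP_{\chi_s^{-1}}\circ\RR_X(f)(g)$, and the linear transform $\FL$ satisfies $\FL(F_g)(a)=|a|^{2n+1}\CF_{X,\psi}(f)(\Fs_a^{-1}g)$; by Corollary \ref{FX-conv}, the latter lies in $\CS^-_{\pvs}(F^\times)$.

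The next step is to rewrite the composition $\CP_{\chi_s^{-1}}\circ\CF_{X,\psi}(f)(g)$ as a Mellin integral. Using the definition \eqref{proj-X-I} and substituting $\CF_{X,\psi}(f)(\Fs_a^{-1}g)=|a|^{-(2n+1)}\FL(F_g)(a)$ from Proposition \ref{pro:Fg}, one obtains
\[
\CP_{\chi_s^{-1}}\circ\CF_{X,\psi}(f)(g)=\int_{F^\times}\FL(F_g)(a)\,\chi^{-1}_{s+\frac{2n+1}{2}}(a)\ud^*a.
\]
Since $\FL(F_g)\in\CS^-_{\pvs}(F^\times)$, the asymptotic behavior at $|a|\ll 1$ given by Definition \ref{schwartzF} (with integrand behaving like $|a|^{n+1}\cdot|a|^{-\frac{2n+1}{2}-\Re(s)}$) together with the compact support away from $\infty$ guarantees absolute convergence exactly when $\Re(s)$ is sufficiently negative, which is the convergence assertion of the theorem.

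Now I apply the functional equation \eqref{eq:FE-eta} of Theorem \ref{thm:LT} to $F_g\in\CS^+_{\pvs}(F^\times)$; as meromorphic functions in $s\in\BC$,
\[
\int_{F^\times}\FL(F_g)(a)\chi^{-1}_{s+\frac{2n+1}{2}}(a)\ud^*a=\beta_\psi(\chi_s)\int_{F^\times}F_g(a)\chi_{s+\frac{2n+1}{2}}(a)\ud^*a.
\]
After plugging in the definition of $F_g$ and changing variable $a\mapsto a^{-1}$, the right-hand integral is recognized as $\CP_{\chi_s^{-1}}\circ\RR_X(f)(g)$. Finally, Lemma \ref{lem:Rx} identifies this with $\RM_{w_\Del}(s,\chi)\circ\CP_{\chi_s}(f)(g)$, yielding the desired identity \eqref{eq:wtF-Inter} by meromorphic continuation.

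The principal difficulty is not the algebraic manipulation but the delicate interplay between three different regions of convergence: $\CP_{\chi_s^{-1}}\circ\CF_{X,\psi}(f)(g)$ converges only for $\Re(s)$ sufficiently negative, $\RM_{w_\Del}(s,\chi)\circ\CP_{\chi_s}(f)(g)$ converges only for $\Re(s)$ sufficiently positive, and these domains do not overlap when $n>0$. The crucial point is that the intermediate object—the Mellin transform of the fixed Schwartz function $F_g\in\CS^+_{\pvs}(F^\times)$—is, by the Paley–Wiener type description in Theorem \ref{thm:PWM} and Proposition \ref{pro:gcd}, a rational function in $q^{-s}$ lying in a specific fractional ideal generated by the $a_{2n}$-factors; this rationality, combined with the meromorphic continuation built into Theorem \ref{thm:LT}, is what allows the two non-overlapping regions to be bridged. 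The principal value in the definition of $\CF_{X,\psi}$ is harmlessly absorbed, since $\FL(F_g)$ has already been shown to belong to $\CS^-_{\pvs}(F^\times)$ and hence defines an honest function on $F^\times$.
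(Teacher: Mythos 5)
Your proposal is correct and follows essentially the same route as the paper's own proof: both reduce the identity to the functional equation of Theorem \ref{thm:LT} applied to the auxiliary function $F_g\in\CS^+_{\pvs}(F^\times)$, using Proposition \ref{pro:Fg} and Corollary \ref{FX-conv} for the convergence statement and Lemma \ref{lem:Rx} to identify the resulting Mellin integral with $\RM_{w_\Del}(s,\chi)\circ\CP_{\chi_s}(f)(g)$. (Only a cosmetic slip: with the paper's convention the Mellin identity of Proposition \ref{pro:Fg} reads $\CM(F_g)(s+\tfrac{2n+1}{2},\chi)=\CP_{\chi_s^{-1}}\circ\RR_X(f)(g)$, with $\chi$ rather than $\chi^{-1}$ in the second argument.)
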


\begin{proof}
First, we show that $\CP_{\chi^{-1}_{s}}\circ \CF_{X,\psi}(f)(g)$ is absolutely convergent for $\Re(s)$ sufficiently negative, for any $f\in  \CC^\infty_c(X_{P_{\Del}})$.

By \eqref{FTX}, the Fourier transform $\CF_{X,\psi}(f)$ is defined as
\begin{align*}
\CF_{X,\psi}(f)(\Fs_a^{-1}g):=\int_{F^\times}^\pv\eta_{\pvs,\psi}(x)|x|^{-\frac{2n+1}{2}}\int_{N_\Del(F)}f(w_\Del n \Fs_x \Fs_a^{-1}g)\ud n\ud^* x.
\end{align*}
By Proposition \ref{pro:Fg} and Corollary \ref{FX-conv}, for any fixed $g\in\Sp_{4n}(F)$, the function $\FL(F_g)(a)=|a|^{2n+1}\CF_{X,\psi}(f)(\Fs_a^{-1}g)$,
as a function in $a\in F^{\times}$, belongs to the space  $\CS^{-}_{\pvs}(F^{\times})\subset|\cdot|^{n+1}\CS_n^-(F^\times)$.
On the other hand,  as in (\ref{proj-X-I}), we have that
\begin{align}\label{54-1}
\CP_{\chi^{-1}_{s}}(\CF_{X,\psi}(f))(g)=\int_{F^{\times}}
\chi_{s}^{-1}(a)|a|^{\frac{2n+1}{2}}\CF_{X,\psi}(f)(\Fs^{-1}_{a}g)\ud^{*} a,
\end{align}
which is the Mellin transform of the function $|a|^{2n+1}\CF_{X,\psi}(f)(\Fs_a^{-1}g)$, as a function in $a$, evaluated at $\chi^{-1}_{s}$, and hence
is absolutely convergent for $\Re(s)$ sufficiently negative by Proposition \ref{Igusa78-53}. This verifies the first part of the theorem.

Next we prove the identity in \eqref{eq:wtF-Inter}. The left-hand side of \eqref{eq:wtF-Inter}, $(\CP_{\chi^{-1}_{s}}\circ\CF_{X,\psi})(f)(g)$, via \eqref{54-1}, can be written as
\begin{align}\label{54-2}
(\CP_{\chi^{-1}_{s}}\circ\CF_{X,\psi})(f)(g)=&\int_{F^{\times}}
\chi_{s}^{-1}(a)|a|^{-\frac{2n+1}{2}}
|a|^{2n+1}
\CF_{X,\psi}(f)
(\Fs_{a}^{-1}g)\ud^{*} a\nonumber\\
&
=
\int_{F^{\times}}
\chi_{s}^{-1}(a)|a|^{-\frac{2n+1}{2}}\FL(F_{g})(a)\ud^* a,
\end{align}
according to Proposition \ref{pro:Fg}. Then by the functional equation in Theorem \ref{thm:LT}, we deduce that
\begin{align}\label{54-3-0}
(\CP_{\chi^{-1}_{s}}\circ\CF_{X,\psi})(f)(g)=
&
\bet_{\psi}(\chi_{s})
\int_{F^{\times}}
\chi_{s}(a)|a|^{\frac{2n+1}{2}}
F_{g}(a)\ud^{*}a.
\end{align}
By Theorem \ref{thm:LT}, the left-hand side converges absolutely for $\Re(s)$ sufficiently negative, while the right-hand side converges absolutely for $\Re(s)$ sufficiently positive, and the identity
holds by meromorphic continuation as functions in $s\in\BC$.

By Proposition \ref{pro:Fg}, we have that $F_g(a)=|a|^{-(2n+1)}\RR_X(f)(\Fs_ag)$. Whenever $\Re(s)$ is sufficiently positive, we may write the integral in the right-hand side of \eqref{54-3-0} as
\begin{align}\label{54-3}
\int_{F^{\times}}
\chi_{s}(a)|a|^{\frac{2n+1}{2}}
F_{g}(a)\ud^{*}a
&=
\int_{F^{\times}}
\chi_{s}(a)|a|^{-\frac{2n+1}{2}}\RR_X(f)(\Fs_ag)\ud^{*} a\nonumber\\
&=
\int_{F^{\times}}
\chi_{s}^{-1}(a)|a|^{\frac{2n+1}{2}}\RR_X(f)(\Fs_a^{-1}g)\ud^{*} a\nonumber\\
&=
\CP_{\chi^{-1}_{s}}\circ \mathrm{R}_{X}(f)(g).
\end{align}
By Lemma \ref{lem:Rx}, it is equal to $\RM_{w_\Del}(s,\chi)\circ\CP_{\chi_s}(f)(g)$. Therefore we obtain the identity:
$$
(\CP_{\chi^{-1}_s}\circ\CF_{X,\psi})(f)(g)=\beta_\psi(\chi_s)(\RM_{w_\Del}(s,\chi)\circ\CP_{\chi_s})(f)(g),
$$
which holds as meromorphic functions in $s\in\BC$.
We are done.
\end{proof}

We are going to show that the $\eta_{\pvs,\psi}$-Fourier transform $\CF_{X,\psi}$, extends to a unitary operator from $\CC^{\infty}_{c}(X_{P_{\Del}})$ to $L^2(X_{{P_\Del}})$, the space of square-integrable functions
on $X_{P_\Del}(F)$. Here the inner product on $L^2(X_{{P_\Del}})$ is defined by
\begin{equation}\label{eq:L2onXp}
\apair{f_1,f_2}_{X_{P_{\Del}}}
=\int_{K_\Del}\int_{F^\times}f_1(\Fs_{x}k)\ol{f_2(\Fs_{x}k)}\delta_{P_{\Del}}(\Fs_{x})^{-1}\ud^{*} x\ud k	
\end{equation}
for $f_1,f_2\in L^{2}(X_{P_\Del})$. Here we use the measure ${\rm d}^{*} x\ud k$ on $X_{P_\Del}(F)$, which is different from the induced measure from ${\rm d} g$ on $\Sp_{4n}(F)$ by a nonzero constant.

\begin{pro}\label{pro:Fx-norm}
For $f\in\CC_c^\infty(X_{P_\Del})$, the function $\CF_{X,\psi}(f)$ is square-integrable on $X_{P_\Del}(F)$ and
the following identity
$$\|\CF_{X,\psi}(f)\|=c\|f\|$$
holds, where $\|\cdot\|$ is the $L^2$-norm on $L^2(X_{{P_\Del}})$ and the constant $c=|2|^{-n(2n+1)}$.
\end{pro}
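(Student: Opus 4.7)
The plan is to reduce the $L^2$-identity $\|\CF_{X,\psi}(f)\|=|2|^{-n(2n+1)}\|f\|$ to the following chain of reductions: (a) a Mellin--Plancherel decomposition on $X_{P_\Del}(F)$ that expresses $\|f\|^2$ as an integral over the unitary dual $\wh{F^\times}$ of the norms $\|\CP_\chi(f)\|^2_{L^2(K_\Del)}$; (b) the compatibility formula of Theorem~\ref{thm:CFP-M} that converts $\CP_{\chi^{-1}}\CF_{X,\psi}$ into $\beta_\psi(\chi)\RM_{w_\Del}(0,\chi)\CP_\chi$; (c) the normalization \eqref{eq:IT-eta-rho} that turns $\beta_\psi(\chi)\RM_{w_\Del}(0,\chi)$ into $\chi(2)^{-2n}|2|^{-n(2n+1)}\RM^\dag_{w_\Del}(0,\chi,\psi)$; and (d) the $L^2(K_\Del)$-unitarity of $\RM^\dag_{w_\Del}(0,\chi,\psi)$ on the unitary line, provided by \eqref{eq:M-FE}. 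The constant $|2|^{-2n(2n+1)}$ coming from (c) will, upon taking the square root, produce exactly $c=|2|^{-n(2n+1)}$.

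First, I would establish the Mellin--Plancherel identity
$$
\|f\|^2_{L^2(X_{P_\Del})} \;=\; \int_{\wh{F^\times}} \|\CP_\chi(f)\|^2_{L^2(K_\Del)}\,d\chi
$$
for $f\in\CC^\infty_c(X_{P_\Del})$. Using the Iwasawa parametrization $X_{P_\Del}(F)=\Fs(F^\times)\cdot K_\Del$, the definition \eqref{eq:L2onXp} rewrites $\|f\|^2$ as the $L^2(K_\Del)$-norm of the $L^2(F^\times,d^*x)$-norm of $\tilde\varphi_k(x):=|x|^{-(2n+1)/2}f(\Fs_x k)$. Applying Plancherel for the abelian group $F^\times$ with its self-dual Haar measure, and using the change of variables $x\mapsto a^{-1}$ together with the identity $\Fs_{a^{-1}}\equiv\Fs_a^{-1}$ on $X_{P_\Del}$ (used already in \eqref{R-5}), the abelian Fourier coefficient $\hat{\tilde\varphi_k}(\chi)$ is identified with $\CP_\chi(f)(k)$, yielding the stated identity.

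Next, I would apply the same identity to $\CF_{X,\psi}(f)$ and use the measure-preserving involution $\chi\mapsto\chi^{-1}$ on $\wh{F^\times}$ to write
$$
\|\CF_{X,\psi}(f)\|^2 \;=\; \int_{\wh{F^\times}} \|\CP_{\chi^{-1}}\CF_{X,\psi}(f)\|^2_{L^2(K_\Del)}\,d\chi.
$$
Specializing Theorem~\ref{thm:CFP-M} to $s=0$ and substituting \eqref{eq:IT-eta-rho} gives, for unitary $\chi$,
$$
\CP_{\chi^{-1}}\CF_{X,\psi}(f) \;=\; \chi(2)^{-2n}|2|^{-n(2n+1)}\,\RM^\dag_{w_\Del}(0,\chi,\psi)\CP_\chi(f),
$$
with $|\chi(2)^{-2n}|=1$. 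The unitarity of $\RM^\dag_{w_\Del}(0,\chi,\psi)$ as an operator from $\RI(0,\chi)$ to $\RI(0,\chi^{-1})$, both viewed as subspaces of $L^2(K_\Del)$, follows from \eqref{eq:M-FE} combined with the standard self-adjointness $\RM^\dag_{w_\Del}(0,\chi,\psi)^{\ast}=\RM^\dag_{w_\Del}(0,\chi^{-1},\psi^{-1})$ on the unitary line. Substituting and integrating yields $\|\CF_{X,\psi}(f)\|^2=|2|^{-2n(2n+1)}\|f\|^2$ as desired.

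The main obstacle is the first step: justifying the Mellin--Plancherel decomposition with exactly the normalization prescribed by \eqref{eq:L2onXp}, in particular matching the measure $\delta_{P_\Del}^{-1}d^*x\,dk$ on $X_{P_\Del}(F)$ against the self-dual Haar measure used in Plancherel for $F^\times$. A secondary technical point is verifying that the meromorphic identity of Theorem~\ref{thm:CFP-M}, which a priori holds by analytic continuation between distinct half-planes of absolute convergence, extends continuously to the unitary line; this is controlled by passing to the normalized form $\RM^\dag_{w_\Del}$, which is holomorphic and (by \eqref{eq:M-FE}) unitary there, so that both sides of the relation in the integrand are a priori well-defined elements of $L^2(K_\Del)$ for almost every $\chi\in\wh{F^\times}$.
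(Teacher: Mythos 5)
Your strategy is correct and rests on the same three pillars as the paper's proof: the compatibility of $\CF_{X,\psi}$ with $\beta_\psi(\chi_s)\RM_{w_\Del}(s,\chi)$ (Theorem \ref{thm:CFP-M}), the adjoint formula $\RM^\dag_{w_\Del}(s,\chi,\psi)^{\ast}=\RM^\dag_{w_\Del}(\bar s,\chi^{-1},\psi^{-1})$, and the inversion identity \eqref{eq:M-FE}, which together give unitarity of $\RM^\dag_{w_\Del}$ on the line $\Re(s)=0$ and produce the constant $|2|^{-n(2n+1)}$ from \eqref{eq:IT-eta-rho}. Where you differ from the paper is in the Mellin-theoretic packaging, and the difference is instructive because it neutralizes exactly the two obstacles you flag. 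Instead of a Plancherel decomposition $\|f\|^2=\int_{\wh{F^\times}}\|\CP_\chi(f)\|^2\,d\chi$ evaluated on the unitary line, the paper polarizes: for $f,f'\in\CC^\infty_c(X_{P_\Del})$ it computes $\langle\CP_{\chi_s}(f),\CP_{\chi_{-\bar s}}(f')\rangle_{K_\Del}=\int_{F^\times}\chi_s(x)^{-1}\langle f,\Fl_x f'\rangle_{X_{P_\Del}}\,d^*x$, does the same for $\CF_{X,\psi}(f),\CF_{X,\psi}(f')$ on the strip $-\frac12<\Re(s)<\frac12$ (where absolute convergence is guaranteed by the asymptotics of $\CS^-_{\pvs}(F^\times)$ from Corollary \ref{FX-conv}), applies the identity \eqref{eq:M=f}, and concludes by injectivity of the Mellin transform applied to the function $x\mapsto\langle f,\Fl_x f'\rangle$. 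This avoids (i) pinning down the Plancherel measure on $\wh{F^\times}$ — injectivity suffices — and (ii) making sense of the compatibility identity exactly at $s=0$, since one only needs it on an open strip. Two remarks on your version: your "main obstacle" is in fact harmless, because whatever normalization of the dual measure $d\chi$ makes the decomposition exact, the \emph{same} constant appears in the decompositions of $\|f\|^2$ and $\|\CF_{X,\psi}(f)\|^2$ and cancels in the ratio; and your appeal to a "standard" adjoint formula does hide a genuine computation — the paper derives it from $\overline{\beta_\psi(\chi_s)}=\chi(-1)\beta_\psi(\chi^{-1}|\cdot|^{\bar s})$ via $\overline{\varepsilon(s,\chi,\psi)}=\chi(-1)\varepsilon(\bar s,\chi^{-1},\psi)$ — but this is a gloss, not a gap.
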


\begin{proof}
For a given unitary character $\chi$,
define the Hermitian bilinear pairing between $\RI(s,\chi)$ and $\RI(-\bar{s},\chi)$ by
\begin{align}\label{hbp}
\apair{f_{\chi,s},f_{\chi,{-\ol{s}}}}_{K_\Del}:=\int_{K_\Del}f_{\chi,s}(k)\overline{f_{\chi,{-\ol{s}}}(k)}\ud k
\end{align}
with $f_{\chi,s}\in \RI(s,\chi)$ and $f_{\chi,{-\ol{s}}}\in \RI(-\bar{s},\chi)$. It follows that for $f_{\chi,s}\in \RI(s,\chi)$, its image under the intertwining operator $\RM_{w_\Del}(s,\chi)$ belongs to
$\RI(-s,\chi^{-1})$, which is paired with $\RI(\ol{s},\chi^{-1})$, according to \eqref{hbp}. By \cite[p.287]{Wal03},
the adjoint operator $\RM_{w_\Del}(s,\chi)^*$ of $\RM_{w_\Del}(s,\chi)$ is the intertwining operator $\RM_{w_{\Del}^{-1}}(\bar{s},\chi^{-1})$ with the identity
\begin{align}\label{56-1}
\apair{ \RM_{w_\Del}(s,\chi)(f_{\chi,s}),f_{\chi^{-1},\ol{s}} }_{K_\Del}&=\apair{ f_{\chi,s},\RM_{w_\Del}(s,\chi)^*(f_{\chi^{-1},\ol{s}}) }_{K_\Del}\nonumber\\
&=\apair{ f_{\chi,s},\RM_{w_{\Del}}(\bar{s},\chi^{-1})(f_{\chi^{-1},\ol{s}}) }_{K_\Del},
\end{align}
for $f_{\chi,s}\in \RI(s,\chi)$ and $f_{\chi^{-1},\ol{s}}\in\RI(\bar{s},\chi^{-1})$.  Note that $w_{\Del}=w_\Del^{-1}$.

By \eqref{eq:IT-eta-rho} and simple calculations, we can deduce that the adjoint operator of the normalized intertwining operator $\RM^{\dagger}_{w_{\Del}}(s,\chi,\psi)$ is
\begin{equation}\label{56-2}
\RM_{w_{\Del}}^{\dagger}(s,\chi,\psi)^*=
\ol{\eta_{\pvs,\psi}(\chi_s)}\cdot\eta_{\pvs,\psi}(\chi^{-1}|\cdot|^{\bar{s}})^{-1}	\cdot\RM^{\dagger}_{w_{\Del}}(\bar{s},\chi^{-1},\psi).
\end{equation}
From \cite[Corollary~2,~p.142]{BH06}, one deduces that
$$
\ol{\varepsilon(s,\chi,\psi)}=\chi(-1)\varepsilon(\bar{s},\chi^{-1},\psi).
$$
By Theorem \ref{thm:eta}, together with the expression in \eqref{eq:eta-intertwining}, we obtain that the following identity:
\[
\ol{\eta_{\pvs,\psi}(\chi_s)}=\chi(-1)\eta_{\pvs,\psi}(\chi^{-1}|\cdot|^{\bar{s}})
\]
holds for any unitary character $\chi$. It follows that
\begin{equation}\label{56-3}
\RM^{\dagger}_{w_{\Del}}(s,\chi,\psi)^*=\chi(-1)	\RM_{w_{\Del}}^{\dagger}(\bar{s},\chi^{-1},\psi).
\end{equation}

Recall from Definition \ref{defn:good-f} the space $\RI^\dagger (s,\chi)$ of good sections. By \cite[Proposition~3.1]{Y14},
for holomorphic sections $f_{\chi,s}\in \RI(s,\chi)$ and $f_{\chi,-\ol{s}}\in \RI(-\bar{s},\chi)$, $\RM^{\dagger}_{w_{\Del}}(s,\chi,\psi)(f_{\chi,s})$
and $\RM^{\dagger}_{w_{\Del}}(-\bar{s},\chi,\psi)(f_{\chi,-\ol{s}})$ are good sections in $\RI^\dagger (-s,\chi^{-1})$  and $\RI^\dagger(\bar{s},\chi^{-1})$, respectively.
We consider the Hermitian bilinear pairing given by
\begin{equation}\label{56-4}
\apair{\RM_{w_{\Del}}^{\dagger}(s,\chi,\psi)(f_{\chi,s}),\RM_{w_{\Del}}^{\dagger}(-\bar{s},\chi,\psi)(f_{\chi,-\wb{s}})}_{K_\Del}.	
\end{equation}
By\eqref{56-1}, the Hermitian inner product in \eqref{56-4} is equal to
\begin{align}\label{56-5}
\apair{f_{\chi,s},\RM_{w_{\Del}}^{\dagger}(s,\chi,\psi)^*\circ\RM_{w_{\Del}}^{\dagger}(-\bar{s},\chi,\psi)(f_{\chi,-\wb{s}})}_{K_\Del},
\end{align}
which, by \eqref{56-3},  is equal to
\begin{align}\label{56-6}
\apair{f_{\chi,s},\chi(-1)	\RM_{w_{\Del}}^{\dagger}(\bar{s},\chi^{-1},\psi)\circ\RM_{w_{\Del}}^{\dagger}(-\bar{s},\chi,\psi)(f_{\chi,-\wb{s}})}_{K_\Del}.
\end{align}
By \eqref{eq:M-FE} and the fact from \cite[Lemma~1,~p.143]{BH06} that $\veps(s,\chi,\psi) = \chi(-1)\veps(s,\chi,\psi^{-1})$,
we obtain that
$$
\RM_{w_{\Del}}^{\dagger}(\bar{s},\chi^{-1},\psi)\circ \RM_{w_{\Del}}^{\dagger}(-\bar{s},\chi,\psi)=\chi(-1).
$$
Hence \eqref{56-6} is equal to
\begin{align}\label{56-7}
\apair{f_{\chi,s},\chi(-1)^2f_{\chi,-\wb{s}}}_{K_\Del}=\apair{f_{\chi,s},f_{\chi,-\wb{s}}}_{K_\Del}.
\end{align}
Therefore we obtain the following equality:
\begin{equation}\label{eq:M=f}
\apair{\RM_{w_{\Del}}^{\dagger}(s,\chi,\psi)(f_{\chi,s}),\RM_{w_{\Del}}^{\dagger}(-\bar{s},\chi,\psi)(f_{\chi,-\wb{s}})}_{K_\Del}
=\apair{f_{\chi,s},f_{\chi,-\wb{s}}}_{K_\Del}.
\end{equation}

Now we are going to use the identity in \eqref{eq:M=f} to finish the proof of the proposition.

For any $f,f'\in\CC^\infty_c(X_{P_\Del})$, take $f_{\chi,s}=\CP_{\chi_s}(f)$ and $f'_{\chi,-\wb{s}}=\CP_{\chi_{-\bar{s}}}(f')$,
which are holomorphic sections in $\RI(s,\chi)$ and $\RI(-\bar{s},\chi)$, respectively.
Consider the Hermitian inner product of $f_{\chi,s}$ and $f'_{\chi,-\wb{s}}$
\begin{align}\label{56-8}
\apair{f_{\chi,s},f'_{\chi,-\wb{s}}}_{K_\Del}
&=
\int_{K_\Del}
\int_{F^{\times}}
\int_{F^{\times}}
\chi_{s}(a)|a|^{\frac{2n+1}{2}}
f(\Fs^{-1}_{a}k)
\nonumber
\\
&\qquad\qquad\qquad\qquad
\wb{\chi_{-\wb{s}}(x)}|x|^{\frac{2n+1}{2}}
\wb{f'(\Fs^{-1}_{x}k)}
\ud^{*} a \ud^{*} x \ud k.
\end{align}
Since $\wb{\chi_{-\wb{s}}(x)} = \chi_{s}(x)^{-1}$, by changing the variable $x\to xa$, we get that the right-hand side of \eqref{56-8} is equal to
\begin{align}\label{56-9}
\int_{F^\times} \chi_{s}(x)^{-1}|x|^{\frac{2n+1}{2}}
\int_{K_\Del}\int_{F^\times}
f(\Fs_{a}^{-1}k)\ol{f'(\Fs_{x}^{-1}\Fs_{a}^{-1}k)}|a|^{2n+1}\ud^{*} a\ud k\ud^{*} x.
\end{align}
By \eqref{La} and \eqref{deltaa}, for $x\in F^\times$, we write $\Fl_a(f')(g)=|x|^{\frac{2n+1}{2}}f'(\Fs_x^{-1}g)$, and hence we write
\begin{align*}
&|x|^{\frac{2n+1}{2}}
\int_{K_\Del}\int_{F^\times}
f(\Fs_{a}^{-1}k)\ol{f'(\Fs_{x}^{-1}\Fs_{a}^{-1}k)}|a|^{2n+1}\ud^{*} a\ud k
\\
&\qquad\qquad=
\int_{K_\Del}\int_{F^\times}
f(\Fs_{a}^{-1}k)\ol{\Fl_x(f')(\Fs_{a}^{-1}k)}|a|^{2n+1}\ud^{*} a\ud k
\\
&\qquad\qquad=
\int_{K_\Del}\int_{F^\times}
f(\Fs_{a}k)\ol{\Fl_x(f')(\Fs_{a}k)}|a|^{-(2n+1)}\ud^{*} a\ud k=
\apair{f,\Fl_xf'}_{X_{P_\Del}},
\end{align*}
because $\del_{P_{\Del}}(\Fs_{a})=|a|^{2n+1}$ and definition of the hermitian inner product $\apair{f_1,f_2}_{X_{P_{\Del}}}$ in \eqref{eq:L2onXp}.
Therefore we arrive at the following identity
\begin{align}
\apair{f_{\chi,s},f'_{\chi,-\wb{s}}}_{K_\Del}= \int_{F^\times} \chi_{s}(x)^{-1}\apair{f,\Fl_xf'}_{X_{P_\Del}}\ud^{*} x.
\label{eq:M=f-1}
\end{align}
Note that all the integrations above are absolutely convergent for any $s\in \BC$.

On the other hand,
by \eqref{eq:wtF-Inter} and \eqref{eq:IT-eta-rho}, we have
\begin{align*}
 & \apair{\RM_{w_{\Del}}^{\dagger}(s,\chi,\psi)(f_{\chi,s}),\RM_{w_{\Del}}^{\dagger}(-\bar{s},\chi,\psi)(f'_{\chi,-\wb{s}})}_{K_\Del}\\
&\qquad\qquad\qquad=c^{-2}\apair{\CP_{\chi_s}\circ\CF_{X,\psi}(f),\CP_{\chi_{-\bar{s}}}\circ\CF_{X,\psi}(f')}_{K_\Del}.
\end{align*}

From Definition \ref{CS-}, for any $f_{-}\in \CS^{-}_{n}(F^{\times})$, $\supp(f_{-})$ is bounded on $F^{\times}$, and for $|x|\ll 1$, there exist locally constant functions $a^{-}_{0}(\ac(x))$ and $a^{-}_{i,\pm}(\ac(x))$ on $\CO^{\times}$ such that
$$
f_{-}(x) =
a^{-}_{0}(\ac(x))
|x|^{n}+\sum_{i=0}^{n-1}
(
(a^{-}_{i,+}(\ac(x)))|x|^{i}
+a^{-}_{i,-}(\ac(x))|x|^{i}(-1)^{\ord(x)}).
$$
Therefore for any $f_{-}\in \CS^{-}_{n}(F^{\times})$, the Mellin transform
$$
\CM(f_{-})(s,\chi)
=\int_{F^{\times}}
\chi_{s}(x)f_{-}(x)\ud^{*}x
$$
is absolutely convergent for $\Re(s)>0$.

From Corollary \ref{FX-conv}, for any $\phi\in \CC^{\infty}_{c}(X_{P_{\Del}}(F))$, as a function of $a\in F^{\times}$, $|a|^{2n+1}\CF_{X,\psi}(\phi)(\Fs^{-1}_{a}g)$ lies in $\CS^{-}_{\pvs}(F^{\times})\subset |\cdot|^{n+1}\CS^{-}_{n}(F^{\times})$. Therefore
the following integral formula
$$
\CP_{\chi_{s}}\circ \CF_{X,\psi}(\phi) =
\int_{F^{\times}}
\chi_{s}(a)|a|^{-\frac{2n+1}{2}}
|a|^{2n+1}
\CF_{X,\psi}(\phi)(\Fs_{a}^{-1}g)\ud^{*}a
$$
is absolutely convergent for $\Re(s)>\frac{2n+1}{2}-(n+1)=-\frac{1}{2}$. Similarly the integral defining $\CP_{\chi^{-1}_{\wb{s}}}\circ \CF_{X,\psi}(f^{\p})$ is absolutely convergent for $\Re(s)<\frac{1}{2}$.
Hence whenever $-\frac{1}{2}<\Re(s)<\frac{1}{2}$, we have the following absolutely convergent integral
\begin{align*}
&\apair{\RM_{w_{\Del}}^{\dagger}(s,\chi,\psi)(f_{\chi,s}),\RM_{w_{\Del}}^{\dagger}(-\bar{s},\chi,\psi)(f'_{\chi,-\wb{s}})}_{K_\Del}
\\
&\qquad\qquad=c^{-2}\apair{\CP_{\chi_s}\circ\CF_{X,\psi}(f),\CP_{\chi_{-\bar{s}}}\circ\CF_{X,\psi}(f')}_{K_\Del}\\
&\qquad\qquad=c^{-2}\int_{K_\Del}\int_{F^\times}
\chi_s(a)|a|^{\frac{2n+1}{2}}
\CF_{X,\psi}(f)(\Fs_{a}^{-1}k)\ud^{*} a\\
&\qquad\qquad\qquad\qquad\cdot \int_{F^\times} \ol{\chi_{-\bar{s}}(x)}|x|^{\frac{2n+1}{2}}
\ol{\CF_{X,\psi}(f')(\Fs_{x}^{-1}k)}
\ud^{*} x\ud k.
\end{align*}
With the same argument as the proof of the identity \eqref{eq:M=f-1}, we obtain the following equality whenever $-\frac{1}{2}<\Re(s)<\frac{1}{2}$,
\begin{align*}
 & c^2\apair{\RM^{\dagger}_{w_{\Del}}(s,\chi,\psi)(f_{\chi,s}),\RM^{\dagger}_{w_{\Del}}(-\bar{s},\chi,\psi)(f_{\chi,-\wb{s}})}_{K_\Del}\\
&\qquad\qquad=\int_{F^\times}\chi_s(x)^{-1}
\apair{\CF_{X,\psi}(f),\Fl_x\CF_{X,\psi}(f')}_{X_{P_\Del}}
\ud^{*} x.
\end{align*}

Combining with \eqref{eq:M=f-1}, we obtain that for any unitary character $\chi$ and $-\frac{1}{2}<\Re(s)<\frac{1}{2}$,
\begin{align}
&\int_{F^\times}\chi_s(x)^{-1}
\apair{\CF_{X,\psi}(f),\Fl_x\CF_{X,\psi}(f')}_{X_{P_\Del}}
\ud^{*} x\nonumber \\
&\qquad\qquad\qquad=c^2\int_{F^\times} \chi_{s}(x)^{-1}\apair{f,\Fl_xf'}_{X_{P_\Del}}\ud^{*} x.
 \label{eq:I=f} 	
\end{align}
Now both sides of \eqref{eq:I=f} admit meromorphic continuation to $s\in \BC$.
Therefore by the Mellin inversion formula, we have
\[
\apair{\CF_{X,\psi}(f), \CF_{X,\psi}(f')}_{X_{P_\Del}}=c^{2}\apair{f, f'}_{X_{P_\Del}}.
\]
This completes the proof of this proposition.
\end{proof}

It is natural to have the following
\begin{cor}\label{Fx-invs}
The Fourier operators enjoy the following property:
$$
\CF_{X,\psi^{-1}}\circ\CF_{X,\psi}=c^{2}\cdot \Id
$$
as operators on $L^{2}(X_{P_{\Del}})$, where $c=|2|^{-n(2n+1)}$.
\end{cor}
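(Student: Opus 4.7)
The plan is to apply Theorem \ref{thm:CFP-M} twice and then invoke the inverse identity \eqref{eq:M-FE} for the normalized intertwining operators, reducing the corollary to a scalar computation on each spectral projection $\CP_{(\chi_s)^{-1}}$, after which the Mellin inversion (equivalently, the Plancherel decomposition of $L^2(X_{P_\Del})$ under the left $M_\Del^\ab$-action) closes the argument.

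First, by Proposition \ref{pro:Fx-norm} both $\CF_{X,\psi}$ and $\CF_{X,\psi^{-1}}$ extend uniquely to bounded operators on $L^2(X_{P_\Del})$ of norm $c=|2|^{-n(2n+1)}$, so the composition $\CF_{X,\psi^{-1}}\circ\CF_{X,\psi}$ is well-defined there. By continuity and the density of $\CC^\infty_c(X_{P_\Del})\subset L^2(X_{P_\Del})$, it suffices to verify the identity on $f\in\CC^\infty_c(X_{P_\Del})$. For such $f$, I apply Theorem \ref{thm:CFP-M} with the roles $(\mu,r)=(\chi^{-1},-s)$ to get
\[
\CP_{\chi_s}\circ\CF_{X,\psi}(f)=\beta_\psi((\chi_s)^{-1})\,\RM_{w_\Del}(-s,\chi^{-1})\circ\CP_{(\chi_s)^{-1}}(f),
\]
and then with the pair $(\psi^{-1};\chi,s)$ to the outer operator, obtaining
\[
\CP_{(\chi_s)^{-1}}\circ\CF_{X,\psi^{-1}}\circ\CF_{X,\psi}(f)=\beta_{\psi^{-1}}(\chi_s)\beta_\psi((\chi_s)^{-1})\,\RM_{w_\Del}(s,\chi)\circ\RM_{w_\Del}(-s,\chi^{-1})\circ\CP_{(\chi_s)^{-1}}(f).
\]

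Next, I use \eqref{eq:M-FE} together with the definition \eqref{eq:IT-eta-rho} of $\RM^\dag_{w_\Del}$. Writing out the normalization, $\RM^\dag_{w_\Del}(s,\chi,\psi)\circ\RM^\dag_{w_\Del}(-s,\chi^{-1},\psi^{-1})=\Id$ and the fact that the scalar prefactors $\chi_s(2)^{2n}$ and $(\chi^{-1})_{-s}(2)^{2n}$ cancel (they are reciprocals) yield
\[
\RM_{w_\Del}(s,\chi)\circ\RM_{w_\Del}(-s,\chi^{-1})=|2|^{-2n(2n+1)}\,\beta_\psi(\chi_s)^{-1}\,\beta_{\psi^{-1}}((\chi_s)^{-1})^{-1}\cdot\Id
\]
on $\RI(-s,\chi^{-1})$. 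Finally, the identity $\gam(s,\chi,\psi^{-1})=\chi(-1)\gam(s,\chi,\psi)$ (recalled in the proof of Proposition \ref{pro:Fx-norm}), combined with $\chi^2(-1)=1$, gives $\beta_{\psi^{-1}}(\chi_s)=\chi(-1)\beta_\psi(\chi_s)$ and the analogous relation for $(\chi_s)^{-1}$; multiplying these shows that the two $\chi(-1)$ factors cancel, i.e.
\[
\beta_{\psi^{-1}}(\chi_s)\beta_\psi((\chi_s)^{-1})=\beta_\psi(\chi_s)\beta_{\psi^{-1}}((\chi_s)^{-1}).
\]
Substituting gives the clean formula
\[
\CP_{(\chi_s)^{-1}}\circ\CF_{X,\psi^{-1}}\circ\CF_{X,\psi}(f)=|2|^{-2n(2n+1)}\,\CP_{(\chi_s)^{-1}}(f)=c^2\,\CP_{(\chi_s)^{-1}}(f),
\]
valid as meromorphic identity in $s\in\BC$ for every unitary $\chi$.

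To conclude, I observe that on the $L^2$-level the family of projections $\{\CP_{(\chi_s)^{-1}}\}_{\chi,s}$ is the spectral decomposition of $L^2(X_{P_\Del})$ under the left $M_\Del^\ab\cong\BG_m$-action (this is the Mellin/Plancherel theorem on $F^\times$ applied fiberwise over $K_\Del$, following \eqref{eq:L2onXp}); hence an $L^2$-function whose projections all vanish is itself zero. Applied to $\CF_{X,\psi^{-1}}\circ\CF_{X,\psi}(f)-c^2 f$, this gives the identity $\CF_{X,\psi^{-1}}\circ\CF_{X,\psi}=c^2\cdot\Id$ on $\CC^\infty_c(X_{P_\Del})$, and by density on all of $L^2(X_{P_\Del})$. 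The main technical subtlety is not the algebra, which is essentially forced by \eqref{eq:M-FE}, but rather the justification that Theorem \ref{thm:CFP-M} may be fed into itself (formally, $\CF_{X,\psi}(f)\notin\CC^\infty_c(X_{P_\Del})$); this is resolved either by first extending Theorem \ref{thm:CFP-M} to the Schwartz space $\CS_\pvs(X_{P_\Del})$, which is $\CF_{X,\psi}$-stable by Proposition \ref{pro:FTX}, or by reading the whole identity as a statement about distributions tested against $\CP_{(\chi_s)^{-1}}(f')$ for $f'\in\CC^\infty_c(X_{P_\Del})$ and using the adjoint form of Theorem \ref{thm:CFP-M}.
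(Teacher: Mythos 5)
Your proposal is correct and follows essentially the same route as the paper: two applications of Theorem \ref{thm:CFP-M} sandwiching the inversion identity \eqref{eq:M-FE} for $\RM^{\dag}_{w_{\Del}}$, with the scalar $c^{2}$ coming from the $|2|^{n(2n+1)}$ normalization in \eqref{eq:IT-eta-rho}, followed by Mellin inversion and density of $\CC^{\infty}_{c}(X_{P_{\Del}})$ in $L^{2}(X_{P_{\Del}})$. One caveat on your closing remark: resolving the "feed the theorem into itself" subtlety via Proposition \ref{pro:FTX} would be circular, since the $\CF_{X,\psi}$-stability of $\CS_{\pvs}(X_{P_{\Del}})$ is itself proved in the paper using this corollary, so you should rely on your second alternative (reading the identity against the projections of test functions in $\CC^{\infty}_{c}(X_{P_{\Del}})$), which is also what the paper implicitly does.
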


\begin{proof}
From \eqref{eq:M-FE} we have that
$$
\RM^{\dag}_{w_{\Del}}(-s,\chi^{-1},\psi^{-1})
\circ \RM^{\dag}_{w_{\Del}}(s,\chi,\psi) = \Id.
$$
By using \eqref{eq:IT-eta-rho}, we have that
$$
\bet_{\psi^{-1}}(\chi_{s}^{-1})
\RM_{w_{\Del}}(-s,\chi^{-1})
\circ \bet_{\psi}(\chi_{s})\RM_{w_{\Del}}(s,\chi)
=c^{2}\cdot \Id.
$$
For any $f\in \CC^{\infty}_{c}(X_{P_{\Del}})$, Theorem \ref{thm:CFP-M} implies that
\begin{align*}
&\bet_{\psi^{-1}}(\chi_{s}^{-1})
\RM_{w_{\Del}}(-s,\chi^{-1})
\circ \bet_{\psi}(\chi_{s})\RM_{w_{\Del}}(s,\chi)
\circ \CP_{\chi_{s}}(f)
\\
&\qquad\qquad=\bet_{\psi^{-1}}(\chi_{s}^{-1})
\RM_{w_{\Del}}(-s,\chi^{-1})
\circ \CP_{\chi_{s}^{-1}}\circ \CF_{X,\psi}(f)
\\
&\qquad\qquad=
\CP_{\chi_{s}}
\circ \CF_{X,\psi^{-1}}
\circ \CF_{X,\psi}(f).
\end{align*}
Note that $\CF_{X,\psi^{-1}}\circ \CF_{X,\psi}(f)$ is defined by Proposition \ref{pro:Fx-norm}.
Hence
$$
\CP_{\chi_{s}}
\circ \CF_{X,\psi^{-1}}
\circ \CF_{X,\psi}(f)=
c^{2}\cdot \CP_{\chi_{s}}(f).
$$
Applying the Mellin inversion formula on both sides we get
$$
\CF_{X,\psi^{-1}}\circ \CF_{X,\psi} = c^{2}\cdot \Id
$$
on $\CC^{\infty}_{c}(X_{P_{\Del}})$, which is a dense subspace of $L^{2}(X_{P_{\Del}})$. We are done.
\end{proof}

\begin{rmk}\label{FXunitary}
If we re-scale the Fourier transform $\CF_{X,\psi}$ and define it to be $c^{-1}\cdot\CF_{X,\psi}$ with $c=|2|^{-n(2n+1)}$,
it is clear that the new Fourier transform $c^{-1}\cdot\CF_{X,\psi}$ extends a unitary operator of the space $L^2(X_{P_\Del})$ and enjoys the property:
$(c^{-1}\CF_{X,\psi^{-1}})\circ(c^{-1}\CF_{X,\psi}) = \Id$. However, such a normalization is not necessary for the rest of this paper.
\end{rmk}

\subsection{Fourier transform and Schwartz space on $X_{P_\Del}$}\label{ssec-FoSs}

With the Fourier transform $\CF_{X,\psi}$ as studied in Section \ref{ssec-FoL2}, we are able to define the Schwartz space on $X_{P_\Del}(F)$ as suggested in \cite{BK02}.

\begin{dfn}\label{dfn:SSFX}
The Schwartz space on $X_{P_\Del}(F)$ is defined to be
\[
\CS_\pvs(X_{P_{\Del}})=\CC^\infty_c(X_{P_\Del})+\CF_{X,\psi}(\CC^\infty_c(X_{P_\Del})).
\]
\end{dfn}
This space of Schwartz functions on $X_{P_\Del}(F)$ enjoys the following properties, which give a more conceptual understanding of the {\sl good sections} associated to the normalized induced representation $\RI(s,\chi)$
of $\Sp_{4n}(F)$ (Definition \ref{defn:good-f}).

\begin{pro}\label{pro:FTX}
The Schwartz space $\CS_{\pvs}(X_{P_{\Del}})$ is stable under the action of the Fourier transform $\CF_{X,\psi}$, and can recover the space of good sections $\RI^\dagger(s,\chi)$ via the projection $\CP_{\chi_s}$, i.e.
the projection $\CP_{\chi_s}$ is surjective from $\CS_{\pvs}(X_{P_{\Del}})$ to $\RI^\dagger(s,\chi)$, for all characters $\chi_s$.
\end{pro}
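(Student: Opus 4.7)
The plan is to prove the two assertions separately, with Theorem~\ref{thm:CFP-M} acting as the bridge between operators on $X_{P_{\Del}}$ and those on the induced representations $\RI(s,\chi)$.

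For the surjectivity onto $\RI^\dagger(s,\chi)$, I would decompose $\CP_{\chi_s}(\CS_{\pvs}(X_{P_{\Del}})) = \CP_{\chi_s}(\CC^\infty_c(X_{P_{\Del}})) + \CP_{\chi_s}(\CF_{X,\psi}(\CC^\infty_c(X_{P_{\Del}})))$. The first summand coincides with the full space of holomorphic sections in $\RI(s,\chi)$: the defining integral of $\CP_{\chi_s}$ converges for all $s$ on compactly supported inputs (producing holomorphic sections), and conversely, given any smooth $K_\Del$-datum for a holomorphic section, one realizes it by smearing a separated-variable function $\xi(a)\varphi(k)\in\CC^\infty_c(X_{P_\Del})$ along the $M_\Del^{\ab}$-fibres. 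For the second summand, relabeling $\chi\to\chi^{-1}$, $s\to-s$ in Theorem~\ref{thm:CFP-M} gives $\CP_{\chi_s}\circ\CF_{X,\psi}=\bet_\psi(\chi_s^{-1})\RM_{w_\Del}(-s,\chi^{-1})\circ\CP_{\chi_s^{-1}}$, and feeding in \eqref{eq:IT-eta-rho} identifies this, up to the nowhere-zero scalar $\chi_s(2)^{2n}|2|^{-n(2n+1)}$, with $\RM^\dag_{w_\Del}(-s,\chi^{-1},\psi)$. Since $\bet_\psi(\chi_s^{-1})/\bet_{\psi^{-1}}(\chi_s^{-1})=\chi(-1)$ is a nonzero constant (the $\chi^{-2}$-pieces are trivial as $\chi(-1)=\pm 1$), the image is unchanged upon swapping $\psi\leftrightarrow\psi^{-1}$. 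Yamana's decomposition \cite[Proposition~3.1]{Y14},
\[
\RI^\dag(s,\chi)=\RI^{\mathrm{hol}}(s,\chi)+\RM^\dag_{w_\Del}(-s,\chi^{-1},\psi^{-1})(\RI^{\mathrm{hol}}(-s,\chi^{-1})),
\]
then matches the two summands precisely.

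For the $\CF_{X,\psi}$-stability, writing $\CS_{\pvs}(X_{P_{\Del}})=\CC^\infty_c(X_{P_{\Del}})+\CF_{X,\psi}(\CC^\infty_c(X_{P_{\Del}}))$ reduces to showing $\CF_{X,\psi}^{2}(g)\in\CS_{\pvs}(X_{P_{\Del}})$ for $g\in\CC^\infty_c(X_{P_{\Del}})$. Iterating Theorem~\ref{thm:CFP-M} yields
\[
\CP_{\chi_s}(\CF_{X,\psi}^{2}(g))=\bet_\psi(\chi_s^{-1})\bet_\psi(\chi_s)\,\RM_{w_\Del}(-s,\chi^{-1})\RM_{w_\Del}(s,\chi)\CP_{\chi_s}(g).
\]
Rewriting \eqref{eq:M-FE} through \eqref{eq:IT-eta-rho}, and observing $\chi_s(2)\chi_s^{-1}(2)=1$, the composition $\RM_{w_\Del}(-s,\chi^{-1})\RM_{w_\Del}(s,\chi)$ equals $(|2|^{2n(2n+1)}\bet_{\psi^{-1}}(\chi_s^{-1})\bet_\psi(\chi_s))^{-1}$. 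Together with $\bet_\psi(\chi_s^{-1})/\bet_{\psi^{-1}}(\chi_s^{-1})=\chi(-1)$, the scalar collapses to $c^{2}\chi(-1)=c^{2}\chi_s(-1)$, giving
\[
\CP_{\chi_s}(\CF_{X,\psi}^{2}(g))=c^{2}\chi_s(-1)\CP_{\chi_s}(g)=c^{2}\CP_{\chi_s}(\Fl_{-1}g),
\]
the final equality via the elementary covariance $\CP_{\chi_s}(\Fl_{a}g)=\chi_s(a)^{-1}\CP_{\chi_s}(g)$ and $\chi_s(-1)=\chi_s(-1)^{-1}$. Both $\CF_{X,\psi}^{2}(g)$ and $c^{2}\Fl_{-1}(g)$ belong to $L^{2}(X_{P_{\Del}})$ by Proposition~\ref{pro:Fx-norm}, and their Mellin projections agree for every $\chi_s$, so Plancherel inversion on the $F^\times$-fibres of $X_{P_{\Del}}(F)$ forces $\CF_{X,\psi}^{2}(g)=c^{2}\Fl_{-1}(g)\in\CC^\infty_c(X_{P_{\Del}})\subset\CS_{\pvs}(X_{P_{\Del}})$.

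The main technical obstacle is the careful bookkeeping of the three sources of scalar factors: the geometric normalization $\chi_s(2)^{2n}|2|^{n(2n+1)}$ distinguishing $P_\Del$ from $P_{\std}$, the epsilon-factor ratio $\veps(s,\chi,\psi)=\chi(-1)\veps(s,\chi,\psi^{-1})$ that controls $\bet_\psi/\bet_{\psi^{-1}}$, and the vanishing of the $\chi^{-2}(-1)$ powers that reduces the seemingly involved product to a single sign. A useful sanity check is the case $n=0$, where $c=1$, $X_{P_\Del}$ collapses to $\BG_m$, and the identity $\CF_{X,\psi}^{2}(g)=c^{2}\Fl_{-1}(g)$ recovers the classical Fourier inversion $\widehat{\widehat{g}}(x)=g(-x)$ of Tate's thesis; this confirms the answer and pins down the correct normalizing constants in the general computation.
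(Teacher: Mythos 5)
Your proposal is correct and follows essentially the same route as the paper's proof: surjectivity onto $\RI^\dagger(s,\chi)$ via Yamana's decomposition of good sections into a holomorphic section plus $\RM^\dag_{w_\Del}(-s,\chi^{-1},\cdot)$ of one, matched against $\CP_{\chi_s}\circ\CF_{X,\psi}$ through Theorem~\ref{thm:CFP-M}; and stability via the inversion identity $\CF_{X,\psi}^{2}=c^{2}\,\Fl_{-1}$ on $\CC^\infty_c(X_{P_\Del})$, obtained from \eqref{eq:M-FE}, \eqref{eq:IT-eta-rho}, the sign $\veps(s,\chi,\psi)=\chi(-1)\veps(s,\chi,\psi^{-1})$, and Mellin inversion. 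The only (cosmetic) difference is that the paper packages the second step as $\CF_{X,\psi^{-1}}=\iota\circ\CF_{X,\psi}$ combined with Corollary~\ref{Fx-invs}, whereas you compute $\CP_{\chi_s}(\CF_{X,\psi}^{2}(g))$ directly.
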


\begin{proof}
From \cite[Proposition~3.1.4~(c)]{Y14}, for any good section $f_{\chi_{s}}\in \RI^{\dag}(s,\chi)$, there exist holomorphic sections $f_{1,\chi_s}\in \RI(s,\chi)$ and $f_{2,\chi_s^{-1}}\in \RI(-s,\chi^{-1})$ respectively, such that
$$
f_{\chi_{s}} = f_{1,\chi_s}+\RM^{\dag}_{w_{\Del}}(-s,\chi^{-1},\psi)(f_{2,\chi_s^{-1}}).
$$
Since $\CP_{\chi_{s}}$ is surjective from $\CC^{\infty}_{c}(X_{P_{\Del}})$ to the space of holomorphic sections in $\RI(s,\chi)$, there exist $f_{1},f_{2}\in \CC^{\infty}_{c}(X_{P_{\Del}})$, such that
$f_{1,\chi_s} = \CP_{\chi_{s}}(f_{1})$ and $f_{2,\chi_s^{-1}} = \CP_{\chi^{-1}_{s}}(f_{2})$. Therefore
$$
f_{\chi_{s}} =
\CP_{\chi_{s}}
(f_{1})+\RM^{\dag}_{w_{\Del}}(-s,\chi^{-1},\psi)\circ \CP_{\chi^{-1}_{s}}(f_{2}),
$$
which, from \eqref{eq:wtF-Inter} and \eqref{eq:M-FE}, can be written as
$$
f_{\chi_{s}}=\CP_{\chi_{s}}
(f_{1})+\CP_{\chi_{s}}\circ \CF_{X,\psi}(f_{3})
=\CP_{\chi_{s}}(f_1+\CF_{X,\psi}(f_{3})),
$$
for some $f_3\in\CC^{\infty}_{c}(X_{P_{\Del}})$.
Combining with Definition \ref{dfn:SSFX} we obtain that the projection $\CP_{\chi_s}$ is surjective from $\CS_{\pvs}(X_{P_{\Del}})$ to $\RI^\dagger(s,\chi)$, for all characters $\chi_s$.

Now we are going to show that the Schwartz space $\CS_{\pvs}(X_{P_{\Del}})$ is stable under the action of the Fourier transform $\CF_{X,\psi}$. This is done by using Corollary \ref{Fx-invs}.
We first find the relation between $\CF_{X,\psi}$ and $\CF_{X,\psi^{-1}}$. By Theorem \ref{thm:CFP-M}, we have
\begin{align}\label{59-1}
\CP_{\chi^{-1}_{s}}\circ \CF_{X,\psi^{-1}}(f)(g)
=
\bet_{\psi^{-1}}(\chi_{s})
(\RM_{w_{\Del}}(s,\chi)\circ \CP_{\chi_{s}})(f)(g),
\end{align}
for any $f\in\CS_{\pvs}(X_{P_{\Del}})$.
By \cite[Lemma~1,~p.143]{BH06}, one has that $\veps(s,\chi,\psi) = \chi(-1)\veps(s,\chi,\psi^{-1})$. Hence we obtain that $\bet_{\psi^{-1}}(\chi_{s})=\chi(-1)\bet_{\psi}(\chi_{s})$. It follows that
the right-hand side of \eqref{59-1} is equal to $\chi(-1)\bet_{\psi}(\chi_{s})(\RM_{w_{\Del}}(s,\chi)\circ \CP_{\chi_{s}})(f)(g)$, which, by Theorem \ref{thm:CFP-M} again, is equal to
$\chi(-1)\CP_{\chi^{-1}_{s}}\circ \CF_{X,\psi}(f)(g)$. From the definition of the projection $\CP_{\chi_{s}}$, we must have
\begin{align}\label{59-2}
\chi(-1)
\CP_{\chi^{-1}_{s}}\circ \CF_{X,\psi}(f)(g)
=\CP_{\chi^{-1}_{s}}\circ \CF_{X,\psi}(f)(\Fs_{-1} g)
\end{align}
for any $f\in \CC^{\infty}_{c}(X_{P_{\Del}})$. Note that the homeomorphism $\iota:g\to \Fs_{-1}\cdot g$ preserves $\CC^{\infty}_{c}(X_{P_{\Del}})$.
From Definition \ref{dfn:FTX} the identity $\CF_{X,\psi}(\iota\circ f)(g) = \iota\circ \CF_{X,\psi}(f)(g)$ holds for any $f\in \CC^{\infty}_{c}(X_{P_{\Del}})$. Hence we obtain the following identity:
\begin{align*} 
\CP_{\chi^{-1}_{s}}\circ \CF_{X,\psi^{-1}}(f)(g)
=
\CP_{\chi^{-1}_{s}}\circ(\iota\circ \CF_{X,\psi}(f))(g).
\end{align*}
Therefore after applying the Mellin inversion formula, we get
\begin{align}\label{59-3}
\CF_{X,\psi^{-1}}= \iota\circ\CF_{X,\psi}.
\end{align}
By Definition \ref{dfn:SSFX}, we get that
\[
\CF_{X,\psi}(\CS_{\pvs}(X_{P_{\Del}})) =
\CF_{X,\psi}(\CC^{\infty}_{c}(X_{P_{\Del}}))+
\CF_{X,\psi}(\CF_{X,\psi}(\CC^{\infty}_{c}(X_{P_{\Del}}))).
\]
Since $\iota$ commutes with $\CF_{X,\psi}$ and preserves $\CC^\infty_c(X_{P_\Del})$, we have 
\[
\CF_{X,\psi}(\CC^{\infty}_{c}(X_{P_{\Del}}))=
\CF_{X,\psi}(\iota\circ \CC^{\infty}_{c}(X_{P_{\Del}}))
=
(\iota\circ\CF_{X,\psi})(\CC^{\infty}_{c}(X_{P_{\Del}})).
\]
Hence we obtain from \eqref{59-3} that
\[
\CF_{X,\psi}(\CC^{\infty}_{c}(X_{P_{\Del}}))=
\CF_{X,\psi^{-1}}(\CC^{\infty}_{c}(X_{P_{\Del}})),
\]
and
\[
\CF_{X,\psi}(\CF_{X,\psi}(\CC^{\infty}_{c}(X_{P_{\Del}})))
=
\CF_{X,\psi}(\CF_{X,\psi^{-1}}(\CC^{\infty}_{c}(X_{P_{\Del}})))
=
\CC^{\infty}_{c}(X_{P_{\Del}}).
\]
Therefore, we obtain the following:
\begin{align*}
\CF_{X,\psi}(\CS_{\pvs}(X_{P_{\Del}}))
=
\CF_{X,\psi}(\CC^{\infty}_{c}(X_{P_{\Del}}))+\CC^{\infty}_{c}(X_{P_{\Del}})=\CS_{\pvs}(X_{P_{\Del}}).
\end{align*}
We are done.
\end{proof}

Let $K_{\Del}$ be the maximal open compact subgroup of $\Sp_{4n}(F)$, as chosen in Theorem \ref{thm:PSR86}, such that $P_{\Del}(F)\cdot K_{\Del}=\Sp_{4n}(F)$ is the Iwasawa decomposition.
We have the following characterization of the functions in $\CS_{\pvs}(X_{P_{\Del}})$.

\begin{thm}\label{thm:asymSf}
A function $f\in \CC^{\infty}(X_{P_{\Del}})$ belongs to $\CS_{\pvs}(X_{P_{\Del}})$ if and only if $f$ is right $K_{\Del}$-finite and as a function in $a\in F^{\times}$,
$$
|a|^{2n+1}f(\Fs^{-1}_ak)
$$
belongs to $\CS^{-}_{\pvs}(F^{\times})$ for any fixed $k\in K_{\Del}$. In particular, a function $f\in \CC^{\infty}(X_{P_{\Del}})$ belongs to $\CC^\infty_c(X_{P_\Del})$ if and only if
$f$ is right $K_{\Del}$-finite and as a function in $a\in F^{\times}$, $f(\Fs^{-1}_ak)$ belongs to $\CC^\infty_c(F^\times)$ for any fixed $k\in K_{\Del}$.
\end{thm}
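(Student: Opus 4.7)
My plan is to prove the two statements of the theorem by analyzing the Mellin transform along the $M_\Del^{\ab}$-direction of $X_{P_\Del}(F)$ and matching the output against the structure theory of good sections developed in Sections \ref{sec-LT-DZI}--\ref{sec-etapvs-FT}. The forward direction is essentially bookkeeping, while the reverse direction requires a coherent reconstruction of $f$ using the $K_\Del$-finiteness.

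\textbf{Forward direction.} Writing $f = f_1 + \CF_{X,\psi}(f_2)$ with $f_1, f_2 \in \CC^\infty_c(X_{P_\Del})$, both summands are right $K_\Del$-finite because any $\phi \in \CC^\infty_c(X_{P_\Del})$ has an open right stabilizer in the compact $K_\Del$ and $\CF_{X,\psi}$ commutes with right translation by $\Sp_{4n}(F)$, a property immediate from its integral definition \eqref{FTX}. The continuity of $a \mapsto \Fs_a^{-1}k$ together with compact support of $f_1$ forces $|a|^{2n+1}f_1(\Fs_a^{-1}k) \in \CC^\infty_c(F^\times) \subset \CS^-_\pvs(F^\times)$, and Corollary \ref{FX-conv} yields the same conclusion for $\CF_{X,\psi}(f_2)$.

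\textbf{Reverse direction, first step.} Given $f$ satisfying the stated conditions, I will show $\CP_{\chi_s}(f) \in \RI^\dag(s, \chi)$ for every unitary character $\chi$. A direct computation from \eqref{proj-X-I} with the substitution $f(\Fs_a^{-1}k) = |a|^{-(2n+1)}F_k(a)$ yields
\[
\CP_{\chi_s}(f)(k) = \int_{F^\times} F_k(a)|a|^{s-\frac{2n+1}{2}}\chi(a)\ud^*a = \CM(F_k)\bigl(s - \tfrac{2n+1}{2},\chi\bigr),
\]
where $F_k(a) := |a|^{2n+1}f(\Fs_a^{-1}k) \in \CS^-_\pvs(F^\times)$. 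By Theorem \ref{thm:PWM} one may write $F_k = |\cdot|^{n+1}g_k$ with $g_k \in \CS^-_{n,\bet}(F^\times)$, and Proposition \ref{pro:gcd} places $\CM(g_k)(s',\chi)$ in the fractional ideal $L(s'+n,\chi)\prod_{i=0}^{n-1}L(2s'+2i,\chi^2)\cdot \BC[q^{\pm s'}]$. The substitution $s' = s + \tfrac12$ then shows $\CP_{\chi_s}(f)(k)/b_{2n}(s,\chi) \in \BC[q^{\pm s}]$, and Proposition \ref{pro:analyticalgoodsection} certifies $\CP_{\chi_s}(f)$ as a good section.

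\textbf{Main obstacle.} The crux is to construct a single $\tilde f \in \CS_\pvs(X_{P_\Del})$ with $\CP_{\chi_s}(\tilde f) = \CP_{\chi_s}(f)$ for \emph{all} characters $\chi_s$; the injectivity of the Mellin transform on $\CS^-_n(F^\times)$ (Proposition \ref{Igusa78-53}) will then force $|a|^{2n+1}f(\Fs_a^{-1}k) = |a|^{2n+1}\tilde f(\Fs_a^{-1}k)$ for each $k \in K_\Del$, and the Iwasawa decomposition $\Sp_{4n}(F) = P_\Del(F) K_\Del$ will give $f = \tilde f \in \CS_\pvs(X_{P_\Del})$. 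By $K_\Del$-finiteness, I fix an open compact $K' \subset K_\Del$ with $f$ right $K'$-invariant; only finitely many characters $\chi$ satisfy $\RI(s,\chi)^{K'} \neq 0$. For each such $\chi$, \cite[Proposition 3.1(c)]{Y14} decomposes $\CP_{\chi_s}(f) = h_{1,\chi_s} + \RM^\dag_{w_\Del}(-s,\chi^{-1},\psi)(h_{2,\chi_s^{-1}})$ with holomorphic $K'$-finite sections $h_1, h_2$ whose $K_\Del$-restrictions are Laurent polynomials in $q^{\pm s}$. Mellin inversion then produces $\phi_1, \phi_2 \in \CC^\infty_c(X_{P_\Del})^{K'}$ with $\CP_{\chi_s}(\phi_1) = h_{1,\chi_s}$ and $\CP_{\chi_s^{-1}}(\phi_2)$ being the appropriate scalar multiple (arising from the normalization \eqref{eq:IT-eta-rho}) of $h_{2,\chi_s^{-1}}$, for every relevant $\chi$. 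Theorem \ref{thm:CFP-M} combined with \eqref{eq:IT-eta-rho} then yields $\CP_{\chi_s}(\phi_1 + \CF_{X,\psi}(\phi_2)) = \CP_{\chi_s}(f)$, so $\tilde f := \phi_1 + \CF_{X,\psi}(\phi_2)$ is the desired element of $\CS_\pvs(X_{P_\Del})$. The second statement of the theorem is the specialization of this framework: $f(\Fs_a^{-1}k) \in \CC^\infty_c(F^\times)$ for each $k$ together with uniformity across the finite coset space $K_\Del/K'$ forces $\supp(f)$ to be compact in $X_{P_\Del}(F)$ by Iwasawa decomposition.
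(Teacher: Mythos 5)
Your forward direction matches the paper's, and the overall architecture of your reverse direction — Mellin analysis along the $M^{\ab}_\Del$-direction, matching fractional ideals, then reconstructing $f$ from its projections using $K_\Del$-finiteness and Mellin inversion — is parallel to the paper's. The route differs in one respect: the paper first applies $\FL^{-1}$ (via Proposition \ref{pro:Fg}) to convert the hypothesis into the statement that $|a|^{-(2n+1)}f(\Fs_a k)\in\CS^{+}_{\pvs}(F^\times)$ and then compares with the Radon transforms $\RR_X(\CC^\infty_c(X_{P_\Del}))$ on the ``$+$''-side, whereas you stay on the ``$-$''-side and route the reconstruction through good sections and Yamana's decomposition. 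Your explicit treatment of the simultaneity across $k\in K_\Del$ (reduction to $K'$-invariance and finitely many characters of $\wh{\CO^\times}$) is in fact more detailed than what the paper writes down.

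There is, however, a concrete gap in your first step of the reverse direction. You show that $\CP_{\chi_s}(f)(k)/b_{2n}(s,\chi)\in\BC[q^{s},q^{-s}]$ for every $k$ and then assert that Proposition \ref{pro:analyticalgoodsection} ``certifies $\CP_{\chi_s}(f)$ as a good section.'' That proposition runs in the opposite direction: it says every good section, divided by $b_{2n}(s,\chi)$, becomes a holomorphic section, i.e.\ $\RI^\dag(s,\chi)\subseteq b_{2n}(s,\chi)\cdot\RI^{\mathrm{hol}}(s,\chi)$. It does not assert the reverse containment, and the reverse containment is not obvious: by Definition \ref{defn:good-f} a good section must have $\RM^\dag_{w_{2m}}(s,\chi,\psi)(f_{\chi_s})$ holomorphic for $\Re(s_0)<0$, and for a general holomorphic $f^{\mathrm{hol}}$ the section $b_{2n}(s,\chi)f^{\mathrm{hol}}$ can acquire poles of $\RM^\dag$ at the poles of $b_{2n}(s,\chi)$ in the strip $\Re(s)\leq -\tfrac12$, since the leading term of the intertwining operator at such a point has image only a proper subspace. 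Your next step — the decomposition $\CP_{\chi_s}(f)=h_{1,\chi_s}+\RM^\dag_{w_\Del}(-s,\chi^{-1},\psi)(h_{2,\chi_s^{-1}})$ from \cite[Proposition 3.1]{Y14} — takes good-section-ness as its hypothesis, so the construction of $\tilde f$ is not justified as written. What is actually needed here is the module-level statement that the image of $a_{2n}(s,\chi)^{-1}\RM_{w_\Del}(s,\chi)$ on holomorphic sections (equivalently, of $\CP_{\chi_s^{-1}}\circ\RR_X$ on $\CC^\infty_c(X_{P_\Del})$) is the full $\BC[q^{s},q^{-s}]$-module $\RI^{\mathrm{hol}}(-s,\chi^{-1})$, not merely that its values generate the fractional ideal pointwise in $s$; this is the strengthening of Proposition \ref{pro:pole-Mw} on which the reconstruction step must rest, and you should either prove it or restructure the argument (as the paper does) so that the comparison is made directly against $\CC^\infty_c(X_{P_\Del})+\RR_X(\CC^\infty_c(X_{P_\Del}))$ rather than against the abstractly defined space of good sections.
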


\begin{proof}
We first show the \emph{only if} part. For any $f\in \CS_{\pvs}(X_{P_{\Del}})$, from Definition \ref{dfn:SSFX}, there exist  $g_{1},g_{2}\in \CC^{\infty}_{c}(X_{P_{\Del}})$ such that
$$
f = g_{1}+\CF_{X,\psi}(g_{2}).
$$
Since both $g_{1}$ and $g_{2}$ are right $K_{\Del}$-finite, Definition \ref{dfn:FTX} shows that $f$ is also right $K_{\Del}$-finite. Fix $k\in K_{\Del}$. Since $g_{1}\in \CC^{\infty}_{c}(X_{P_{\Del}})$,
as a function of $a\in F^{\times}$, $|a|^{2n+1}g_{1}(\Fs^{-1}_a k)$ lies in $\CC^{\infty}_{c}(F^{\times})$. From Corollary \ref{FX-conv},
as a function of $a\in F^{\times}$, $|a|^{2n+1}\CF_{X,\psi}(g_{2})(\Fs^{-1}_a k)$ lies in $\CS^{-}_{\pvs}(F^{\times})$. This establishes the \emph{only if} part.

In order to show the \emph{if} part, by Proposition \ref{pro:Fg} and by  applying $\FL^{-1}$ to the function $f$ in the statement, it is equivalent to show the following statement:
\begin{itemize}
\item Any  $f\in \CC^{\infty}(X_{P_{\Del}})$ lies in the space $\CC^{\infty}_{c}(X_{P_{\Del}})+\mathrm{R}_{X}(\CC^{\infty}_{c}(X_{P_{\Del}}))$ if it is right $K_{\Del}$-finite and the following function in $a\in F^{\times}$
$$
|a|^{-(2n+1)}f(\Fs_a k)
$$
lies in $\CS^{+}_{\pvs}(F^{\times})$ for any $k\in K_{\Del}$.
\end{itemize}
By the Iwasawa decomposition, the space $\CC^{\infty}_{c}(X_{P_{\Del}})$ consists of functions $f\in \CC^{\infty}(X_{P_{\Del}})$ that are right $K_{\Del}$-finite and as a function in $a\in F^{\times}$,
$$
f(\Fs_a k)
$$
lies in $\CC^{\infty}_{c}(F^{\times})$ for any $k\in K_{\Del}$. Therefore, following the notation in Proposition \ref{pro:Fg}, we only need to show that for fixed $k\in K_{\Del}$, up to unramified shift, the set of functions
$$
\FF_k=\{ F_{k}\ |\ F_{k}(a) = |a|^{-(2n+1)}\mathrm{R}_{X}(h)(\Fs_a k), h\in \CC^{\infty}_{c}(X_{P_{\Del}})\}
$$
has Mellin transform given by $\CZ^{+}_{n,\bet}(\wh{F^{\times}})$, which is introduced in Definition \ref{defin:Mellin-gcd}.
Equivalently, from Proposition \ref{pro:gcd}, we only need to show the following equality for any fixed $k\in K_{\Del}$,
\begin{align*}
\{\CM(F_{k})(s+\frac{2n+1}{2},\chi)\ |\ F_{k}\in\FF_k\}
=a_{2n}(s,\chi)\cdot\BC[q^{s},q^{-s}].
\end{align*}
Indeed, this follows directly from the proof of Proposition \ref{pro:Fg}. Note that from Proposition \ref{pro:pole-Mw} the factor $a_{2n}(s,\chi)$ can indeed be achieved. Hence we also complete the proof for the \emph{if} part. It is clear that the above proof also proves the characterization for $f\in\CC^\infty(X_{P_\Del})$ to belong to $\CC^\infty_c(X_{P_\Del})$. We are done.
\end{proof}

\section{Harmonic Analysis on $\BG_m\times\Sp_{2n}$}\label{sec-FOG}


After establishing the basic properties of the Fourier transform $\CF_{X,\psi}$ on $X_{P_\Del}$ in Section \ref{sec-etapvs-FT}, we are ready to move to the right-hand side of the Diagram in \eqref{2OpenX},
which is recalled below,  in order to develop the basic properties in harmonic analysis related to the Fourier operator $\CF_{\rho,\psi}$ on $G=\BG_m\times\Sp_{2n}$ (Definitions \ref{dfn-SDFT} and \ref{dfn-SDFTG}).
The main goal of this section is to
prove Theorem \ref{thm:L2}, which is a combination of Theorems \ref{thm:Pl-FOG} and \ref{thm:FOG-Ss}. We assume as well that $n>0$ in this section.

\subsection{Cayley transform and the Jacobian}\label{ssec-CT-J}
We recall Diagram \eqref{2OpenX} as follows:
\begin{equation}\label{twoOpenX}
\begin{matrix}
&&&&\\
&&\Sp_{4n}&&\\
&&&&\\
&&\downarrow&&\\
&&&&\\
M^{\ab}_\Del w_\Del N_\Del&\rightarrow& X_{P_\Del}&\leftarrow& M^{\ab}_\Del(\Sp_{2n}\times\{\RI_{2n}\})\cong \BG_m\times\Sp_{2n}\\
&&&&
\end{matrix}
\end{equation}
which is a reformulation of Diagram \eqref{twoOpen},
where $X_{P_\Del}:=[P_\Del,P_\Del]\bks\Sp_{4n}$ is as in Section \ref{ssec-FoL2} and the group $G=\BG_m\times\Sp_{2n}$ naturally appears on the right-hand side of the diagram.
In order to figure out the geometric relation between the two Zariski open subvarieties $M^{\ab}_\Del w_\Del N_\Del$ and $M^{\ab}_\Del(\Sp_{2n}\times\{\RI_{2n}\})$ in $X_{P_\Del}$,
we have to go back to Diagram \eqref{twoOpen} to figure out the geometric relation between the two Zariski open subvarieties $P_\Del\bks P_\Del w_\Del N_\Del$ and $P_\Del\bks P_\Del(\Sp_{2n}\times\{\RI_{2n}\})$ in the
flag variety $P_{\Del}\bs\Sp_{4n}$, which, as shown in Lemma \ref{lem:cc}, is essentially the {\sl classical Cayley transform} in this setting. For the analytic purpose, we calculate the Jacobian of $\CC^{-1}$ in
Lemma \ref{lem:jacobian}.

\begin{dfn}\label{dfn:cc}
Write a generic element $w_{\Del}n_{\Del}$ in $w_{\Del}N_{\Del}(F)$ as $w_{\Del}n_{\Del} = p_{\Del}h$ for unique $p_{\Del}\in P_{\Del}(F)$ and $h=h(w_\Del n_\Del)\in \Sp_{2n}(F)\times \{ \RI_{2n}\}$ and define
a morphism $\CC$, which is a bi-rational morphism,  via the decomposition:
\begin{align*}
\CC: w_{\Del}N_{\Del}(F)&\to \Sp_{2n}(F)\times \{\RI_{2n}\}
\\
w_{\Del}n_{\Del}&\mapsto h=h(w_\Del n_\Del).
\end{align*}
\end{dfn}

As in Section \ref{ssec-lzidm}, we take the following ordered basis of $W=V^{+}\oplus V^-$ as in \eqref{sbasis}
\begin{equation*}
\{ e_{1}, e_{2},...,e_{n}, f_{1}, f_{2},...,f_{n}, e_{n+1},e_{n+2},...,e_{2n}, -f_{n+1},-f_{n+2},...,-f_{2n}\}.
\end{equation*}
This leads to a natural embedding of $\Sp(V^+)\times \Sp(V^-)$ into $\Sp(W)$ as defined in \eqref{action-2n2n} and an explicit expression for $\Sp_{2n}(F)\times\Sp_{2n}(F)$ embedded into $\Sp_{4n}(F)$:
\begin{align*}
(
\left(
\begin{smallmatrix}
A	&	B\\
C	&	D
\end{smallmatrix}
\right)
,
\left(
\begin{smallmatrix}
M	&	N\\
P	&	Q
\end{smallmatrix}
\right)
)
\mapsto
\left(
\begin{smallmatrix}
A 	&	&	B	&	\\
	&M	&		&-N	\\
C 	&	&	D	&	\\
 	&-P	&		&Q	\\
\end{smallmatrix}
\right).
\end{align*}
In the standard Lagrangian $L_{\std}$ in $W$, we may take an ordered basis
$$
\{ e_{n+1},e_{n+2},...,e_{2n},-f_{n+1},-f_{n+2},...,-f_{2n} \}.
$$
With the ordered basis \eqref{sbasis},  $P_{\std}(F)$ consists of matrices of the form as in \eqref{siegelp}.
On the other hand, the diagonal Lagrangian subspace $L_{\Del}$ has an ordered basis
$$
\{ e_{1}+f_{1},e_{2}+f_{2},...,e_{2n}+f_{2n} \}.
$$
We write the Levi decomposition $P_{*} = M_{*}N_{*}$, where $* = \std$ or $\Del$. If we take $g_0$ and $g_0^{-1}$ as in \eqref{g0} and \eqref{g0-1}, respectively, then we have
that for any $n_{\Del}\in N_{\Del}(F)$, $p_{\Del}\in P_{\Del}(F)$,
$$
w_{\Del} = g_{0}^{-1}w_{\std}g_{0},
\
n_{\Del} = g_{0}^{-1}n_{\std}g_{0},
\ {\rm and}\ \
p_{\Del} = g_{0}^{-1}p_{\std}g_{0},
$$
where $n_{\std}\in N_{\std}, p_{\std}\in P_{\std}$, and $w_*$ is the Weyl element for $*=\Del$ or $*=\std$, which takes the parabolic $P_*$ to the opposite.
Here
\[
w_{\std}=\ppair{\begin{smallmatrix}
&&&\frac{-\RI_n}{2}\\
&&\frac{I_n}{2}&\\
&2\RI_n&&\\
-2\RI_n&&&\\
\end{smallmatrix}}.
\]

We may write the expression
$w_{\Del}n_{\Del} = p_{\Del}h$
in Definition \ref{dfn:cc} as
$$
g_{0}^{-1}w_{\std}g_{0}\cdot g_{0}^{-1}n_{\std}g_{0} = g_{0}^{-1}p_{\std}g_{0} \cdot h,
$$
which can be simplified as
\begin{align}\label{simplifiedexp}
w_{\std}n_{\std} = p_{\std}\cdot g_{0}hg_{0}^{-1}.
\end{align}
Using the embedding (\ref{embsp}), we identify $h=\left(\begin{smallmatrix}
A &	B \\
C &    D
\end{smallmatrix}
\right)\in \Sp_{2n}(F)$ with $h\in \Sp_{2n}(F)\times \{\RI_{2n} \}$ by
$$
h = \left(
\begin{smallmatrix}
A &	&	B&	 \\
   &\RI_{n}&	  &  \\
C &    & D    &	\\
    &    &	   & \RI_{n}
\end{smallmatrix}
\right).
$$
It follows from a direct calculation that
$$
g_{0}\left(
\begin{smallmatrix}
A &	&	B&	 \\
   &\RI_{n}&	  &  \\
C &    & D    &	\\
    &    &	   & \RI_{n}
\end{smallmatrix}
\right)
g_{0}^{-1}
=
\left(
\begin{smallmatrix}
\frac{1}{2}(D+\RI_{n}) & -\frac{1}{2}C\ & -\frac{1}{4}C      & \frac{1}{4}(\RI_{n}-D)  \\
-\frac{1}{2}B     &    \frac{1}{2}(A+\RI_{n}) & \frac{1}{4}(A-\RI_{n}) & \frac{1}{4}B \\
-B  & A-\RI_{n} &	\frac{1}{2}(A+\RI_{n}) & \frac{1}{2}B \\
-D+\RI_{n} & C  &       \frac{1}{2}C      & \frac{1}{2}(D+\RI_{n})
\end{smallmatrix}
\right).
$$
From the expression (\ref{simplifiedexp}), we have
$
p_{\std}^{-1} = g_{0}hg_{0}^{-1}\cdot n_{\std}^{-1}\cdot w_{\std}^{-1}.
$
Here we set
$$
n_{\std} =
\left(
\begin{smallmatrix}
\RI_{n} &  &  \alp & \bet \\
        & \RI_{n} & {}^t\!\bet & \gam\\
        &         & \RI_{n} & \\
        &	  &          & \RI_{n}
\end{smallmatrix}
\right)\in N_{\std}(F)
$$
with
$
\left(
\begin{smallmatrix}
  \alp & \bet \\
 {}^t\!\bet & \gam\\
\end{smallmatrix}
\right)
\in \Sym^{2}(F^{2n}),
$
i.e.  $\alp = {}^t\!\alp, \gam= {}^t\!\gam$.
Since 
$
n_{\std}^{-1}
$
is equal to
$
\left(
\begin{smallmatrix}
\RI_{n} &  &  -\alp & -\bet \\
        & \RI_{n} & -{}^t\!\bet & -\gam\\
        &         & \RI_{n} & \\
        &	  &          & \RI_{n}
\end{smallmatrix}
\right),
$
we have
$$
n_{\std}^{-1}w_{\std}^{-1}=
\left(
\begin{smallmatrix}
 2 \beta & -2 \alpha & 0 & \frac{-\RI_n}{2} \\
 2 \gamma & -2 {}^t\!\beta & \frac{\RI_n}{2} & 0 \\
 0 & 2\RI_n & 0 & 0 \\
 -2\RI_n & 0 & 0 & 0 \\
\end{smallmatrix}
\right).
$$
The element  $p_{\std}^{-1} = g_{0}hg_{0}^{-1}\cdot n_{\std}^{-1}\cdot w_{\std}^{-1}$
can be written explicitly as the following product:
$$
\left(
\begin{smallmatrix}
\frac{1}{2}(D+\RI_{n}) & -\frac{1}{2}C\ & -\frac{1}{4}C      & \frac{1}{4}(\RI_{n}-D)  \\
-\frac{1}{2}B     &    \frac{1}{2}(A+\RI_{n}) & \frac{1}{4}(A-\RI_{n}) & \frac{1}{4}B \\
-B  & A-\RI_{n} &	\frac{1}{2}(A+\RI_{n}) & \frac{1}{2}B \\
-D+\RI_{n} & C  &       \frac{1}{2}C      & \frac{1}{2}(D+\RI_{n})
\end{smallmatrix}
\right)
\left(
\begin{smallmatrix}
2 \beta & -2 \alpha & 0 & \frac{-\RI_n}{2} \\
 2 \gamma & -2 {}^t\!\beta & \frac{\RI_n}{2} & 0 \\
 0 & 2\RI_n & 0 & 0 \\
 -2\RI_n & 0 & 0 & 0 \\
\end{smallmatrix}
\right),
$$
which is equal to
$$
\left(
\begin{smallmatrix}
(D+\RI_n)\beta-C\gamma-\frac{\RI_n-D}{2}&-(D+\RI_n)\alpha+C\cdot{}^t\!\beta-\frac{C}{2}&-\frac{C}{4}&-\frac{D+\RI_n}{4}\\
-B\beta+(A+\RI_n)\gamma-\frac{B}{2}&B\alpha-(A+\RI_n)\cdot{}^t\!\beta+\frac{A-\RI_n}{2}&\frac{A+\RI_n}{4}&\frac{B}{4}\\
-2B\beta+2(A-\RI_n)\gamma-B&2B\alpha-2(A-\RI_n)\cdot {}^t\!\beta+(A+\RI_n)&\frac{A-\RI_n}{2}&\frac{B}{2}\\
2(-D+\RI_n)\beta+2C\gamma-(D+\RI_n)&2(D-\RI_n)\alpha-2C\cdot {}^t\!\beta+C&\frac{C}{2}&\frac{D-\RI_n}{2}\\	
\end{smallmatrix}
\right).
$$
By definition, this last expression is equal to $p_{\std}^{-1}$.
Hence the lower-left $2n\times 2n$ block should be zero, which means that
\[
0_{2n}
=
\left(
\begin{smallmatrix}
-2B\beta+2(A-\RI_n)\gamma-B&2B\alpha-2(A-\RI_n)\cdot {}^t\!\beta+(A+\RI_n)	\\
2(-D+\RI_n)\beta+2C\gamma-(D+\RI_n)&2(D-\RI_n)\alpha-2C\cdot {}^t\!\beta+C
\end{smallmatrix}
\right).	
\]
Note that the lower-left $2n\times 2n$ block equals
\begin{align*}
&
2\left(
\begin{smallmatrix}
B & A-\RI_n \\
D-\RI_{n} & C
\end{smallmatrix}
\right)
\left(
\begin{smallmatrix}
\alp & -\bet \\
-{}^t\!\bet & \gam
\end{smallmatrix}
\right)
+
\left(
\begin{smallmatrix}
A+\RI_n & -B  \\
C    &   -(D+\RI_n)
\end{smallmatrix}
\right)
\\
=&\left(
\begin{smallmatrix}
A+\RI_{n} & B  \\
C    &  D+\RI_{n}
\end{smallmatrix}
\right)
+
2\left(
\begin{smallmatrix}
B & \RI_{n}-A \\
D-\RI_{n} & -C
\end{smallmatrix}
\right)
\left(
\begin{smallmatrix}
\alp & \bet \\
{}^t\!\bet & \gam
\end{smallmatrix}
\right)
\\
=&
\left(
\begin{smallmatrix}
\RI_{n} &  \\
   & \RI_{n}
\end{smallmatrix}
\right)
+
\left(
\begin{smallmatrix}
A & B \\
C  & D
\end{smallmatrix}
\right)
+
2(
\left(
\begin{smallmatrix}
	& \RI_{n} \\
-\RI_{n}&
\end{smallmatrix}
\right)
+
\left(
\begin{smallmatrix}
B & -A \\
D & -C
\end{smallmatrix}
\right)
)
\left(
\begin{smallmatrix}
\alp & \bet \\
{}^t\!\bet & \gam
\end{smallmatrix}
\right)\\
=&
(\RI_{2n}+h)+2(\RJ_n-h\RJ_n)\left(
\begin{smallmatrix}
\alp & \bet \\
{}^t\!\bet & \gam
\end{smallmatrix}
\right).
\end{align*}
Hence we obtain the following relation:
\begin{equation}\label{relation}
\left(
\begin{smallmatrix}
\alp & \bet \\
{}^t\! \bet & \gam
\end{smallmatrix}
\right)
= \frac{1}{2}\RJ_{n}(\RI_{2n}-h)^{-1}(\RI_{2n}+h).
\end{equation}
From this relation,  we write $h$ in terms of
$X = \left(
\begin{smallmatrix}
\alp & \bet \\
{}^t\!\bet & \gam
\end{smallmatrix}
\right) \in \Sym^{2}(F^{2n})$.
In fact, from \eqref{relation}, we have
$$
(\RI_{2n}-h)(-2\RJ_nX)=\RI_{2n}+h
$$
and
$$
h(2\RJ_nX-\RI_{2n})=\RI_{2n}+2X\RJ_n.
$$
Hence we obtain
$$
h=  (2\RJ_{n}X+\RI_{2n})(2\RJ_{n}X-\RI_{2n})^{-1}.
$$
Moreover, using the fact that ${}^t\!\RJ_{n} = -\RJ_{n}$, we find that $\RJ_{n}X\in \mathfrak{sp}_{2n}(F)$, the Lie algebra of $\Sp_{2n}(F)$. This means that
$$
(X\RJ_{n})\RJ_{n}+\RJ_{n}{}^t\!(X\RJ_{n}) = 0.
$$
The discussion above proves the following lemma.

\begin{lem}\label{lem:cc}
The morphism $\CC$ defined in Definition \ref{dfn:cc} is given explicitly as follows.
For any $n_\Del$ in $N_\Del(F)$, we write $n_{\Del}=g_{0}^{-1}n_{\std}g_{0}$ and
$
n_{\std} =
\left(
\begin{smallmatrix}
\RI_{2n}	&	X	\\
		&	\RI_{2n}
\end{smallmatrix}
\right)\in N_{\std}(F),
$
with $X=X(n_\Del)\in \Sym^{2}(F^{2n})$ depending on $n_\Del$. Then $\CC(w_\Del n_\Del)=h=(h,\RI_{2n})\in\Sp_{2n}(F)\times\{\RI_{2n}\}$ is given by taking
$$
h= (2\RJ_{n}X+\RI_{2n})(2\RJ_{n}X-\RI_{2n})^{-1}.
$$
Note that the morphism is defined over an open dense subset.
\end{lem}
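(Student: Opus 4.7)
The plan is to unwind the Bruhat-type decomposition $w_\Del n_\Del = p_\Del h$ by conjugating the whole identity back to the standard Siegel setting, where the matrix arithmetic is tractable. First I would use $w_\Del = g_0^{-1} w_{\std} g_0$, $n_\Del = g_0^{-1} n_{\std} g_0$, and $p_\Del = g_0^{-1} p_{\std} g_0$ to rewrite the equation as $w_{\std} n_{\std} = p_{\std} \cdot (g_0 h g_0^{-1})$, so that the morphism $\CC$ is characterized by
\begin{equation*}
p_{\std}^{-1} = (g_0 h g_0^{-1})\, n_{\std}^{-1}\, w_{\std}^{-1}.
\end{equation*}

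Next I would plug in the explicit block forms of $g_0$, $g_0^{-1}$, $w_{\std}$, $n_{\std}$, and $h = \left(\begin{smallmatrix}A & B\\ C & D\end{smallmatrix}\right)$, and multiply out. Only the constraint that $p_{\std} \in P_{\std}(F)$ is used: since $p_{\std}^{-1}$ is then block upper-triangular of the form $\left(\begin{smallmatrix} * & *\\ 0 & *\end{smallmatrix}\right)$, the lower-left $2n \times 2n$ block of the product must vanish. After collecting terms in the entries of $h$ and of $X = \left(\begin{smallmatrix}\alpha & \beta\\ {}^t\!\beta & \gamma\end{smallmatrix}\right)$, that lower-left block simplifies to
\begin{equation*}
(\RI_{2n} + h) + 2(\RJ_n - h\RJ_n) X,
\end{equation*}
and setting it to zero gives the key relation $(\RI_{2n} - h)(-2\RJ_n X) = \RI_{2n} + h$.

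Finally, I would solve this relation rationally for $h$ in terms of $X$. Rearranging yields $h(2\RJ_n X - \RI_{2n}) = \RI_{2n} + 2\RJ_n X$, whence
\begin{equation*}
h = (2\RJ_n X + \RI_{2n})(2\RJ_n X - \RI_{2n})^{-1}
\end{equation*}
on the open dense subset where $2\RJ_n X - \RI_{2n}$ is invertible. Using that ${}^t\!\RJ_n = -\RJ_n$ and $X = {}^t\!X$, so that $\RJ_n X$ lies in $\Fsp_{2n}$, a short direct verification confirms that the Cayley-type expression on the right genuinely belongs to $\Sp_{2n}(F)$, which is precisely what is required for $h \in \Sp_{2n}(F) \times \{\RI_{2n}\}$.

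The main obstacle is really bookkeeping rather than conceptual: keeping the four $n \times n$ blocks of the three factors straight and collecting all terms with the correct signs so that the vanishing of the lower-left block reduces cleanly to the single matrix identity above. Once that reduction is made, inverting the linear-fractional relation to express $h$ in terms of $X$ is immediate.
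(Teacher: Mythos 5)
Your proposal is correct and follows essentially the same route as the paper's proof: conjugating by $g_0$ to reduce to the standard Siegel setting, writing $p_{\std}^{-1}=(g_0hg_0^{-1})\,n_{\std}^{-1}\,w_{\std}^{-1}$, forcing the lower-left $2n\times 2n$ block to vanish, and arriving at the same intermediate identity $(\RI_{2n}+h)+2(\RJ_n-h\RJ_n)X=0$ before inverting the linear-fractional relation. The concluding remark that $\RJ_nX\in\Fsp_{2n}(F)$ so that the Cayley-type expression lands in $\Sp_{2n}(F)$ matches the paper's observation as well.
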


Note that for $X\in \Sym^{2}(F^{2n})$, $Y=\RJ_nX$ belongs to the Lie algebra $\mathfrak{sp}_{2n}(F)$, and hence the map
$$
Y\in\mathfrak{sp}_{2n}(F) \mapsto h = (2Y+\RI_{2n})(2Y-\RI_{2n})^{-1} \in \Sp_{2n}(F)
$$
is the classical Cayley transform.

\begin{rmk}\label{gmfactor}
Using the notation above we can also describe the matrices $p_{\std}^{-1}$ and the Levi part of $p_{\std}$ as follows.
By previous calculations,
\begin{align*}
p_{\std}^{-1}
=
\left(
\begin{smallmatrix}
(D+\RI_n)\beta-C\gamma-\frac{\RI_n-D}{2}&-(D+\RI_n)\alpha+C\cdot{}^t\!\beta-\frac{C}{2}&-\frac{C}{4}&-\frac{D+\RI_n}{4}\\
-B\beta+(A+\RI_n)\gamma-\frac{B}{2}&B\alpha-(A+\RI_n)\cdot{}^t\!\beta+\frac{A-\RI_n}{2}&\frac{A+\RI_n}{4}&\frac{B}{4}\\
0&0&\frac{A-\RI_n}{2}&\frac{B}{2}\\
0&0&\frac{C}{2}&\frac{D-\RI_n}{2}\\	
\end{smallmatrix}
\right).
\end{align*}
Here we notice that the lower-right $2n\times 2n$ block can be written as
$$
\left(
\begin{smallmatrix}
\frac{A-\RI_n}{2}&\frac{B}{2}\\
\frac{C}{2}&\frac{D-\RI_n}{2}\\
\end{smallmatrix}
\right)
=\frac{1}{2}(h-\RI_{2n}).
$$
By \eqref{siegelp}, the Levi part of $p_{\std}$ must have the following expression
\begin{equation}\label{levi-std}
\left(
\begin{smallmatrix}
 \frac{1}{2}({}^t\!h-\RI_{2n})& 0  \\
	0& 2(h-\RI_{2n})^{-1}
\end{smallmatrix}
\right).
\end{equation}
\end{rmk}

Next we are going to calculate the Jacobian of the morphism $\CC^{-1}$.
The $F$-birational morphism $\CC$ from $w_\Del N_\Del$ to $\Sp_{2n}\times\{\RI_{2n}\}$ as defined in Definition \ref{dfn:cc} has been explicated in Lemma \ref{lem:cc}. We have to discuss the relations of various measures
involved in the calculation. From the fixed Haar measure ${\rm d} g$ on $\Sp_{4n}(F)$, we have a unique natural measure on $P_\Del(F)\bs\Sp_{4n}(F)$, which may still be denoted by ${\rm d} g$. In Diagram \eqref{twoOpen},
the measure ${\rm d} g$ induces a measure ${\rm d}_\Del(w_\Del n_\Del)$ on $w_\Del N_\Del(F)$ and a right-invariant measure ${\rm d}_\Del h$ on $\Sp_{2n}(F)$.
As fixed at the beginning of this paper, the Haar measure ${\rm d} h$ on $\Sp_{2n}(F)$ is normalized so that $\vol(\Sp_{2n}(\CO))=1$. Hence ${\rm d}_\Del h$ and ${\rm d} h$ are different by a constant.
It is clear that the measure ${\rm d}_\Del(w_\Del n_\Del)$ on $w_\Del N_\Del(F)$ can be induced from the measure ${\rm d} n_\Del$ on $N_\Del(F)$. Via the $F$-birational morphism $\CC$, we have
\begin{align}\label{FF-8}
\ud n_\Del=\Fj_{\CC^{-1}}(h)\ud h,
\end{align}
where $\Fj_{\CC^{-1}}$ denotes the Jacobian of $\CC^{-1}$. As we explained right below Lemma \ref{lem:cc}, the $F$-birational morphism $\CC$ is essentially the classical Cayley transform. The following lemma computes $\Fj_{\CC^{-1}}(h)$ explicitly.

\begin{lem}\label{lem:jacobian}
The Jacobian of the morphism $\CC^{-1}$ is given by
\begin{align}\label{FF-9}
\Fj_{\CC^{-1}}(h)=c_0|\det(h-\RI_{2n})|^{-(2n+1)}
\end{align}
where $c_0 = \prod_{i=1}^{n}\frac{1}{\zet_{F}(2i)}$ and $\zet_{F}(s)$ is the local Dedekind zeta function of $F$.
\end{lem}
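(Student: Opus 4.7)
The plan is to reduce the Jacobian computation to the classical Cayley transform together with a determinant identity on symmetric matrices. By Lemma~\ref{lem:cc}, after the identification $N_{\Del}(F)\cong\Sym^{2}(F^{2n})$ via $n_{\Del}\leftrightarrow X$, the morphism $\CC$ is given by $\CC(n_{\Del})=h=(2\RJ_{n}X+\RI_{2n})(2\RJ_{n}X-\RI_{2n})^{-1}$. Setting $Y=\RJ_{n}X$, the assignment $X\mapsto Y$ is a linear isomorphism $\Sym^{2}(F^{2n})\xrightarrow{\sim}\Fsp_{2n}(F)$ sending $\Sym^{2}(\CO)$ bijectively onto $\Fsp_{2n}(\CO)$ (since $\RJ_{n}\in\GL_{2n}(\CO)$), hence measure-preserving for the standard normalizations. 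It therefore suffices to determine the Jacobian of the Cayley-type map $Y\mapsto h$, comparing $\ud Y$ on $\Fsp_{2n}(F)$ with the Haar measure $\ud h$ on $\Sp_{2n}(F)$.

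First I would differentiate $h=(2Y+\RI_{2n})(2Y-\RI_{2n})^{-1}$. Using the identity $(h-\RI_{2n})(2Y-\RI_{2n})=2\RI_{2n}$, which gives $(2Y-\RI_{2n})^{-1}=\tfrac{1}{2}(h-\RI_{2n})$, a direct calculation yields
\[
\ud h=-(h-\RI_{2n})\,\ud Y\,(h-\RI_{2n}).
\]
Identifying $T_{h}\Sp_{2n}\cong\Fsp_{2n}$ by left translation $\dot h\mapsto h^{-1}\dot h$, the differential of $\CC$ becomes the linear endomorphism $T_{h}\colon v\mapsto -h^{-1}(h-\RI_{2n})\,v\,(h-\RI_{2n})$ of $\Fsp_{2n}(F)$, whose image lies in $\Fsp_{2n}$ automatically by construction.

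The core step is to compute $|\det T_{h}|$. Transporting $T_{h}$ to $\Sym^{2}(F^{2n})$ via $v=\RJ_{n}x$, and using $\RJ_{n}^{2}=-\RI_{2n}$ together with the standard identity ${}^{t}h\,\RJ_{n}=\RJ_{n}h^{-1}$ for $h\in\Sp_{2n}$, one verifies $\RJ_{n}^{-1}\bigl(-h^{-1}(h-\RI_{2n})\bigr)\RJ_{n}={}^{t}(h-\RI_{2n})$, so that the induced map on $\Sym^{2}(F^{2n})$ is $x\mapsto{}^{t}(h-\RI_{2n})\,x\,(h-\RI_{2n})$. A standard linear-algebra fact---verified directly for diagonal $A=\diag(a_{1},\ldots,a_{m})$ (each basis element $e_{ii}$ is scaled by $a_{i}^{2}$ and each $e_{ij}+e_{ji}$ by $a_{i}a_{j}$) and extended by Zariski density---asserts that on $\Sym^{2}(F^{m})$, the map $x\mapsto{}^{t}A\,x\,A$ has determinant $\det(A)^{m+1}$. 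Applying this with $m=2n$, $A=h-\RI_{2n}$ gives $|\det T_{h}|=|\det(h-\RI_{2n})|^{2n+1}$. Equivalently, if $\ud h_{L}$ denotes the left-invariant Haar measure on $\Sp_{2n}(F)$ induced from $\ud Y$, then $\ud Y=|\det(h-\RI_{2n})|^{-(2n+1)}\,\ud h_{L}$.

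It remains to identify the constant $c_{0}$. Since $\Sp_{2n}$ is unimodular, $\ud h_{L}=c_{0}\cdot \ud h$ with $c_{0}=\vol(\Sp_{2n}(\CO),\ud h_{L})$. The classical point count $|\Sp_{2n}(\BF_{q})|=q^{n^{2}}\prod_{i=1}^{n}(q^{2i}-1)$, together with $\dim\Sp_{2n}=n(2n+1)$, yields the standard formula
\[
c_{0}=\vol(\Sp_{2n}(\CO),\ud h_{L})=\frac{|\Sp_{2n}(\BF_{q})|}{q^{\dim\Sp_{2n}}}=\prod_{i=1}^{n}(1-q^{-2i})=\prod_{i=1}^{n}\zet_{F}(2i)^{-1}.
\]
Combining with $\ud n_{\Del}=\ud X=\ud Y$ under the measure-preserving identifications above gives $\ud n_{\Del}=c_{0}|\det(h-\RI_{2n})|^{-(2n+1)}\,\ud h$, as claimed. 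The main obstacle is the determinant identity on $\Sym^{2}(F^{2n})$, but the diagonalization plus Zariski-density argument handles it cleanly; the rest is bookkeeping of normalizations.
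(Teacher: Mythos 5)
Your proof is correct and follows the same overall strategy as the paper: differentiate the Cayley transform to get $\ud h=-(h-\RI_{2n})\,\ud Y\,(h-\RI_{2n})$, extract the factor $|\det(h-\RI_{2n})|^{2n+1}$ from the determinant of this linear map, and pin down the constant via Weil's point-count formula $\vol(\Sp_{2n}(\CO))=|\Sp_{2n}(\BF_q)|/q^{\dim\Sp_{2n}}=\prod_{i=1}^n\zeta_F(2i)^{-1}$. The one place where you genuinely diverge is the computation of the determinant: the paper works directly with top-degree differential forms in the ambient matrix space and decomposes $\RI_{2n}-h=PAQ$ into elementary and diagonal factors, whereas you left-translate to realize the differential as an honest endomorphism $T_h$ of $\Fsp_{2n}(F)$, conjugate by $\RJ_n$ (using ${}^t h\,\RJ_n=\RJ_n h^{-1}$) to turn it into $x\mapsto{}^t(h-\RI_{2n})\,x\,(h-\RI_{2n})$ on $\Sym^2(F^{2n})$, and invoke the identity $\det\bigl(x\mapsto{}^tAxA\bigr)=\det(A)^{m+1}$. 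Your route is arguably tidier at this step, since the maps $v\mapsto PvQ$ for general elementary $P,Q$ do not preserve $\Fsp_{2n}$, so the paper's wedge manipulation has to be read in the ambient space, while your endomorphism of $\Sym^2(F^{2n})$ is intrinsic; both yield the same exponent $2n+1$ and the same constant $c_0$.
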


\begin{proof}
By Lemma \ref{lem:cc}, we only need to compute the Jacobian of the Cayley transform
$$
Y\in \Fs\Fp_{2n}(F)\to h=(2Y+\RI_{2n})(2Y-\RI_{2n})^{-1}\in \Sp_{2n}(F).
$$
Let $\RD h$ and $\RD Y$ be the differentials of $h\in \Sp_{2n}(F)$ and $Y\in \Fs\Fp_{2n}(F)$, respectively.

The differential $\RD Y$ admits the following description. The affine space $\Fs\Fp_{2n}$ has a natural smooth scheme structure over $\CO$, whose $F$-points is the vector space $\Fs\Fp_{2n}(F)$. The sheaf of   K\"ahler differentials $\Ome_{\Fs\Fp_{2n}/\CO}$ is free of rank $\dim \Fs\Fp_{2n} = n(2n+1)$. Let $\RD Y_{\CO}$ be a constant section associated to a unit element in $\Gam(\Fs\Fp_{2n},\Ome_{\Fs\Fp_{2n}/\CO})$ which is the space of global sections of $\Ome_{\Fs\Fp_{2n}/\CO}$. Up to a unit element in $\Gam(\Fs\Fp_{2n},\CO_{\Fs\Fp_{2n}})$ where $\CO_{\Fs\Fp_{2n}}$ is the structure sheaf of $\Fs\Fp_{2n}$ over $\CO$, its image in 
\[
\Gam(\Fs\Fp_{2n},\Ome_{\Fs\Fp_{2n}/F}) \simeq \Gam(\Fs\Fp_{2n},\Ome_{\Fs\Fp_{2n}/\CO}\times_{\Spec(\CO)}\Spec(F))
\] 
is equal to $\RD Y$.

The differential $\RD h$ admits the following description. The $F$-variety $\Sp_{2n}$ has a natural smooth scheme structure over $\CO$, whose $F$-points is $\Sp_{2n}(F)$. As a scheme over $\CO$, $\Sp_{2n}$ embeds into $\RM_{2n}$ in a natural way. We let $i:\Sp_{2n}\hookrightarrow \RM_{2n}$ be the closed embedding. Let $\RD X^{2n}_{\CO}$ be a constant section associated to a unit element in $\Gam(\RM_{2n},\Ome_{\RM_{2n}/\CO})$. Then its image under the natural pull-back morphism 
\[
i^{*}\Ome_{\RM_{2n}/\CO}\to \Ome_{\Sp_{2n}/\CO}
\]
is a unit element in $\Gam(\Sp_{2n},\Ome_{\Sp_{2n}/\CO})$. Up to a unit element in $\Gam(\Sp_{2n},\CO_{\Sp_{2n}})$ where $\CO_{\Sp_{2n}}$ is the structure sheaf of $\Sp_{2n}$ over $\CO$, the image in 
\[
\Gam(\Sp_{2n},\Ome_{\Sp_{2n}/F})\simeq \Gam(\Sp_{2n},\Ome_{\Sp_{2n}/\CO}\times_{\Spec(\CO)}\Spec(F))
\] 
is equal to $\RD Y$.

After taking differential on both sides of the equation 
\[
h(2Y-\RI_{2n}) = 2Y+\RI_{2n},
\]
we obtain 
$$
\RD h(2Y-\RI_{2n}) +2h\RD Y = 2\RD Y.
$$
Therefore, we have 
$$
\RD h(Y-\frac{\RI_{2n}}{2}) = (\RI_{2n}-h)\RD Y.
$$
By explicit computation, we obtain 
$$
Y-\frac{\RI_{2n}}{2} = (h-\RI_{2n})^{-1}.
$$
Hence
\begin{equation}\label{eq:dif-rln}
\RD h = -(\RI_{2n}-h)\RD Y(\RI_{2n}-h).
\end{equation}
After applying the morphism
$
\bigwedge^{\max}
$
to \eqref{eq:dif-rln}, the left-hand side of \eqref{eq:dif-rln} gives a top degree differential form ${\rm d}^{\CC}h$ on $\Sp_{2n}(F)$. Similarly, the right-hand side of \eqref{eq:dif-rln} gives a top degree differential form ${\rm d}^{\CC}Y$ on $\Fs\Fp_{2n}(F)$. Therefore we get the equality ${\rm d}^{\CC}h = \ud^{\CC}Y$.

By \cite{W82}, the differential forms induce  measures on $\Sp_{2n}(F)$ and $\Fs\Fp_{2n}(F)$, which we still denote as ${\rm d}^{\CC}h$ and ${\rm d}^{\CC}Y$. As we have already fixed the Haar measure ${\rm d} h$ (resp. ${\rm d} Y$) on $\Sp_{2n}(F)$ (resp. $\Fs\Fp_{2n}(F)$) by $\vol(\Sp_{2n}(\CO)) = 1$ (resp. $\vol(\Fs\Fp_{2n}(\CO)) =1$), we only need to find the difference between ${\rm d}^{\CC}h$ (resp. ${\rm d}^{\CC}Y$) and ${\rm d} h$ (resp. ${\rm d} Y$).

We first show the following statement.
\begin{itemize}
\item As measures on $\Fs\Fp_{2n}(F)$,
${\rm d}^{\CC}Y = |\det(h-\RI_{2n})|^{2n+1}\ud Y$.
\end{itemize}
By definition, the volume form $\bigwedge^{\max}\RD Y$ is obtained from a constant section $\bigwedge^{\max}\RD Y_{\CO}$ associated to a unit element in $\Gam(\Fs\Fp_{2n},\bigwedge^{\max}\Ome_{\Fs\Fp_{2n}/\CO})$, which in particular is a gauge form in the sense of \cite[2.2.2]{W82}. By \cite[Theorem~2.2.5]{W82}, $\vol(\Fs\Fp_{2n}(\CO), \bigwedge^{\max}\RD Y) = 1$. Therefore ${\rm d} Y = \bigwedge^{\max}\RD Y$.

For the matrix $\RI_{2n}-h$, there exist elementary matrices $P,Q$ and a diagonal matrix $A$, such that $(\RI_{2n}-h) = PAQ$. By direct computation,
$$
\bigwedge^{\max}P\RD YQ = \pm \bigwedge^{\max}\RD Y = \pm \ud Y,
$$
and
$$
\bigwedge^{\max}A\RD YA = \det(A)^{2n+1}\bigwedge^{\max}\RD Y = \det(A)^{2n+1}\ud Y.
$$
Therefore
$$
\ud^{\CC}Y=\bigwedge^{\max}(-(\RI_{2n}-h)\RD Y(\RI_{2n}-h))=\pm \det(\RI-h)^{2n+1}\ud Y.
$$
In particular, as measures on $\Fs\Fp_{2n}(F)$, ${\rm d}^{\CC}Y = |\det(h-\RI_{2n})|^{2n+1}\ud Y$.

Then we establish the following statement.

\begin{itemize}
\item As measures on $\Sp_{2n}(F)$, ${\rm d}^{\CC}h =c_{0} \ud h$ where $c_{0}= \prod_{i=1}^{n}\frac{1}{\zet_{F}(2i)}$.
\end{itemize}
By definition, $\RD h$ is obtained from a constant section associated to a unit element $\RD h_{\CO}$ in $\Gam(\Sp_{2n},\Ome_{\Sp_{2n}/\CO})$. Therefore ${\rm d}^{\CC}h = \bigwedge^{\max}\RD h$, which lies in $\Gam(\Sp_{2n},\bigwedge^{\max}\Ome_{\Sp_{2n}/F})$, is obtained from the constant section associated to the unit element $\bigwedge^{\max}\RD h_{\CO}$ in $\Gam(\Sp_{2n},\bigwedge^{\max}\Ome_{\Sp_{2n}/\CO})$. In particular ${\rm d}^{\CC}h$ is a gauge form in the sense of \cite[2.2.2]{W82}, and the measure induced by ${\rm d}^{\CC}h$ is a Haar measure of $\Sp_{2n}(F)$. Hence as measures on $\Sp_{2n}(F)$, ${\rm d}^{\CC}h$ and ${\rm d} h$ differ by a nonzero constant, which can be calculated through the volume of $\Sp_{2n}(\CO)$ using ${\rm d}^{\CC}h$. By \cite[Theorem~2.2.5]{W82}, we have 
\[
\vol(\Sp_{2n}(\CO),\ud^{\CC}h) = \frac{|\Sp_{2n}(\BF_{q})|}{q^{\dim \Sp_{2n}}}= \frac{q^{n^{2}}\prod_{i=1}^{n}(q^{2i}-1)}{q^{n(2n+1)}}= \frac{1}{\prod_{i=1}^{n}\zet_{F}(2i)}, 
\]
where $\BF_{q}=\CO/\CP$. Therefore ${\rm d}^{\CC}h = c_{0}\ud h$.
This proves the lemma.
\end{proof}

\subsection{Fourier operator and Schwartz space on $G$}\label{ssec-FoSsG}

Via the embedding $G\to X_{P_\Del}$ in Diagram \eqref{twoOpenX},
$G$ is open dense in $X_{P_\Del}$. More precisely, for any $(a,h)\in G(F)$, the image of $(a,h)$ through that embedding is $\Fs_a^{-1}\cdot(h,\RI_{2n})$ in $X_{P_\Del}(F)$, where
$\Fs_a$ is the section associated to the abelianization morphism $\Fa$ as defined in \eqref{section:Fs} and \eqref{M-Mab}, respectively. For any $f\in\CS_{\pvs}(X_{P_\Del})$, define
\begin{align}\label{f-phi}
\phi(a,h)=\phi_f(a,h):=f(\Fs_a^{-1}\cdot(h,\RI))|a|^{\frac{2n+1}{2}}
\end{align}
with $\RI=\RI_{2n}$. It is clear that any smooth function $f$ on $X_{P_\Del}(F)$ is completely determined by its values $f(\Fs_a^{-1}(h,\RI))$ with all $(a,h)\in G(F)$.

\begin{dfn}[$\rho$-Schwartz Space on $G$]\label{dfn:rho-SsG}
The $\rho$-Schwartz space $\CS_{\rho}(G)$ on $G(F)$ is defined to be
$$
\CS_\rho(G):=
\{\phi_f\ |\ \forall\ f\in\CS_\pvs(X_{P_\Del})\},
$$
where $\phi_f(a,h)$ is defined in \eqref{f-phi}.
\end{dfn}

With the help of the generalized function $\eta_{\pvs,\psi}(x)$ on $\BG_m(F)=F^\times$ as in Theorem \ref{thm:eta}, we make the following definitions.

\begin{dfn}[Function $\Phi_{\rho,\psi}$ on $G$]\label{dfn-SDFT}
Let $\eta_{\pvs,\psi}(x)$ be the generalized function on $F^\times$ as defined in \eqref{def:eta-rho-psi}. Define a function $\Phi_{\rho,\psi}$ on $G(F)=F^\times\times\Sp_{2n}(F)$ to be
\begin{align}\label{def:SDG}
\Phi_{\rho,\psi}(a,h):=
c_0\cdot\eta_{\pvs,\psi}(a\det(h+\RI))\cdot|\det(h+\RI)|^{-\frac{2n+1}{2}},
\end{align}
with the constant $c_0 = \prod_{i=1}^{n}\frac{1}{\zet_{F}(2i)}$ as in \eqref{FF-9}, where $\zet_{F}(s)$ is the local Dedekind zeta function of $F$.
\end{dfn}

\begin{pro}\label{pro:Phi}
The function $\Phi_{\rho,\psi}$ defined in Definition \ref{dfn-SDFT} enjoys the following properties.
\begin{itemize}
\item[(1)] It is locally integrable on $G(F)$.
\item[(2)] It is $G(F)$-invariant under the adjoint action of $G(F)$ on $G(F)$.
\item[(3)] For any $(a,h)\in G(F)$, $\Phi_{\rho,\psi}((a,h))=\Phi_{\rho,\psi}((a,h^{-1}))$.
\end{itemize}
\end{pro}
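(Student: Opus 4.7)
Parts (2) and (3) are direct computations from the definition \eqref{def:SDG}, using that $\BG_m$ is abelian and that $\det h=1$ for all $h\in\Sp_{2n}$. For (2), the adjoint action of $(a_1,h_1)\in G(F)$ sends $(a,h)$ to $(a,h_1hh_1^{-1})$, and $\det(h_1hh_1^{-1}+\RI) = \det(h_1(h+\RI)h_1^{-1}) = \det(h+\RI)$, so $\Phi_{\rho,\psi}$ is invariant. For (3), $\det(h^{-1}+\RI) = \det(h)^{-1}\det(\RI+h) = \det(h+\RI)$, which gives the claimed identity after substitution into \eqref{def:SDG}.

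For (1), the strategy is a change of variables that exploits the classical Cayley transform. Apply first the involution $h\mapsto \tilde h:=-h$, which is an automorphism of $\Sp_{2n}$ preserving $\ud h$, and then the Cayley parametrization $Y\mapsto \tilde h = (2Y+\RI)(2Y-\RI)^{-1}$ from Lemma~\ref{lem:cc}, with $Y=\RJ_n X\in\Fsp_{2n}(F)$ and $X\in\Sym^2(F^{2n})$. One computes $\tilde h-\RI = 2\RI(2Y-\RI)^{-1}$ and hence $\det(h+\RI) = \det(\tilde h - \RI) = 2^{2n}/\det(2Y-\RI)$, while Lemma~\ref{lem:jacobian} applied to $\tilde h$ gives $\ud h = \ud\tilde h = c_0^{-1}|\det(h+\RI)|^{2n+1}\,\ud n_\Del$ on the open dense subset where the parametrization is defined. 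The singular weight $|\det(h+\RI)|^{-(2n+1)/2}$ from \eqref{def:SDG} then combines with the Jacobian factor $|\det(h+\RI)|^{2n+1}$ to yield
\[
\int_K|\Phi_{\rho,\psi}(a,h)|\,\ud^*a\,\ud h = \int_{K'}|\eta_{\pvs,\psi}(at)|\cdot|t|^{\frac{2n+1}{2}}\,\ud^*a\,\ud n_\Del,
\]
where $t:=\det(h+\RI)$ and $K'$ is the image of a compact $K\subset G(F)$.

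The remaining task, and the main obstacle, is showing finiteness of the right-hand side. The crucial input is Theorem~\ref{thm:eta}: the function $u\mapsto |u|^{\frac{2n+1}{2}}\eta_{\pvs,\psi}(u)$ coincides stably with the values of $\FL(\one_N)(u)$ for $N$ sufficiently large, and hence lies in $\CS^-_{\pvs}(F^\times)\subset|\cdot|^{n+1}\CS_n^-(F^\times)$. In particular this function is locally bounded on $F^\times$, has bounded support in $|u|$, and vanishes like $|u|^{n+1}$ as $|u|\to 0$. Although $K'$ is non-compact along the locus $\{\det(h+\RI)=0\}$ (which corresponds to $|Y|\to\infty$ under the parametrization), Fubini reduces the finiteness to an inner integral over $n_\Del$ that can be reorganized via the fiber-integration formula along $t$ from Section~\ref{ssec-fife}, i.e., the Radon-type transform $f_\Phi$ defined in \eqref{radon-1}. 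After this reorganization the problem transfers to an estimate on $F^\times$ against the locally integrable function $|u|^{\frac{2n+1}{2}}|\eta_{\pvs,\psi}(u)|$, where the asymptotic decay $|u|^{n+1}$ near $0$ furnishes convergence, concluding the proof.
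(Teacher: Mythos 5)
Parts (2) and (3) are correct and are exactly the direct verifications the paper leaves to the reader: commutativity of $\BG_m$, $\det(h_1(h+\RI_{2n})h_1^{-1})=\det(h+\RI_{2n})$, and $\det(h^{-1}+\RI_{2n})=\det(h)^{-1}\det(h+\RI_{2n})$ with $\det(h)=1$. No issues there.

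For part (1) your reduction is the right one, and it is more serious than what the paper offers (the paper simply asserts local integrability from the local constancy of $\eta_{\pvs,\psi}$, which does not engage with the singular locus $\{\det(h+\RI_{2n})=0\}$ at all). Your Cayley/Jacobian computation correctly turns $|\Phi_{\rho,\psi}(a,h)|\,\ud^*a\,\ud h$ into $|\eta_{\pvs,\psi}(at)|\,|t|^{\frac{2n+1}{2}}\,\ud^*a\,\ud n_\Del$ with $t=\det(h+\RI_{2n})$, and the identification of $u\mapsto|u|^{\frac{2n+1}{2}}\eta_{\pvs,\psi}(u)$ with $\FL(\one_N)(u)\in\CS^-_{\pvs}(F^\times)$ for $|u|$ bounded is also correct (though the stable limit does \emph{not} have bounded support --- compare $n=0$, where $\eta_{\pvs,\psi}(t)=\psi(t)|t|^{1/2}\zeta(1)^{-1}$; this is harmless here since $|at|$ is bounded on a compact set). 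The gap is the final sentence: the decay $O(|u|^{n+1})$ does \emph{not} furnish convergence. The majorant you obtain is $\int_{K'}|t|^{n+1}\,\ud n_\Del$ over a neighborhood of infinity in $N_\Del(F)$, i.e.\ essentially $\int_{|X|\ge R}|\det(X)|^{-(n+1)}\,\ud X$ on $S_{2n}(F)$; after $X\mapsto X^{-1}$ (with $\ud(X^{-1})=|\det X|^{-(2n+1)}\ud X$) this is $\int_{|W|\le R^{-1}}|\det W|^{-n}\,\ud W$, which converges only for $n<1$. Equivalently, the pushforward of $\ud n_\Del$ under $t$ behaves like $|t|^{-2n}\,\ud^*t$ near $t=0$, so $|t|^{n+1}$ integrates to $\int_{|t|\le\epsilon}|t|^{1-n}\,\ud^*t=\infty$ for $n\ge 1$.

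Moreover this cannot be repaired by a sharper size estimate, because $|u|^{n+1}$ is the true rate: the leading coefficients of $\FL(\one_N)\in|\cdot|^{n+1}\CS^-_{n}(F^\times)$ at the exponent $|u|^{0}$ correspond to the (generically nonvanishing) residues of $\beta_\psi(\chi_s^{-1})$ at $|z|=q^{1/2}$, so $|\eta_{\pvs,\psi}(u)|\asymp|u|^{1/2}$ on a positive-density set near $u=0$. Feeding this back gives the lower bound $\int_H|\det(h+\RI_{2n})|^{-n}\,\ud h$ over a compact neighborhood $H$ of a generic point of the singular hypersurface, which already diverges for $n=1$: there $\det(h+\RI_2)=2+\tr(h)$ and $\tr$ is a submersion at $h_0=-\left(\begin{smallmatrix}1&1\\0&1\end{smallmatrix}\right)$, so one is integrating $|c|^{-1}$ in a local coordinate $c$ on $\SL_2(F)$. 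The upshot is that absolute convergence over compacta fails for $n\ge 1$, and any honest version of part (1) must be a conditional statement exploiting the oscillation of $\eta_{\pvs,\psi}$ (the characters $\chi(\ac(\cdot))$ and the sign $(-1)^{\ord(\cdot)}$). This is in fact how the paper uses $\Phi_{\rho,\psi}$ everywhere else: the $\Sp_{2n}$-integral is performed first, producing a function in $\CS^+_{\pvs}(F^\times)$, and only then is the $\GL_1$-integral taken as a principal value via Theorem \ref{thm:eta}. Your write-up should either prove such a conditional statement (specifying the order of integration) or flag that the $L^1_{\mathrm{loc}}$ claim as literally stated is not accessible by majorization.
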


\begin{proof}
Since the generalized function $\eta_{\pvs,\psi}(x)$ on $F^\times$ is locally constant (Theorem \ref{thm:eta}), it is clear that the function $\Phi_{\rho,\psi}$ is locally integrable on $G(F)$. This proves
Part (1). Parts (2) and (3) follow directly from the definition.
\end{proof}

Hence the function $\Phi_{\rho,\psi}(g)$ on $G(F)$ defines a $G(F)$-invariant distribution on the space $\CC_c^\infty(G)$. We refer to Section \ref{ssec-pf-Thm13} for more discussions about this distribution.

\begin{dfn}[Fourier Operator on $G$]\label{dfn-SDFTG}
Let $\Phi_{\rho,\psi}$ be the $G(F)$-invariant distribution on $G(F)$ as in Definition \ref{dfn-SDFT}.
The Fourier operator $\CF_{\rho,\psi}$ over $G(F)$ with $\Phi_{\rho,\psi}$ as the convolution kernel function is defined by
\begin{align}\label{def:FTG}
\CF_{\rho,\psi}(\phi_f)(a,h)
&:=
(\Phi_{\rho,\psi}*\phi_f^\vee)(a,h)\nonumber\\
&=
\int_{F^\times}^\pv\int_{\Sp_{2n}(F)}\Phi_{\rho,\psi}(t,y)\phi_f^\vee(t^{-1}a,y^{-1}h)\ud y\ud^*t,
\end{align}
for all $f\in\CC^\infty_c(X_{P_\Del})$ with $\phi_f$ as defined in \eqref{f-phi}, where $\phi^\vee(g)=\phi(g^{-1})$.
\end{dfn}

\subsection{Compatibility of $\CF_{\rho,\psi}$ with $\CF_{X,\psi}$ and $\RM_{w_\Del}^\dag(s,\chi,\psi)$}\label{ssec-CFG-CFX-RM}

The compatibility of the Fourier operator $\CF_{\rho,\psi}$ over $G(F)$ with Fourier transform $\CF_{X,\psi}$ over $X_{P_\Del}(F)$ and with the normalized local intertwining operator
$\RM_{w_\Del}^\dag(s,\chi,\psi)$ is the core technical result for the proof of basic results on harmonic analysis for $\Phi_{\rho,\psi}$ and $\CF_{\rho,\psi}$ in the rest of this paper.

For $f\in\CS_{\pvs}(X_{P_{\Del}})$, take $\phi_f(a,h)=f(\Fs_a^{-1}(h,\RI))|a|^{\frac{2n+1}{2}}$ as defined in \eqref{f-phi}, for $g=(a,h)\in G(F)$.
From Definition \ref{dfn:FTX}, for $f\in \CC^{\infty}_c(X_{P_{\Del}})$, the Fourier transform $\CF_{X,\psi}(f)(\Fs_a^{-1}(h,\RI))$ is equal to
\begin{align}\label{FF-1}
\int_{F^\times}^\pv\eta_{\psi}(t)|t|^{-\frac{2n+1}{2}}
\int_{N_\Del(F)}
f(w_\Del n_\Del  \Fs_t \Fs_a^{-1}(h,\RI))\ud n_\Del \ud^* t,
\end{align}
where $\eta_\psi(t)=\eta_{\pvs,\psi}(t)$ is the generalized function on $F^\times$ as defined in Theorem \ref{thm:eta}.
We write
\[
w_\Del n_\Del \cdot \Fs_t\Fs_a^{-1}=w_\Del \Fs_t\Fs_a^{-1} \cdot \Fs_a\Fs_t^{-1} n_\Del  \Fs_t\Fs_a^{-1}.
\]
By changing the variable $n_\Del\mapsto  \Fs_t\Fs_a^{-1} n_\Del \Fs_a\Fs_t^{-1}$, we obtain that \eqref{FF-1} is equal to
\begin{align}\label{FF-2}
|a|^{-(2n+1)}
\int_{F^\times}^\pv\eta_{\psi}(t)|t|^{\frac{2n+1}{2}}
\int_{N_\Del(F)}
f(w_\Del \Fs_t\Fs_a^{-1}  n_\Del  (h,\RI)) \ud n_\Del \ud^* t.
\end{align}
Since $w_\Del \Fs_t\Fs_a^{-1}=\Fs_t^{-1}\Fs_a w_\Del$, we obtain that  \eqref{FF-2} is equal to
\begin{align}\label{FF-3}
|a|^{-(2n+1)}
\int_{F^\times}^\pv\eta_{\psi}(t)|t|^{\frac{2n+1}{2}}
\int_{N_\Del(F)}
f(\Fs_t^{-1}\Fs_a w_\Del  n_\Del  (h,\RI)) \ud n_\Del \ud^* t.
\end{align}

In order to convert the Fourier transform $\CF_{X,\psi}$ over $X_{P_\Del}(F)$ to the Fourier operator $\CF_{\rho,\psi}$ over $G(F)$, we consider the inner integral of \eqref{FF-3}:
\begin{align}\label{FF-4}
\int_{N_\Del(F)}
f(\Fs_t^{-1}\Fs_a w_\Del  n_\Del (h,\RI)) \ud n_\Del,
\end{align}
which is closely related to the Radon transform $\RR_{X}(f)$ as defined in \eqref{Radon}.
As in Definition \ref{dfn:cc} we write $w_\Del n_\Del=p_\Del y=p_\Del(y,\RI)$ with $y\in\Sp_{2n}(F)$. Then we have
\begin{align*}
f(\Fs_t^{-1}\Fs_a w_\Del  n_\Del)=
f(\Fs_t^{-1}\Fs_a p_\Del (y,\RI))=
f(m_\Del\Fs_t^{-1}\Fs_a  (y,\RI)),
\end{align*}
where $m_\Del$ is the Levi part of $p_\Del$.
Taking $g_0$ and $g_0^{-1}$ to be as in \eqref{g0} and \eqref{g0-1}, respectively, we have that $M_\Del=g_0^{-1}M_{\std} g_0$, and in particular, $m_\Del=g_0^{-1} m_{\std} g_0$.
From Lemma \ref{lem:cc} and \eqref{levi-std}, $m_{\std}$ is equal to
$$
\left(
\begin{smallmatrix}
 \frac{1}{2}({}^t\!y-\RI_{2n})& 0  \\
	0& 2(y-\RI_{2n})^{-1}
\end{smallmatrix}
\right).
$$
By the normalization of the abelianization morphism $\Fa$ as in \eqref{M-Mab}, we have that  $\Fa(m_\Del)=2^{-2n}\det(y-\RI_{2n})$.
It follows that
\begin{equation}\label{FF-5}
f(\Fs_t^{-1}\Fs_a w_\Del  n_\Del)
=f(\Fs_{\Fa(m_\Del)}\Fs_t^{-1}\Fs_a(y,\RI))=f(\Fs_{\det(y-\RI_{2n})}\Fs_{t}^{-1}\Fs_{2^{-2n}a}(y,\RI)),
\end{equation}
for any $f\in\CC^\infty_c(X_{P_\Del})$. Hence we obtain that \eqref{FF-3} is equal to
\begin{align}\label{FF-6}
|a|^{-(2n+1)}
\int_{F^\times}^\pv\eta_{\psi}(t)|t|^{\frac{2n+1}{2}}
\int_{N_\Del(F)}
f(\Fs_{\det(y-\RI_{2n})}\Fs_{t}^{-1}\Fs_{2^{-2n}a}(yh,\RI)) \ud n_\Del \ud^* t.
\end{align}
By changing the variable $t\mapsto t\det(y-\RI)$, we have that \eqref{FF-6} is equal to
\begin{align}\label{FF-7}
&|a|^{-(2n+1)}
\int_{F^\times}^\pv\int_{N_\Del(F)}
\eta_{\psi}(t\det(y-\RI))|t\det(y-\RI)|^{\frac{2n+1}{2}}\nonumber\\
&\qquad\qquad\qquad\qquad\qquad\qquad\qquad\cdot f(\Fs_{t}^{-1}\Fs_{2^{-2n}a}(yh,\RI)) \ud n_\Del \ud^* t.
\end{align}

By \eqref{FF-8}, we write ${\rm d} n_\Del=\Fj_{\CC^{-1}}(y)\ud y$, because of $w_\Del n_\Del=p_\Del y$ with $y\in\Sp_{2n}(F)$.
The Jacobian $\Fj_{\CC^{-1}}$ of $\CC^{-1}$ is explicitly calculated in Lemma \ref{lem:jacobian}, which is given by
\begin{align}\label{Jac}
\Fj_{\CC^{-1}}(y)=c_0|\det(y-\RI_{2n})|^{-(2n+1)}
\end{align}
where $c_0 = \prod_{i=1}^{n}\frac{1}{\zet_{F}(2i)}$ and $\zet_{F}(s)$ is the local Dedekind zeta function of $F$.
By putting those calculations together and changing the variable $h\mapsto -h$, we deduce that \eqref{FF-7} is equal to
\begin{align}\label{FF-10}
&c_0|a|^{-(2n+1)}
\int_{F^\times}^\pv\int_{\Sp_{2n}(F)}
\eta_{\psi}(t\det(y+\RI))|\det(y+\RI)|^{-\frac{2n+1}{2}}\nonumber\\
&\qquad\qquad\qquad\qquad\qquad\cdot |t|^{\frac{2n+1}{2}}f(\Fs_{t}^{-1}\Fs_{2^{-2n}a}(-yh,\RI)) \ud y \ud^* t.
\end{align}
By \eqref{f-phi}, we have
\begin{align*}
|t|^{\frac{2n+1}{2}}f(\Fs_{t}^{-1}\Fs_{2^{-2n}a}(-yh,\RI))
&=
\phi_f(t(2^{2n}a^{-1}),-yh)|2^{2n}a^{-1}|^{-\frac{2n+1}{2}}\\
&=\phi_f^\vee(t^{-1}(2^{-2n}a),(-h^{-1})y^{-1})|2^{2n}a^{-1}|^{-\frac{2n+1}{2}}.
\end{align*}
Hence we deduce that \eqref{FF-10} is equal to
\begin{align}\label{FF-11}
&c_0|2|^{-2n(2n+1)}
\int_{F^\times}^\pv\int_{\Sp_{2n}(F)}
\eta_{\psi}(t\det(y+\RI))|\det(y+\RI)|^{-\frac{2n+1}{2}}\nonumber\\
&\qquad\qquad\qquad\cdot|2^{-2n}a|^{-\frac{2n+1}{2}}\phi_f^\vee(t^{-1}(2^{-2n}a),(-h^{-1})y^{-1}) \ud y \ud^* t.
\end{align}
By the expression of the Fourier operator $\CF_{\rho,\psi}$ over $G(F)$ in Definition \ref{dfn-SDFTG}, we are able to write \eqref{FF-11} as
\begin{align*} 
|2|^{-2n(2n+1)}|2^{-2n}a|^{-\frac{2n+1}{2}}\cdot \CF_{\rho,\psi}(\phi_f)(2^{-2n}a,-h^{-1})
\end{align*}
for $g=(a,h)\in G(F)$, which is equal to
\begin{align}
|2|^{-2n(2n+1)}f_{\CF_{\rho,\psi}(\phi_f)}(\Fs_{2^{-2n}a}^{-1}\cdot(-h^{-1},\RI)).
\end{align}
This establishes the compatibility of the Fourier transform $\CF_{X,\psi}$ over $X_{P_\Del}(F)$ and the Fourier operator $\CF_{\rho,\psi}$ over $G(F)$.

\begin{thm}[Compatibility of $\CF_{\rho,\psi}$ with $\CF_{X,\psi}$]\label{thm:FT-FT}
For any function $f\in\CC^\infty_c(X_{P_\Del})$, let $\phi_f\in\CS_\rho(G)$ be as defined in \eqref{f-phi}. The Fourier transform $\CF_{X,\psi}$ over $X_{P_\Del}(F)$ and
the Fourier operator $\CF_{\rho,\psi}$ over $G(F)$ are related by the following identities:
\begin{align}\label{CF-f}
\CF_{X,\psi}(f)(\Fs_a^{-1}(h,\RI))
&=
|2|^{-2n(2n+1)}f_{\CF_{\rho,\psi}(\phi_f)}(\Fs_{2^{-2n}a}^{-1}(-h^{-1},\RI));
\end{align}
and equivalently
\begin{align}\label{phi-CF-f}
\phi_{\CF_{X,\psi}(f)}(a,h)
=
|2|^{-n(2n+1)}\CF_{\rho,\psi}(\phi_f)(2^{-2n}a,-h^{-1}),
\end{align}
for any $g=(a,h)\in G(F)$.
\end{thm}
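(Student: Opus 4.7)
The plan is to establish \eqref{CF-f} by unwinding the defining integral of $\CF_{X,\psi}$ via the Cayley transform and re-packaging the result in terms of $\Phi_{\rho,\psi}$. Since the two identities in the theorem are trivially equivalent by the definition \eqref{f-phi} of $\phi_f$, it suffices to prove \eqref{CF-f}. Starting from Definition \ref{dfn:FTX}, I would write
\[
\CF_{X,\psi}(f)(\Fs_a^{-1}(h,\RI))=\int_{F^{\times}}^{\pv}\eta_{\pvs,\psi}(t)|t|^{-\frac{2n+1}{2}}\int_{N_\Del(F)}f(w_\Del n_\Del \Fs_t\Fs_a^{-1}(h,\RI))\ud n_\Del\,\ud^{*}t,
\]
and then slide $\Fs_t\Fs_a^{-1}$ past $w_\Del n_\Del$ by changing variables in $n_\Del$; this produces the factor $|a|^{-(2n+1)}|t|^{2n+1}$ from $\delta_{P_\Del}$ together with $w_\Del \Fs_t\Fs_a^{-1}=\Fs_t^{-1}\Fs_a w_\Del$ applied on the left.

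The next step is to replace the $N_\Del$-integration by an integration over $\Sp_{2n}(F)$ using the Cayley morphism $\CC$ of Definition \ref{dfn:cc} and Lemma \ref{lem:cc}. Writing $w_\Del n_\Del=p_\Del(y,\RI)$ with $y\in\Sp_{2n}(F)$, Remark \ref{gmfactor} identifies $\Fa(m_\Del)=2^{-2n}\det(y-\RI_{2n})$, so that $\Fs_t^{-1}\Fs_a w_\Del n_\Del=\Fs_{\det(y-\RI)}\Fs_t^{-1}\Fs_{2^{-2n}a}(y,\RI)$ modulo $[P_\Del,P_\Del]$. Applying Lemma \ref{lem:jacobian} to replace $\ud n_\Del$ by $c_0|\det(y-\RI)|^{-(2n+1)}\ud y$ and performing the change of variable $t\mapsto t\det(y-\RI)$ absorbs the Cayley Jacobian into $\eta_{\pvs,\psi}$. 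After finally sending $y\mapsto -y$ and $h\mapsto -h^{-1}$ (to match the involution $y\mapsto y^{-1}$ coming from $\phi^\vee$), the inner factor becomes
\[
c_0\,\eta_{\pvs,\psi}(t\det(y+\RI))|\det(y+\RI)|^{-\frac{2n+1}{2}}=\Phi_{\rho,\psi}(t,y),
\]
which is precisely the kernel in Definition \ref{dfn-SDFT}.

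At this stage, converting $f$ back to $\phi_f$ through \eqref{f-phi} produces a factor $|2^{-2n}a|^{-\frac{2n+1}{2}}$ and identifies the remaining double integral with the convolution $(\Phi_{\rho,\psi}*\phi_f^\vee)(2^{-2n}a,-h^{-1})$; comparison with Definition \ref{dfn-SDFTG} then yields \eqref{CF-f} with the stated constant $|2|^{-2n(2n+1)}$. The identity \eqref{phi-CF-f} then follows by multiplying both sides by $|a|^{\frac{2n+1}{2}}$ and rewriting the left-hand side via \eqref{f-phi}.

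The main technical obstacle is bookkeeping: one must track four distinct sources of constants and shifts simultaneously, namely (i) the modular character of $P_\Del$ that produces powers of $|t|$ and $|a|$, (ii) the factor $2^{-2n}$ coming from the normalization $\Fa(m_\Del)=2^{-2n}\det(y-\RI)$ that reflects the geometric relation between $P_\Del$ and $P_{\std}$, (iii) the Cayley Jacobian $c_0|\det(y-\RI)|^{-(2n+1)}$ which is exactly canceled by the change of variable in $t$ except for the constant $c_0$ absorbed into $\Phi_{\rho,\psi}$, and (iv) the principal value prescription on $\ud^{*}t$ which must be preserved under the substitution $t\mapsto t\det(y-\RI)$. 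The matching of these factors explains why the constant $|2|^{-2n(2n+1)}=|2|^{-n(2n+1)}\cdot|2|^{-n(2n+1)}$ appears in \eqref{CF-f}, where one factor of $|2|^{-n(2n+1)}$ is geometric (from $\Fa(m_\Del)$) and the other is analytic (from the $|2^{-2n}a|^{-(2n+1)/2}$ absorbed into $\phi_f$). The convergence of the resulting principal value integral is not an issue once Corollary \ref{FX-conv} is invoked on the $X_{P_\Del}$ side.
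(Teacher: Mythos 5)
Your proposal follows essentially the same route as the paper's own derivation in Section \ref{ssec-CFG-CFX-RM}: unwind Definition \ref{dfn:FTX}, conjugate $\Fs_t\Fs_a^{-1}$ past $w_\Del n_\Del$, pass to the $\Sp_{2n}$-integral via the Cayley transform (Lemma \ref{lem:cc}, Remark \ref{gmfactor}, Lemma \ref{lem:jacobian}), absorb the Jacobian by the substitution $t\mapsto t\det(y-\RI)$, and identify the kernel with $\Phi_{\rho,\psi}$ after $y\mapsto -y$. Your accounting of the constant $|2|^{-2n(2n+1)}$ and of the role of Corollary \ref{FX-conv} for convergence also matches the paper, so the argument is correct as proposed.
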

It is clear that $f_{\CF_{\rho,\psi}(\phi_f)}$ makes sense according to the transition relation \eqref{f-phi} if we prove that $\CF_{\rho,\psi}(\phi_f)$ belongs to the $\rho$-Schwartz space $\CS_\rho(G)$,
which is the subject matter of Theorem \ref{thm:FOG-Ss}. At this point, it is understood through the following identity:
\begin{align}\label{f-phi-g}
f_{\CF_{\rho,\psi}(\phi_f)}(\Fs_{a}^{-1}(-h^{-1},\RI))=|a|^{-\frac{2n+1}{2}}\CF_{\rho,\psi}(\phi_f)(a,-h^{-1})
\end{align}
for any $g=(a,h)\in G(F)$. Since the left-hand side of \eqref{CF-f} is the restriction of $\CF_{X,\psi}(f)\in\CS_\pvs(X_{P_\Del})$, the function on the right-hand side of \eqref{CF-f} makes sense.

From Theorem \ref{thm:FT-FT} we are able to extend the Fourier operator $\CF_{\rho,\psi}$, which was defined for $\phi_f$ with $f\in \CC_c^\infty(X_{P_\Del})$ in Definition \ref{dfn-SDFTG},
to the whole $\rho$-Schwartz space $\CS_{\rho}(G)$ (with $f\in\CS_\pvs(X_{P_\Del})$).

\begin{dfn}\label{dfn:FTtoSG}
For any $f\in \CS_{\pvs}(X_{P_{\Del}})$, let $\phi_{f}$ be defined in \eqref{f-phi}. Define
$\CF_{\rho,\psi}(\phi_{f})$ through identity \eqref{phi-CF-f},
\begin{equation}\label{FTtoSG}
\CF_{\rho,\psi}(\phi_{f})(a,h) =
|2|^{n(2n+1)}\phi_{\CF_{X,\psi}(f)}(2^{2n}a,-h^{-1}),
\end{equation}
for any $g=(a,h)\in G(F)$.
\end{dfn}
As remarked below \eqref{f-phi}, any smooth function $f$ on $X_{P_\Del}(F)$ is completely determined by its values $f(\Fs_a^{-1}(h,\RI))$ for all $g=(a,h)\in G(F)$. The extension of the Fourier operator
$\CF_{\rho,\psi}$ to the whole $\rho$-Schwartz space $\CS_{\rho}(G)$ is well-defined.

\begin{cor}\label{67-1}
For the Fourier operator $\CF_{\rho,\psi}$ as defined in Definition \ref{dfn:FTtoSG} for $\phi\in\CS_\rho(G)$, there exists a unique $f\in\CS_\pvs(X_{P_\Del})$ such that $\phi=\phi_f$ and $f=f_\phi$ via
\eqref{f-phi}, and
\[
\CF_{X,\psi}(f)(\Fs_a^{-1}(h,\RI))
=
|2|^{-2n(2n+1)}f_{\CF_{\rho,\psi}(\phi_f)}(\Fs_{2^{-2n}a}^{-1}(-h^{-1},\RI))
\]
for any $g=(a,h)\in G(F)$.
\end{cor}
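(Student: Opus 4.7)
The proof will be largely bookkeeping, as the statement is essentially a restatement of how Definition \ref{dfn:FTtoSG} was set up to extend Theorem \ref{thm:FT-FT} from $\CC_c^\infty(X_{P_\Del})$ to $\CS_\pvs(X_{P_\Del})$. The plan is to handle existence, uniqueness, and the identity in turn, with only the uniqueness requiring a brief geometric observation.

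First, for existence, given $\phi\in\CS_\rho(G)$, Definition \ref{dfn:rho-SsG} produces some $f\in\CS_\pvs(X_{P_\Del})$ with $\phi=\phi_f$ simply by how $\CS_\rho(G)$ is defined. For uniqueness, I would use the fact that the embedding $G\hookrightarrow X_{P_\Del}$ (recall the right side of Diagram \eqref{twoOpenX}) identifies $G(F)$ with the Zariski open dense subset $M_\Del^\ab(\Sp_{2n}\times\{\RI_{2n}\})$ of $X_{P_\Del}$. Since every $f\in\CS_\pvs(X_{P_\Del})$ is smooth and any smooth function on a $p$-adic manifold is determined by its values on a dense open subset, two Schwartz functions $f_1,f_2$ agreeing on the image of $G(F)$ must coincide. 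The transition formula \eqref{f-phi} rewritten as $f(\Fs_a^{-1}(h,\RI))=\phi(a,h)|a|^{-(2n+1)/2}$ shows $f$ is fully recovered from $\phi$ on this dense open set. Hence $f=f_\phi$ is unique.

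The identity is then a direct unwinding of Definition \ref{dfn:FTtoSG}. Starting from
\[
\CF_{\rho,\psi}(\phi_f)(a,h)=|2|^{n(2n+1)}\phi_{\CF_{X,\psi}(f)}(2^{2n}a,-h^{-1}),
\]
I would substitute $(a,h)\mapsto(2^{-2n}a,-h^{-1})$ and use \eqref{f-phi} (applied both to express $\phi_{\CF_{X,\psi}(f)}$ in terms of $\CF_{X,\psi}(f)$ and to invert the $\phi\leftrightarrow f$ passage on the left) to compute
\[
f_{\CF_{\rho,\psi}(\phi_f)}(\Fs_{2^{-2n}a}^{-1}(-h^{-1},\RI))=|2^{-2n}a|^{-(2n+1)/2}\cdot\CF_{\rho,\psi}(\phi_f)(2^{-2n}a,-h^{-1}).
\]
Tracking the powers of $|2|$ and $|a|$ that appear (namely $|2|^{n(2n+1)}$ from Definition \ref{dfn:FTtoSG}, and $|2^{2n}a|^{(2n+1)/2}$ from re-expanding $\phi_{\CF_{X,\psi}(f)}$ via \eqref{f-phi}) cancels out exactly to yield the factor $|2|^{-2n(2n+1)}$ on the right-hand side of the claimed identity.

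There is no real obstacle here: the content of Theorem \ref{thm:FT-FT} already proves the identity for $f\in\CC_c^\infty(X_{P_\Del})$, and Definition \ref{dfn:FTtoSG} was crafted so that \eqref{CF-f} holds by fiat for all $f\in\CS_\pvs(X_{P_\Del})$. The corollary is therefore a compatibility statement confirming that the extension chosen in Definition \ref{dfn:FTtoSG} is the unique one reproducing the relation \eqref{CF-f} of Theorem \ref{thm:FT-FT}, and its proof reduces to the elementary substitution above once existence and uniqueness of $f$ have been noted.
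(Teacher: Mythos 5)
Your proposal is correct and matches the paper's treatment: the paper presents this corollary as an immediate consequence of Definition \ref{dfn:FTtoSG} together with the remark (made right after \eqref{f-phi}) that a smooth, i.e.\ locally constant, function on $X_{P_\Del}(F)$ is determined by its values on the open dense image of $G(F)$, which is exactly your uniqueness argument. Your unwinding of the powers of $|2|$ and $|a|$ via \eqref{f-phi} is the same bookkeeping the paper relies on, and it checks out.
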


Combining Theorem \ref{thm:FT-FT} and Corollary \ref{67-1} with Theorem \ref{thm:CFP-M}, we obtain the compatibility of the Fourier operator $\CF_{\rho,\psi}$ over $G(F)$ with the normalized local intertwining operator
$\RM^{\dag}_{w_\Del}(s,\chi,\psi)$ as given in \eqref{eq:IT-eta-rho}.

\begin{cor}[Compatibility of $\CF_{\rho,\psi}$ with $\RM^{\dag}_{w_\Del}(s,\chi,\psi)$]\label{cor:ComOnG}
For $h\in\Sp_{2n}(F)$ and $f\in\CS_{\pvs}(X_{P_{\Del}})$, $\CP_{\chi_s^{-1}}(f_{\CF_{\rho,\psi}(\phi_{f})})((-h^{-1},\RI))$ is well-defined for $\Re(s)$ sufficiently negative, and has the following identity:
$$
(\RM^{\dag}_{w_\Del}(s,\chi,\psi)\circ\CP_{\chi_s})(f)((h,\RI))
=
\CP_{\chi_s^{-1}}(f_{\CF_{\rho,\psi}(\phi_{f})})((-h^{-1},\RI)),
$$
which holds for all $s\in\BC$ by meromorphic continuation.
\end{cor}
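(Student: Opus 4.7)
The plan is to derive the identity by combining Theorem \ref{thm:CFP-M}, the definition of the normalized intertwining operator in \eqref{eq:IT-eta-rho}, and the transfer identity \eqref{CF-f} in Theorem \ref{thm:FT-FT}, followed by a change of variables on the $\GL_1$-part.

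First I would apply Theorem \ref{thm:CFP-M} at the point $g=(h,\RI)$ to obtain
\[
\CP_{\chi_s^{-1}}\circ \CF_{X,\psi}(f)((h,\RI))
=\beta_\psi(\chi_s)\,(\RM_{w_\Del}(s,\chi)\circ \CP_{\chi_s})(f)((h,\RI))
\]
for $f\in\CS_\pvs(X_{P_\Del})$, with both sides understood as meromorphic functions in $s$. Using \eqref{eq:IT-eta-rho} to replace $\beta_\psi(\chi_s)\RM_{w_\Del}(s,\chi)$ by $\chi_s(2)^{-2n}|2|^{-n(2n+1)}\RM^\dag_{w_\Del}(s,\chi,\psi)$ gives the right-hand side of the desired identity up to the scalar $\chi_s(2)^{-2n}|2|^{-n(2n+1)}$.

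Next I would attack the left-hand side through the transfer identity \eqref{CF-f} of Theorem \ref{thm:FT-FT} (and its extension to $\CS_\pvs(X_{P_\Del})$ supplied by Corollary \ref{67-1}). Expanding the projection $\CP_{\chi_s^{-1}}$ by its integral definition \eqref{proj-X-I},
\[
\CP_{\chi_s^{-1}}\circ \CF_{X,\psi}(f)((h,\RI))
=\int_{F^\times}\CF_{X,\psi}(f)(\Fs_a^{-1}(h,\RI))\,|a|^{\frac{2n+1}{2}}\chi_s^{-1}(a)\,\ud^* a,
\]
and substituting \eqref{CF-f} turns the integrand into
\[
|2|^{-2n(2n+1)}\,f_{\CF_{\rho,\psi}(\phi_f)}\bigl(\Fs_{2^{-2n}a}^{-1}(-h^{-1},\RI)\bigr)\,|a|^{\frac{2n+1}{2}}\chi_s^{-1}(a).
\]
I would then change variables $a\mapsto 2^{2n}a$, producing the factor $\chi_s^{-1}(2^{2n})|2|^{n(2n+1)}=\chi_s(2)^{-2n}|2|^{n(2n+1)}$ out front and the integral $\CP_{\chi_s^{-1}}(f_{\CF_{\rho,\psi}(\phi_f)})((-h^{-1},\RI))$ inside. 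Collecting constants yields
\[
\CP_{\chi_s^{-1}}\circ \CF_{X,\psi}(f)((h,\RI))
=\chi_s(2)^{-2n}|2|^{-n(2n+1)}\,\CP_{\chi_s^{-1}}\bigl(f_{\CF_{\rho,\psi}(\phi_f)}\bigr)((-h^{-1},\RI)),
\]
and equating this with the expression obtained in the previous step, the common scalar $\chi_s(2)^{-2n}|2|^{-n(2n+1)}$ cancels, giving the desired identity.

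For convergence, Theorem \ref{thm:CFP-M} asserts that $\CP_{\chi_s^{-1}}\circ\CF_{X,\psi}(f)$ is absolutely convergent for $\Re(s)$ sufficiently negative; the change of variables above transports this absolute convergence to the integral defining $\CP_{\chi_s^{-1}}(f_{\CF_{\rho,\psi}(\phi_f)})((-h^{-1},\RI))$, settling the well-definedness claim. The extension to all $s\in\BC$ follows by meromorphic continuation, since each piece already admits such a continuation. I do not foresee a substantial obstacle here: the statement is essentially a bookkeeping corollary of Theorems \ref{thm:CFP-M} and \ref{thm:FT-FT}, and the only care needed is to track the powers of $|2|$ and $\chi_s(2)$ coming from \eqref{eq:IT-eta-rho} and the change of variables, which must cancel in the end.
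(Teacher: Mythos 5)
Your proposal is correct and follows essentially the same route as the paper: apply Theorem \ref{thm:CFP-M} and \eqref{eq:IT-eta-rho} on one side, expand $\CP_{\chi_s^{-1}}\circ\CF_{X,\psi}(f)$ via \eqref{CF-f} and the change of variable $a\mapsto 2^{2n}a$ on the other, and cancel the common factor $\chi_s(2)^{-2n}|2|^{-n(2n+1)}$. The only point you gloss over is that Theorem \ref{thm:CFP-M} is stated for $f\in\CC^\infty_c(X_{P_\Del})$ rather than all of $\CS_\pvs(X_{P_\Del})$; the paper extends its convergence assertion to the whole Schwartz space by the decomposition $\CS_\pvs(X_{P_\Del})=\CC^\infty_c(X_{P_\Del})+\CF_{X,\psi}(\CC^\infty_c(X_{P_\Del}))$ of Definition \ref{dfn:SSFX}, and you should insert that one line.
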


\begin{proof}
By Definition \ref{dfn:FTtoSG} and Corollary \ref{67-1}, for $(a,h)\in G(F)$ and $f\in \CS_{\pvs}(X_{P_{\Del}})$, we have
\[
\CF_{X,\psi}(f)(\Fs_a^{-1}(h,\RI))
=
|2|^{-2n(2n+1)}f_{\CF_{\rho,\psi}(\phi_{f})}(\Fs_{2^{-2n}a}^{-1}(-h^{-1},\RI)).
\]
We first calculate $\CP_{\chi_s^{-1}}\circ\CF_{X,\psi}(f)(h,\RI_{2n})$. By Theorem \ref{thm:CFP-M}, whenever $\Re(s)$ is sufficiently negative, it is equal to the following absolutely convergent integral,
\begin{align}\label{FF-12}
&\int_{F^\times}\chi_s(t)^{-1}|t|^{\frac{2n+1}{2}}\CF_{X,\psi}(f)(\Fs_t^{-1}(h,\RI))\ud^*t\nonumber \\
&=\int_{F^\times}\chi_s(t)^{-1}|t|^{\frac{2n+1}{2}}
|2|^{-2n(2n+1)}f_{\CF_{\rho,\psi}(\phi_{f})}(\Fs_{2^{-2n}t}^{-1}(-h^{-1},\RI))\ud^*t.
\end{align}
Note that in Theorem \ref{thm:CFP-M}, the absolute convergence of the integral in \eqref{FF-12} for $\Re(s)$  sufficiently negative is proved only for functions in $\CF_{X,\psi}(\CC^{\infty}_{c}(X_{P_{\Del}}))$.
By Definition \ref{dfn:SSFX}, we have that
\[
\CS_{\pvs}(X_{P_{\Del}}) = \CC^{\infty}_{c}(X_{P_{\Del}})+\CF_{X,\psi}(\CC^{\infty}_{c}(X_{P_{\Del}})).
\]
Hence the absolute convergence of the integral in \eqref{FF-12} for $\Re(s)$  sufficiently negative holds for any $f\in \CS_{\pvs}(X_{P_{\Del}})$.

By changing variable $t\mapsto 2^{2n}t$ in the integral in \eqref{FF-12}, we deduce that the right-hand side of \eqref{FF-12} is equal to
\begin{align}\label{FF-13}
&\chi_{s}(2)^{-2n}|2|^{-n(2n+1)}\int_{F^\times}\chi_s(t)^{-1}|t|^{\frac{2n+1}{2}}f_{\CF_{\rho,\psi}(\phi_{f})}(\Fs_{t}^{-1}(-h^{-1},\RI))\ud^*x\nonumber\\
&\qquad\qquad\qquad=\chi_{s}(2)^{-2n}|2|^{-n(2n+1)}\CP_{\chi_s^{-1}}(f_{\CF_{\rho,\psi}(\phi_{f})})((-h^{-1},\RI)).
\end{align}
Hence $\CP_{\chi_s^{-1}}(f_{\CF_{\rho,\psi}(\phi_{f})})((-h^{-1},\RI))$ is well-defined for $\Re(s)$ sufficiently negative.
On the other hand, by Theorem \ref{thm:CFP-M}, we have that $\CP_{\chi_s^{-1}}\circ\CF_{X,\psi}(f)((h,\RI))$ is also equal to
\[
\chi_{s}(2)^{-2n}|2|^{-n(2n+1)}
(\RM^{\dag}_{w_\Del}(s,\chi,\psi)\circ\CP_{\chi_s})(f)((h,\RI)).
\]
By comparing with \eqref{FF-13}, we obtain the desired identity. We are done.
\end{proof}


\subsection{Plancherel theorem for $\CF_{\rho,\psi}$}\label{ssec-PT-CFG}

We start to prove Theorem \ref{thm:L2} here.
First we define 
\[
\CC_{c,X}^{\infty}(G): = \{ \phi_{f}\ |\ f\in \CC^{\infty}_{c}(X_{P_{\Del}}) \}, 
\]
which is a subspace of the $\rho$-Schwartz space $\CS_\rho(G)$, and prove the following lemma.

\begin{lem}\label{lem:FOG-Ss0}
For any $\phi\in \CC_{c,X}^{\infty}(G)$, the function $\CF_{\rho,\psi}(\phi)(a,h)$ belongs to the $\rho$-Schwartz space $\CS_\rho(G)$.
\end{lem}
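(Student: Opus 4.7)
The plan rests on the bridge established in Theorem~\ref{thm:FT-FT} between the Fourier operator $\CF_{\rho,\psi}$ on $G$ and the Fourier transform $\CF_{X,\psi}$ on $X_{P_\Del}$. For $\phi_f\in\CC_{c,X}^\infty(G)$ with $f\in\CC^\infty_c(X_{P_\Del})$, identity \eqref{phi-CF-f} rearranges to
\[
\CF_{\rho,\psi}(\phi_f)(a,h) \;=\; |2|^{n(2n+1)}\,\phi_{\CF_{X,\psi}(f)}(2^{2n}a,-h^{-1}).
\]
By Proposition~\ref{pro:FTX}, $\CF_{X,\psi}$ stabilizes $\CS_\pvs(X_{P_\Del})$, so $\phi_{\CF_{X,\psi}(f)}\in\CS_\rho(G)$. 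It therefore suffices to prove that the change-of-variables operator $T:\varphi(a,h)\mapsto\varphi(2^{2n}a,-h^{-1})$ preserves $\CS_\rho(G)$.

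To handle $T$, the plan is to decompose it into three pieces: (i) the left $M^{\ab}_\Del$-scaling $a\mapsto 2^{2n}a$; (ii) the right $\Sp_{4n}$-translation by $(-\RI_{2n},\RI_{2n})$; and (iii) the inversion $h\mapsto h^{-1}$ on $\Sp_{2n}$. Pieces (i) and (ii) correspond to genuine group actions on $X_{P_\Del}$; each preserves $\CC^\infty_c(X_{P_\Del})$, and since $\CF_{X,\psi}$ intertwines them with explicit abelian twists, each also preserves $\CF_{X,\psi}(\CC^\infty_c(X_{P_\Del}))$. Consequently (i) and (ii) preserve $\CS_\pvs(X_{P_\Del})$, and hence $\CS_\rho(G)$.

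The main obstacle is piece (iii), because $h\mapsto h^{-1}$ is only an anti-automorphism of $\Sp_{2n}$ and does not extend to a group action of $\Sp_{4n}$ on $X_{P_\Del}$. The plan to bypass this is to invoke the alternative Zariski-open parameterization $M^{\ab}_\Del w_\Del N_\Del\hookrightarrow X_{P_\Del}$ from Diagram~\eqref{twoOpenX} together with the Cayley transform $\CC$ of Definition~\ref{dfn:cc}. By the computations in Section~\ref{ssec-CT-J}, $\CC$ sends $n_\Del\leftrightarrow X\in\Sym^2(F^{2n})$ to $h=(2\RJ_n X+\RI_{2n})(2\RJ_n X-\RI_{2n})^{-1}$, so $h\mapsto h^{-1}$ corresponds to $X\mapsto -X$, equivalently $n_\Del\mapsto n^{-1}_\Del$, on the abelian unipotent radical $N_\Del$. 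A short calculation using $\det(h)=1$ yields $\det(h^{-1}-\RI_{2n})=\det(h-\RI_{2n})$, so the $M^{\ab}_\Del$-coordinate in the transition between the two open sets is preserved. Together with the Jacobian formula of Lemma~\ref{lem:jacobian}, this shows that (iii) is induced by an explicit affine involution of $M^{\ab}_\Del w_\Del N_\Del$ which manifestly preserves $\CC^\infty_c(X_{P_\Del})$ and is compatible with $\CF_{X,\psi}$. An alternative verification, using the asymptotic characterization of Theorem~\ref{thm:asymSf}, proceeds by checking that $|a|^{2n+1}\tilde{g}(\Fs_a^{-1}k)$ lies in $\CS^-_\pvs(F^\times)$ for each $k\in K_\Del$; this reduces, via the Iwasawa decomposition of $(-h^{-1},\RI_{2n})\in\Sp_{4n}(F)$, to the analogous asymptotic statement for $\CF_{X,\psi}(f)\in\CS_\pvs(X_{P_\Del})$.
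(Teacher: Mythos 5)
Your opening move --- inverting \eqref{phi-CF-f} and invoking Proposition \ref{pro:FTX} so that everything reduces to the change-of-variables operator $T\colon\varphi(a,h)\mapsto\varphi(2^{2n}a,-h^{-1})$ --- is exactly how the paper's proof begins, and your pieces (i) and (ii) are unproblematic. The gap is in piece (iii), the inversion $h\mapsto h^{-1}$, which is where the actual content of the lemma lies. Realizing (iii) as the affine involution $X\mapsto -X$ on the open cell $M^{\ab}_\Del w_\Del N_\Del$ (your Cayley-transform computation, including $\det(h^{-1}-\RI_{2n})=\det(h-\RI_{2n})$, is correct) is only a birational statement: the involution is defined on a Zariski-open subset of $X_{P_\Del}$, while functions in $\CC^\infty_c(X_{P_\Del})$ are compactly supported on all of $X_{P_\Del}(F)$ and may be supported near or on the complement of that cell. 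So it is not ``manifest'' that (iii) preserves $\CC^\infty_c(X_{P_\Del})$; you must exhibit a homeomorphism of the \emph{whole} of $X_{P_\Del}(F)$ inducing $h\mapsto -h^{-1}$ on the $G$-chart (note that the naive candidate $g\mapsto g^{-1}$ does not descend to $[P_\Del,P_\Del]\bks\Sp_{4n}$). This is precisely what the paper constructs: $\inv_{X_{P_\Del}}\colon g\mapsto w_{\mathrm{swap}}\,g\,w_\Del w_{\mathrm{swap}}$, built from globally defined left and right translations, which descends because $w_{\mathrm{swap}}$ normalizes $[P_\Del,P_\Del]$ and which sends $\Fs_a^{-1}(h,\RI)$ to $\Fs_a^{-1}(-h^{-1},\RI)$ via the identity $(h,\RI)\equiv(\RI,h^{-1})$ modulo the diagonal stabilizer $\Sp_{2n}^\Del$. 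Your ``alternative verification'' through Theorem \ref{thm:asymSf} runs into the same wall: the right $K_\Del$-finiteness of the twisted function is exactly what requires such a global realization.

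Second, even granting (iii) on $\CC^\infty_c(X_{P_\Del})$, your assertion that the involution is ``compatible with $\CF_{X,\psi}$'' --- which you need in order to conclude that $T$ preserves the second summand $\CF_{X,\psi}(\CC^\infty_c(X_{P_\Del}))$ of $\CS_\pvs(X_{P_\Del})$ --- is stated without proof and is not a formal consequence of anything you have set up. The paper avoids this question entirely by a device you are missing: since the kernel satisfies $\Phi_{\rho,\psi}(a,h)=\Phi_{\rho,\psi}(a,h^{-1})$, a change of variables inside the convolution gives $\CF_{\rho,\psi}(\phi_f)(a,-h^{-1})=\CF_{\rho,\psi}(\phi')(a,h)$ with $\phi'(a,h)=\phi_f(a,-h^{-1})$. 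This moves the twist from the \emph{output} of $\CF_{\rho,\psi}$ to its \emph{input}, so one only ever needs (iii) to act on $\CC^\infty_c(X_{P_\Del})$, never on its Fourier image. To repair your argument, adopt both devices: the kernel symmetry to eliminate the compatibility claim, and the explicit homeomorphism $\inv_{X_{P_\Del}}$ to make (iii) global.
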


\begin{proof}
It is equivalent to prove that for any $f\in \CC^{\infty}_{c}(X_{P_{\Del}})$, the function $\CF_{\rho,\psi}(\phi_{f})(a,h)$ belongs to $\CS_{\rho}(G)$.

By Theorem \ref{thm:FT-FT}, we have that
$$
\phi_{\CF_{X,\psi}(f)}(a,h) =
|2|^{-n(2n+1)}
\CF_{\rho,\psi}(\phi_{f})(2^{-2n}a,-h^{-1}).
$$
By Definition \ref{dfn:rho-SsG}, we have that $\phi_{\CF_{X,\psi}(f)}(a,h)\in \CS_{\rho}(G)$. Hence we obtain that
as a function in variable $(a,h)$, the function $\CF_{\rho,\psi}(\phi_{f})(a,-h^{-1})$ belongs to the space $\CS_{\rho}(G)$.

It remains to show that the function $\CF_{\rho,\psi}(\phi_{f})(a,h)$ belongs to the space $\CS_{\rho}(G)$. From Definition \ref{dfn-SDFTG}, we have
$$
\CF_{\rho,\psi}(\phi_{f})(a,-h^{-1}) =
\int^{\pv}_{F^{\times}}
\int_{\Sp_{2n}(F)}
\Phi_{\rho,\psi}(t,y)\phi^{\vee}_{f}(t^{-1}a,-y^{-1}h^{-1})\ud y\ud^{*}t.
$$
From the formula in \eqref{def:SDG}, we must have that $\Phi_{\rho,\psi}(a,h) = \Phi_{\rho,\psi}(a,h^{-1})$ for $h\in\Sp_{2n}(F)$. After changing variables: $y\to y^{-1}$ and $y\to hyh^{-1}$,
\begin{align}\label{Fphi-Fphi'}
\CF_{\rho,\psi}(\phi_{f})(a,-h^{-1})
&=
\int^{\pv}_{F^{\times}}
\int_{\Sp_{2n}(F)}
\Phi_{\rho,\psi}(t,y)\phi^{\vee}_{f}(t^{-1}a,-h^{-1}y)\ud y\ud^{*}t\nonumber
\\
&=
\int^{\pv}_{F^{\times}}
\int_{\Sp_{2n}(F)}
\Phi_{\rho,\psi}(t,y)\phi^{\p\vee}(t^{-1}a,y^{-1}h)\ud y\ud^{*}t\nonumber\\
&=\CF_{\rho,\psi}(\phi')(a,h),
\end{align}
where $\phi^{\p}(a,h) = \phi_{f}(a,-h^{-1})$.
It turns out to be sufficient to show that
for any $f\in \CC^{\infty}_{c}(X_{P_{\Del}})$, the function $\phi^{\p}(a,h) = \phi_{f}(a,-h^{-1})$ belongs to $\CC^{\infty}_{c,X}(G)$; or equivalently, there exists $f^{\p}\in \CC^{\infty}_{c}(X_{P_{\Del}})$
such that
\begin{align}\label{f'-f}
f^{\p}(\Fs^{-1}_{a}(h,\RI_{2n})) = f(\Fs^{-1}_{a}(-h^{-1},\RI_{2n}))
\end{align}
for any $(a,h)\in G(F)$. This implies that
\begin{align}\label{phi'-phi}
\phi_{f'}(a,h)=\phi'(a,h)=\phi_f(a,-h^{-1}).
\end{align}
To prove \eqref{f'-f}, we only need to find a homeomorphism from $X_{P_{\Del}}(F)$ to itself that sends $\Fs^{-1}_{a}(h,\RI_{2n})$ to $\Fs^{-1}_{a}(-h^{-1},\RI_{2n})$
for any $(a,h)\in G(F)$.

Following from Sections \ref{ssec-lzidm} and \ref{ssec-CT-J}, we consider the homeomorphism from $\Sp_{4n}(F)$ to itself:
\begin{align*}
\mathrm{inv}_{X_{P_{\Del}}}: \Sp_{4n}(F)&\to \Sp_{4n}(F)
\\
g&\to w_{\mathrm{swap}}g w_{\Del} w_{\mathrm{swap}}
\end{align*}
where
$
w_{\mathrm{swap}} = w_{\mathrm{swap}}^{-1}=
\left(
\begin{smallmatrix}
0 & \RI_{n} & 0 & 0\\
\RI_{n} & 0 & 0 & 0\\
0 & 0 & 0 & \RI_{n}\\
0 & 0 & \RI_{n} & 0\\
\end{smallmatrix}
\right)\in \Sp_{4n}(F)
$
and 
\[
w_{\Del} = (\RI_{2n},-\RI_{2n})\in \Sp_{2n}(F)\times \Sp_{2n}(F)
\] 
is as given in \eqref{wDel0}, which takes the parabolic subgroup $P_\Del$ to its opposite $P_\Del^-$.

We first show that $\mathrm{inv}_{X_{P_{\Del}}}$ descends to a homeomorphism from $X_{P_{\Del}}(F)$ to itself.
For any $p\in [P_{\Del},P_{\Del}]$ and $g\in \Sp_{4n}(F)$,
\begin{align*}
\inv_{X_{P_{\Del}}}(pg)
&= w_{\mathrm{swap}}pgw_{\Del}w_{\mathrm{swap}}
\\
&=
w_{\mathrm{swap}}pw_{\mathrm{swap}}
w_{\mathrm{swap}}gw_{\Del}w_{\mathrm{swap}}
\\
&=
w_{\mathrm{swap}}pw_{\mathrm{swap}}
\inv_{X_{P_{\Del}}}(g).
\end{align*}
It is enough to show that $w_{\mathrm{swap}}[P_{\Del},P_{\Del}]w_{\mathrm{swap}} = [P_{\Del},P_{\Del}]$.
By taking $g_0$ and $g_0^{-1}$ as in \eqref{g0} and \eqref{g0-1} respectively, we have that
$$
g^{-1}_{0} P_{\std}(F) g_{0} = P_{\Del}(F).
$$
By a straightforward computation, we obtain that
$$
w_{\mathrm{swap}}g^{-1}_{0} =
g^{-1}_{0}
\left(
\begin{smallmatrix}
\RI_{n} & & &\\
	& -\RI_{n} & &\\
	& & \RI_{n} & \\
	& &	& -\RI_{n}
\end{smallmatrix}
\right).
$$
Hence we obtain that
\begin{align*}
w_{\mathrm{swap}}[P_{\Del},P_{\Del}]w_{\mathrm{swap}}
&= w_{\mathrm{swap}}
g^{-1}_{0}
[P_{\std},P_{\std}]
g_{0}
w_{\mathrm{swap}}
\\
&=g^{-1}_{0}
\left(
\begin{smallmatrix}
\RI_{n} & & &\\
	& -\RI_{n} & &\\
	& & \RI_{n} & \\
	& &	& -\RI_{n}
\end{smallmatrix}
\right)
[P_{\std},P_{\std}]
\left(
\begin{smallmatrix}
\RI_{n} & & &\\
	& -\RI_{n} & &\\
	& & \RI_{n} & \\
	& &	& -\RI_{n}
\end{smallmatrix}
\right)
g_{0}
\\
&=g^{-1}_{0}
[P_{\std},P_{\std}]g_{0} =[P_{\Del},P_{\Del}].
\end{align*}
It follows that $\inv_{X_{P_{\Del}}}$ descends to a homeomorphism from $X_{P_{\Del}}(F)$ to itself.

It remains to show that $\mathrm{inv}_{X_{P_{\Del}}}$ sends $\Fs^{-1}_{a}(h,\RI_{2n})$ to $\Fs^{-1}_{a}(-h^{-1},\RI_{2n})$ for any $(a,h)\in G(F)$.

Since $\Sp_{2n}(F)\times \Sp_{2n}(F)$ acts on $X_{P_{\Del}}(F)$ with $\Sp_{2n}(F)^{\Del}$ as the stabilizer at the point $[P_\Del,P_\Del]$, we only need to show that
$\mathrm{inv}_{X_{P_{\Del}}}$ sends $\Fs^{-1}_{a}(h,\RI_{2n})$ to $\Fs^{-1}_{a}(\RI_{2n},-h)$ for any $(a,h)\in G(F)$.
But this can be deduced from the explicit description of the embedding \eqref{embsp}. More precisely, by a straightforward computation,
$w_{\Del}$ sends $(\RI_{2n},h)$ to $(\RI_{2n},-h)$ for any $h\in \Sp_{2n}(F)$, and the conjugation action by $w_{\mathrm{swap}}$ swaps $\Sp_{2n}(F)\times \{ \RI_{2n} \}$ and $\{ \RI_{2n} \}\times \Sp_{2n}(F)$, i.e. $w_{\mathrm{swap}}(h_1,h_2)w_{\mathrm{swap}} = (h_2,h_1)$ for any $(h_1,h_2)\in \Sp_{2n}(F)\times \Sp_{2n}(F)$. We are done.
\end{proof}

Now we are ready to prove the following result.

\begin{thm}[Plancherel Theorem]\label{thm:Pl-FOG}
The Fourier operator
$\CF_{\rho,\psi}$ extends to a unitary operator on $L^2(G,\ud^{*}x\ud h)$ and has the property that $\CF_{\rho,\psi^{-1}}\circ \CF_{\rho,\psi}=\Id$.
\end{thm}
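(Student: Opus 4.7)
\smallskip

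The plan is to transfer the Plancherel theorem for $\CF_{X,\psi}$ on $L^2(X_{P_\Del})$ (Proposition \ref{pro:Fx-norm}) and the involution identity $\CF_{X,\psi^{-1}}\circ \CF_{X,\psi}=c^2\Id$ (Corollary \ref{Fx-invs}) to the corresponding statements for $\CF_{\rho,\psi}$ on $L^2(G,\ud^*a\ud h)$, using the bijective correspondence $f\leftrightarrow \phi_f$ from \eqref{f-phi} and the key compatibility identity of Theorem \ref{thm:FT-FT} (equivalently Definition \ref{dfn:FTtoSG}),
\[
\CF_{\rho,\psi}(\phi_f)(a,h)=|2|^{n(2n+1)}\phi_{\CF_{X,\psi}(f)}(2^{2n}a,-h^{-1}).
\]
I would first establish that $\CC_{c,X}^\infty(G)$ is dense in $L^2(G,\ud^*a\ud h)$. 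Given $\phi\in\CC_c^\infty(G)$, push forward along the open embedding $G\hookrightarrow X_{P_\Del}$, $(a,h)\mapsto \Fs_a^{-1}(h,\RI)$: since $\supp(\phi)$ is a compact subset of $G(F)$, its image is a compact subset of the open image of $G$ in $X_{P_\Del}(F)$, so the zero-extension $f_\phi\in\CC_c^\infty(X_{P_\Del})$ is smooth and satisfies $\phi=\phi_{f_\phi}$; thus $\CC_c^\infty(G)\subset \CC_{c,X}^\infty(G)$, which is therefore dense. Next, by left $M_\Del^{\ab}$- and right $\Sp_{2n}$-invariance of both inner products and the uniqueness of Haar measure on the unimodular group $G(F)$, there exists a positive constant $C_0$ with $\|\phi_f\|_{L^2(G)}^2=C_0\cdot \|f\|_{L^2(X_{P_\Del})}^2$ for all $f\in\CS_\pvs(X_{P_\Del})$; the value of $C_0$ can be computed by a Jacobian calculation along the lines of Lemma \ref{lem:jacobian}, but this explicit value will not be needed.

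Given these ingredients, isometry on $\CC_{c,X}^\infty(G)$ follows by direct substitution. Since $(a,h)\mapsto(2^{-2n}a,-h^{-1})$ preserves $\ud^*a\ud h$ (using unimodularity of $\Sp_{2n}(F)$ and translation-invariance on $F^\times$),
\[
\|\CF_{\rho,\psi}(\phi_f)\|_{L^2(G)}^2=|2|^{2n(2n+1)}\|\phi_{\CF_{X,\psi}(f)}\|_{L^2(G)}^2=|2|^{2n(2n+1)}C_0\,\|\CF_{X,\psi}(f)\|_{L^2(X_{P_\Del})}^2.
\]
Applying Proposition \ref{pro:Fx-norm} with $c=|2|^{-n(2n+1)}$, the powers of $|2|$ cancel and the comparison constant $C_0$ reappears symmetrically, yielding
\[
\|\CF_{\rho,\psi}(\phi_f)\|^2=C_0\,\|f\|^2=\|\phi_f\|^2.
\]
Hence $\CF_{\rho,\psi}$ is isometric on the dense subspace $\CC_{c,X}^\infty(G)$ and extends uniquely to a bounded operator on $L^2(G,\ud^*a\ud h)$ of norm one.

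For the involution $\CF_{\rho,\psi^{-1}}\circ\CF_{\rho,\psi}=\Id$, I would use that $\Phi_{\rho,\psi}$ is conjugation-invariant (hence central in the convolution algebra of $G(F)$) and satisfies $\Phi_{\rho,\psi}^\vee=\Phi_{\rho,\psi}$ by Proposition \ref{pro:Phi}, which gives
\[
\CF_{\rho,\psi^{-1}}\circ\CF_{\rho,\psi}(\phi)=\Phi_{\rho,\psi^{-1}}\ast(\Phi_{\rho,\psi}\ast\phi^\vee)^\vee=\Phi_{\rho,\psi^{-1}}\ast\Phi_{\rho,\psi}\ast\phi,
\]
reducing the claim to $\Phi_{\rho,\psi^{-1}}\ast\Phi_{\rho,\psi}=\delta_e$. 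This identity would then be verified by transferring the Mellin-inversion argument from the proof of Corollary \ref{Fx-invs}: via Corollary \ref{cor:ComOnG}, the operators $\CF_{\rho,\psi}$ and $\CF_{\rho,\psi^{-1}}$ correspond, after the projection $\CP_{\chi_s}$ to the induced representation of $\Sp_{4n}(F)$, to the normalized intertwining operators $\RM^\dag_{w_\Del}(s,\chi,\psi)$ and $\RM^\dag_{w_\Del}(-s,\chi^{-1},\psi^{-1})$ respectively, whose composition is the identity by \eqref{eq:M-FE}; the Mellin inversion along the $F^\times$-factor then returns the desired identity $\Id$ on $\CC_{c,X}^\infty(G)$, and extension by continuity from the dense subspace completes the proof. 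The main obstacle I anticipate is the bookkeeping in the involution step: naively iterating Theorem \ref{thm:FT-FT} produces the twist $(a,h)\mapsto(2^{2n}a,-h^{-1})$ twice, whose square is the nontrivial dilation $(2^{4n}a,h)$ rather than the identity, so the transfer of Corollary \ref{Fx-invs} directly through $\phi_f\leftrightarrow f$ requires tracking an extra scalar; the Mellin-theoretic route via Corollary \ref{cor:ComOnG} bypasses this difficulty, at the price of invoking the analytic machinery of Section \ref{sec-etapvs-FT}.
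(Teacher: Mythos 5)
Your isometry argument is essentially the paper's: both reduce $\|\CF_{\rho,\psi}(\phi_f)\|_{L^2(G)}=\|\phi_f\|_{L^2(G)}$ on $\CC^\infty_{c,X}(G)$ to Proposition \ref{pro:Fx-norm} via the comparison constant between the two $L^2$-norms (the paper's $c_{K_\Del}$, your $C_0$) and the measure-preserving change of variables $(a,h)\mapsto(2^{2n}a,-h^{-1})$, with the powers of $|2|$ cancelling against $c=|2|^{-n(2n+1)}$. That half is fine.

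The involution half has a gap. You correctly identify the obstacle — iterating \eqref{phi-CF-f} twice produces the dilation $(2^{4n}a,h)$ rather than the identity — but neither of your two proposed resolutions closes it as stated. The route via $\Phi_{\rho,\psi^{-1}}\ast\Phi_{\rho,\psi}=\delta_e$ requires rearranging a triple convolution of distributions that are \emph{not} essentially compact (this is exactly why Section \ref{ssec-pf-Thm13} decomposes $\Phi_{\rho,\psi}$ through the Bernstein center), so the associativity and centrality manipulations are not free. And the ``Mellin-theoretic route'' does not in fact bypass the twist: Corollary \ref{cor:ComOnG} itself carries the evaluation point $(-h^{-1},\RI)$ on one side, and the map $h\mapsto -h^{-1}$ is not a group translation, so composing the two intertwining identities reintroduces precisely the bookkeeping you hoped to avoid.

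What the paper does instead is show that the twist essentially commutes with $\CF_{\rho,\psi}$: using $\Phi_{\rho,\psi}(t,y)=\Phi_{\rho,\psi}(t,y^{-1})$ (Proposition \ref{pro:Phi}) and conjugation-invariance, together with the substitutions $y\mapsto y^{-1}$ and $y\mapsto h^{-1}yh$ inside the defining integral, one gets
\[
\CF_{\rho,\psi}(\phi_f)(2^{-2n}a,-h^{-1})=\CF_{\rho,\psi}(\phi')(a,h),\qquad \phi'(a,h):=\phi_{\Fr_w f}(2^{2n}a,h^{-1}),
\]
with $w=-w_\Del$. Substituting this into the two transfer identities turns Corollary \ref{Fx-invs} into $\phi'=\CF_{\rho,\psi^{-1}}\circ\CF_{\rho,\psi}(\phi')$, and since $\phi\mapsto\phi'$ is a bijection of $\CC^\infty_c(G)$, the identity holds on all of $\CC^\infty_c(G)$ and extends by continuity. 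To make your proof complete you would need to supply this (or an equivalent) commutation step explicitly rather than deferring it.
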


\begin{proof}
We first prove that $\|\CF_{\rho,\psi}(\phi)\|^2=\|\phi\|^2$ for any $\phi\in\CC_{c,X}^\infty(G)$.
For any $f\in \CC^{\infty}_{c}(X_{P_{\Del}})$, from Proposition \ref{pro:Fx-norm}, we have that
$$
\|2^{n(2n+1)}\CF_{X,\psi}(f)\|  = \|f\|,
$$
where $\|\cdot\|$ is the $L^{2}$-norm of the space $L^{2}(X_{P_{\Del}})$ as defined in \eqref{eq:L2onXp}:
$$
\|f\|^{2} =
\int_{K_\Del}
\int_{F^{\times}}
|f(\Fs_{x}k)|^{2}\del_{P_{\Del}}(\Fs_{x})^{-1}\ud^{*}x\ud k.
$$
After changing variable $x\to x^{-1}$, we can rewrite the integral as
\begin{align*}
\|f\|^{2} &=
\int_{K_\Del}
\int_{F^{\times}}
|f(\Fs^{-1}_{x}k)|^{2}\del_{P_{\Del}}(\Fs_{x})\ud^{*}x\ud k
\\
&=
\int_{K_\Del}
\int_{F^{\times}}
|f(\Fs^{-1}_{x}k)|x|^{\frac{2n+1}{2}}|^{2}\ud^{*}x\ud k.
\end{align*}
We make a comment on the relevant measures here.
Up to constant,
there is a unique $\Sp_{4n}(F)$-invariant measure on $P_{\Del}(F)\bs \Sp_{4n}(F)$, which can be obtained via the constant sections of the canonical bundle
$\bigwedge^{\max}\Ome_{P_{\Del}(F)\bs \Sp_{4n}(F)}$. From the Iwasawa decomposition, $P_{\Del}(F)\bs \Sp_{4n}(F)$ is a $K_\Del$-homogeneous variety,
and the Haar measure ${\rm d} k$ on $K_\Del$ can induce the $\Sp_{4n}(F)$-invariant measure on $P_{\Del}(F)\bs \Sp_{4n}(F)$. On the other hand,
as $\Sp_{2n}(F)\times \{ \RI\}$ embeds as an open dense subset into $P_{\Del}(F)\bs \Sp_{4n}(F)$, the Haar measure ${\rm d} h$ on $\Sp_{2n}(F)\times \{ \RI\}$ can also induce
the $\Sp_{4n}(F)$-invariant measure on $P_{\Del}(F)\bs \Sp_{4n}(F)$. Therefore there exists a nonzero constant, which we denote by $c_{K_\Del}$, such that
$$
\ud k=c_{K_\Del}\cdot\ud h
$$
as measures on $P_{\Del}(F)\bs \Sp_{4n}(F)$.
Hence we can write $\|f\|^{2}$ as
$$
\|f\|^{2} =
c_{K_\Del}
\int_{\Sp_{2n}(F)}
\int_{F^{\times}}
|f(\Fs^{-1}_{x}\cdot(h,\RI))|x|^{\frac{2n+1}{2}}|^{2}
\ud^{*}x\ud h.
$$
From the relation in \eqref{f-phi}, we have that $f(\Fs_x^{-1}\cdot (h,\RI))|x|^{\frac{2n+1}{2}} = \phi_{f}(x,h)$. Hence we obtain that
$$
\|f\|^{2} =
c_{K_\Del}
\int_{\Sp_{2n}(F)}
\int_{F^{\times}}
|\phi_{f}(x,h)|^{2}\ud^{*}x\ud h.
$$
It follows that the $\phi_f$ belongs to the space $L^2(G,\ud^*x\ud h)$.

On the other hand, from \eqref{phi-CF-f}, for any $f\in \CC^{\infty}_{c}(X_{P_{\Del}})$, the following identity
$$
\phi_{2^{n(2n+1)}\CF_{X,\psi}(f)}(a,h) =
|2|^{n(2n+1)}\phi_{\CF_{X,\psi}(f)}
(a,h) =\CF_{\rho,\psi}
(\phi_{f})(2^{-2n}a,-h^{-1})
$$
holds for any $(a,h)\in G(F)$. Hence we obtain that
\begin{align*}
\|2^{n(2n+1)}\CF_{X,\psi}(f)\|^{2}
&=
c_{K_\Del}
\int_{\Sp_{2n}(F)}
\int_{F^{\times}}
|2^{n(2n+1)}\phi_{\CF_{X,\psi}(f)}(x,h)|^{2}\ud^{*}x\ud h
\\
&=
c_{K_\Del}
\int_{\Sp_{2n}(F)}
\int_{F^{\times}}
|\CF_{\rho,\psi}(\phi_{f})(2^{-2n}x,-h^{-1})|^{2}\ud^{*}x\ud h.
\end{align*}
After changing variables $x\to 2^{2n}x$ and $h\to -h^{-1}$, we get that
\begin{align*}
\|2^{n(2n+1)}\CF_{X,\psi}(f)\|^{2}
=
c_{K_\Del}
\int_{\Sp_{2n}(F)}
\int_{F^{\times}}
|\CF_{\rho,\psi}(\phi_{f})(x,h)|^{2}\ud^{*}x\ud h.
\end{align*}
From Proposition \ref{pro:Fx-norm} that $\|2^{n(2n+1)}\CF_{X,\psi}(f)\| = \|f\|$, we get the desired identity
$$
\int_{\Sp_{2n}(F)}
\int_{F^{\times}}
|\phi_{f}(x,h)|^{2}\ud^{*}x\ud h
=
\int_{\Sp_{2n}(F)}
\int_{F^{\times}}
|\CF_{\rho,\psi}(\phi_{f})(x,h)|^{2}\ud^{*}x\ud h.
$$
for any $f\in\CC_c^\infty(X_{P_\Del})$. In other words, we obtain that $\|\CF_{\rho,\psi}(\phi)\|=\|\phi\|$ for any $\phi\in\CC_{c,X}^\infty(G)$. By Theorem \ref{thm:asymSf}, it is clear that the space
$\CC_c^\infty(G)$ is contained in $\CC_{c,X}^\infty(G)$. As just proved above, the space  $\CC_{c,X}^\infty(G)$ is contained in $L^{2}(G, \ud^{*}x\ud h)$. Therefore we obtain that
\[
\|\CF_{\rho,\psi}(\phi)\|=\|\phi\|
\]
holds for all $\phi\in L^{2}(G, \ud^{*}x\ud h)$.

Now we prove the second part that $\CF_{\rho,\psi^{-1}}\circ \CF_{\rho,\psi}=\Id$.
By Corollary \ref{Fx-invs}, for any $f\in \CC^{\infty}_{c}(X_{P_{\Del}})$, we have 
\[
\CF_{X,\psi^{-1}}\circ \CF_{X,\psi}(f) = |2|^{-2n(2n+1)}f.
\]
By Definition \ref{dfn:FTtoSG} and \eqref{CF-f}, we have that
\begin{align*}
|2|^{-2n(2n+1)}f(\Fs^{-1}_{a}(h,\RI))
&=
\CF_{X,\psi^{-1}}\circ \CF_{X,\psi}(f)
(\Fs^{-1}_{a}(h,\RI))
\\
&=
|2|^{-2n(2n+1)}
f_{\CF_{\rho,\psi^{-1}}(\phi_{\CF_{X,\psi}(f)})}(\Fs^{-1}_{2^{-2n}a}(-h^{-1},\RI)),
\end{align*}
for $(a,h)\in G(F)$.
By the transition relation in \eqref{f-phi}, we obtain that
\begin{align*}
\phi_{f}(a,h)
&=
|a|^{\frac{2n+1}{2}}
f_{\CF_{\rho,\psi^{-1}}(\phi_{\CF_{X,\psi}(f)})}(\Fs^{-1}_{2^{-2n}a}(-h^{-1},\RI))
\\
&=
|2|^{n(2n+1)}
\CF_{\rho,\psi^{-1}}(\phi_{\CF_{X,\psi}(f)})(2^{-2n}a,-h^{-1}).
\end{align*}
After changing variables $a\to 2^{2n}a$ and $h\to -h^{-1}$, we obtain that
\begin{equation}\label{eq:FGId-1}
|2|^{-n(2n+1)}\phi_{f}(2^{2n}a,-h^{-1})
=
\CF_{\rho,\psi^{-1}}
(\phi_{\CF_{X,\psi}(f)})(a,h).
\end{equation}
From \eqref{phi-CF-f}, we have that
\begin{equation}\label{eq:FGId-2}
\phi_{\CF_{X,\psi}(f)}(a,h) = |2|^{-n(2n+1)}
\CF_{\rho,\psi}(\phi_{f})(2^{-2n}a,-h^{-1}).
\end{equation}
By Definition \ref{dfn-SDFTG}, we write $\CF_{\rho,\psi}(\phi_{f})(2^{-2n}a,-h^{-1})$ as
\begin{align*}
\int_{F^{\times}}^{\pv}
\int_{\Sp_{2n}(F)}
\Phi_{\rho,\psi}(t,y)\phi^{\vee}_{f}
(t^{-1}2^{-2n}a,y^{-1}(-h^{-1}))\ud y \ud^{*}t.
\end{align*}
From Proposition \ref{pro:Phi}, we have that $\Phi_{\rho,\psi}(t,y) = \Phi_{\rho,\psi}(t,y^{-1})$ for any $y\in\Sp_{2n}(F)$.
After changing variable $y\to y^{-1}$ and $y\to h^{-1}yh$, we obtain that $\CF_{\rho,\psi}(\phi_{f})(2^{-2n}a,-h^{-1})$ is equal to
\begin{align*}
\int_{F^{\times}}^{\pv}
\int_{\Sp_{2n}(F)}
\Phi_{\rho,\psi}(t,y)\phi^{\vee}_{f}(t^{-1}2^{-2n}a,(-h^{-1}) y)\ud y \ud^{*}t.
\end{align*}
The function $\phi^{\vee}_{f}(t^{-1}2^{-2n}a,(-h^{-1}) y)$ can be calculated as follows:
\begin{align*}
\phi^{\vee}_{f}(t^{-1}2^{-2n}a,-h^{-1}y)=
\phi_f(2^{2n}ta^{-1},-y^{-1}h)=\phi_{\Fr_wf}(2^{2n}ta^{-1},y^{-1}h),
\end{align*}
where $w:=-w_{\Del} = (-\RI_{2n}, \RI_{2n})\in \Sp_{2n}(F)\times \Sp_{2n}(F)$ and $\Fr_wf$ is the right translation of $f$ by $w$.
Set $\phi^{\p}(a,h): = \phi_{\Fr_{w}f}(2^{2n}a,h^{-1})$ and obtain that
\[
\phi^{\vee}_{f}(t^{-1}2^{-2n}a,-h^{-1}y)={\phi'}^{\vee}(t^{-1}a,y^{-1}h).
\]
Putting it back the integral, we obtain that
\begin{align*}
\CF_{\rho,\psi}(\phi_{f})(2^{-2n}a,-h^{-1})
=&\int_{F^{\times}}^{\pv}
\int_{\Sp_{2n}(F)}
\Phi_{\rho,\psi}(t,y)\phi^{\p\vee}
(t^{-1}a,y^{-1}h) \ud y \ud^{*}t
\\
=&\CF_{\rho,\psi}(\phi^{\p})(a,h).
\end{align*}
Hence the equation in \eqref{eq:FGId-2} becomes the following equation:
$$
\phi_{\CF_{X,\psi}(f)}(a,h) = |2|^{-n(2n+1)}\CF_{\rho,\psi}(\phi^{\p})(a,h).
$$
Plugging the above formula into \eqref{eq:FGId-1}, we get that
$$
\phi_{f}(2^{2n}a,-h^{-1})
=
\CF_{\rho,\psi^{-1}}
\circ \CF_{\rho,\psi}(\phi^{\p})(a,h)
$$
for any $(a,h)\in G(F)$.
Because
\[
\phi_{f}(2^{2n}a,-h^{-1}) = \phi_{\Fr_{w}f}(2^{2n}a,h^{-1})=\phi'(a,h),
\]
we finally get that
\begin{align}\label{FF-I}
\phi^{\p} = \CF_{\rho,\psi^{-1}}\circ \CF_{\rho,\psi}(\phi^{\p}).
\end{align}
Since $\CC_c^\infty(G)\subset\CC_{c,X}^\infty(G)$, and for any $\phi\in\CC_c^\infty(G)$, one has that $\phi'(a,h)=\phi(2^{2n}a,-h^{-1})\in\CC_c^\infty(G)$. It implies that
the identity in \eqref{FF-I} holds for all $\phi'\in\CC_c^\infty(G)$. Therefore we obtain that
$$
\CF_{\rho,\psi^{-1}}\circ \CF_{\rho,\psi} = \Id
$$
as operators in the space $L^2(G,\ud^*x\ud h)$.
We are done.
\end{proof}

To complete the proof of Theorem \ref{thm:L2}, we prove the following result.

\begin{thm}\label{thm:FOG-Ss}
The $\rho$-Schwartz space $\CS_\rho(G)$ is stable under the Fourier operator $\CF_{\rho,\psi}$.
\end{thm}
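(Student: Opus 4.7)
The plan is to reduce the stability of $\CS_\rho(G)$ under $\CF_{\rho,\psi}$ to the stability of $\CS_\pvs(X_{P_\Del})$ under a geometric pullback on $X_{P_\Del}(F)$, and then further to the stability of $\CS_\pvs^-(F^\times)$ under dilations of $F^\times$.

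For $\phi = \phi_f \in \CS_\rho(G)$ with $f\in \CS_\pvs(X_{P_\Del})$, Definition~\ref{dfn:FTtoSG} gives
\[
\CF_{\rho,\psi}(\phi_f)(a,h) = |2|^{n(2n+1)}\, \phi_{\CF_{X,\psi}(f)}(2^{2n}a,-h^{-1}),
\]
while Proposition~\ref{pro:FTX} ensures $g := \CF_{X,\psi}(f) \in \CS_\pvs(X_{P_\Del})$. Unwinding the definition of $\phi_g$ through \eqref{f-phi}, a short computation identifies the right-hand side with $\phi_{\tilde f}(a,h)$, where
\[
\tilde f := |2|^{2n(2n+1)}\, g\circ \sigma,
\]
and $\sigma:X_{P_\Del}(F) \to X_{P_\Del}(F)$ is the homeomorphism characterized by $\sigma(\Fs_a^{-1}(h,\RI)) = \Fs_{2^{2n}a}^{-1}(-h^{-1},\RI)$. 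Such a $\sigma$ exists as the composition of the left translation by $\Fs_{2^{2n}}^{-1}$ (which descends to $X_{P_\Del}$ since it acts through $M_\Del^\ab$) with the involution $\inv_{X_{P_\Del}}$ constructed in the proof of Lemma~\ref{lem:FOG-Ss0}. Therefore the theorem is reduced to the following key assertion.

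\emph{Key Lemma.} For every $c\in F^\times$, $\CS_\pvs(X_{P_\Del})$ is stable under $g \mapsto g\circ \sigma_c$, where $\sigma_c$ is the homeomorphism of $X_{P_\Del}(F)$ determined by $\sigma_c(\Fs_a^{-1}(h,\RI)) = \Fs_{ca}^{-1}(-h^{-1},\RI)$.

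To establish the key lemma I would invoke the characterization in Theorem~\ref{thm:asymSf}: a smooth function $g$ on $X_{P_\Del}(F)$ lies in $\CS_\pvs(X_{P_\Del})$ if and only if $g$ is right $K_\Del$-finite and, for every fixed $k\in K_\Del$, the function $a\mapsto |a|^{2n+1} g(\Fs_a^{-1}k)$ belongs to $\CS_\pvs^-(F^\times)$. Using the explicit formula $\sigma_c(x) = \Fs_c^{-1}\cdot w_{\mathrm{swap}}\, x\, w_\Del w_{\mathrm{swap}}$ from the proof of Lemma~\ref{lem:FOG-Ss0}, together with the Iwasawa decomposition $w_{\mathrm{swap}}\, k\, w_\Del w_{\mathrm{swap}} = p(k)\, k'(k)$ with $p(k)\in P_\Del(F)$ and $k'(k)\in K_\Del$ depending locally constantly on $k$, and using that both $\Fs_c^{-1}$ and the Levi part of $w_{\mathrm{swap}}$ act on $X_{P_\Del}$ through the abelian quotient $M_\Del^\ab$, one obtains in $X_{P_\Del}(F)$
\[
\sigma_c(\Fs_a^{-1}k) = \Fs_{c\alpha(k)^{-1}a}^{-1}\cdot k'(k),
\]
where $\alpha(k) := \Fa(p(k))\in F^\times$. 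The right $K_\Del$-finiteness of $g\circ \sigma_c$ then follows from the smoothness of $g$ and the compactness of $K_\Del$, since the pairs $(\alpha(k), k'(k))$ take only finitely many relevant values once $g$ is stabilized by a small enough congruence subgroup. The asymptotic condition, after the substitution $b = c\alpha(k)^{-1}a$, amounts to the stability of $\CS_\pvs^-(F^\times)$ under the dilation $b\mapsto \lambda b$ for $\lambda=c\alpha(k)^{-1}\in F^\times$. This last stability follows from the Paley--Wiener characterization $\CS_\pvs^-(F^\times) = |\cdot|^{n+1}\CS_{n,\bet}^-(F^\times)$ in Theorem~\ref{thm:PWM}: a dilation only multiplies the Mellin transform by $\chi_s(\lambda)^{-1}$, which preserves the prescribed pole structure in Definition~\ref{defin:Mellin-gcd}.

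The main obstacle will be the careful bookkeeping in the Iwasawa step: one must verify that the $\Fa$-component $\alpha(k)$ of $p(k)$ interacts correctly with the abelianization of $M_\Del$, and that $g\circ \sigma_c$ inherits right $K_\Del$-finiteness once $k'(k)$ has been extracted from $w_{\mathrm{swap}}\, k\, w_\Del w_{\mathrm{swap}}$. Once this combinatorial step is in place, the remaining analytic input---namely the dilation stability of $\CS_\pvs^-(F^\times)$---is immediate from Theorem~\ref{thm:PWM}.
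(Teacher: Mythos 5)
Your proposal is correct, but it takes a genuinely different route from the paper's. The paper proves the theorem by decomposing $f=f_1+\CF_{X,\psi}(f_2)$ with $f_1,f_2\in\CC_c^\infty(X_{P_\Del})$, handling the first piece by Lemma \ref{lem:FOG-Ss0}, and handling the second by showing (via the identities \eqref{phi'-phi}, \eqref{f'-f}, \eqref{Fphi-Fphi'}) that it reduces to computing $\CF_{\rho,\psi}\circ\CF_{\rho,\psi}$ on $\CC_{c,X}^\infty(G)$, which by the Plancherel identity $\CF_{\rho,\psi^{-1}}\circ\CF_{\rho,\psi}=\Id$ of Theorem \ref{thm:Pl-FOG} equals the harmless involution $\iota$. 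You instead apply Definition \ref{dfn:FTtoSG} directly to all of $\CS_\rho(G)$, feed in Proposition \ref{pro:FTX} to get $\CF_{X,\psi}(f)\in\CS_\pvs(X_{P_\Del})$, and reduce the whole theorem to the stability of $\CS_\pvs(X_{P_\Del})$ under the twisted translation $\sigma_c$ (a normalized left $M_\Del^\ab$-translation composed with $\inv_{X_{P_\Del}}$, noting $w_{\mathrm{swap}}\in M_\Del(F)$), which you prove from the characterization in Theorem \ref{thm:asymSf} together with the dilation-invariance of $\CS_\pvs^-(F^\times)$ coming from Theorem \ref{thm:PWM} (a unit of $\BC[z,z^{-1}]$ preserves the fractional ideals of Proposition \ref{pro:gcd}). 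Your route buys independence from the Plancherel theorem and from the decomposition $\CS_\pvs=\CC_c^\infty+\CF_{X,\psi}(\CC_c^\infty)$, isolating the genuinely geometric content in one stability lemma; the cost is that you must invoke the full strength of Theorems \ref{thm:asymSf} and \ref{thm:PWM} and carry out the Iwasawa bookkeeping for $w_{\mathrm{swap}}kw_\Del w_{\mathrm{swap}}=p(k)k'(k)$, which you correctly flag (the ambiguity of $\Fa(p(k))$ by $\Fa(P_\Del(F)\cap K_\Del)\subset\CO^\times$ is harmless precisely because of the dilation-invariance you establish, and right $K_\Del$-finiteness survives right translation by $w_\Del w_{\mathrm{swap}}$ since the conjugate of an open subgroup of $K_\Del$ meets $K_\Del$ in an open subgroup). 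The paper's proof is shorter given that Theorem \ref{thm:Pl-FOG} is already in hand; yours is more self-contained at the level of the Schwartz-space formalism.
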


\begin{proof}
For any $\phi\in\CS_\rho(G)$, via the transition relation in \eqref{f-phi}, there is a unique $f\in\CS_\pvs(X_{P_\Del})$ such that $\phi=\phi_f$ and $f=f_\phi$.
By Definition \ref{dfn:SSFX}, for any $f\in \CS_{\pvs}(X_{P_{\Del}})$, there exist  $f_{1},f_{2}\in \CC^{\infty}_{c}(X_{P_{\Del}})$, such that
$$
f= f_{1}+\CF_{X,\psi}(f_{2}).
$$
By linearity of \eqref{59-3} and \eqref{phi-CF-f},
$$
\phi_{f}(a,h) = \phi_{f_{1}}(a,h)+|2|^{-n(2n+1)}\CF_{\rho,\psi}(\phi_{f_{2}})(2^{-2n}a,-h^{-1})
$$
for any $(a,h)\in G(F)$. By Lemma \ref{lem:FOG-Ss0}, the function $\CF_{\rho,\psi}(\phi_{f_{1}})(a,h)$ belongs to the space $\CS_{\rho}(G)$.

For $f_{2}\in \CC^{\infty}_{c}(X_{P_{\Del}})$, write $\phi'(a,h)=\CF_{\rho,\psi}(\phi_{f_{2}})(a,-h^{-1})$.
It is enough to show that $\CF_{\rho,\psi}(\phi')(a,h)$ is well-defined and belongs to the space $\CS_\rho(G)$.
From the proof of Lemma \ref{lem:FOG-Ss0}, in particular, of the identities in \eqref{phi'-phi}, \eqref{f'-f}, and \eqref{Fphi-Fphi'}, there exists $f_{3}\in \CC^{\infty}_{c}(X_{P_{\Del}})$ such that
$$
\phi'(a,h)=\CF_{\rho,\psi}(\phi_{f_{2}})(a,-h^{-1}) = \CF_{\rho,\psi}(\phi_{f_{3}})(a,h).
$$
By Lemma \ref{lem:FOG-Ss0} again, the function $\phi'(a,h)$ belongs to the space $\CS_\rho(G)$, and hence $\CF_{\rho,\psi}(\phi')(a,h)$ is well-defined, by Definition \ref{dfn:FTtoSG} or Theorem \ref{thm:Pl-FOG},  and
\begin{align}\label{Fphi'}
\CF_{\rho,\psi}(\phi')(a,h)=
(\CF_{\rho,\psi}\circ\CF_{\rho,\psi})(\phi_{f_{3}})(a,h)
\end{align}
for any $(a,h)\in G(F)$. It remains to show that $(\CF_{\rho,\psi}\circ \CF_{\rho,\psi})(\phi_{f_{3}})(a,h)$ belongs to the $\rho$-Schwartz space $\CS_{\rho}(G)$, for any $f_{3}\in \CC^{\infty}_{c}(X_{P_{\Del}})$.
From \eqref{59-3} and \eqref{phi-CF-f}, we obtain that
$$
\iota\circ \CF_{\rho,\psi}(\phi_{f_{3}})(a,h) = \CF_{\rho,\psi^{-1}}(\phi_{f_{3}})(a,h)
$$
where $\iota\circ\phi_{f_{3}}(a,h)  = \phi_{f_{3}}(-a,h)$ for any $(a,h)\in G(F)$. Combining with Theorem \ref{thm:Pl-FOG}, we obtain that
$$
\CF_{\rho,\psi}\circ \CF_{\rho,\psi}(\phi_{f_{3}})(a,h) = \iota\circ \CF_{\rho,\psi^{-1}}\circ \CF_{\rho,\psi}(\phi_{f_{3}})(a,h) = \iota\circ \phi_{f_{3}}(a,h).
$$
Since $\iota$ stabilizes the $\rho$-Schwartz space $\CS_{\rho}(G)$, we obtain that
the function $\CF_{\rho,{\psi}}\circ \CF_{\rho,\psi}(\phi_{f_{3}})(a,h)$ belongs to the space $\CS_{\rho}(G)$, and so does the function $\CF_{\rho,\psi}(\phi_{f})(a,h)$. We are done.
\end{proof}

This finishes our proof of Theorem \ref{thm:L2} and hence completes our verification of Conjecture 5.4 of \cite{BK00} for the case under consideration.


\section{Multiplicity One and Gamma Functions}\label{sec-MOGF}


We discuss the {\sl multiplicity one conjecture} (Conjecture \ref{cnj:mo}) and its relation to local gamma functions, following the line of the classical theory of Gelfand and Graev for
$\GL_1(F)$ (\cite{GG63}, \cite{GGPS}, and \cite{ST66}). This completes the theory of harmonic analysis and gamma functions.

\subsection{Multiplicity one}\label{ssec-mo}

As a key result of the functional analysis related to the $G(F)$-invariant distribution $\Phi_{\rho,\psi}$, the Fourier operator $\CF_{\rho,\psi}$ and the Schwartz space $\CS_\rho(G)$, one wishes to prove
the {\sl multiplicity one conjecture} (Conjecture \ref{cnj:mo}) for the case under consideration. For the purpose of this section, we prove a weaker
version of the multiplicity one result, which holds for general reductive groups over $F$.

Let $H$ be any reductive algebraic group defined over $F$. As before, let $\Pi(H)$ be the set of equivalence classes of irreducible admissible representations of $H(F)$.

\begin{lem}\label{lem:wmo}
Let $\CC^{\infty}_{c}(H)$ be the space of smooth, compactly supported functions on $H(F)$.
For $\sig_{1},\sig_{2}\in \Pi(H)$, the following inequality
$$
\dim \Hom_{H(F)\times H(F)}(\CC^{\infty}_{c}(H)\otimes \sig_1\otimes \sig_2,\BC)\leq 1
$$
holds. Moreover, equality holds if and only if $\sig_1\simeq \wt{\sig}_2$, where $\wt{\sig}_2$ is the contragredient of $\sig_2$.
\end{lem}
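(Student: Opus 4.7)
The plan is to reformulate the Hom-space in terms of distributions on $H(F)$ and then reduce to Schur's lemma. First, by the standard tensor-hom adjunction, we identify
$$\Hom_{H(F)\times H(F)}(\CC^{\infty}_{c}(H)\otimes \sigma_1\otimes \sigma_2,\BC) \cong \Hom_{H(F)\times H(F)}(\sigma_1 \otimes \sigma_2, \mathcal{D}(H)),$$
where $\mathcal{D}(H) = (\CC^{\infty}_c(H))^*$ is the space of distributions on $H(F)$, equipped with the $(H \times H)$-action induced from left and right translation. Any $T$ on the left-hand side corresponds to the map $\Theta_T: v_1 \otimes v_2 \mapsto D_{v_1,v_2}$, where $D_{v_1,v_2}(\phi) := T(\phi \otimes v_1 \otimes v_2)$.

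Next, I would convert each distribution $D_{v_1, v_2}$ into an invariant bilinear pairing via the convolution action of $\CC^{\infty}_c(H)$ on $\sigma_1$ and $\sigma_2$. The $(H \times H)$-equivariance of $\Theta_T$, once integrated against test functions $\psi_1, \psi_2 \in \CC^{\infty}_c(H)$, yields the key identity
$$D_{v_1,v_2}(\psi_1 * \phi * \psi_2) \;=\; D_{\sigma_1(\check{\psi}_1)v_1,\, \sigma_2(\psi_2)v_2}(\phi),$$
where $\check{\psi}(h) = \psi(h^{-1})$. By the density theorem for irreducible admissible representations (or equivalently by the Jacobson density theorem applied to the Hecke algebra action), the image of $\CC^{\infty}_c(H)$ in $\End(\sigma_i)$ contains every finite-rank operator, and in particular contains the projection $\sigma_i(e_K)$ onto the $K$-fixed subspace $\sigma_i^K$ for every compact open subgroup $K$. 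Choosing $K$ small enough that $\sigma_i^K \neq 0$ for $i = 1,2$ and inserting $\phi = e_K$ into the identity above, one finds that $T$ is completely determined by the bilinear form $B_T: \sigma_1 \otimes \sigma_2 \to \BC$ obtained as the stable value of $T(e_K \otimes v_1 \otimes v_2)$ as $K$ shrinks. The equivariance of $T$ moreover forces $B_T$ to be $H(F)$-invariant under the diagonal action, so $B_T \in \Hom_{H(F)}(\sigma_1 \otimes \sigma_2, \BC) \cong \Hom_{H(F)}(\sigma_1, \wt{\sigma}_2)$.

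At this point Schur's lemma takes over. Since $\sigma_1$ and $\sigma_2$ are irreducible, $\Hom_{H(F)}(\sigma_1, \wt{\sigma}_2)$ is at most one dimensional, and is one dimensional precisely when $\sigma_1 \simeq \wt{\sigma}_2$, which is equivalent to $\sigma_2 \simeq \wt{\sigma}_1$. The assignment $T \mapsto B_T$ is injective: if $B_T = 0$, then $T(e_K \otimes v_1 \otimes v_2) = 0$ for all sufficiently small $K$ and all $v_i \in \sigma_i^K$, and the convolution identity above allows one to ``spread'' this vanishing across all of $\CC^{\infty}_c(H)$, giving $T = 0$. Conversely, when $\sigma_1 \simeq \wt{\sigma}_2$, the matrix-coefficient construction produces a nonzero $T$ via
$$T(\phi \otimes v_1 \otimes v_2) \;:=\; \int_{H(F)} \phi(h)\,\langle v_1,\, \sigma_2(h) v_2 \rangle\, dh,$$
where $\langle\cdot,\cdot\rangle$ is the canonical pairing between $\sigma_1 \simeq \wt{\sigma}_2$ and $\sigma_2$. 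This proves both the bound and the equality condition.

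The main obstacle will be the middle step: identifying $T$ with a single invariant bilinear form $B_T$ in a canonical and well-defined manner. One must verify that $T(e_K \otimes v_1 \otimes v_2)$ stabilizes as $K$ shrinks, is independent of the (sufficiently small) compact open subgroup chosen, and recovers $T(\phi \otimes v_1 \otimes v_2)$ for every test function $\phi$ via the equivariance/convolution identity. This requires combining the density theorem for Hecke algebra actions with the standard fact that every $\phi \in \CC^{\infty}_c(H)$ is bi-$K$-invariant for some open compact $K$, so that convolution by an idempotent $e_{K'}$ with $K' \subset K$ recovers $\phi$. Once this technical bookkeeping is in place, the reduction to Schur's lemma is automatic.
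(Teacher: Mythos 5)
Your argument is correct and lands in the same place as the paper's — an identification of the Hom-space with $\Hom_{H(F)}(\sig_1,\wt{\sig}_2)$ followed by Schur's lemma — but you get there by a more hands-on route. The paper identifies $\CC^{\infty}_{c}(H)$ with the compact induction $\ind^{H\times H}_{H}(1_H)$ along the diagonal, passes to the smooth dual $\Ind^{H\times H}_{H}(1_H)$, and applies Frobenius reciprocity in one line; the reduction to diagonal-invariant pairings is entirely formal. You instead work inside the full distribution space $\mathcal{D}(H)$ and carry out the explicit Hecke-algebra bookkeeping: the convolution identity, the idempotents $e_K$, the stabilization $D_{v_1,v_2}(e_{K'})=D_{v_1,v_2}(e_K)$ for $K'\subset K$ and $K$-fixed vectors, and the decomposition $\phi=\phi*e_K*e_K$ to spread the determination of $T$ from the idempotents to all of $\CC^{\infty}_{c}(H)$. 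This is essentially the proof of the adjunction $\ind^{\vee}\simeq\Ind$ and of Frobenius reciprocity unwound in this special case, so the mathematical content is the same; what your version buys is explicitness (you exhibit $B_T$ and the inverse matrix-coefficient construction concretely, which is exactly the functional used later in the zeta integrals), at the cost of the ``technical bookkeeping'' you flag, all of which does go through: the stabilization follows from $e_K*e_{K'}*e_K=e_K$, diagonal invariance of $B_T$ follows from equivariance plus stabilization applied to $e_{hKh^{-1}}$, and the passage to smooth vectors of $\mathcal{D}(H)$ is automatic since $\sig_1\otimes\sig_2$ is smooth. One small point worth making explicit in your write-up: irreducibility (admissibility) of the $\sig_i$ is what guarantees both that some $\sig_i^K\neq 0$ and that the Hecke algebra acts with dense image, which you invoke when spreading the vanishing of $B_T$ to all of $T$.
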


\begin{proof}
In this proof, we may use $H$ for $H(F)$ to simplify the notation.
First, one may identify the space $\CC^{\infty}_c(H)$ with the smooth and compact induction $\ind^{H\times H}_{H} (1_H)$, as representation of $H\times H$,
where $1_H$ is the trivial representation of $H$ and $H\hookrightarrow H\times H$ is the diagonal embedding.  Then one has that
\begin{align*}
\Hom_{H\times H}(\CC^{\infty}_c(H)\otimes \sig_{1}\otimes \sig_2,\BC)\simeq
\Hom_{H\times H}(\ind^{H\times H}_H(1_H)\otimes \sig_1\otimes \sig_2,\BC).
\end{align*}
Since the smooth dual of $\ind^{H\times H}_H(1_{H})$ is isomorphic to the smooth induction $\Ind^{H\times H}_H(1_H)$, we obtain that
$$
\Hom_{H\times H}(\ind^{H\times H}_H(1_H)\otimes \sig_1\otimes \sig_2,\BC)\simeq
\Hom_{H\times H}(\sig_1\otimes \sig_2, \Ind^{H\times H}_H(1_H)).
$$
By the Frobenius reciprocity, we deduce that
\begin{align*}
\Hom_{H\times H}(\sig_1\otimes \sig_2, \Ind^{H\times H}_H(1_H))
&\simeq
\Hom_H(\sig_1\otimes\sig_2,1_H)\\
&\simeq \Hom_H(\sig_1,\wt{\sig}_2).
\end{align*}
Finally, the result follows from Schur's lemma for the pair $(\sig_1,\wt{\sig}_2)$.
\end{proof}

\subsection{Functional equation and gamma function}\label{ssec-sfegf}

Now we take $H=G=\BG_m\times\Sp_{2n}$ as considered in this paper.
For $\chi\otimes\pi\in\Pi(G)$,  denote by $\vphi(a,h) = \chi(a)\langle w,\pi(h)v \rangle$ a matrix coefficient of $\chi\otimes\pi$,
associated to a pair of smooth vectors $v\in \pi,w\in \wt{\pi}$.
For $\phi\in\CS_\rho(G)$, as in \eqref{LZI-0} the local zeta integral is defined to be
\begin{equation}\label{lzi}
\CZ(s,\phi,\vphi) =\int_{F^{\times}\times \Sp_{2n}(F)} \phi(a,h)\vphi(a,h) |a|^{s-\frac{1}{2}}\ud^{*} a \ud h.
\end{equation}
Note that when $n=0$ we consider $\Sp_{2n}(F)$ as the trivial group and then $\CZ(s,\phi,\varphi)$ is the Tate-integral.

\begin{pro}\label{pro:zicm}
For any matrix coefficient $\vphi(a,h) = \chi(a)\langle w,\pi(h)v \rangle$ of $\chi\otimes\pi\in\Pi(G)$, and for any $\phi\in\CS_\rho(G)$,
the local zeta integral $\CZ(s,\phi,\vphi)$ as in \eqref{lzi} converges absolutely for $\Re(s)$ sufficiently positive, admits meromorphic continuation to $s\in \BC$
\end{pro}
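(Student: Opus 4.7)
The plan is to reduce the local zeta integral $\CZ(s,\phi,\vphi)$ on $G(F)$ to the Piatetski-Shapiro and Rallis local doubling zeta integral on $\Sp_{4n}(F)$ via the projection $\CP_{\chi_{s-1/2}}$, and then invoke Theorem \ref{thm:PSR86} together with the analysis of good sections from Section \ref{ssec-zi-io}.

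First I would use Definition \ref{dfn:rho-SsG} to identify $\phi = \phi_f$ for a unique $f \in \CS_\pvs(X_{P_\Del})$, with the transition relation $\phi(a,h) = f(\Fs_a^{-1}(h,\RI_{2n}))|a|^{(2n+1)/2}$ from \eqref{f-phi}. Substituting into \eqref{lzi} and using $\vphi(a,h) = \chi(a)\langle w,\pi(h)v\rangle$ gives, formally,
\begin{equation*}
\CZ(s,\phi_f,\vphi) = \int_{\Sp_{2n}(F)}\left(\int_{F^\times} f(\Fs_a^{-1}(h,\RI_{2n}))\,\chi(a)|a|^{s+n}\,\ud^* a\right)\langle w,\pi(h)v\rangle\,\ud h.
\end{equation*}
Comparing with the definition \eqref{proj-X-I} of $\CP_{\chi_s}$ and noting that $\chi_{s-1/2}(a)|a|^{(2n+1)/2} = \chi(a)|a|^{s+n}$, the inner integral equals $\CP_{\chi_{s-1/2}}(f)((h,\RI_{2n}))$. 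Hence, once the interchange of integration order is justified, the identity
\begin{equation*}
\CZ(s,\phi_f,\vphi) = \RZ\bigl(\CP_{\chi_{s-1/2}}(f),\,\varphi_{v,w}\bigr)
\end{equation*}
holds, with the right-hand side being precisely the doubling zeta integral \eqref{doublingzeta} evaluated at the section $\CP_{\chi_{s-1/2}}(f)$ restricted to $\Sp_{2n}(F)\times\{\RI_{2n}\}$.

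The key geometric input is Proposition \ref{pro:FTX}, which asserts that $\CP_{\chi_s}$ surjects $\CS_\pvs(X_{P_\Del})$ onto the space of good sections $\RI^\dagger(s,\chi)$. Thus $\CP_{\chi_{s-1/2}}(f)\in \RI^\dagger(s-1/2,\chi)$. By Proposition \ref{pro:analyticalgoodsection} together with \cite[Proposition 3.1]{Y14}, any good section can be written as $f_1 + \RM^\dag_{w_\Del}(-s,\chi^{-1},\psi^{-1})(f_2)$ for holomorphic sections $f_1,f_2$; meromorphic continuation of each piece is established in Theorem \ref{thm:PSR86} (for the holomorphic piece) and via the analytic properties of the normalized intertwining operator in Section \ref{ssec-lgf} (for the second piece after applying the functional equation in Corollary \ref{FE-Langlands gamma}). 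Convergence of $\RZ(\cdot,\varphi_{v,w})$ for $\Re(s)$ sufficiently positive and meromorphic continuation to $s\in\BC$ then follow from Theorem \ref{thm:PSR86}, which gives the desired properties for $\CZ(s,\phi,\vphi)$.

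The main obstacle is the justification of the interchange of integrations. For $f \in \CC^\infty_c(X_{P_\Del})$ this is routine, since $\CP_{\chi_{s-1/2}}(f)$ is a holomorphic section with compactly supported restriction to $K_\Del$, and the standard argument of \cite[Part~A]{GPSR87} for absolute convergence of the doubling zeta integral for $\Re(s)$ sufficiently positive applies. For general $\phi\in\CS_\rho(G)$, by Definition \ref{dfn:SSFX} we may write $f = f_1 + \CF_{X,\psi}(f_2)$ with $f_1,f_2\in\CC^\infty_c(X_{P_\Del})$; the piece $\phi_{f_1}$ is handled by the compactly supported case, while for the piece $\phi_{\CF_{X,\psi}(f_2)}$ we use Theorem \ref{thm:asymSf} to control the asymptotic behavior of $|a|^{2n+1}\phi(a,h)$ on the $M_\Del^\ab$-part (namely, as a function of $a$ it lies in $\CS^-_\pvs(F^\times)$), ensuring absolute convergence for $\Re(s)$ sufficiently positive via the asymptotic bounds of Definition \ref{CS-}. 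Once absolute convergence in a right half-plane is secured, Fubini justifies the interchange, and the meromorphic continuation follows from the good-section analysis above.
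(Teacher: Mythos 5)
Your proposal is correct and follows essentially the same route as the paper: the paper's own proof defers to exactly this reduction, namely the identity $\CZ(s+\tfrac{1}{2},\phi,\vphi)=\RZ(\CP_{\chi_{s}}(f),\vphi')$ established in the proof of Theorem \ref{thm-lfe}, combined with the surjectivity of $\CP_{\chi_s}$ onto good sections (Proposition \ref{pro:FTX}) and the known analytic theory of the doubling integrals, as carried out in Theorem \ref{thm:rho-Ss}. One small normalization slip: by Theorem \ref{thm:asymSf} and the transition relation \eqref{f-phi}, it is $|a|^{\frac{2n+1}{2}}\phi(a,h)$ (not $|a|^{2n+1}\phi(a,h)$) that lies in $\CS^-_{\pvs}(F^\times)$ as a function of $a$, but this does not affect your convergence argument.
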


\begin{proof}
By the definition of Schwartz space $\CS_\rho(G)$ (Definition \ref{dfn:rho-SsG}) and the asymptotics of functions in $\CS_\pvs(X_{P_\Del})$ (Theorem \ref{thm:asymSf}), it is standard to show that the local zeta
integral $\CZ(s,\phi,\vphi)$ as in \eqref{lzi} converges absolutely for $\Re(s)$ sufficiently positive and is a rational function in $q^{-s}$, and hence admits meromorphic continuation to $s\in \BC$. Another way to do this is
to make a connection of the zeta integrals in \eqref{lzi} with the Piatetski-Shapiro and Rallis zeta integrals, as in the proof of Theorem \ref{thm-lfe} below. We omit the details here.
\end{proof}

It is clear that when $\phi\in\CC_c^\infty(G)$, the zeta integral $\CZ(s,\phi,\vphi)$ as in \eqref{lzi} converges absolutely for any $s\in\BC$ and hence is entire as function in $s$. It follows that
the zeta integral $\CZ(s,\phi,\vphi)$ yields a non-zero linear functional in the $\Hom$-space:
\[
\Hom_{G(F)\times G(F)}(\CC^{\infty}_c(G)\otimes (\chi_{s-\frac{1}{2}}^{-1}\otimes\wt{\pi})\otimes(\chi_{s-\frac{1}{2}}\otimes\pi),\BC).
\]
\begin{pro}\label{pro:lfe}
For any matrix coefficient $\vphi(a,h) = \chi(a)\langle w,\pi(h)v \rangle$ of $\chi\otimes\pi\in\Pi(G)$, and for any $\phi\in\CC^\infty_c(G)$,
The local zeta integral $\CZ(1-s, \CF_{\rho,\psi}(\phi), \vphi^\vee)$ defines a non-zero linear functional in the $\Hom$-space:
\[
\Hom_{G(F)\times G(F)}(\CC^{\infty}_c(G)\otimes (\chi_{s-\frac{1}{2}}^{-1}\otimes\wt{\pi})\otimes(\chi_{s-\frac{1}{2}}\otimes\pi),\BC),
\]
via meromorphic continuation in $s\in\BC$, where $\CF_{\rho,\psi}$ is the Fourier operator defined in Definition \ref{dfn-SDFTG} and
${\vphi}^\vee(a,h)=\chi^{-1}(a)\langle w,\pi(h^{-1})v\rangle$ is the matrix coefficient associated to the contragredient $\chi^{-1}\otimes\wt{\pi}$ of $\chi\otimes\pi$.
Moreover, there exists a meromorphic function $\Gamma_{\rho,\psi}(s,\chi\otimes\pi)$ in $s$ such that the following functional equation:
$$
\CZ(1-s,\CF_{\rho,\psi}(\phi),\vphi^\vee)
=\Gamma_{\rho,\psi}(s,\chi\otimes\pi)
\cdot\CZ(s,\phi,\vphi)
$$
holds.
\end{pro}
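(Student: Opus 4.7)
The strategy is to realize both sides of the putative functional equation as elements of a one-dimensional $\Hom$-space produced by Lemma \ref{lem:wmo}, and then to conclude by proportionality. Applied with $\sigma_1 = \chi_{s-1/2}^{-1}\otimes\wt{\pi}$ and $\sigma_2 = \chi_{s-1/2}\otimes\pi$, which are contragredients of one another, Lemma \ref{lem:wmo} gives that
\[
\Hom_{G(F)\times G(F)}\bigl(\CC_c^\infty(G)\otimes(\chi_{s-1/2}^{-1}\otimes\wt{\pi})\otimes(\chi_{s-1/2}\otimes\pi),\BC\bigr)
\]
is exactly one-dimensional. The plan is to produce both desired functionals inside this space and read off the gamma function as the ratio.

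The first functional, $T_s(\phi,w,v) := \CZ(s,\phi,\varphi_{v,w})$, converges absolutely for $\Re(s)\gg 0$ by Proposition \ref{pro:zicm} and extends meromorphically to all $s\in\BC$. To check $G\times G$-invariance, with the actions in which $(g_1,g_2)$ acts by $L_{g_1}R_{g_2}$ on $\phi$, by $\wt{\sigma}'(g_1)$ on $w\in\chi_{s-1/2}^{-1}\otimes\wt{\pi}$, and by $\sigma'(g_2)$ on $v\in\chi_{s-1/2}\otimes\pi$ (setting $\sigma' := \chi_{s-1/2}\otimes\pi$), I will perform the substitution $g = g_1 u g_2^{-1}$ and combine the standard covariance $\varphi_{\sigma(g_2^{-1})v,\wt{\sigma}(g_1^{-1})w}(u) = \varphi_{v,w}(g_1ug_2^{-1})$ with the fact that the $\chi_{s-1/2}$-weights from the two representations cancel exactly against the $|a|^{s-1/2}$ change-of-variables factor. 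Non-vanishing of $T_s$ is then obtained by choosing $\phi$ to be a bump function near a point at which $\varphi_{v,w}$ does not vanish.

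The second functional, $T'_s(\phi,w,v) := \CZ(1-s,\CF_{\rho,\psi}(\phi),\varphi_{v,w}^\vee)$, is well-defined because Theorem \ref{thm:FOG-Ss} gives $\CF_{\rho,\psi}(\phi)\in\CS_\rho(G)$, so Proposition \ref{pro:zicm} supplies absolute convergence for $\Re(s)\ll 0$ together with meromorphic continuation. Its $G\times G$-invariance will rest on the swap equivariance
\[
\CF_{\rho,\psi}(L_{g_1}R_{g_2}\phi) = L_{g_2}R_{g_1}\CF_{\rho,\psi}(\phi).
\]
Equivariance under $L_{g_1}$ falls out immediately from the defining convolution in Definition \ref{dfn-SDFTG} via the identity $(L_{g_1}\phi)^\vee = R_{g_1}\phi^\vee$. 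For $R_{g_2}$, the substitution $t\to gug_2^{-1}$ (unit Jacobian because the Haar measure on $G = \BG_m\times\Sp_{2n}$ is bi-invariant) reduces matters to the pointwise identity $\Phi_{\rho,\psi}(gug_2^{-1}) = \Phi_{\rho,\psi}(g_2^{-1}gu)$; using the explicit formula in Definition \ref{dfn-SDFT} together with the elementary matrix identity $\det(AB+I) = \det(BA+I)$, this holds and is essentially a rephrasing of the conjugation invariance recorded in Proposition \ref{pro:Phi}. With the swap equivariance in hand, the same change-of-variables argument as in paragraph two, now using $\varphi^\vee_{v,w}(g_2^{-1}gg_1) = (L_{g_2}R_{g_1}\varphi_{v,w}^\vee)(g)$ and the substitution $g = g_2ug_1^{-1}$, will yield the required $G\times G$-invariance of $T'_s$.

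With both $T_s$ and $T'_s$ in the same one-dimensional $\Hom$-space, proportionality forces $T'_s = \Gamma_{\rho,\psi}(s,\chi\otimes\pi)\cdot T_s$ for a unique meromorphic scalar $\Gamma_{\rho,\psi}(s,\chi\otimes\pi)$, which is the sought functional equation. The main obstacle in the outline is the swap equivariance of $\CF_{\rho,\psi}$ — in particular the verification that left and right translations are genuinely interchanged (not merely each preserved up to a twist) by the Fourier operator — which ultimately hinges on the bi-invariance of Haar measure on $\BG_m\times\Sp_{2n}$ together with the explicit conjugation invariance of $\Phi_{\rho,\psi}$; once this is pinned down, everything else is a formal consequence of the multiplicity-one result.
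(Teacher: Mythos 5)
Your proof follows essentially the same route as the paper's: use Theorem \ref{thm:FOG-Ss} and Proposition \ref{pro:zicm} to get convergence and meromorphic continuation of $\CZ(1-s,\CF_{\rho,\psi}(\phi),\vphi^\vee)$, place both zeta integrals in the $\Hom$-space of Lemma \ref{lem:wmo}, and conclude by one-dimensionality. The only difference is that you spell out the $G(F)\times G(F)$-equivariance (via the swap identity for $\CF_{\rho,\psi}$ coming from the conjugation invariance of $\Phi_{\rho,\psi}$), a step the paper dismisses as clear from the definitions; your verification of it is correct.
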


\begin{proof}
For any $\phi\in\CC_c^\infty(G)$, by Theorem \ref{thm:FOG-Ss}, we must have that $\CF_{\rho,\psi}(\phi)$ belongs to the space $\CS_\rho(G)$. By Proposition \ref{pro:zicm}, the zeta integral
$\CZ(1-s,\CF_{\rho,\psi}(\phi),\vphi^\vee)$ converges absolutely for $\Re(s)$ sufficiently negative and admits a meromorphic continuation to $s\in\BC$. It is clear from the definition of the zeta integral in \eqref{lzi},
$\CZ(1-s,\CF_{\rho,\psi}(\phi),\vphi^\vee)$ yields a linear functional in the $\Hom$-space
\[
\Hom_{G(F)\times G(F)}(\CC^{\infty}_c(G)\otimes (\chi_{s-\frac{1}{2}}^{-1}\otimes\wt{\pi})\otimes(\chi_{s-\frac{1}{2}}\otimes\pi),\BC)
\]
for almost all $s$.
By Lemma \ref{lem:wmo}, the above $\Hom$-space is one-dimensional. Hence there exists a meromorphic function in $s$, which is denoted by $\Gamma_{\rho,\psi}(s,\chi\otimes\pi)$, such that
\[
\CZ(1-s,\CF_{\rho,\psi}(\phi),\vphi^\vee)
=\Gamma_{\rho,\psi}(s,\chi\otimes\pi)
\cdot\CZ(s,\phi,\vphi)
\]
holds.
\end{proof}


\subsection{Gamma function in the sense of Gelfand and Graev}\label{ssec-gfgg}

We are going to show that the gamma function $\Gamma_{\rho,\psi}(s,\chi\otimes\pi)$ is a gamma function in the sense of Gelfand and Graev (\cite{GG63}, \cite{GGPS}, and \cite{ST66}).

For any $\chi\otimes\pi\in\Pi(G)$ and any $s\in\BC$, we have a matrix coefficient
\[
\varphi_{\chi_s\otimes\pi}((a,h)):=\chi_s(a)\apair{w,\pi(h)v}
\]
of $\chi_s\otimes\pi$ associated to a pair of smooth vectors $v\in\pi$ and $w\in\wt{\pi}$. The matrix coefficient $\varphi_{\chi_s\otimes\pi}$ defines a distribution:
\begin{align}\label{75-1}
(\varphi_{\chi_s\otimes\pi},\phi):=\int_{G(F)}\varphi_{\chi_s\otimes\pi}(g)\phi(g)\ud^*g
\end{align}
for any $\phi\in\CC_c^\infty(G)$, where $\ud^*g=\ud^*a\ud h$ for $g=(a,h)\in G(F)$. Then the local zeta integral defined in \eqref{lzi} can be written as
\begin{align}\label{75-2}
\CZ(s,\phi,\varphi_{\chi\otimes\pi})=(\varphi_{\chi_{s-\frac{1}{2}}\otimes\pi},\phi).
\end{align}
Since $\phi\in\CC_c^\infty(G)$, the identities in \eqref{75-1} and \eqref{75-2} hold for all $s\in\BC$.
Similarly, we have
\begin{align}\label{75-3}
\CZ(1-s,\CF_{\rho,\psi}(\phi),\varphi_{\chi\otimes\pi}^\vee)=(\varphi_{\chi^{-1}_{s-\frac{1}{2}}\otimes\wt{\pi}},\CF_{\rho,\psi}(\phi)),
\end{align}
which converges for $\Re(s)$ sufficiently negative and can be extended to all $s\in\BC$ by meromorphic continuation of the local zeta integral (Proposition \ref{pro:lfe}).
By definition, the Fourier operator $\CF_{\rho,\psi}$ defined for $\phi\in\CC_c^\infty(G)$ induces a Fourier operator $\CF_{\rho,\psi}^*$ on distributions $\varphi_{\chi^{-1}_{s-\frac{1}{2}}\otimes\wt{\pi}}$
by the following formula:
\[
(\CF_{\rho,\psi}^*(\varphi_{\chi^{-1}_{s-\frac{1}{2}}\otimes\wt{\pi}}),\phi):=(\varphi_{\chi^{-1}_{s-\frac{1}{2}}\otimes\wt{\pi}},\CF_{\rho,\psi}(\phi)).
\]
Because of Proposition \ref{pro:lfe} again, the distribution $\CF_{\rho,\psi}^*(\varphi_{\chi^{-1}_{s-\frac{1}{2}}\otimes\wt{\pi}})$ is defined for $\Re(s)$ sufficiently negative
and admits a meromorphic continuation to all $s\in\BC$.
By applying the functional equation in Proposition \ref{pro:lfe} to the distributions in \eqref{75-2} and \eqref{75-3}, we obtain that
\[
(\CF_{\rho,\psi}^*(\varphi_{\chi^{-1}_{s-\frac{1}{2}}\otimes\wt{\pi}}),\phi)=\Gamma_{\rho,\psi}(s,\chi\otimes\pi)
\cdot (\varphi_{\chi_{s-\frac{1}{2}}\otimes\pi},\phi)
\]
for all $\phi\in\CC_c^\infty(G)$, via meromorphic continuation in $s$.
Hence we obtain the following result.

\begin{cor}[Gelfand-Graev Gamma Function]\label{cor:gfgg}
Let $\varphi_{\chi_s\otimes\pi}$ be a matrix coefficient of any irreducible admissible representation $\chi_s\otimes\pi$ of $G(F)$. Then $\varphi_{\chi_s\otimes\pi}^\vee=\varphi_{\chi_s^{-1}\otimes\wt{\pi}}$
is a matrix coefficient of the contragredient $\chi_s^{-1}\otimes\wt{\pi}$ of $\chi_s\otimes\pi$, and as distributions on $G(F)$, the following identity holds
\begin{align}\label{F-GGgamma}
\CF_{\rho,\psi}^*(\varphi_{\chi^{-1}_{s}\otimes\wt{\pi}})=\Gamma_{\rho,\psi}(\frac{1}{2},\chi_{s}\otimes\pi)
\cdot\varphi_{\chi_s\otimes\pi},
\end{align}
by meromorphic continuation. Moreover, the gamma function $\Gamma_{\rho,\psi}(s,\chi\otimes\pi)$
satisfies the following identity:
\begin{align}\label{gamma-id}
\Gamma_{\rho,\psi}(\frac{1}{2},\chi_{s}\otimes\pi)\cdot\Gamma_{\rho,\psi}(\frac{1}{2},\chi_{s}^{-1}\otimes\wt{\pi})
=1
\end{align}
as meromorphic functions in $s$.
\end{cor}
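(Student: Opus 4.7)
The plan is to derive both identities by combining the functional equation from Proposition~\ref{pro:lfe}, the multiplicity-one statement of Lemma~\ref{lem:wmo}, and the Plancherel identity $\CF_{\rho,\psi^{-1}}\circ\CF_{\rho,\psi}=\Id$ established in Theorem~\ref{thm:Pl-FOG}. The first identity \eqref{F-GGgamma} is essentially a bookkeeping exercise: rewrite both zeta integrals in Proposition~\ref{pro:lfe} as pairings between matrix-coefficient distributions and test functions, then transport the Fourier operator to the distribution side via duality.

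More concretely, for $\phi\in\CC_c^\infty(G)$ the zeta integral \eqref{lzi} is $\CZ(s,\phi,\varphi_{\chi\otimes\pi})=(\varphi_{\chi_{s-\frac{1}{2}}\otimes\pi},\phi)$ in the distributional notation of \eqref{75-1}. Since $\CF_{\rho,\psi}(\phi)\in\CS_\rho(G)$ by Theorem~\ref{thm:FOG-Ss}, the twisted zeta integral on the other side of Proposition~\ref{pro:lfe} converges in a half-plane and extends meromorphically, so we may legitimately define $\CF_{\rho,\psi}^{*}$ on matrix-coefficient distributions by $(\CF_{\rho,\psi}^{*}T,\phi):=(T,\CF_{\rho,\psi}(\phi))$. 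The functional equation of Proposition~\ref{pro:lfe} then reads
\[
(\CF_{\rho,\psi}^{*}(\varphi_{\chi_{s-\frac{1}{2}}^{-1}\otimes\wt\pi}),\phi)=\Gamma_{\rho,\psi}(s,\chi\otimes\pi)\cdot(\varphi_{\chi_{s-\frac{1}{2}}\otimes\pi},\phi)
\]
as meromorphic functions of $s$, holding for all $\phi\in\CC_c^\infty(G)$. Since $\CC_c^\infty(G)$ separates distributions, the identity \eqref{F-GGgamma} follows after the substitution $s\mapsto s+\tfrac12$.

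For \eqref{gamma-id} I would iterate \eqref{F-GGgamma}. Applying \eqref{F-GGgamma} to the data $(\chi_s\otimes\pi,\psi)$ gives $\CF_{\rho,\psi}^{*}(\varphi_{\chi_s^{-1}\otimes\wt\pi})=\Gamma_{\rho,\psi}(\tfrac12,\chi_s\otimes\pi)\cdot\varphi_{\chi_s\otimes\pi}$. Applying \eqref{F-GGgamma} to the dual data $(\chi_s^{-1}\otimes\wt\pi,\psi^{-1})$ produces
\[
\CF_{\rho,\psi^{-1}}^{*}(\varphi_{\chi_s\otimes\pi})=\Gamma_{\rho,\psi^{-1}}(\tfrac12,\chi_s^{-1}\otimes\wt\pi)\cdot\varphi_{\chi_s^{-1}\otimes\wt\pi}.
\]
Composing and using that $\CF_{\rho,\psi^{-1}}\circ\CF_{\rho,\psi}=\Id$ (Theorem~\ref{thm:Pl-FOG}) implies $\CF_{\rho,\psi}^{*}\circ\CF_{\rho,\psi^{-1}}^{*}=\Id$ on distributions, one obtains
\[
\Gamma_{\rho,\psi}(\tfrac12,\chi_s\otimes\pi)\cdot\Gamma_{\rho,\psi^{-1}}(\tfrac12,\chi_s^{-1}\otimes\wt\pi)=1.
\]

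The main obstacle is converting the factor $\Gamma_{\rho,\psi^{-1}}$ into $\Gamma_{\rho,\psi}$ so as to match the statement. I expect to extract this from the symmetry $\CF_{X,\psi^{-1}}=\iota\circ\CF_{X,\psi}$ established in \eqref{59-3}, which when transferred to $G$ via Theorem~\ref{thm:FT-FT} and the proof of Theorem~\ref{thm:FOG-Ss} yields a comparable symmetry between $\CF_{\rho,\psi}$ and $\CF_{\rho,\psi^{-1}}$; substituting into the functional equation of Proposition~\ref{pro:lfe} and carefully tracking the central-character sign $\chi(-1)$ together with the action of the involution $(a,h)\mapsto(a,-h^{-1})$ on matrix coefficients should produce exactly the relation $\Gamma_{\rho,\psi^{-1}}(\tfrac12,\chi_s^{-1}\otimes\wt\pi)=\Gamma_{\rho,\psi}(\tfrac12,\chi_s^{-1}\otimes\wt\pi)$ needed to conclude \eqref{gamma-id}.
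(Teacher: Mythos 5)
Your derivation of \eqref{F-GGgamma} is exactly the paper's: both rewrite the two zeta integrals of Proposition \ref{pro:lfe} as the pairings $(\varphi_{\chi_{s-\frac{1}{2}}\otimes\pi},\phi)$ and $(\varphi_{\chi^{-1}_{s-\frac{1}{2}}\otimes\wt{\pi}},\CF_{\rho,\psi}(\phi))$, define $\CF_{\rho,\psi}^{*}$ by duality against $\CC_c^\infty(G)$, and read off the identity from the functional equation after the shift $s\mapsto s+\tfrac12$. Your route to \eqref{gamma-id} --- iterating \eqref{F-GGgamma} and invoking $\CF_{\rho,\psi^{-1}}\circ\CF_{\rho,\psi}=\Id$ from Theorem \ref{thm:Pl-FOG} --- is likewise precisely the composition $(\CF^{*}_{\rho,\psi^{-1}}\circ\CF^{*}_{\rho,\psi})(\varphi_{\chi_s^{-1}\otimes\wt{\pi}})$ that the paper indicates and then declines to spell out.

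The one step to be careful about is the one you yourself flag. The composition argument cleanly yields
\[
\Gamma_{\rho,\psi}(\tfrac{1}{2},\chi_s\otimes\pi)\cdot\Gamma_{\rho,\psi^{-1}}(\tfrac{1}{2},\chi_s^{-1}\otimes\wt{\pi})=1,
\]
which is the form recorded in \eqref{gam-gam=1} of the introduction once Corollary \ref{gam-gam} identifies $\Gamma_{\rho,\psi}$ with the Langlands $\gamma$-function. Your proposed upgrade $\Gamma_{\rho,\psi^{-1}}(\tfrac{1}{2},\chi_s^{-1}\otimes\wt{\pi})=\Gamma_{\rho,\psi}(\tfrac{1}{2},\chi_s^{-1}\otimes\wt{\pi})$ does not follow for free from the symmetry \eqref{59-3}: already for abelian factors one has $\gamma(s,\chi,\psi^{-1})=\chi(-1)\gamma(s,\chi,\psi)$, so the two normalizations generically differ by the value at $-1$ of the relevant central character, and your bookkeeping of $\chi(-1)$ together with the involution $(a,h)\mapsto(a,-h^{-1})$ would have to show that this sign is exactly absorbed. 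The paper omits this verification as well (``we omit the details here''), so you are not missing an argument the authors supply; but as written, the statement you can fully justify from the available ingredients is the $\psi,\psi^{-1}$ version displayed above, and the passage to the literal form of \eqref{gamma-id} should be treated as a separate sign computation rather than asserted.
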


The identity in \eqref{gamma-id} can be deduced from the identity in \eqref{F-GGgamma} and the identity $\CF_{\rho,\psi}\circ\CF_{\rho,\psi^{-1}}=\Id$ (Theorem \ref{thm:Pl-FOG}) by considering
$(\CF^*_{\rho,\psi^{-1}}\circ\CF^*_{\rho,\psi})(\varphi_{\chi^{-1}_{s}\otimes\wt{\pi}})$.
We omit the details here.

It is clear that when $n=0$, this formula specializes to the classical formula (definition) for the gamma function on $F^\times$ of Gelfand-Graev (\cite{GG63}, \cite{GGPS}, and \cite{ST66}).
Hence Corollary \ref{cor:gfgg} proves that the gamma function $\Gamma_{\rho,\psi}(s,\chi\otimes\pi)$
is a gamma function in the sense of Gelfand and Graev.

When $\chi_s\otimes\pi$ is square-integrable, the matrix coefficient $\varphi_{\chi_s\otimes\pi}$ belongs to the space $L^2(G,\ud^*a\ud h)$. In this case, we have that
\[
(\varphi_{\chi_s\otimes\pi},\phi)=\apair{\phi,\ovl{\varphi}_{\chi_s\otimes\pi}}_G,
\]
where $\apair{\cdot,\cdot}_G$ is the hermitian inner product of the space $L^2(G,\ud^*a\ud h)$. By Theorem \ref{thm:Pl-FOG}, we have that
\[
(\varphi_{\chi_s\otimes\pi}^\vee,\CF_{\rho,\psi}(\phi))=\apair{\CF_{\rho,\psi}(\phi),\ovl{\varphi}_{\chi_s\otimes\pi}^\vee}_G=\apair{\phi,\CF_{\rho,\psi^{-1}}(\ovl{\varphi}^\vee_{\chi_s\otimes\pi})}_G.
\]
By Proposition \ref{pro:lfe}, we have
\[
(\varphi_{\chi_s\otimes\pi}^\vee,\CF_{\rho,\psi}(\phi))=\Gamma_{\rho,\psi}(\frac{1}{2},\chi_{s}\otimes\pi)
\cdot(\varphi_{\chi_s\otimes\pi},\phi).
\]
It follows that
\[
\apair{\phi,\CF_{\rho,\psi^{-1}}(\ovl{\varphi}^\vee_{\chi_s\otimes\pi})}_G=\Gamma_{\rho,\psi}(\frac{1}{2},\chi_{s}\otimes\pi)
\cdot\apair{\phi,\ovl{\varphi}_{\chi_s\otimes\pi}}_G.
\]
Hence we obtain that as functions in the space $L^2(G,\ud^*a\ud h)$ and also as distributions,
\[
\CF_{\rho,\psi^{-1}}(\ovl{\varphi}^\vee_{\chi_s\otimes\pi})=\Gamma_{\rho,\psi}(\frac{1}{2},\chi_{s}\otimes\pi)
\cdot\ovl{\varphi}_{\chi_s\otimes\pi}.
\]
By Corollary \ref{cor:gfgg}, we obtain that
\[
\CF_{\rho,\psi}^*(\varphi_{\chi_s\otimes\pi})=\CF_{\rho,\psi^{-1}}(\varphi_{\chi_s\otimes\pi})
\]
for any matrix coefficient $\varphi_{\chi_s\otimes\pi}$ of $\chi_s\otimes\pi$ when $\chi_s\otimes\pi$ is square-integrable.

\begin{cor}\label{cor:F*F-}
When an irreducible admissible representation $\chi_s\otimes\pi$ of $G(F)$ is square-integrable, as distributions and as functions in the space $L^2(G,\ud^*a\ud h)$, the identity
\[
\CF_{\rho,\psi}^*(\varphi_{\chi_s\otimes\pi})=\CF_{\rho,\psi^{-1}}(\varphi_{\chi_s\otimes\pi})
\]
holds for any matrix coefficient $\varphi_{\chi_s\otimes\pi}$ of $\chi_s\otimes\pi$.
\end{cor}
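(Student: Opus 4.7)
The plan is to bootstrap from the Plancherel theorem (Theorem \ref{thm:Pl-FOG}) and the functional equation repackaged in Corollary \ref{cor:gfgg}, and to reduce the statement to matching two formulas already derived in the text just above the statement of this corollary.

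First I would observe that since $\chi_s\otimes\pi$ is square-integrable, any matrix coefficient $\varphi = \varphi_{\chi_s\otimes\pi}$ lies in $L^2(G,\ud^*a\ud h)$, so the distributional pairing coincides with the Hermitian inner product, $(\varphi,\phi) = \langle\phi,\bar\varphi\rangle_G$. By Theorem \ref{thm:Pl-FOG} applied with both $\psi$ and $\psi^{-1}$, the operator $\CF_{\rho,\psi}$ is an isometry of $L^2(G)$ whose two-sided inverse is $\CF_{\rho,\psi^{-1}}$; hence it is unitary, giving the fundamental intertwining identity
\[
\langle\CF_{\rho,\psi}(\phi_1),\phi_2\rangle_G = \langle\phi_1,\CF_{\rho,\psi^{-1}}(\phi_2)\rangle_G
\]
for all $\phi_1,\phi_2\in L^2(G)$. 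Unpacking the definition of $\CF^*_{\rho,\psi}$ against an arbitrary $\phi\in\CC_c^\infty(G)$ then yields
\[
(\CF^*_{\rho,\psi}(\varphi),\phi) = (\varphi,\CF_{\rho,\psi}(\phi)) = \langle\CF_{\rho,\psi}(\phi),\bar\varphi\rangle_G = \langle\phi,\CF_{\rho,\psi^{-1}}(\bar\varphi)\rangle_G,
\]
whereas on the other side
\[
(\CF_{\rho,\psi^{-1}}(\varphi),\phi) = \langle\phi,\overline{\CF_{\rho,\psi^{-1}}(\varphi)}\rangle_G.
\]
Thus the claim is equivalent to the intertwining property $\CF_{\rho,\psi^{-1}}(\bar\varphi) = \overline{\CF_{\rho,\psi^{-1}}(\varphi)}$ for $\varphi=\varphi_{\chi_s\otimes\pi}$.

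The remaining step, which I see as the main obstacle, is to verify this conjugation intertwining. One route is direct: since $\CF_{\rho,\psi^{-1}}$ is convolution with $\Phi_{\rho,\psi^{-1}}$, it suffices to check the pointwise identity $\overline{\Phi_{\rho,\psi^{-1}}(g)} = \Phi_{\rho,\psi}(g)$ on $G(F)$, which in turn boils down via Definition \ref{dfn-SDFT} to the $\GL_1$-statement $\overline{\eta_{\pvs,\psi^{-1}}(x)} = \eta_{\pvs,\psi}(x)$ on $F^\times$. I would prove this from the Mellin-inversion formula \eqref{eq:eta-MI} together with the standard identity $\overline{\varepsilon(s,\chi,\psi^{-1})} = \chi^{-1}(-1)\varepsilon(\bar s,\chi^{-1},\psi)$ of \cite[Corollary 2, p.142]{BH06}, since the sum in \eqref{eq:eta-MI} runs over unitary characters $\chi$ of $\CO^\times$ for which $\bar\chi = \chi^{-1}$, and the Mellin variable $z=q^{-s}$ satisfies $\bar z = z$ on the relevant contour.

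A cleaner alternative, avoiding the delicate conjugation analysis of $\veps$-factors, is to argue by multiplicity one. By Cor.~\ref{cor:gfgg} the distribution $\CF^*_{\rho,\psi}(\varphi_{\chi_s\otimes\pi})$ is a nonzero matrix coefficient of the contragredient $\chi^{-1}_s\otimes\wt\pi$, and by the computation displayed just above the corollary (which uses Proposition \ref{pro:lfe} together with unitarity of $\CF_{\rho,\psi}$ on $L^2$), so is $\CF_{\rho,\psi^{-1}}(\varphi_{\chi_s\otimes\pi})$. Both transform identically under the $G(F)\times G(F)$-action, so by Lemma \ref{lem:wmo} they are proportional. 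To identify the constant of proportionality as $1$, I would apply both sides to a single explicit matrix coefficient and use the identity $\Gamma_{\rho,\psi}(\tfrac{1}{2},\chi_s\otimes\pi)\cdot\Gamma_{\rho,\psi}(\tfrac{1}{2},\chi^{-1}_s\otimes\wt\pi)=1$ from \eqref{gamma-id}, combined with the identification (valid for unitary $\chi_s\otimes\pi$) of $\bar\varphi_{\chi_s\otimes\pi}$ with an appropriately chosen $\varphi^\vee_{\chi_s\otimes\pi}$ as matrix coefficients of $\chi^{-1}_s\otimes\wt\pi$. This last identification is the main bookkeeping step, but it is routine for unitary representations.
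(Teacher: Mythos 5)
Your opening reduction is correct and is exactly how the paper begins: square-integrability puts $\varphi$ in $L^2(G,\ud^*a\ud h)$, Theorem \ref{thm:Pl-FOG} makes $\CF_{\rho,\psi}$ unitary with inverse $\CF_{\rho,\psi^{-1}}$, and unwinding $(\CF^*_{\rho,\psi}(\varphi),\phi)=(\varphi,\CF_{\rho,\psi}(\phi))$ against the Hermitian pairing reduces the claim to $\CF_{\rho,\psi^{-1}}(\bar\varphi)=\overline{\CF_{\rho,\psi^{-1}}(\varphi)}$ on matrix coefficients. The genuine gap is in your ``direct route'' for this remaining step. Even granting the kernel identity $\overline{\Phi_{\rho,\psi^{-1}}}=\Phi_{\rho,\psi}$ (which is indeed the correct conjugation relation; cf.\ $\eta_{\pvs,\psi}(t)=\psi(t)|t|^{1/2}\zeta(1)^{-1}$ for $n=0$), what it gives is $\overline{\CF_{\rho,\psi^{-1}}(\varphi)}=\overline{\Phi_{\rho,\psi^{-1}}}*\bar\varphi^{\vee}=\CF_{\rho,\psi}(\bar\varphi)$, \emph{not} $\CF_{\rho,\psi^{-1}}(\bar\varphi)$. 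You are then still left with $\CF_{\rho,\psi}(\bar\varphi)=\CF_{\rho,\psi^{-1}}(\bar\varphi)$, which no pointwise kernel computation can supply: the two kernels are different distributions, and since $\CF_{\rho,\psi^{-1}}=\iota\circ\CF_{\rho,\psi}$ with $\iota$ the translation by $-1$ in the $\BG_m$-variable (the $G$-analogue of \eqref{59-3}, used in the proof of Theorem \ref{thm:FOG-Ss}), the two operators act on a given matrix-coefficient line by scalars differing by a central sign. So the identity you propose to verify pointwise cannot hold as an operator identity, and on matrix coefficients it is precisely the representation-theoretic statement route A was meant to avoid. Checking $\overline{\Phi_{\rho,\psi^{-1}}}=\Phi_{\rho,\psi}$ therefore does not suffice; as written the ``suffices'' claim is a non sequitur.

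Your ``cleaner alternative'' is essentially the paper's argument and is the route to take. The paper in fact needs neither a second appeal to multiplicity one nor a separate constant-matching via \eqref{gamma-id}: it runs Proposition \ref{pro:lfe} once through the distributional adjoint, giving $\CF^*_{\rho,\psi}(\varphi^{\vee})=\Gamma_{\rho,\psi}(\frac{1}{2},\chi_s\otimes\pi)\,\varphi$ (Corollary \ref{cor:gfgg}), and once through the Hilbert-space adjoint supplied by Theorem \ref{thm:Pl-FOG}, giving $\CF_{\rho,\psi^{-1}}(\bar\varphi^{\vee})=\Gamma_{\rho,\psi}(\frac{1}{2},\chi_s\otimes\pi)\,\bar\varphi$, so the \emph{same} scalar appears on both sides automatically. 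What remains is only the unitary identification of $\bar\varphi^{\vee}$ with a matrix coefficient of $\chi_s\otimes\pi$ (and of $\bar\varphi$ with one of the contragredient), together with tracking where complex conjugates of $\Gamma$ arise when scalars are moved across the Hermitian pairing --- exactly the bookkeeping you defer. Carried out carefully, route B closes the proof; route A does not.
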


\subsection{$\chi_s\otimes\pi$-Fourier coefficient of $\Phi_{\rho,\psi}$}\label{ssec-pf-Thm13}

We first consider a natural topology on the space $\CC^\infty_c(G)$. For any non-empty open compact subset $\Omega$ of $G(F)$, and any open compact subgroup $\CJ$ of $G(F)$, define
$\CV_{\Omega,\CJ}$ to be a subspace of $\CC^\infty_c(G)$ consisting of functions $\phi\in\CC_c^\infty(G)$ that are bi-$\CJ$-invariant and have support $\supp(\phi)$ contained in $\Omega$. It is easy to check that
$\CV_{\Omega,\CJ}$ is finite-dimensional. The family $\{\CV_{\Omega,\CJ}\}_{\Omega,\CJ}$ yields a topology $\CT$ on $\CC^\infty_c(G)$. A sequence $\phi_n$ in $\CC^\infty_c(G)$ converges to $\phi\in\CC_c^\infty(G)$ under
the topology $\CT$ if there are a pair $(\Omega,\CJ)$ such that $\phi_n$ and $\phi$ belong to the space $\CV_{\Omega,\CJ}$ for $n$ large and $\phi_n$ converges to $\phi$ in the space $\CV_{\Omega,\CJ}$. A distribution
$D$ is a continuous linear functional of the space $\CV_{\Omega,\CJ}$. A generalized function $\Phi(g)$ on $G(F)$ can be viewed as a distribution via
\[
D_\Phi(\phi):=\int_{G(F)}\Phi(g)\phi(g)\ud^* g=(\Phi*\phi^\vee)(e)
\]
where $e$ is the identity element of $G(F)$ and $\ud^*g=\ud^*a\ud h$ for $g=(a,h)\in G(F)$. A distribution $D$ on $G(F)$ is called {\sl essentially compact} if for any $\phi\in\CC^\infty_c(G)$, both
$D(\Fl_g\cdot\phi^\vee)$ and $D(\Fr_{g^{-1}}\cdot\phi^\vee)$ belong to $\CC^\infty_c(G)$. Here $\phi^\vee(x)=\phi(x^{-1})$,
the left translation is given by $\Fl_g\cdot\phi(x)=\phi(g^{-1}x)$ and right translation is given by $\Fr_g\cdot\phi(x)=\phi(xg)$. A distribution $D$ is {\sl $G(F)$-invariant} if for any $g\in G(F)$,
\begin{align}\label{D-inv}
D((\Fl_g\circ\Fr_g)\cdot\phi)=D(\phi)
\end{align}
holds for any $\phi\in\CC^\infty_c(G)$. By the definition in \cite{Ber84} (see also \cite{MT07}), the {\sl Bernstein center} of $G(F)$, which is denoted by $\CZ(G)$,
consists of all $G(F)$-invariant essentially compact distributions on $G(F)$.

If a distribution $\Phi$ is essentially compact, one can define the $\chi_s\otimes\pi$-Fourier coefficient of $\Phi$ by
\[
(\chi_s\otimes\pi)(\Phi):=\int_{G(F)}\Phi(g)(\chi_s\otimes\pi)(g)\ud^*a\ud h
\]
where $g=(a,h)\in G(F)$.
From Definition \ref{dfn-SDFT}, the distribution $\Phi_{\rho,\psi}$ has the following expression:
$$
\Phi_{\rho,\psi}(a,h):=c_{0}
\cdot \eta_{\pvs, \psi}
(a\det(h+\RI_{2n}))
\cdot
|\det(h+\RI_{2n})|^{-\frac{2n+1}{2}}.
$$
Since the distribution $\Phi_{\rho,\psi}$ on $G(F)$ is {\sl not} essentially compact,
we have to define rigorously the convolution $(\chi_{s}\otimes \pi)(\Phi_{\rho,\psi})$ or the $\chi_s\otimes\pi$-Fourier coefficient of the distribution $\Phi_{\rho,\psi}$ on $G(F)$
for any $\chi_s\otimes\pi\in\Pi(G)$.
To this end,
we define, for any $\ell\in \BZ$,
\begin{equation}\label{73-2}
G_{\ell}:= \{ (a,h)\in G(F) = F^{\times}\times \Sp_{2n}(F) \mid\ |a|  =q^{-\ell}\}.
\end{equation}
Let $\mathbf{1}_{\ell}$ be the characteristic function of the open subset $G_{\ell}$ of $G(F)$. We first consider the cut-off function $\Phi_{\rho,\psi}\cdot \mathbf{1}_{\ell}$.

\begin{pro}\label{pro:Phiell}
For any $\ell\in \BZ$, the distribution $\Phi_{\rho,\psi,\ell} := \Phi_{\rho,\psi}\cdot \mathbf{1}_{\ell}$
belongs to the Bernstein center $\CZ(G)$ of $G(F)$.
\end{pro}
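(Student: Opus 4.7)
The plan is to verify the two defining properties of the Bernstein center in turn.

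For the $G(F)$-invariance of $\Phi_{\rho,\psi,\ell}$, I observe that since $\BG_m$ is central in $G$, the adjoint action of $(a',h')\in G(F)$ sends $(a,h)$ to $(a, h'hh'^{-1})$; in particular it preserves the $\BG_m$-coordinate $a$ and hence the level set $\{|a|=q^{-\ell}\}$. Combined with the $G(F)$-invariance of $\Phi_{\rho,\psi}$ from Proposition~\ref{pro:Phi}(2), this gives the $G(F)$-invariance of $\Phi_{\rho,\psi,\ell}=\Phi_{\rho,\psi}\cdot\mathbf{1}_{\ell}$.

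For the essential compactness, I would unfold the convolutions $\Phi_{\rho,\psi,\ell}\ast\phi$ and $\phi\ast \Phi_{\rho,\psi,\ell}$ for $\phi\in \CC_c^\infty(G)$ and check that each lies in $\CC_c^\infty(G)$. The smoothness of the convolutions is immediate from the local constancy of $\eta_{\pvs,\psi}$ (Theorem~\ref{thm:eta}) and of $\phi$. The $\BG_m$-coordinate of the support is automatically bounded: if $(a,h)$ lies in the support, then $a=t k_a$ with $|t|=q^{-\ell}$ and $k_a$ in the compact $\BG_m$-projection of $\supp \phi$, confining $a$ to a bounded annulus of $F^\times$.

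The crux is compactness of the $\Sp_{2n}$-coordinate of the support. My plan is to use the Mellin inversion formula~\eqref{eq:eta-MI} to write
\begin{equation*}
\Phi_{\rho,\psi,\ell}(a,h)=\sum_{\chi\in \wh{\CO^{\times}}} \Psi_{\chi,\ell}^{a}(a)\cdot \Psi_{\chi,\ell}^{h}(h),
\end{equation*}
where $\Psi_{\chi,\ell}^{a}$ is a scalar multiple of $\chi(\ac(\cdot))^{-1}\mathbf{1}_{\ell}$ on $\BG_m(F)$ and $\Psi_{\chi,\ell}^{h}$ is an $\Sp_{2n}(F)$-conjugation-invariant function depending only on $\det(h+\RI_{2n})$, carrying the residue data of $\bet_\psi(\chi_s^{-1})$ at $z=0$. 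Since $\bet_{\psi}(\chi_s^{-1})$ is a rational function of $z=q^{-s}$ with finite principal part at $z=0$, only $\chi$ in a set that becomes effectively finite after pairing against any compactly supported test function contribute. This reduces the verification to showing that each term $\Psi_{\chi,\ell}^{a}\otimes\Psi_{\chi,\ell}^{h}$ defines an essentially compact distribution on $G(F)$, where the $\BG_m$-factor is trivially in $\CZ(\BG_m)$, leaving the $\Sp_{2n}$-factor to analyze.

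The hard part will be this last step, because the support of $\Psi_{\chi,\ell}^{h}$ is controlled only by the value of $\det(h+\RI_{2n})$, and in $\Sp_{2n}(F)$ there exist unbounded elements with arbitrarily small $|\det(h+\RI_{2n})|$, for instance symplectic elements with eigenvalues accumulating near $-1$ while other eigenvalues grow large. To overcome this, I plan to use the conjugation-invariance via Harish-Chandra descent: the convolution of an $\Sp_{2n}$-conjugation-invariant distribution with a compactly supported test function can be expressed through orbital integrals along regular semisimple conjugacy classes, parametrized by the palindromic characteristic polynomials of symplectic elements. The key estimate, combining compactness of $\supp\phi$ with the boundedness of $|\det(y+\RI_{2n})|$ enforced by the support of $\eta_{\pvs,\psi}$ on the factor $\Phi_{\rho,\psi,\ell}(t,y)$, should confine the relevant conjugacy classes to a bounded set of characteristic polynomials and thereby close the argument.
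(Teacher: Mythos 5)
Your first two steps (the $G(F)$-invariance and the reduction of essential compactness to convolutions with characteristic functions of compact open sets, together with the confinement of the $\BG_m$-coordinate to the annulus $|a|=q^{-\ell}$) match the paper's proof. The gap is in the final step, and it is fatal as stated. First, the support of $\eta_{\pvs,\psi}$ only forces an \emph{upper} bound $|\det(y+\RI_{2n})|\leq Cq^{\ell}$ on the relevant $y$, and the sets $\{h\in\Sp_{2n}(F): |\det(h+\RI_{2n})|\leq C\}$ are very far from compact: for example every unipotent $h$ satisfies $\det(h+\RI_{2n})=2^{2n}$, so a single level set already contains the whole (non-compact) unipotent variety. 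Second, and for the same reason, confining the characteristic polynomials to a bounded set cannot yield compactness of the support: the fibers of the map $h\mapsto \mathrm{char.poly}(h)$ are finite unions of conjugacy classes, which are themselves non-compact (again the unipotent variety, sitting over the single polynomial $(X-1)^{2n}$, is the basic counterexample). More fundamentally, the compact support of $\Phi_{\rho,\psi,\ell}\ast\mathrm{ch}_K$ is not a support phenomenon at all — no estimate on where $\Phi_{\rho,\psi,\ell}$ is nonzero can produce it — but a cancellation phenomenon coming from the oscillation of $\eta_{\pvs,\psi}$ in the principal-value integral over $t$. A Harish-Chandra descent to orbital integrals does not capture this cancellation and so cannot close the argument along the lines you propose.

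What the paper does instead is to avoid any pointwise support analysis on $\Sp_{2n}(F)$ and to work on the compactification $X_{P_\Del}$. One observes that $\phi_{\ell,K}:=\Phi_{\rho,\psi,\ell}\ast\mathrm{ch}_{K}=\mathbf{1}_{\ell}\cdot\CF_{\rho,\psi}(\mathrm{ch}_{K})$, that $\CF_{\rho,\psi}(\mathrm{ch}_{K})$ lies in $\CS_{\rho}(G)$ and hence extends to a smooth function on $X_{P_\Del}(F)$ (Theorem \ref{thm:FOG-Ss} via the transition relation \eqref{f-phi}), and that the cutoff $\mathbf{1}_{\ell}$ kills the entire asymptotic expansion in the $M_\Del^{\ab}$-direction. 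The characterization of $\CC^{\infty}_{c}(X_{P_\Del})$ inside $\CS_{\pvs}(X_{P_\Del})$ by $K_\Del$-finiteness together with the $a$-variable behavior (Theorem \ref{thm:asymSf}) then shows $f_{\phi_{\ell,K}}\in\CC^{\infty}_{c}(X_{P_\Del})$; since its support sits inside the open subset $G_{\ell}$ of $X_{P_\Del}(F)$, it is compact in $G(F)$. In other words, the compactness in the $\Sp_{2n}$-direction is bought entirely by the boundary behavior on $X_{P_\Del}$, which encodes exactly the cancellation your plan is missing. If you want to salvage your approach, you would need to replace the orbital-integral step by an argument that exhibits this cancellation directly, which in effect amounts to re-proving Theorem \ref{thm:asymSf}.
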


\begin{proof}
It is clear from the definition that for any $\ell\in \BZ$, the distribution $\Phi_{\rho,\psi,\ell}$ is $G(F)$-invariant, because of the $G(F)$-invariance of $\Phi_{\rho,\psi}$.
We remain to show that $\Phi_{\rho,\psi,\ell}$ is essentially compact on $G(F)$.

For any open compact subgroup $K$ of $G(F)$, let $\ch_K$ be the characteristic function of $K$. For any $g_{1},g_{2}\in G(F)$, we have
\begin{align*}
\Phi_{\rho,\psi,\ell}*\mathrm{ch}_{g_{1}Kg_{2}}(g)
&=
\int_{G(F)}
\Phi_{\rho,\psi,\ell}(y)\mathrm{ch}_{g_{1}Kg_{2}}(y^{-1}g)\ud y
\\
&=\int_{G(F)}
\Phi_{\rho,\psi,\ell}(y)\mathrm{ch}_{K}(g_{1}^{-1}y^{-1}gg_{2}^{-1})\ud y.
\end{align*}
Note that $y^{-1}g\in g_{1}Kg_{2}$ if and only if $g^{-1}_{1}y^{-1}gg_{2}^{-1}\in K$. Since the distribution $\Phi_{\rho,\psi,\ell}$ is $G(F)$-invariant,
after changing variable $y\to g_{1}yg_{1}^{-1}$, we get
\begin{align}\label{73-3}
\Phi_{\rho,\psi,\ell}*\mathrm{ch}_{g_{1}Kg_{2}}(g)
& =
\int_{G(F)}
\Phi_{\rho,\psi,\ell}(y)\mathrm{ch}_K(y^{-1}g_1^{-1}gg_2^{-1})\ud y\nonumber\\
&=\Phi_{\rho,\psi,\ell}*\mathrm{ch}_{K}(g_{1}^{-1}gg_{2}^{-1}).
\end{align}
It is clear that $\Phi_{\rho,\psi,\ell}*\mathrm{ch}_{g_{1}Kg_{2}}\in \CC^{\infty}_{c}(G)$ if and only if $\Phi_{\rho,\psi,\ell}*\mathrm{ch}_{K}\in \CC^{\infty}_{c}(G)$. Hence it is enough to show that
$\Phi_{\rho,\psi,\ell}*\mathrm{ch}_{K}\in \CC^{\infty}_{c}(G)$ for any open compact subgroup $K$ of the maximal open compact subgroup 
$$
K_G = \CO^\times \times \Sp_{2n}(\CO)
$$ of $G(F)$.
From the identity in \eqref{73-3}, we take $g_1=k_1, g_2=k_2\in K$, and obtain
\[
\Phi_{\rho,\psi,\ell}*\mathrm{ch}_{K}(g)
=
\Phi_{\rho,\psi,\ell}*\mathrm{ch}_{k_{1}Kk_{2}}(g)
=
\Phi_{\rho,\psi,\ell}*\mathrm{ch}_{K}(k_{1}^{-1}gk_{2}^{-1}).
\]
It follows that the function $\Phi_{\rho,\psi,\ell}*\mathrm{ch}_{K}(g)$ is bi-$K$-invariant, and hence is smooth on $G(F)$.

It remains to show that the function $\phi_{\ell,K}(g):=\Phi_{\rho,\psi,\ell}*\mathrm{ch}_{K}(g)$ is compactly supported on $G(F)$. In order to apply Theorem \ref{thm:asymSf} to the current situation, we have to use
the transition relation in \eqref{f-phi}. This means that the $\ch_K(a,h)$ on $G(F)$ defines a function
$$
f_{\ch_K}(\Fs_a^{-1}(h,\RI))=|a|^{-\frac{2n+1}{2}}\ch_K(a,h)=\ch_K(a,h).
$$
because of $K=K_1\times K_2$ with $K_1$ an open compact subgroup of $\CO^\times$ and $K_2$ an open compact subgroup of $\Sp_{2n}(\CO)$. Hence $f_{\ch_K}(x)\in\CC^\infty_c(X_{P_\Del})$. By Theorem \ref{thm:asymSf},
the function $f_{\ch_K}$ is right $K_\Del$-finite.
We write
\begin{align}\label{73-4}
\phi_{\ell,K}(g)=\Phi_{\rho,\psi,\ell}*\mathrm{ch}_{K}(g)
=
\int_{G_\ell}
\Phi_{\rho,\psi}(y)\mathrm{ch}_{K}(y^{-1}g)dy.
\end{align}
By writing $g=(a,h)$ and $y=(t,x)$, we obtain that $(t^{-1}a,x^{-1}h)\in K=K_1\times K_2$. We must obtain that
$|a|=|t|=q^{-\ell}$. Hence we obtain that $\supp(\phi_{\ell,K})$ is contained in $G_\ell$.

By \eqref{def:FTG}, we write
\begin{align*}
\CF_{\rho,\psi}(\mathrm{ch}_{K})
&=
\Phi_{\rho,\psi}*\mathrm{ch}_{K}^{\vee} =
\Phi_{\rho,\psi}*\mathrm{ch}_{K}
=
\sum_{\ell\in \BZ}\Phi_{\rho,\psi,\ell}*\mathrm{ch}_{K} = \sum_{\ell\in \BZ}\phi_{\ell,K}.
\end{align*}
This decomposition is well-defined because $\phi_{\ell_1,K}$ and $\phi_{\ell_2,K}$ have disjoint supports if $\ell_1\neq \ell_2$.
Therefore we can re-write
$$
\phi_{\ell,K} = \mathbf{1}_{\ell}\cdot \CF_{\rho,\psi}(\mathrm{ch}_{K}).
$$
From Definition \ref{dfn:rho-SsG}, $\CF_{\rho,\psi}(\mathrm{ch}_{K})$ can be extended to a smooth function on $X_{P_{\Del}}(F)$ via the translation relation \eqref{f-phi}. As the characteristic function of the open (and closed) subset $G_{\ell}\subset G(F)$ of $X_{P_{\Del}}(F)$, $\mathbf{1}_{\ell}$ can be extended to a smooth function on $X_{P_{\Del}}(F)$ as well.
By using the transition relation \eqref{f-phi} again, we have
\[
f_{\phi_{\ell,K}}(\Fs_a^{-1}(h,\RI))=\phi_{\ell,K}(a,h)q^{-{\frac{(2n+1)\ell}{2}}} =\mathbf{1}_{\ell}\cdot \CF_{\rho,\psi}(\mathrm{ch}_{K})(a,h)q^{-\frac{(2n+1)\ell}{2}},
\]
which can also be extended as a smooth function on $X_{P_\Del}(F)$.
From \eqref{73-4}, the function $f_{\phi_{\ell,K}}$ is $K_\Del$-finite. Since we just proved that $f_{\phi_{\ell,K}}(\Fs_a^{-1}k)$ belongs to $\CC^\infty_c(F^\times)$ for any fixed $k\in K_\Del$,
by Theorem \ref{thm:asymSf} again, the function $f_{\phi_{\ell,K}}$ belongs to $\CC^\infty_c(X_{P_\Del})$. Because the support $\supp(\phi_{\ell,K})$ is contained in $G_\ell$,
which is open in $G(F)$, we obtain that $\supp(\phi_{\ell,K})$ is compact in $G(F)$.
This finishes the proof.
\end{proof}

As in the proof of Proposition \ref{pro:Phiell}, we write
\begin{align}\label{Phi-Phiell}
\Phi_{\rho,\psi}  =
\sum_{\ell\in \BZ}
\Phi_{\rho,\psi,\ell},
\end{align}
which is well-defined,
because the family of distributions $\{\Phi_{\rho,\psi,\ell} \}_{\ell\in \BZ}$ have disjoint supports.
From Proposition \ref{pro:Phiell}, for any $\ell\in \BZ$ the distribution $\Phi_{\rho,\psi,\ell}$ belongs to the Bernstein center $\CZ(G)$ of $G(F)$.
By \cite[Theorem~2.13]{Ber84}, the action of $\Phi_{\rho,\psi,\ell}$ on $\chi\otimes \pi$ is well-defined, and there exists a regular function $f_{\ell}$ on the Bernstein variety of $G(F)$ such that
\begin{align}\label{73-5}
(\chi\otimes \pi)(\Phi_{\rho,\psi,\ell}) = f_{\ell}(\chi\otimes \pi)\Id_{\chi\otimes \pi}
\end{align}
for any $\chi\otimes\pi\in\Pi(G)$.

For any $\phi\in \CC^{\infty}_{c}(G)$, from Proposition \ref{pro:lfe}, the following functional equation holds after meromorphic continuation
\begin{equation}\label{lfe-73}
\CZ(1-s,\CF_{\rho,\psi}(\phi),\vphi^{\vee})
=\Gamma_{\rho,\psi}(s,\chi\otimes\pi)
\cdot\CZ(s,\phi,\vphi),
\end{equation}
for $\vphi\in \CC(\chi\otimes \pi)$, the space of matrix coefficients of $\chi\otimes \pi$.
Since $\phi\in \CC^{\infty}_{c}(G)$, the local zeta integral introduced in \eqref{lzi}
$$
\CZ(s,\phi,\vphi) = \int_{F^{\times}\times \Sp_{2n}(F)}
\phi(a,h)\vphi(a,h)|a|^{s-\frac{1}{2}}\ud^{*}a\ud h
$$
is absolutely convergent and holomorphic for any $s\in \BC$, and also in
Definition \ref{dfn-SDFT}, the principal value integral that defines
$$
\CF_{\rho,\psi}(\phi)(a,h) = \Phi_{\rho,\psi}*\phi^{\vee}(a,h),
$$
can be replaced by the absolutely convergent integral.

When $\Re(s)$ is sufficiently negative, the left-hand side of \eqref{lfe-73} is absolutely convergent. From \eqref{73-4}, for $\ell\in \BZ$, the functions
$$
\Phi_{\rho,\psi,\ell}*\phi^{\vee} = \mathbf{1}_{\ell}\cdot \Phi_{\rho,\psi,\ell}*\phi^{\vee}
$$
have disjoint supports. Therefore the following identity
\begin{align}\label{zetaell}
\CZ(1-s,\CF_{\rho,\psi}(\phi),\vphi^{\vee}) =
&
\CZ(1-s, \Phi_{\rho,\psi}*\phi^{\vee},\vphi^{\vee})\nonumber
\\
=
&
\CZ(1-s,\sum_{\ell\in \BZ}\Phi_{\rho,\psi,\ell}*\phi^{\vee},\vphi^{\vee})\nonumber
\\
=
&
\sum_{\ell\in \BZ}
\CZ(1-s,\Phi_{\rho,\psi,\ell}*\phi^{\vee},\vphi^{\vee})
\end{align}
holds whenever $\Re(s)$ is sufficiently negative, and the summations are always absolutely convergent. We are going to prove the following lemma by using
the argument in the proof of \cite[Lemma~2.4.4]{Luo19}, which is now rigorous since the distribution $\Phi_{\rho,\psi,\ell}$ belongs to the Bernstein center $\CZ(G)$ of $G(F)$.

\begin{lem}\label{lem:Phiell}
For any $\phi\in \CC^{\infty}_{c}(G)$, the following identity
$$
\CZ(1-s, \Phi_{\rho,\psi,\ell}*\phi^{\vee},\vphi^{\vee}) = f_{\ell}(\chi^{-1}_{s-\frac{1}{2}}\otimes \wt{\pi})\cdot
\CZ(s,\phi,\vphi)
$$
holds for any $s\in \BC$, and for any $\ell\in\BZ$.
\end{lem}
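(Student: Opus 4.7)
The plan is to reinterpret the zeta integral as a matrix-coefficient pairing and then exploit the defining property of the Bernstein center. More precisely, set $\sigma_s := \chi^{-1}_{s-\frac{1}{2}}\otimes\wt{\pi}$, acting on the underlying space $V_{\wt{\pi}}$ of $\wt{\pi}$, and fix vectors $v\in V_{\pi}$, $w\in V_{\wt{\pi}}$ so that $\vphi(a,h)=\chi(a)\apair{w,\pi(h)v}$. For any $F\in \CC^{\infty}_{c}(G)$, define $\sigma_s(F)w := \int_{G(F)} F(a,h)\,\chi^{-1}_{s-\frac{1}{2}}(a)\,\wt{\pi}(h)w\,\ud^* a\,\ud h$. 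Since $\chi^{-1}(a)|a|^{1/2-s}=\chi^{-1}_{s-\frac{1}{2}}(a)$ and $\apair{\wt{\pi}(h)w,v}=\apair{w,\pi(h^{-1})v}$, a direct unwinding gives the identification
\begin{equation*}
\CZ(1-s,F,\vphi^{\vee})=\apair{\sigma_s(F)w,v},
\end{equation*}
valid for all $s\in\BC$ because $F$ is compactly supported.

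Next, I would apply this with $F=\Phi_{\rho,\psi,\ell}\ast\phi^{\vee}$. By Proposition \ref{pro:Phiell}, $\Phi_{\rho,\psi,\ell}$ lies in the Bernstein center $\CZ(G)$ and is essentially compact, so $F\in\CC^{\infty}_{c}(G)$ and the previous identification applies. The standard multiplicativity of the convolution action of $\CZ(G)$ on smooth representations yields
\begin{equation*}
\sigma_s(\Phi_{\rho,\psi,\ell}\ast\phi^{\vee})=\sigma_s(\Phi_{\rho,\psi,\ell})\circ\sigma_s(\phi^{\vee}),
\end{equation*}
and \eqref{73-5} together with \cite[Theorem~2.13]{Ber84} identifies the first factor with the scalar $f_{\ell}(\sigma_s)=f_{\ell}(\chi^{-1}_{s-\frac{1}{2}}\otimes\wt{\pi})\cdot\Id$. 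Therefore
\begin{equation*}
\apair{\sigma_s(F)w,v}=f_{\ell}(\chi^{-1}_{s-\frac{1}{2}}\otimes\wt{\pi})\apair{\sigma_s(\phi^{\vee})w,v}.
\end{equation*}

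Finally, I would unwind $\apair{\sigma_s(\phi^{\vee})w,v}$ back to the original zeta integral. By definition this equals $\int_{G(F)}\phi^{\vee}(a,h)\chi^{-1}_{s-\frac{1}{2}}(a)\apair{w,\pi(h^{-1})v}\,\ud^*a\,\ud h$, and the substitution $(a,h)\mapsto(a^{-1},h^{-1})$ (together with the invariance of $\ud^*a\,\ud h$) converts this integral into $\int_{G(F)}\phi(a,h)\chi_{s-\frac{1}{2}}(a)\apair{w,\pi(h)v}\,\ud^*a\,\ud h=\CZ(s,\phi,\vphi)$. Combining the three steps gives the desired identity for every $s\in\BC$.

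The only genuinely delicate point I anticipate is verifying the multiplicativity $\sigma_s(D\ast\phi^{\vee})=\sigma_s(D)\circ\sigma_s(\phi^{\vee})$ for a Bernstein-center distribution $D$, since $D$ is only a distribution rather than a function; this is where essential compactness of $\Phi_{\rho,\psi,\ell}$ is crucially used to ensure $D\ast\phi^{\vee}\in\CC^{\infty}_{c}(G)$ so that both sides make unambiguous sense, and then the identity follows from the standard associativity of the convolution action of $\CZ(G)$ on the smooth representation $\sigma_s$ (as developed in \cite{Ber84}). Apart from this formal verification, no further analytic work is needed: every integral in sight is absolutely convergent for all $s\in\BC$, so the identity holds as an equality of entire functions in $s$ without any need for meromorphic continuation.
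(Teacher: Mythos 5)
Your proposal is correct and follows essentially the same route as the paper's proof: both rewrite $\CZ(1-s,F,\vphi^\vee)$ as the pairing $\apair{(\chi^{-1}_{s-\frac{1}{2}}\otimes\wt{\pi})(F)w,v}$, invoke essential compactness of $\Phi_{\rho,\psi,\ell}$ so that $F=\Phi_{\rho,\psi,\ell}*\phi^\vee\in\CC^\infty_c(G)$, apply the multiplicativity of the Bernstein-center action together with \eqref{73-5}, and unwind via the substitution $(a,h)\mapsto(a^{-1},h^{-1})$. The "delicate point" you flag (multiplicativity of the convolution action of a central distribution) is exactly the step the paper takes from \cite[Theorem~2.13]{Ber84}, so no gap remains.
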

\begin{proof}
For $\phi\in \CC^{\infty}_{c}(G)$, since $\Phi_{\rho,\psi,\ell}$ is essentially compact, the function $\Phi_{\rho,\psi,\ell}*\phi^{\vee}$ belongs to $\CC^{\infty}_{c}(G)$.
Hence the following integral
\begin{align*}
\CZ(1-s,\Phi_{\rho,\psi,\ell}*\phi^{\vee},\vphi^{\vee})
=
\int_{G(F)}
\Phi_{\rho,\psi,\ell}*\phi^{\vee}
(a,h)
\vphi^{\vee}(a,h)
|a|^{\frac{1}{2}-s}\ud^{*}a\ud h
\end{align*}
converges absolutely for any $s\in \BC$.
Up to a finite linear combination, we may write
$\vphi(g) = \chi(a)\langle w,\pi(h)v\rangle$ for a pair of fixed vectors $v\in\pi$ and $w\in\wt{\pi}$. The local zeta integral $\CZ(1-s,\Phi_{\rho,\psi,\ell}*\phi^{\vee},\vphi^{\vee})$ can be written as
\[
\int_{G(F)}
\Phi_{\rho,\psi,\ell}*\phi^{\vee}(a,h)\chi^{-1}(a)\langle w,\pi(h^{-1})v\rangle |a|^{\frac{1}{2}-s}
\ud^{*}a\ud h
\]
which is equal to
\[
\int_{G(F)}
\Phi_{\rho,\psi,\ell}*\phi^{\vee}(a,h)\chi_{s-\frac{1}{2}}^{-1}(a)\langle\wt{\pi}(h)w,v\rangle
\ud^{*}a\ud h.
\]
Hence the local zeta integral $\CZ(1-s,\Phi_{\rho,\psi,\ell}*\phi^{\vee},\vphi^{\vee})$ can be written as
\begin{align}\label{73-6}
\langle \int_{G(F)}
\Phi_{\rho,\psi,\ell}*\phi^{\vee}(a,h)\chi_{s-\frac{1}{2}}^{-1}(a)\wt{\pi}(h)w
\ud^{*}a\ud h,v \rangle.
\end{align}
Note that
\begin{align}
\int_{G(F)}
\Phi_{\rho,\psi,\ell}*\phi^{\vee}(a,h)\chi_{s-\frac{1}{2}}^{-1}(a)\wt{\pi}(h)
\ud^{*}a\ud h
&=
(\chi^{-1}_{s-\frac{1}{2}}\otimes \wt{\pi})(\Phi_{\rho,\psi,\ell}*\phi^{\vee}).
\end{align}
It is clear that
\begin{align*}
(\chi^{-1}_{s-\frac{1}{2}}\otimes \wt{\pi})(\Phi_{\rho,\psi,\ell}*\phi^{\vee})
& =
(\chi^{-1}_{s-\frac{1}{2}}\otimes \wt{\pi})(\Phi_{\rho,\psi,\ell})
\cdot
(\chi^{-1}_{s-\frac{1}{2}}\otimes \wt{\pi})(\phi^{\vee})\\
&=
f_{\ell}(\chi^{-1}_{s-\frac{1}{2}}\otimes \wt{\pi})
\cdot(\chi^{-1}_{s-\frac{1}{2}}\otimes\wt{\pi})(\phi^{\vee})
\end{align*}
according to \eqref{73-5}. Hence we get that \eqref{73-6} can be rewritten as
$$
f_{k}(\chi^{-1}_{s-\frac{1}{2}}\otimes \wt{\pi})
\langle \chi^{-1}_{s-\frac{1}{2}}\otimes
\wt{\pi}(\phi^{\vee})w,v\rangle.
$$
Therefore, we obtain the following expression:
\begin{align}\label{73-7}
\CZ(1-s,\Phi_{\rho,\psi,\ell}*\phi^{\vee},\vphi^{\vee})=
f_{k}(\chi^{-1}_{s-\frac{1}{2}}\otimes \wt{\pi})
\langle  \chi^{-1}_{s-\frac{1}{2}}\otimes
\wt{\pi}(\phi^{\vee})w,v\rangle.
\end{align}
We are going to compute $\langle  \chi^{-1}_{s-\frac{1}{2}}\otimes\wt{\pi}(\phi^{\vee})w,v\rangle$ as follows. Write that
$$
\langle \chi^{-1}_{s-\frac{1}{2}}\otimes \wt{\pi}(\phi^{\vee})w,v\rangle
=\int_{G(F)}
\phi^{\vee}(a,h)
\chi^{-1}_{s-\frac{1}{2}}(a)
\langle \wt{\pi}(h)w,v\rangle \ud^{*}a\ud h.
$$
The right-hand side,  after changing variable $(a,h)\to (a^{-1},h^{-1})$, is equal to
\[
\int_{G(F)}
\phi(a,h)\chi_{s-\frac{1}{2}}(a)
\langle \wt{\pi}(h^{-1})w,v\rangle \ud^{*}a\ud h
\]
which is equal to
\[
\int_{G(F)}
\phi(a,h)\chi(a)\langle w,\pi(h)v\rangle |a|^{s-\frac{1}{2}}\ud^{*}a\ud h
=\CZ(s,\phi,\vphi).
\]
By combining with the identity in \eqref{73-7}, we obtain the desired identity
$$
\CZ(1-s,\Phi_{\rho,\psi,\ell}*\phi^{\vee},\vphi^{\vee}) =
f_{\ell}(\chi^{-1}_{s-\frac{1}{2}}\otimes \wt{\pi})\cdot
\CZ(s,\phi,\vphi).
$$
\end{proof}

Now we apply Lemma \ref{lem:Phiell} to the identity in \eqref{zetaell}, and obtain that
for any $\phi\in \CC^{\infty}_{c}(G)$, the following identity
\begin{align}
\CZ(1-s,\CF_{\rho,\psi}(\phi),\vphi^{\vee}) =
(\sum_{\ell}f_{\ell}(\chi^{-1}_{s-\frac{1}{2}}\otimes \wt{\pi}))\cdot
\CZ(s,\phi,\vphi)
\end{align}
holds whenever $\Re(s)$ is sufficiently negative.
Comparing with the right-hand side of the functional equation in \eqref{lfe-73}, the following equality
\[
\sum_{\ell}f_{\ell}(\chi^{-1}_{s-\frac{1}{2}}\otimes \wt{\pi} ) = \Gamma_{\rho,\psi}(s,\chi\otimes\pi)
\]
converges absolutely whenever $\Re(s)$ is sufficiently negative and extends to all $s\in\BC$ by meromorphic continuation. By a shift of the parameter $s$, we obtain that the following identity
\begin{align}\label{gammaell}
\sum_{\ell}f_{\ell}(\chi_{s}\otimes \pi) = \Gamma_{\rho,\psi}(\frac{1}{2}, \chi_{s}^{-1}\otimes \wt{\pi})
\end{align}
converges absolutely whenever $\Re(s)$ is sufficiently positive and extends to all $s\in\BC$ by meromorphic continuation.

Finally, from \eqref{Phi-Phiell}, the decomposition $\Phi_{\rho,\psi} = \sum_{\ell}\Phi_{\rho,\psi,\ell}$ is well-defined. For any $\chi_s\otimes\pi\in\Pi(G)$, we define
\begin{align}\label{def:Phi-action}
(\chi_s\otimes\pi)(\Phi_{\rho,\psi})
:=
\sum_{\ell\in\BZ}(\chi_s\otimes\pi)(\Phi_{\rho,\psi,\ell}).
\end{align}
By Proposition \ref{pro:Phiell} and Lemma \ref{lem:Phiell}, together with \eqref{gammaell}, we obtain that
\[
\sum_{\ell\in\BZ}(\chi_s\otimes\pi)(\Phi_{\rho,\psi,\ell})
=
\sum_{\ell}f_{\ell}(\chi_{s}\otimes \pi)\Id_{\chi_s\otimes\pi}
\]
which is absolutely convergent for $\Re(s)$ sufficiently positive and extends to all $s\in\BC$ by meromorphic continuation.
Hence the definition of the convolution $(\chi_s\otimes\pi)(\Phi_{\rho,\psi})$ for any $\chi_s\otimes\pi\in\Pi(G)$ makes sense
whenever $\Re(s)$ is sufficiently positive and the following identity:
\begin{align}\label{Phi-action}
(\chi_{s}\otimes \pi)(\Phi_{\rho,\psi}) = \Gamma_{\rho,\psi}(\frac{1}{2}, \chi_{s}^{-1}\otimes \wt{\pi})
\cdot\Id_{\chi_s\otimes \pi}
\end{align}
holds by meromorphic continuation.
This proves the following theorem.

\begin{thm}\label{thm:Phi-gamma}
The $G(F)$-invariant distribution $\Phi_{\rho,\psi}$ on $G(F)$ as introduced in Definition \ref{dfn-SDFT} enjoys the following properties.
\begin{enumerate}
\item For any $\ell\in \BZ$, the distribution $\Phi_{\rho,\psi,\ell} = \mathbf{1}_{\ell}\cdot \Phi_{\rho,\psi}$ belongs to the Bernstein center $\CZ(G)$ of $G(F)$,
where $\mathbf{1}_{\ell}$ is the characteristic function of $G_{\ell}$ as defined in \eqref{73-2}.
\item For any irreducible admissible representation $\chi\otimes \pi$ of $G(F)$, let $f_{\ell}(\chi\otimes \pi)$ be the regular function defined on the Bernstein variety of $G(F)$ such that
$$
(\chi\otimes \pi)(\Phi_{\rho,\psi,\ell}) = f_{\ell}(\chi\otimes \pi)\Id_{\chi\otimes \pi}.
$$
Then the summation
$
\sum_{\ell}f_{\ell}(\chi_{s}\otimes \pi)
$
converges absolutely when $\Re(s)$ is sufficiently positive, and admits meromorphic continuation to $s\in \BC$. Moreover, the following identity
$$
\sum_{\ell}f_{\ell}(\chi_{s}\otimes \pi)=\Gamma_{\rho,\psi}(\frac{1}{2}, \chi_{s}^{-1}\otimes \wt{\pi})
$$
holds after meromorphic continuation to $s\in\BC$.
\item For $\chi_s\otimes\pi\in\Pi(G)$, the $\chi_s\otimes\pi$-Fourier coefficient of the $G(F)$-invariant distribution $\Phi_{\rho,\psi}$ or its convolution with $\chi_s\otimes\pi$, as defined in \eqref{def:Phi-action},
has property that
\[
(\chi_{s}\otimes \pi)(\Phi_{\rho,\psi}) = \Gamma_{\rho,\psi}(\frac{1}{2}, \chi_{s}^{-1}\otimes \wt{\pi})
\cdot\Id_{\chi_s\otimes \pi}
\]
holds for $\Re(s)$ sufficiently positive, and then for all $s\in\BC$ away from the possible poles of $\Gamma_{\rho,\psi}(\frac{1}{2}, \chi_{s}^{-1}\otimes \wt{\pi})$
by meromorphic continuation.
\end{enumerate}
\end{thm}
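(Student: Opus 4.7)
The plan is to prove the three parts in order, with the bulk of the work going into (1), from which (2) and (3) will follow by applying Bernstein's theorem on the center together with the functional equation of Proposition \ref{pro:lfe} and a careful decomposition of the Fourier operator along the $|a|$-stratification of $G(F)$.

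For Part (1), the $G(F)$-invariance of $\Phi_{\rho,\psi,\ell}$ is inherited from that of $\Phi_{\rho,\psi}$ (Proposition \ref{pro:Phi}), since the cutoff $\mathbf{1}_\ell$ is itself $\mathrm{Ad}(G(F))$-invariant. The substantive point is essential compactness. Using $G(F)$-invariance, it suffices to show that $\Phi_{\rho,\psi,\ell} * \mathrm{ch}_K \in \CC_c^\infty(G)$ for every open compact subgroup $K$ of the maximal open compact $K_G = \CO^\times \times \Sp_{2n}(\CO)$. Since the level sets $G_\ell$ are pairwise disjoint and open, one writes
\begin{equation*}
\CF_{\rho,\psi}(\mathrm{ch}_K) = \Phi_{\rho,\psi} * \mathrm{ch}_K^\vee = \sum_{\ell \in \BZ} \Phi_{\rho,\psi,\ell} * \mathrm{ch}_K = \sum_{\ell} \mathbf{1}_\ell \cdot \CF_{\rho,\psi}(\mathrm{ch}_K),
\end{equation*}
so that $\Phi_{\rho,\psi,\ell} * \mathrm{ch}_K = \mathbf{1}_\ell \cdot \CF_{\rho,\psi}(\mathrm{ch}_K)$. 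By Theorem \ref{thm:FOG-Ss}, $\CF_{\rho,\psi}(\mathrm{ch}_K) \in \CS_\rho(G)$, and then the asymptotic characterization of Schwartz functions in Theorem \ref{thm:asymSf} via the transition relation \eqref{f-phi} shows that cutting off to a single value of $|a|$ produces a function whose associated $f$ on $X_{P_\Del}(F)$ is $K_\Del$-finite and compactly supported in the $a$-direction, hence lies in $\CC_c^\infty(X_{P_\Del})$. Translating back, this gives $\Phi_{\rho,\psi,\ell} * \mathrm{ch}_K \in \CC_c^\infty(G)$.

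For Part (2), Bernstein's theorem on the center (\cite{Ber84}) gives regular functions $f_\ell$ on the Bernstein variety such that $(\chi\otimes\pi)(\Phi_{\rho,\psi,\ell}) = f_\ell(\chi\otimes\pi)\Id_{\chi\otimes\pi}$. The plan is to combine this with the functional equation \eqref{lfe-73}. For $\phi \in \CC_c^\infty(G)$ and $\Re(s)$ sufficiently negative, the decomposition above gives
\begin{equation*}
\CZ(1-s,\CF_{\rho,\psi}(\phi),\varphi^\vee) = \sum_{\ell \in \BZ} \CZ(1-s, \Phi_{\rho,\psi,\ell} * \phi^\vee, \varphi^\vee),
\end{equation*}
absolutely convergent because the supports are disjoint. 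For each term, since $\Phi_{\rho,\psi,\ell}$ lies in $\CZ(G)$, the convolution $\Phi_{\rho,\psi,\ell} * \phi^\vee$ lies in $\CC_c^\infty(G)$, and a direct computation — expanding $\varphi = \chi(a)\langle w, \pi(h)v\rangle$, rewriting the integral as a pairing, and pulling out the central character — gives
\begin{equation*}
\CZ(1-s, \Phi_{\rho,\psi,\ell} * \phi^\vee, \varphi^\vee) = f_\ell(\chi_{s-\frac{1}{2}}^{-1}\otimes\wt{\pi}) \cdot \CZ(s,\phi,\varphi).
\end{equation*}
Comparing the resulting identity with \eqref{lfe-73} and shifting $s$ yields $\sum_\ell f_\ell(\chi_s\otimes\pi) = \Gamma_{\rho,\psi}(\tfrac{1}{2}, \chi_s^{-1}\otimes\wt{\pi})$, first for $\Re(s)$ sufficiently positive where the sum converges, then for all $s$ by meromorphic continuation.

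Part (3) is then a matter of definition: the sum $\sum_\ell (\chi_s\otimes\pi)(\Phi_{\rho,\psi,\ell})$ is taken as the definition of $(\chi_s\otimes\pi)(\Phi_{\rho,\psi})$, well-defined wherever the meromorphic continuation in Part (2) has no pole, and it evaluates to $\Gamma_{\rho,\psi}(\tfrac{1}{2}, \chi_s^{-1}\otimes\wt{\pi})\Id_{\chi_s\otimes\pi}$. The main obstacle in this approach is the failure of $\Phi_{\rho,\psi}$ itself to be essentially compact, which is precisely what forces the introduction of the cutoffs $\mathbf{1}_\ell$ and the passage through meromorphic continuation; the crucial input that makes this stratification work is Theorem \ref{thm:asymSf}, ensuring that pieces of Schwartz functions stratified by $|a|$ are again compactly supported Schwartz functions. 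Once this is in hand, the rest of the argument is a careful bookkeeping of the functional equation against the Bernstein-center decomposition.
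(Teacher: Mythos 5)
Your proposal is correct and follows essentially the same route as the paper: Part (1) is the paper's Proposition \ref{pro:Phiell} (reduction of essential compactness to $\Phi_{\rho,\psi,\ell}*\mathrm{ch}_K$, the identification $\Phi_{\rho,\psi,\ell}*\mathrm{ch}_K=\mathbf{1}_\ell\cdot\CF_{\rho,\psi}(\mathrm{ch}_K)$, and the compact-support conclusion via Theorem \ref{thm:asymSf}), Part (2) is the paper's Lemma \ref{lem:Phiell} combined with the functional equation \eqref{lfe-73}, and Part (3) is the definition \eqref{def:Phi-action} together with Part (2). No gaps.
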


This completes the theory of harmonic analysis and gamma functions. By using Corollary \ref{gam-gam} that the gamma function $\Gamma_{\rho,\psi}(s, \chi\otimes \pi)$ is the same as the Langlands $\gamma$-function
$\gam(s,\chi\otimes\pi,\rho,\psi)$, we know that Theorem \ref{thm:Phi-gamma} implies Theorem \ref{thm:LGF}.


\section{Theorems \ref{thm:LGF}, \ref{thm:LFE-CF-BK}, and \ref{thm:LLF}}\label{sec-Thm-1-2-3}


We are going to finish the proofs of Theorems \ref{thm:LGF}, \ref{thm:LFE-CF-BK}, and \ref{thm:LLF} by using the well developed results from the local theory of the Piatetski-Shapiro and Rallis zeta integrals,
and give a quick confirmation (Corollary \ref{gam-gam}) that the gamma function $\Gamma_{\rho,\psi}(s, \chi\otimes \pi)$ as defined in Proposition \ref{pro:lfe} is equal to the Langlands $\gamma$-function
$\gam(s,\chi\otimes\pi,\rho,\psi)$.

\subsection{Proof of Theorems \ref{thm:LGF} and \ref{thm:LFE-CF-BK}}\label{sec:lfe-dm}
For a Schwartz function $\phi\in\CS_\rho(G)$ and a matrix coefficient $\varphi\in\CC(\chi\otimes\pi)$,
we prove Theorem \ref{thm:LFE-CF-BK}, which is re-stated as follows.

\begin{thm}\label{thm-lfe}
The local zeta integral $\CZ(s,\phi,\vphi)$ as in \eqref{lzi} converges absolutely for $\Re(s)$ sufficiently positive, admits meromorphic continuation to $s\in \BC$ and satisfies the following functional equation:
$$
\CZ(1-s, \CF_{\rho,\psi}(\phi) , \vphi^\vee)  =\gam(s,\chi\otimes \pi,\rho,\psi) \CZ(s,\phi,\vphi),
$$
where $\CF_{\rho,\psi}$ is the Fourier operator defined in Definition \ref{dfn-SDFTG}, and
\[
{\vphi}^\vee(a,h)=\chi^{-1}(a)\langle w,\pi(h^{-1})v\rangle
\]
is the matrix coefficient associated to the contragredient $\chi^{-1}\otimes\wt{\pi}$ of $\chi\otimes\pi$.
\end{thm}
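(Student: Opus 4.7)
The plan is to reduce the functional equation to the local functional equation for the Piatetski-Shapiro and Rallis doubling zeta integrals (Corollary \ref{FE-Langlands gamma}) via the projection $\CP_{\chi_s}$ from $\CS_\pvs(X_{P_\Del})$ onto the space of good sections $\RI^\dag(s,\chi)$ established in Proposition \ref{pro:FTX}. Convergence and meromorphic continuation are already given by Proposition \ref{pro:zicm}, so only the functional equation itself requires proof.

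First, I would set up the identification. For any $\phi\in\CS_\rho(G)$, let $f\in\CS_\pvs(X_{P_\Del})$ be the unique element with $\phi=\phi_f$ via the transition relation \eqref{f-phi}, and write $\varphi(a,h)=\chi(a)\langle w,\pi(h)v\rangle$. Substituting $\phi(a,h)=f(\Fs_a^{-1}(h,\RI))|a|^{(2n+1)/2}$ into the defining integral \eqref{lzi} and carrying out the inner $a$-integration, one recognizes the projection formula \eqref{proj-X-I}: with $s'=s-\tfrac{1}{2}$,
\[
\CZ(s,\phi,\varphi)=\int_{\Sp_{2n}(F)}\CP_{\chi_{s'}}(f)((h,\RI))\,\langle w,\pi(h)v\rangle\,\ud h=\RZ(\CP_{\chi_{s'}}(f),\varphi_{v,w}),
\]
the Piatetski-Shapiro and Rallis doubling zeta integral \eqref{doublingzeta} attached to the good section $\CP_{\chi_{s'}}(f)\in\RI^\dag(s',\chi)$.

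Next I would treat $\CZ(1-s,\CF_{\rho,\psi}(\phi),\varphi^\vee)$ analogously. Since $\CF_{\rho,\psi}(\phi)\in\CS_\rho(G)$ by Theorem \ref{thm:FOG-Ss}, the same unfolding (now with $\varphi^\vee(a,h)=\chi^{-1}(a)\langle w,\pi(h^{-1})v\rangle$) produces
\[
\CZ(1-s,\CF_{\rho,\psi}(\phi),\varphi^\vee)=\int_{\Sp_{2n}(F)}\CP_{\chi_{s'}^{-1}}(f_{\CF_{\rho,\psi}(\phi_f)})((h,\RI))\,\langle w,\pi(h^{-1})v\rangle\,\ud h.
\]
Performing the change of variable $h\mapsto -h^{-1}$ on $\Sp_{2n}(F)$ (which preserves Haar measure) and using that $-\RI_{2n}$ is central with $\pi(-\RI_{2n})^2=1$, I would then invoke Corollary \ref{cor:ComOnG} to rewrite the integrand as $(\RM^\dag_{w_\Del}(s',\chi,\psi)\circ\CP_{\chi_{s'}})(f)((h,\RI))$ multiplied by $\pi(-1)\langle w,\pi(h)v\rangle$. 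This recasts the left-hand side as
\[
\CZ(1-s,\CF_{\rho,\psi}(\phi),\varphi^\vee)=\pi(-1)\cdot\RZ\bigl(\RM^\dag_{w_\Del}(s',\chi,\psi)\,\CP_{\chi_{s'}}(f),\,\varphi_{v,w}\bigr).
\]
Applying Corollary \ref{FE-Langlands gamma} yields a further factor of $\pi(-1)\gamma(s'+\tfrac{1}{2},\chi\otimes\pi,\rho,\psi)=\pi(-1)\gamma(s,\chi\otimes\pi,\rho,\psi)$ times $\RZ(\CP_{\chi_{s'}}(f),\varphi_{v,w})=\CZ(s,\phi,\varphi)$, and the two copies of $\pi(-1)$ collapse to $1$, giving the desired identity by meromorphic continuation from the overlap region of validity.

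The main obstacle will be the careful bookkeeping: matching the half-integer shift between the normalization of $\CZ(s,\phi,\varphi)$ (with $|a|^{s-1/2}$) and the PSR normalization (with $\chi_s$ in $\RI(s,\chi)$), tracking the effect of the substitution $h\mapsto -h^{-1}$ through both the matrix coefficient and the Corollary \ref{cor:ComOnG} identity, and verifying the cancellation of the $\pi(-1)$ factors. Ensuring the region-of-convergence issues are handled correctly (since Corollary \ref{cor:ComOnG} is proved for $\Re(s)$ sufficiently negative while $\CZ(s,\phi,\varphi)$ converges for $\Re(s)$ sufficiently positive) will require an appeal to meromorphic continuation at the appropriate step.
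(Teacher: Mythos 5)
Your proposal is correct and follows essentially the same route as the paper's proof: both reduce the zeta integral to the Piatetski-Shapiro--Rallis doubling integral via $\CP_{\chi_{s-1/2}}$ (the paper's identity \eqref{RHS}), rewrite the Fourier-transformed side via the substitution $h\mapsto -h^{-1}$ and Corollary \ref{cor:ComOnG} as $\pi(-1)\RZ(\RM^\dag_{w_\Del}(s',\chi,\psi)f_{\chi_{s'}},\varphi')$ (the paper's identity \eqref{LHS}), and then invoke Corollary \ref{FE-Langlands gamma} so that the two $\pi(-1)$ factors cancel. The bookkeeping points you flag (the half-integer shift, the regions of convergence, and meromorphic continuation) are exactly the ones the paper's proof handles.
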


\begin{proof}
Take a matrix coefficient $\vphi(a,h) = \chi(a)\langle w,\pi(h)v \rangle$ of $\chi\otimes\pi$ associated to a pair of smooth vectors $v\in \pi,w\in \wt{\pi}$.  Then
$$
{\vphi}^\vee(a,h)=\chi^{-1}(a)\langle w,\pi(h^{-1})v\rangle
$$
is a matrix coefficient of the contragredient of $\chi\otimes\pi$. Take $\varphi'$ to be the restriction of $\varphi$ into $\{1\}\times\Sp_{2n}(F)$, i.e., $\varphi'(h)=\apair{w,\pi(h)v}$ for $h\in \Sp_{2n}(F)$.
For $\phi\in \CS_\rho(G)$, from the relation in \eqref{f-phi}, the function $\phi(a,g)$ is obtained from a smooth function $f$ in $\CS_{\pvs}(X_{P_\Del})$ with
\begin{align}\label{71-1}
\phi(a,h) = f(\Fs^{-1}_{a} (h,\RI))|a|^{\frac{2n+1}{2}}.
\end{align}
By taking the projection $f_{\chi_s}=\CP_{\chi_s}(f)$, as defined in \eqref{proj-X-I}, we first prove the following identity:
\begin{equation}\label{RHS}
\CZ(s+\frac{1}{2},\phi,\vphi)=\RZ(f_{\chi_{s}},\vphi'),
\end{equation}
which implies that the local zeta integral $\CZ(s,\phi,\vphi)$ converges absolutely for $\Re(s)$ sufficiently positive and admits a meromorphic continuation to $s\in\BC$.

In fact, for $\Re(s)$ sufficiently positive, we have
\begin{align}\label{71-2}
\CZ(s+\frac{1}{2},\phi,\varphi)=
\int_{F^{\times}\times \Sp_{2n}(F)}
\phi(a,h)\vphi(a,h)|a|^{s}\ud^{*}a\ud h,
\end{align}
which, by \eqref{71-1} and the definition of $\varphi(a,h)$, can be written as
\begin{align}\label{71-3}
\int_{F^\times\times\Sp_{2n}(F)}\chi_{s}(a)|a|^{\frac{2n+1}{2}}	f(\Fs_{a}^{-1}\cdot(h,\RI_{2n}))\apair{w,\pi(h)v}\ud^{*} a\ud h.
\end{align}
By the definition of the projection $\CP_{\chi_s}$ as in \eqref{proj-X-I}, the integral in \eqref{71-3} is equal to
\[
\int_{\Sp_{2n}(F)} \CP_{\chi_s}(f)((h,\RI_{2n}))\apair{w,\pi(h)v}\ud h=\RZ(f_{\chi_s},\varphi').
\]
This proves the identity in \eqref{RHS} for $\Re(s)$ sufficiently positive, which still holds for all $s\in\BC$ by meromorphic continuation.

When $\Re(s)$ is sufficiently negative, the left-hand side of the functional equation
in Theorem \ref{thm-lfe} is given by
\begin{equation}\label{71-4}
\CZ(1-s, \CF_{\rho,\psi}(\phi) , {\vphi}^\vee)
=
\int_{F^{\times}\times \Sp_{2n}(F)} \CF_{\rho,\psi}(\phi)(a,h){\vphi^\vee}(a,h) |a|^{\frac{1}{2}-s}\ud a^{*}\ud h.
\end{equation}
We intend to show that the following identity
\begin{equation}\label{LHS}
\CZ(\frac{1}{2}-s, \CF_{\rho,\psi}(\phi) , {\vphi}^\vee)
=
\pi(-\RI_{2n})\cdot\RZ(\RM^{\dag}_{w_{\Del}}(s,\chi,\psi)f_{\chi_{s}},\vphi').
\end{equation}
holds for $\Re(s)$ sufficiently negative and for all $s\in\BC$ by meromorphic continuation.

In fact, by definition, whenever $\Re(s)$ is sufficiently negative, the right-hand side of \eqref{LHS} is equal to
\begin{align}\label{71-5}
\pi(-\RI_{2n})\cdot\RZ(\RM^{\dag}_{w_{\Del}}(s,\chi,\psi)(f_{\chi_{s}}),\vphi')\nonumber\qquad\qquad\qquad\qquad\qquad\qquad
\\
=
\int_{\Sp_{2n}(F)}\RM^{\dag}_{w_{\Del}}(s,\chi,\psi)(f_{\chi_{s}})((h,\RI_{2n}))\langle w,\pi(-h)v\rangle \ud h.
\end{align}
By taking a change of variable $-h^{-1}\mapsto h$, it becomes
\begin{equation}\label{71-6}
\int_{\Sp_{2n}(F)}\RM^{\dag}_{w_{\Del}}(s,\chi,\psi)(f_{\chi_{s}})((-h^{-1},\RI_{2n}))\langle w,\pi(h^{-1})v\rangle \ud h.
\end{equation}
By Corollary \ref{cor:ComOnG}, we have
\begin{align*}
\RM^{\dag}_{w_{\Del}}(s,\chi,\psi)(f_{\chi_{s}})((-h^{-1},\RI))
&=\CP_{\chi^{-1}_s}(f_{\CF_{\rho,\psi}(\phi)})((h,\RI)) \\
=&\int_{F^\times}\chi^{-1}_{s}(a)\delta_{P_\Del}^{\frac{1}{2}}(\Fs_{a})
f_{\CF_{\rho,\psi}(\phi)}(\Fs_a^{-1}\cdot(h,\RI))\ud^{*} a,
\end{align*}
where $f_{\CF_{\rho,\psi}(\phi)}$ is obtained via $\CF_{\rho,\psi}(\phi)$ from the relation in \eqref{f-phi}.
By plugging the above formula into \eqref{71-6}, we obtain that the right-hand side of \eqref{LHS}, $\pi(-\RI_{2n})\cdot\RZ(\RM^{\dag}_{w_{\Del}}(s,\chi,\psi)f_{\chi_{s}},\vphi)$ is equal to
\begin{align}\label{71-7}
\int_{F^\times\times \Sp_{2n}(F)}
\chi^{-1}(a)|a|^{-s}|a|^{\frac{2n+1}{2}}
f_{\CF_{\rho,\psi}(\phi)}(\Fs_{a}^{-1}(h,\RI)) \apair{w,\pi(h^{-1})v}\ud^{*} a\ud h,
\end{align}
which, by the relation in \eqref{f-phi} again, can be written as
\begin{align}\label{71-8}
\int_{F^\times\times\Sp_{2n}(F)}\CF_{\rho,\psi}(\phi)(a,h)\chi^{-1}(a)\langle w,{\pi}(h^{-1})v\rangle |a|^{-s}\ud^{*} a\ud h.
\end{align}
Now the integral in \eqref{71-8} is exactly equal to $\CZ(\frac{1}{2}-s, \CF_{\rho,\psi}(\phi) , {\vphi^\vee})$, which is the left-hand side of \eqref{LHS}.
This proves \eqref{LHS}.

Recall from Corollary \ref{FE-Langlands gamma}, the local functional equation for the local zeta integrals of Piatetski-Shapiro and Rallis is
\begin{equation}\label{PSR-lfe}
\pi(-\RI_{2n})\cdot\RZ(\RM^{\dag}_{w_{\Del}}(s,\chi,\psi)f_{\chi_{s}},\vphi') = \gam(s+\frac{1}{2},\chi\otimes\pi,\rho,\psi)\cdot\RZ(f_{\chi_{s}},\vphi'),
\end{equation}
where $\varphi'$ is the restriction of $\varphi$ into $1\times\Sp_{2n}(F)$, i.e., $\varphi'(h)=\apair{w,\pi(h)v}$ for $h\in \Sp_{2n}(F)$.
Hence from \eqref{LHS} and \eqref{RHS}, we obtain the following functional equation
\begin{equation}\label{BK-lfe}
\CZ(\frac{1}{2}-s, \CF_{\rho,\psi}(\phi) , {\vphi^{\vee}})  =\gam(s+\frac{1}{2},\chi\otimes\pi,\rho,\psi) \CZ(s+\frac{1}{2},\phi,\vphi).
\end{equation}
By a shift: $s\mapsto s-\frac{1}{2}$, we obtain the functional equation in Theorem \ref{thm-lfe}. We are done.
\end{proof}

From Proposition \ref{pro:lfe} and Theorem \ref{thm-lfe}, we obtain

\begin{cor}\label{gam-gam}
The gamma function
$\Gamma_{\rho,\psi}(s, \chi\otimes \pi)$ as defined in Proposition \ref{pro:lfe} is equal to the Langlands $\gamma$-function $\gam(s,\chi\otimes\pi,\rho,\psi)$, as meromorphic functions in $s$,
for any $\chi\otimes\pi\in\Pi(G)$.
\end{cor}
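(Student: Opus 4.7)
The plan is to observe that Corollary \ref{gam-gam} is essentially a consistency statement between two instances of the same functional equation, one coming from the weak multiplicity one argument of Proposition \ref{pro:lfe} and the other from the doubling method calculation of Theorem \ref{thm-lfe}. I would organize the argument in three short steps.

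First, I would recall that Proposition \ref{pro:lfe} defines $\Gamma_{\rho,\psi}(s,\chi\otimes\pi)$ through the local functional equation
\[
\CZ(1-s,\CF_{\rho,\psi}(\phi),\vphi^{\vee}) = \Gamma_{\rho,\psi}(s,\chi\otimes\pi)\cdot \CZ(s,\phi,\vphi),
\]
which is forced to hold by the one-dimensionality of
\[
\Hom_{G(F)\times G(F)}\bigl(\CC^{\infty}_{c}(G)\otimes (\chi_{s-\frac{1}{2}}^{-1}\otimes \wt{\pi})\otimes (\chi_{s-\frac{1}{2}}\otimes \pi),\BC\bigr)
\]
from Lemma \ref{lem:wmo}, valid for $\phi\in \CC^{\infty}_c(G)$ and $\vphi\in \CC(\chi\otimes\pi)$. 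On the other hand, Theorem \ref{thm-lfe} shows, via the reduction \eqref{RHS}–\eqref{LHS} to the Piatetski-Shapiro and Rallis doubling zeta integral and Corollary \ref{FE-Langlands gamma}, that exactly the same identity holds with $\Gamma_{\rho,\psi}(s,\chi\otimes\pi)$ replaced by the Langlands $\gam(s,\chi\otimes\pi,\rho,\psi)$, now for all $\phi\in \CS_\rho(G)$.

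Second, since $\CC^{\infty}_{c}(G)\subset \CS_\rho(G)$, both functional equations apply to the same test data $(\phi,\vphi)$ with $\phi\in \CC^{\infty}_c(G)$. Subtracting gives
\[
\bigl(\Gamma_{\rho,\psi}(s,\chi\otimes\pi)-\gam(s,\chi\otimes\pi,\rho,\psi)\bigr)\cdot \CZ(s,\phi,\vphi) \equiv 0
\]
as meromorphic functions of $s$. It therefore suffices to exhibit, for each $\chi\otimes\pi\in \Pi(G)$, a pair $(\phi,\vphi)$ with $\phi\in \CC^{\infty}_c(G)$ and $\vphi\in \CC(\chi\otimes\pi)$ such that $\CZ(s,\phi,\vphi)\not\equiv 0$. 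This is a standard non-vanishing: choose a matrix coefficient $\vphi$ with $\vphi(e)\neq 0$, and then take $\phi$ to be the characteristic function of a sufficiently small bi-invariant open neighborhood of $e$ in $G(F)$, scaled so that the integrand is non-zero on a set of positive measure, giving an entire non-zero function of $s$.

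Third, I would conclude that
\[
\Gamma_{\rho,\psi}(s,\chi\otimes\pi)=\gam(s,\chi\otimes\pi,\rho,\psi)
\]
as meromorphic functions of $s$, for every $\chi\otimes\pi\in \Pi(G)$. I do not expect any genuine obstacle here: the only thing to verify beyond the two functional equations already proved is the non-vanishing of the local zeta integral for some choice of data in $\CC^{\infty}_c(G)\times \CC(\chi\otimes\pi)$, which is elementary. The real content is contained in Theorem \ref{thm-lfe}, whose proof routes the functional equation through the doubling identity \eqref{PSR-lfe} of Piatetski-Shapiro and Rallis and the compatibility of $\CF_{\rho,\psi}$ with $\RM^{\dag}_{w_\Del}(s,\chi,\psi)$ established in Corollary \ref{cor:ComOnG}; Corollary \ref{gam-gam} is then just the uniqueness clause of the abstract gamma factor attached to the one-dimensional $\Hom$-space.
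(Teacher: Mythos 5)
Your argument is correct and is essentially the paper's own: Corollary \ref{gam-gam} is deduced there directly by comparing the functional equation of Proposition \ref{pro:lfe} (which defines $\Gamma_{\rho,\psi}$ via the one-dimensional $\Hom$-space) with that of Theorem \ref{thm-lfe} (which produces the Langlands $\gamma$-function via the doubling method), the non-vanishing of $\CZ(s,\phi,\vphi)$ for suitable $\phi\in\CC^{\infty}_c(G)$ being already built into Proposition \ref{pro:lfe}. No further comment is needed.
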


By Corollary \ref{gam-gam}, we deduce that Theorem \ref{thm:Phi-gamma} implies Theorem \ref{thm:LGF}.

\subsection{$L$-functions}\label{ssec-pf-Thm12}

We start to prove Theorem \ref{thm:LLF} here, which will be finished in Section \ref{ssec-bf}.

From the proof of Theorem \ref{thm-lfe}, we can deduce, by using the work of Yamana (\cite[\S 5]{Y14}), that the poles of the local zeta integrals $\CZ(s,\phi,\vphi)$ are completely determined by the poles of the
Langlands local $L$-function $L(s,\chi\otimes \pi,\rho)$. This indicates that the Schwartz space $\CS_{\rho}(G)$ is a candidate for the $\rho$-Schwartz space in the sense of the Braverman-Kazhdan proposal (\cite{BK00}) and of Ng\^o (\cite{N20}).

\begin{thm}\label{thm:rho-Ss}
For any irreducible admissible representation $\chi\otimes \pi$ of $G(F)$, the following set
$$
\CI_{\chi\otimes \pi} = \{
\CZ(s,\phi,\vphi)\mid \phi\in \CS_{\rho}(G),\vphi\in \CC(\chi\otimes\pi)
\}
$$
is a fractional ideal of $\BC[q^{s},q^{-s}]$ that admits a greatest common denominator $P_{\chi\otimes\pi}(q^{-s})$ with $P_{\chi\otimes\pi}(0)=1$ and
\[
P_{\chi\otimes\pi}(q^{-s})^{-1}=L(s,\chi\otimes \pi,\rho),
\]
which is the Langlands local $L$-factor $L(s,\chi\otimes \pi,\rho)$.
\end{thm}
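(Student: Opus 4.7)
The plan is to reduce this theorem to the known local theory of the Piatetski--Shapiro and Rallis doubling zeta integrals on the space of good sections. Concretely, the transition relation \eqref{f-phi} sets up a bijection $f\leftrightarrow \phi_f$ between $\CS_\pvs(X_{P_\Del})$ and $\CS_\rho(G)$, and in the course of proving Theorem \ref{thm-lfe} (specifically, identity \eqref{RHS}) we already established the basic translation
\[
\CZ(s+\tfrac{1}{2},\phi_f,\vphi) \;=\; \RZ(f_{\chi_s},\vphi'),
\]
where $f_{\chi_s}=\CP_{\chi_s}(f)\in \RI(s,\chi)$ is the projected section and $\vphi'$ is the restriction of the matrix coefficient $\vphi$ of $\chi\otimes\pi$ to the $\Sp_{2n}(F)$-factor, so $\vphi'\in \CC(\pi)$. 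Thus determining the fractional ideal $\CI_{\chi\otimes\pi}$ is equivalent to determining the fractional ideal
\[
\CJ_{\chi\otimes\pi}\;:=\;\{\,\RZ(f^\dag_{\chi_s},\vphi')\mid f^\dag_{\chi_s}\in \CP_{\chi_s}(\CS_\pvs(X_{P_\Del})),\ \vphi'\in \CC(\pi)\,\}
\]
after the shift $s\to s+\tfrac{1}{2}$.

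The first key ingredient is Proposition \ref{pro:FTX}, which asserts that the projection $\CP_{\chi_s}$ maps $\CS_\pvs(X_{P_\Del})$ \emph{surjectively} onto the space $\RI^\dag(s,\chi)$ of good sections in the sense of Ikeda and Yamana. Hence $\CJ_{\chi\otimes\pi}$ coincides with the fractional ideal of PSR zeta integrals attached to arbitrary good sections and matrix coefficients of $\pi$, tensored with $\chi$. The second key ingredient is the local theory for PSR zeta integrals on good sections developed in \cite[Section~5]{Y14} (building on \cite{GPSR87}, \cite{Ik92}, \cite{LR05} and \cite{Kk18}), which pins down this fractional ideal precisely: it is $L(s+\tfrac{1}{2},\chi\otimes\pi,\rho)\cdot \BC[q^{s},q^{-s}]$. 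Substituting back through the shift $s\to s+\tfrac{1}{2}$ gives $\CI_{\chi\otimes\pi}=L(s,\chi\otimes\pi,\rho)\cdot \BC[q^{s},q^{-s}]$, and normalizing the greatest common denominator $P_{\chi\otimes\pi}(q^{-s})$ by $P_{\chi\otimes\pi}(0)=1$ yields the asserted equality $P_{\chi\otimes\pi}(q^{-s})^{-1}=L(s,\chi\otimes\pi,\rho)$.

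The upper bound direction of the argument---that every $\CZ(s,\phi,\vphi)$ is a holomorphic multiple of $L(s,\chi\otimes\pi,\rho)$---is the easier half: it follows by combining Proposition \ref{pro:analyticalgoodsection} (which says that $f^\dag_{\chi_s}/b_{2n}(s,\chi)$ is holomorphic for good sections) with the normalized intertwining-operator formalism of Section \ref{ssec-lgf}, together with standard holomorphy of matrix-coefficient integrals against holomorphic sections. The main obstacle will be the reverse inclusion: realizing every pole predicted by $L(s,\chi\otimes\pi,\rho)$ by an explicit choice of $(\phi,\vphi)\in \CS_\rho(G)\times \CC(\chi\otimes\pi)$. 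For this one needs the non-vanishing of the bilinear pairing $(f^\dag_{\chi_s},\vphi')\mapsto \RZ(f^\dag_{\chi_s},\vphi')$ at every prescribed $s_0\in \BC$ after normalization by $b_{2n}(s,\chi)$; the surjectivity in Proposition \ref{pro:FTX} reduces this to a statement purely about good sections, which is precisely what is proved in the Yamana--Lapid--Rallis framework. Combining these two directions establishes the theorem, with the unramified calculation (Theorem \ref{thm:PSR86}(2)) fixing the normalization $P_{\chi\otimes\pi}(0)=1$ in the unramified case and an approximation argument handling the ramified case.
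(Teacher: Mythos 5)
Your proposal is correct and follows essentially the same route as the paper's proof: translate $\CZ(s,\phi_f,\vphi)$ into the Piatetski-Shapiro--Rallis integral $\RZ(f_{\chi_{s-\frac{1}{2}}},\vphi')$ via \eqref{RHS}, use the surjectivity of $\CP_{\chi_s}$ onto good sections from Proposition \ref{pro:FTX}, and invoke \cite[\S 5]{Y14} to identify the resulting fractional ideal with $L(s,\chi\otimes\pi,\rho)\cdot\BC[q^s,q^{-s}]$. The extra remarks about the two inclusions and the unramified normalization are harmless elaboration on what the citation to Yamana already supplies.
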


\begin{proof}
Take any $\phi = \phi_{f}\in \CS_{\rho}(G)$ with $f\in \CS_{\pvs}(X_{P_{\Del}})$, and $\vphi\in \CC(\chi\otimes\pi)$, the space of matrix coefficients of $\chi\otimes\pi$.
From \eqref{RHS} in the proof of Theorem \ref{thm-lfe}, for $\Re(s)$ sufficiently positive,
the local zeta integral $\CZ(s,\phi,\vphi)$ can be identified with the doubling local zeta integral:
$$
\CZ(s,\phi_{f},\vphi) = \RZ(f_{\chi_{s-\frac{1}{2}}},\vphi^{\p})
$$
where $\vphi^{\p} = \vphi|_{\{ 1\}\times \Sp_{2n}(F)}$ and $\CP_{\chi_{s}}(f) = f_{\chi_{s}}\in \RI^{\dag}(s,\chi)$, the space of good sections on $\Sp_{4n}(F)$.
Hence $\CZ(s,\phi,\vphi)$ converges absolutely when $\Re(s)$ is sufficiently positive, and admits meromorphic continuation to $s\in \BC$.
By Proposition \ref{pro:FTX}, $\CP_{\chi_{s}}$ is surjective from $\CS_\pvs(X_{P_{\Del}})$ to $\RI^{\dag}(s,\chi)$. Hence we obtain another description of the set $\CI_{\chi\otimes \pi}$:
$$
\CI_{\chi\otimes \pi} = \{ \RZ(f_{\chi_{s-\frac{1}{2}}},\vphi^{\p})\ |\ f_{\chi_{s}}\in \RI^{\dag}(s,\chi),\vphi\in \CC(\pi) \}.
$$
According to the local theory of the doubling zeta integrals as explicitly discussed in \cite[\S 5]{Y14}, one has that
$$
\{ \RZ(f_{\chi_{s-\frac{1}{2}}},\vphi^{\p})\ |\ f_{\chi_{s}}\in \RI^{\dag}(s,\chi),\vphi\in \CC(\pi) \} = L(s,\chi\otimes \pi,\rho)\cdot \BC[q^{s},q^{-s}].
$$
It follows that
$\CI_{\chi\otimes \pi}=L(s,\chi\otimes \pi,\rho)\cdot \BC[q^{s},q^{-s}]$ is a fractional ideal.
We are done.
\end{proof}

\subsection{Basic function}\label{ssec-bf}

We continue our proof of Theorem \ref{thm:LLF}, related to properties of the basic function $\BL_\rho$.
By definition, the basic function is a unique bi-$K_G$-invariant function $\BL_\rho$ on $G(F)$ with property that
\begin{eqnarray}\label{basicf}
\CZ(s,\BL_\rho,\vphi_\circ)&=&\int_{F^{\times}\times \Sp_{2n}(F)}
\BL_{\rho}(a,h)\vphi_\circ(a,h)|a|^{s-\frac{1}{2}}
\ud^{*} a \ud g\nonumber\\
&=&L(s,\chi\otimes \pi,\rho),
\end{eqnarray}
where $K_G:=\CO^\times\times\Sp_{2n}(\CO)$, $\vphi_\circ(a,h)$ is the normalized unramified matrix coefficient of an unramified $\chi\otimes\pi$ with the normalization $\vphi_{\circ}(1,\RI_{2n}) = 1$, and
$\CZ(\cdots)$ is the local zeta integral as defined in \eqref{lzi}. The construction and the uniqueness of the such basic functions for a given pair $(G,\rho)$ have been worked out in \cite{Li17} and \cite{Luo19}
(see also the relevant discussions in \cite{BNS16}, \cite{Gz18}, \cite{Sak18}, and \cite{Sh18}).

We intend to show here that there exists a unique basic function $\BL_\rho$ that belongs to the space $\CS_\rho(G)$, and is fixed under the action
of the Fourier operator $\CF_{\rho,\psi}$. Based on our strategy in this paper, we are going to use the relevant known results about the doubling local zeta integrals via Theorem \ref{thm:FT-FT} and Corollary \ref{cor:ComOnG}.

In the proof of Theorem \ref{thm-lfe}, we show in \eqref{RHS} that
\begin{equation}\label{RHS-1}
\CZ(s+\frac{1}{2},\phi,\vphi)=\RZ(f_{\chi_{s}},\vphi')
\end{equation}
holds for $\Re(s)$ sufficiently positive and for $s\in\BC$ via meromorphic continuation, where $f_{\chi_s}=\CP_{\chi_s}(f)$ with a function $f\in\CS_{\pvs}(X_{P_\Del})$ that is determined by $\phi$ as in \eqref{f-phi}, and
$\vphi'(h)=\vphi(1,h)$, which is a matrix coefficient of $\pi$.

Since the relevant results for doubling local zeta integrals are only known for the case where the residue characteristic of $F$ is odd (\cite{LR05}, for instance), for the rest of this section,
we assume the ground field $F$ has an odd residue characteristic, and may come back to the even residue characteristic situation as needed in future.
From \cite[Proposition 3]{LR05}, the unramified calculation of the local zeta integral of Piatetski-Shapiro and Rallis implies that
$$
\RZ(f_{\chi_s}^\circ,\vphi'_\circ) =
\frac{L(s+\frac{1}{2},\chi\otimes \pi,\rho)}{b_{2n}(s,\chi)},
$$
where $b_{2n}(s,\chi)$ is given in \eqref{eq:M-Unramified} and $f^{\circ}_{\chi_{s}}$ is the normalized spherical vector in $\RI(s,\chi)$, which in particular is $K_{\Del}$-invariant. Here $\pi$ (resp. $\chi$) is an unramified representation of $\Sp_{2n}(F)$ (resp. $F^{\times}$). In other words,
$$
\RZ(b_{2n}(s,\chi)\cdot f_{\chi_s}^\circ,\vphi'_\circ) =L(s+\frac{1}{2},\chi\otimes \pi,\rho).
$$
Since $b_{2n}(s,\chi)\cdot f_{\chi_s}^\circ$ is a unique $K_\Del$-invariant good section in $\RI^\dag(s,\chi)$ satisfying the above identity, by Theorem \ref{pro:FTX}, there exists a unique bi-$K_\Del$-invariant function
$\BL_\rho'$ in $\CS_\pvs(X_{P_\Del})$ such that
$$
b_{2n}(s,\chi)\cdot f_{\chi_s}^\circ=\CP_{\chi_s}(\BL_\rho').
$$
By the transition relation in \eqref{f-phi}, there exists a unique function $\BL_\rho$ in the Schwartz space $\CS_\rho(G)$ corresponding to the function $\BL_\rho'$. We claim that such a constructed
function $\BL_\rho$ is the {\sl basic function} for the local unramified $L$-function $L(s,\chi\otimes \pi,\rho)$.

First of all, under the assumption that the residue characteristic of $F$ is odd, we have that $g_{0}\in K_{4n} = \Sp_{4n}(\CO)$, where $g_0$ is defined in \eqref{g0}, which relates the variety $X_{P_\Del}$ to the
variety $X_{P_{\std}}$. Hence we have that $K_{\Del} = g^{-1}_{0}K_{4n}g_{0} = K_{4n}$. In particular the image of the embedding of $K_{G}\times K_{G}\hookrightarrow \Sp_{4n}$ induced from $\Sp_{2n}(F)\times \Sp_{2n}(F)\hookrightarrow \Sp_{4n}(F)$ lies in $K_{4n}$. Therefore, $\BL_{\rho}$ is bi-$K_{G}$-invariant.
It is clear from \eqref{RHS-1} that
$$
\CZ(s,\BL_\rho,\vphi_\circ)=L(s,\chi\otimes \pi,\rho).
$$
This proves the claim above and that $\BL_\rho\in\CS_\rho(G)$ is the {\sl basic function} for $(G,\rho)$.
Note that the uniqueness of the basic function $\BL_\rho\in\CS_\rho(G)$ can be verified by using the Mellin inversion formula applied to $\chi_{s}$ (see \cite{Luo19}, for instance).

It remains to show that the basic function $\BL_\rho\in\CS_\rho(G)$ is fixed under the action of the Fourier operator $\CF_{\rho,\psi}$.
From \eqref{eq:M-Unramified} again, we know that for the normalized spherical section $f_{\chi_s}^\circ$ in $\RI(s,\chi)$, the normalized intertwining operator $\RM^{\dag}_{w_{\Del}}$ sends the good section
$b_{2n}(s,\chi)f_{\chi_s}^\circ\in \RI^\dag(s,\chi)$ to the good section $b_{2n}(-s,\chi^{-1})f_{\chi^{-1}_{s}}^\circ\in\RI^\dag(-s,\chi^{-1})$. By Theorem \ref{thm:CFP-M}, we must have $\CF_{X,\psi}(\BL_\rho')=\BL_\rho'$,
because of the uniqueness of $\BL_\rho'$.  Finally, by Theorem \ref{thm:FT-FT}, we have that $\CF_{\rho,\psi}(\BL_\rho)=\BL_\rho$ as functions on $G(F)$.

We summarize the above discussion as a proposition.

\begin{pro}\label{pro:FT-bf}
Assume that the residue characteristic of $F$ is odd.
There exists a unique basic function $\BL_\rho$ belonging to the space $\CS_\rho(G)$, with the following two properties.
\begin{enumerate}
\item For the normalized matrix coefficient $\vphi_\circ$ of an unramified representation $\chi\otimes\pi$, the local zeta integral as in \eqref{lzi} gives exactly the local $L$-function:
$$
\CZ(s,\BL_\rho,\vphi_\circ)=L(s,\chi\otimes \pi,\rho).
$$
\item The Fourier operator $\CF_{\rho,\psi}$ preserves the basic function $\BL_\rho$:
\[\CF_{\rho,\psi}(\BL_\rho)=\BL_\rho.
\]
\end{enumerate}
\end{pro}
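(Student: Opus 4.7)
The plan is to exploit the compatibility between our Fourier operator $\CF_{\rho,\psi}$ over $G(F)$ and the Piatetski-Shapiro and Rallis doubling zeta integrals, which is already encoded in Sections \ref{sec-etapvs-FT}--\ref{sec-Thm-1-2-3}, and to transport the classical unramified computation of Lapid-Rallis from $X_{P_\Del}$ back to $G$ via the transition relation \eqref{f-phi}.

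First, I would invoke identity \eqref{RHS} established in the proof of Theorem \ref{thm-lfe}:
\[
\CZ(s+\tfrac{1}{2},\phi_f,\vphi)=\RZ(f_{\chi_s},\vphi^{\p}),
\]
where $f_{\chi_s}=\CP_{\chi_s}(f)$ and $\vphi^{\p}=\vphi\vert_{\{1\}\times \Sp_{2n}(F)}$. This reduces the search for a basic function $\BL_\rho\in\CS_\rho(G)$ satisfying property (1) to finding a bi-$K_\Del$-invariant function $\BL_\rho^{\p}\in\CS_\pvs(X_{P_\Del})$ such that $\CP_{\chi_s}(\BL_\rho^{\p})$ equals the unique $K_\Del$-invariant good section that realizes $L(s+\tfrac12,\chi\otimes\pi,\rho)$ against $\vphi^{\p}_\circ$. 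The unramified calculation \cite[Proposition~3]{LR05} gives
\[
\RZ(f^\circ_{\chi_s},\vphi^{\p}_\circ)=\frac{L(s+\tfrac12,\chi\otimes\pi,\rho)}{b_{2n}(s,\chi)},
\]
so the target good section is $b_{2n}(s,\chi)\cdot f^\circ_{\chi_s}\in\RI^\dag(s,\chi)$.

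Next, by Proposition \ref{pro:FTX} the projection $\CP_{\chi_s}$ is surjective from $\CS_\pvs(X_{P_\Del})$ onto $\RI^\dag(s,\chi)$. Combined with the Mellin inversion/decomposition implicit in Theorem \ref{thm:asymSf} (applied fibrewise over $K_\Del$), one extracts a unique bi-$K_\Del$-invariant preimage $\BL_\rho^{\p}\in\CS_\pvs(X_{P_\Del})$ whose projection at every unramified $\chi_s$ is $b_{2n}(s,\chi)f^\circ_{\chi_s}$. Setting $\BL_\rho(a,h):=\BL_\rho^{\p}(\Fs_a^{-1}(h,\RI))|a|^{\frac{2n+1}{2}}$ via \eqref{f-phi} produces a candidate in $\CS_\rho(G)$. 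Here the assumption on the residue characteristic enters crucially: since $p\neq 2$, the Cayley-transform matrix $g_0$ in \eqref{g0} lies in $K_{4n}=\Sp_{4n}(\CO)$, so $K_\Del=g_0^{-1}K_{4n}g_0=K_{4n}$, which means that bi-$K_\Del$-invariance on $X_{P_\Del}$ translates into bi-$K_G$-invariance on $G$ under the embedding $G\hookrightarrow X_{P_\Del}$. Property (1) then follows from \eqref{RHS} applied at $\phi=\BL_\rho$, $\vphi=\vphi_\circ$.

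For property (2), I would apply the Ikeda-Yamana normalized unramified intertwining identity \eqref{eq:M-Unramified}:
\[
\RM^\dag_{w_\Del}(s,\chi,\psi)\bigl(b_{2n}(s,\chi)f^\circ_{\chi_s}\bigr)=b_{2n}(-s,\chi^{-1})f^\circ_{\chi_s^{-1}}.
\]
Combined with Theorem \ref{thm:CFP-M} (compatibility of $\CF_{X,\psi}$ with $\beta_\psi(\chi_s)\RM_{w_\Del}(s,\chi)$, equivalently with $\RM^\dag_{w_\Del}$), this forces $\CP_{\chi_s^{-1}}\circ\CF_{X,\psi}(\BL_\rho^{\p})=\CP_{\chi_s^{-1}}(\BL_\rho^{\p})$ for every unramified $\chi_s$. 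Uniqueness of the bi-$K_\Del$-invariant preimage under the full family of projections $\{\CP_{\chi_s^{-1}}\}$, again via Mellin inversion on the $M_\Del^{\ab}$-direction, yields $\CF_{X,\psi}(\BL_\rho^{\p})=\BL_\rho^{\p}$. Theorem \ref{thm:FT-FT} (more precisely, \eqref{phi-CF-f} together with the fact that the homeomorphism $(a,h)\mapsto(2^{-2n}a,-h^{-1})$ acts trivially on bi-$K_G$-invariant functions in the unramified setting) then transfers this identity to $\CF_{\rho,\psi}(\BL_\rho)=\BL_\rho$. Finally, uniqueness of $\BL_\rho$ inside $\CS_\rho(G)$ follows from injectivity of the assignment $\BL_\rho\mapsto\{\CP_{\chi_s}(\BL_\rho^{\p})\}_{\chi_s}$ provided by Theorem \ref{thm:PWM}.

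The main obstacle I anticipate is checking that the transfer of bi-$K_G$-invariance and of the fixed-point identity $\CF_{X,\psi}(\BL_\rho^{\p})=\BL_\rho^{\p}$ through \eqref{f-phi} and Theorem \ref{thm:FT-FT} does not pick up a stray factor from the twist $(a,h)\mapsto(2^{-2n}a,-h^{-1})$; when $p\neq 2$ one has $|2|=1$ and $-\RI_{2n}\in K_G$, so this twist acts trivially on bi-$K_G$-invariant vectors, but verifying this compatibility carefully (and ensuring uniqueness holds inside $\CS_\rho(G)$ rather than in a larger completion) is the one step that requires genuine bookkeeping rather than direct appeal to the earlier theorems.
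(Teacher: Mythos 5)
Your proposal is correct and follows essentially the same route as the paper: identity \eqref{RHS}, the Lapid--Rallis unramified computation identifying $b_{2n}(s,\chi)f^\circ_{\chi_s}$ as the target good section, surjectivity of $\CP_{\chi_s}$ from Proposition \ref{pro:FTX} plus Mellin inversion for existence and uniqueness of $\BL_\rho'$, the identity \eqref{eq:M-Unramified} combined with Theorem \ref{thm:CFP-M} to get $\CF_{X,\psi}(\BL_\rho')=\BL_\rho'$, and Theorem \ref{thm:FT-FT} to descend to $G(F)$, with the odd residue characteristic entering exactly as you say through $g_0\in K_{4n}$. Your closing remark that the twist $(a,h)\mapsto(2^{-2n}a,-h^{-1})$ acts trivially on bi-$K_G$-invariant functions when $p\neq 2$ (since $|2|=1$ and $-\RI_{2n}\in K_G$) is a point the paper leaves implicit, and it is correct.
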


This finishes our proof of Theorem \ref{thm:LLF}.

From the proof of Proposition \ref{pro:FT-bf}, when the residue characteristic of $F$ is even, the {\sl basic function} $\BL_\rho$ may have a different structure. We omit the discussion here.

\bibliographystyle{amsalpha}

\end{document}